\def\eqref#1{equation~\ref{#1}}
\def\Eqref#1{Equation~\ref{#1}}
\def\1{\bm{1}}
\def\eps{{\epsilon}}
\DeclareMathAlphabet{\mathsfit}{\encodingdefault}{\sfdefault}{m}{sl}
\SetMathAlphabet{\mathsfit}{bold}{\encodingdefault}{\sfdefault}{bx}{n}
\newcommand{\E}{\mathbb{E}}
\newcommand{\R}{\mathbb{R}}
\title{Beyond Stationarity: \\ Convergence Analysis of Stochastic Softmax Policy Gradient Methods}
\author{%
  Sara Klein, Simon Weissmann \& Leif Döring \\%\thanks{Use footnote for providing further information
    %about author (webpage, alternative address)---\emph{not} for acknowledging
    %funding agencies.} \\
  Institute of Mathematics, University of Mannheim\\
  \texttt{\{sara.klein, simon.weissmann, leif.doering\}@uni-mannheim.de} \\
  % examples of more authors
  %\And
  %Simon~Weissmann \\
  %Institute of Mathematics\\
  %University of Mannheim\\
  %\texttt{simon.@uni-mannheim.de} \\
  %\And
  %Leif~Döring \\
  %Institute of Mathematics\\
  %University of Mannheim\\
  %\texttt{doering@uni-mannheim.de} \\
}
% The \author macro works with any number of authors. There are two commands
% used to separate the names and addresses of multiple authors: \And and \AND.
%
% Using \And between authors leaves it to \LaTeX{} to determine where to break
% the lines. Using \AND forces a linebreak at that point. So, if \LaTeX{}
% puts 3 of 4 authors names on the first line, and the last on the second
% line, try using \AND instead of \And before the third author name.

\iclrfinalcopy % Uncomment for camera-ready version, but NOT for submission.
\begin{document}

\maketitle

\begin{abstract}
   Markov Decision Processes (MDPs) are a formal framework for modeling and solving sequential decision-making problems. In finite-time horizons such problems are relevant for instance for optimal stopping or specific supply chain problems, but also in the training of large language models. In contrast to infinite horizon MDPs optimal policies are not stationary, policies must be learned for every single epoch. In practice all parameters are often trained simultaneously, ignoring the inherent structure suggested by dynamic programming. This paper introduces a combination of dynamic programming and policy gradient called dynamic policy gradient, where the parameters are trained backwards in time. 
   
   For the tabular softmax parametrisation we carry out the convergence analysis for simultaneous and dynamic policy gradient towards global optima, both in the exact and sampled gradient settings without regularisation. It turns out that the use of dynamic policy gradient training much better exploits the structure of finite-time problems which is reflected in improved convergence bounds.
\end{abstract}

\section{Introduction}

Policy gradient (PG) methods continue to enjoy great popularity in practice due to their model-free nature and high flexibility. Despite their far-reaching history \citep{Williams1992, Sutton1999, Konda1999, Kakade2001}, there were no proofs for the global convergence of these algorithms for a long time. Nevertheless, they have been very successful in many applications, which is why numerous variants have been developed in the last few decades, whose convergence analysis, if available, was mostly limited to convergence to stationary points \citep{2013Pirotta, schulman2015, papini2018a, clavera18a, shen19d, xu2019improved, huang20a, Xu2020Sample, huang2022bregman}. In recent years, notable advancements have been achieved in the convergence analysis towards global optima \citep{fazel18a, agarwal2020theory, mei2022global, bhandari2021, bhandari2022-finite-time-section, Cen2021, Xiao2022, YuanGrower22, alfano2023, johnson2023optimal}. These achievements are partially attributed to the utilisation of (weak) gradient domination or Polyak-\L ojasiewicz (PL) inequalities (lower bounds on the gradient) \citep{POLYAK1963}. 

As examined in~\citet{Karimi2016-PLinoptimiztion} a PL-inequality and $\beta$-smoothness (i.e. $\beta$-Lipschitz continuity of the gradient) implies a linear convergence rate for gradient descent methods. In certain cases, only a weaker form of the PL inequality can be derived, which states that it is only possible to lower bound the norm of the gradient instead of the squared norm of the gradient by the distance to the optimum. Despite this limitation, $\cO(1/n)$-convergence can still be achieved in some instances.

This article deals with PG algorithms for finite-time MDPs. Finite-time MDPs differ from discounted infinite-time MDPs in that the optimal policies are not stationary, i.e. depend on the epochs. While a lot of recent theoretical research focused on discounted MDPs with infinite-time horizon not much is known for finite-time MDPs. 
However, there are many relevant real world applications which require non-stationary finite-time solutions such as inventory management in hospital supply chains \citep{AbuZwaida2021OptimizationOI} or optimal stopping in finance \citep{becker2019}.
There is a prevailing thought that finite-time MDPs do not require additional scrutiny as they can be transformed into infinite horizon MDPs by adding an additional time-coordinate. Seeing finite-time MDPs this way leads to a training procedure in which parameters for all epochs are trained simultaneously, see for instance~\citet{GuinBhatanagar23}. While there are practical reasons to go that way, we will see below that ignoring the structure of the problem yields worse convergence bounds. The aim of this article is two-fold. Firstly, we analyse the simultaneous PG algorithm. The analysis for exact gradients goes along arguments of recent articles, the analysis of the stochastic PG case is novel. Secondly, we introduce a new approach to PG for finite-time MDPs. We exploit the dynamic programming structure and view the MDP as a nested sequence of contextual bandits.
Essentially, our algorithm performs a sequence of PG algorithms backwards in time with carefully chosen epoch dependent training steps. We compare the exact and stochastic analysis to the simultaneous approach. 
Dynamic PG can bee seen as a concrete algorithm for Policy Search by Dynamic Programming, where policy gradient is used to solve the one-step MDP \citep{bagnell2003,scherrer14}. 
There are some recent articles also studying PG of finite-time horizon MDPs from a different perspective considering fictitious discount algorithms \citep{Guo_Hu_Zhang_2022} or finite-time linear quadratic control problems \citep{hamby2021, hambly2022policy, Zhang2021derivative, zhang2023global,zhang2023learning, zhang2023revisiting}.

\begin{floatingfigure}[r]{7cm}
\mbox{\includegraphics[width=0.55\textwidth]{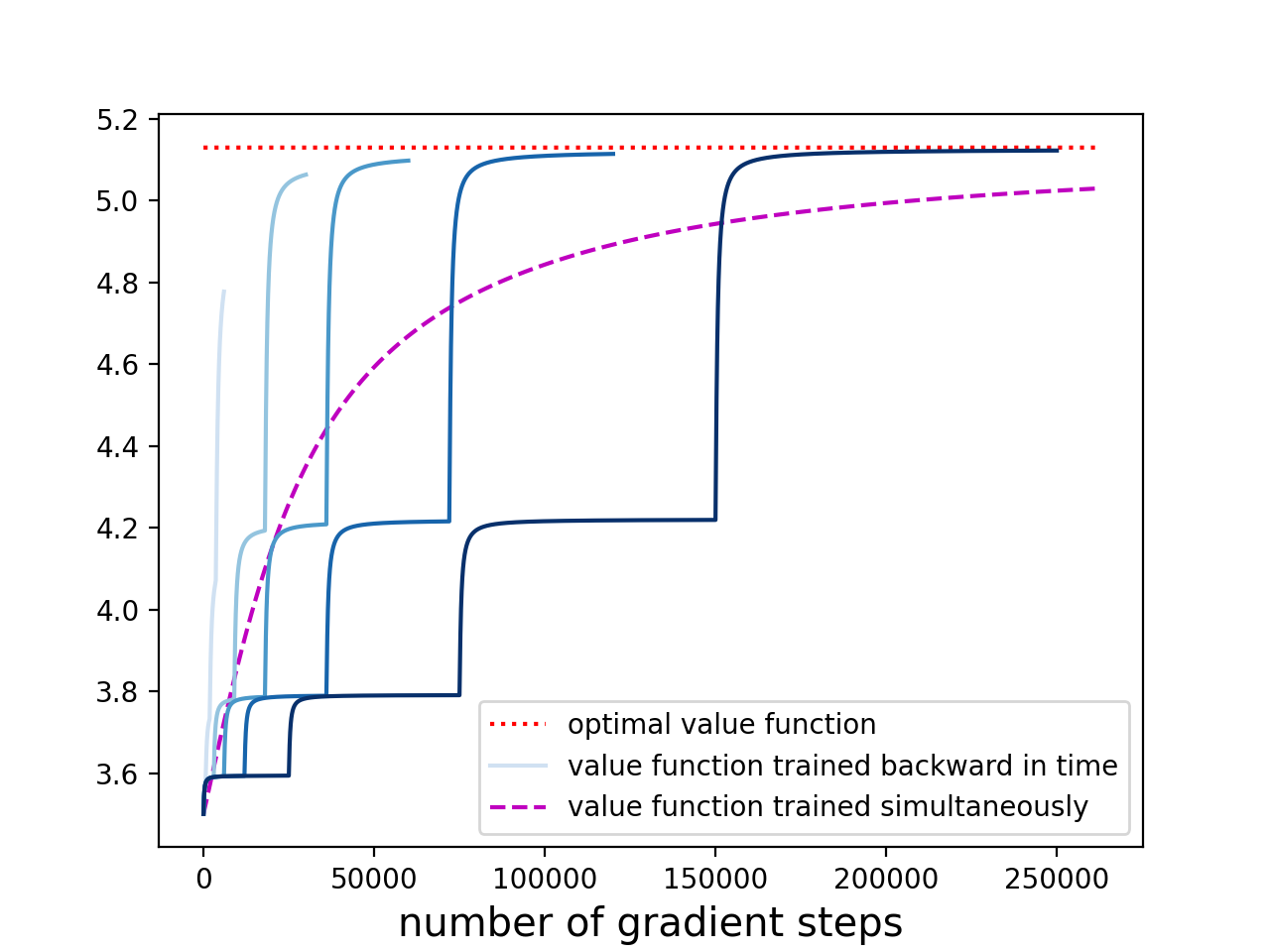}}
\caption{Evolution of the value function during training.}\label{fig:1}
\end{floatingfigure}

This article can be seen to extend a series of recent articles from discounted MDPs to finite-time MDPs. In \citet{agarwal2020theory}, the global asymptotic convergence of PG is demonstrated under tabular softmax parametrisation, and convergence rates are derived using log-barrier regularisation and natural policy gradient. Building upon this work, \citet{mei2022global} showed the first convergence rates for PG using non-uniform PL-inequalities \citep{mei2021non-uniformPL}, specifically for tabular softmax parametrisation. 
The convergence rate relies heavily on the discount factor as $(1-\gamma)^{-6}$ and does not not readily convert to non-discounted MDPs. Through careful analysis, we establish upper bounds involving $H^5$ for simultaneous PG, contrasting with $H^3$ for dynamic PG. Essentially, dynamic PG offers a clear advantage. Examining the PG theorem for finite-time MDPs reveals that early epochs should be trained less if policies for later epochs are suboptimal. A badly learned $Q$-function-to-go leads to badly directed gradients in early epochs. Thus, simultaneous training yields ineffective early epoch training, addressed by our dynamic PG algorithm, optimizing policies backward in time with more training steps.
To illustrate this phenomenon we implemented a simple toy example where the advantage of dynamic PG becomes visible. In Figure~\ref{fig:1} one can see $5$ simulations of the dynamic PG with different target accuracies (blue curves) plotted against one version of the simultaneous PG with target accuracy $0.1$ (dashed magenta curve). The time-horizon is chosen as $H=5$. More details on the example can be found in Appendix~\ref{app:example}.

A main further contribution of this article is a stochastic analysis, where we abandon the assumption that the exact gradient is known and focus on the model free stochastic PG method. For this type of algorithm, very little is known about convergence to global optima even in the discounted case. Many recent articles consider variants such as natural PG or mirror descent to analyse the stochastic scenario \citep{agarwal2020theory,fatkhullin2023,Xiao2022,alfano2023a}. 
\citet{ding2022beyondEG} derive complexity bounds for entropy-regularised stochastic PG. They use a well-chosen stopping time which measures the distance to the set of optimal parameters, and simultaneously guarantees convergence to the regularised optimum prior to the occurrence of the stopping time by using a small enough step size and large enough batch size. As we are interested in convergence to the unregularised optimum, we consider stochastic softmax PG without regularisation. 
Similar to the previous idea, we construct a different stopping time, which 
allows us to derive complexity bounds for an approximation arbitrarily close to the global optimum that does not require a set of optimal parameters %, which 
and this is relevant when considering softmax parametrisation. To the best of our knowledge, the results presented in this paper provide the first convergence analysis for dynamic programming inspired PG under softmax parametrisation in the finite-time MDP setting. Both for exact and batch sampled policy gradients without regularisation.

\section{Finite-time horizon MDPs and policy gradient methods.}\label{sec:finite-time-MDP}

A finite-time MDP is defined by a tuple $(\cH,\cS,\cA,r, p)$ with $\cH=\{0,\dots,H-1\}$ decision epochs, 
finite state space $\cS = \cS_0 \cup \cdots \cup \cS_{H-1}$, %with $\cS_H = \{\tau\}$ contains a unique terminal state, 
finite action space $\cA = \bigcup_{s \in \cS} \cA_s$, 
a reward function $r:\cS \times\cA\to \R$ and 
transition function $p: \cS \times \cA \to \Delta(\cS)$ with $p(\cS_{h+1}|s,a) =1$ for every $h<H-1$, $s \in \cS_h$ and $a\in\cA_s$. 
Here $\Delta(D)$ denotes the set of all probability measures over a finite set $D$.

Throughout the article $\pi = (\pi_h)_{h=0}^{H-1}$ denotes a time-dependent policy, where $\pi_h : \cS_h \to \Delta(\cA)$ is the policy in decision epoch $h\in\mathcal H$ %such that 
with $\pi_h(\cA_s|s)=1$ for every $s \in \cS_h$. It is well-known that in contrast to discounted infinite-time horizon MDPs non-stationary policies are needed to optimise finite-time MDPs. An optimal policy in time point $h$ depends on the time horizon until the end of the problem (see for example \citet{puterman2005markov}). The epoch-dependent value functions under policy $\pi$ are defined by
\begin{equation}\label{eq:def_value_function}
  V_h^{\pi_{(h)}}(\mu_h) := \E_{\mu_h}^{\pi_{(h)}}\Big[\sum_{k=h}^{H-1} r(S_k,A_k)\Big], \quad h \in\cH,
\end{equation}
where $\mu_h$ is an initial distribution, $\pi_{(h)} = (\pi_k)_{k=h}^{H-1}$ denotes the sub-policy of $\pi$ from $h$ to $H-1$ and $\E_{\mu_h}^{\pi_{(h)}}$ is the expectation under the measure such that $S_h \sim\mu_h$, $A_k \sim \pi_k(\cdot|S_k)$ and $S_{k+1} \sim p(\cdot | S_k,A_k)$ for $h\leq k< H-1$. The target is to find a (time-dependent) policy that maximises the state-value function $V_0$ at time $0$. In the following we will discuss two approaches to solve finite-time MDPs with PG:

\begin{itemize}
	\item An algorithm that is often used in practice, where parametrised policies are trained simultaneously, i.e. the parameters for $\pi_0,...,\pi_{H-1}$ are trained at once using the objective $V_0$. 
	\item A new algorithm that trains the parameters sequentially starting at the last epoch. We call this scheme dynamic PG because it combines dynamic programming (backwards induction) and PG.
\end{itemize}

In fact, one can also consider PG algorithms that train stationary policies (i.e. independent of $h$) for finite-time MDPs. However, this violates the intrinsic nature of finite-time MDPs (optimal policies will only be stationary in trivial cases). In order to carry out a complete theoretical analysis assumptions are required. In this article we will assume that all policies are softmax parametrised, an assumption that appeared frequently in the past years. It is a first step towards a full understanding and already indicates why PG methods should use the dynamic programming structure inherent in finite-time MDPs. This paper should not be seen as limited to the softmax case, but more like a kick-off to analyse a new approach which is beneficial in many scenarios.

\paragraph*{Simultaneous Policy Gradient.} 

Let us start by formulating the simultaneous PG algorithm that is often used in practice. 
The action spaces may depend on the current state and the numbers of possible actions in epoch $h$ is denoted by $d_h = \sum_{s\in\cS_h} |\cA_s|$. To perform a PG algorithm all policies $\pi_h$ (or the entire policy $\pi$) must be parametrised. While the algorithm does not require a particular policy we will analyse the tabular softmax parametrisation
\begin{equation}\label{eq:simulat}
    \pi^\theta(a|s_h) = \frac{\exp(\theta(s_h,a))}{\sum_{a^\prime} \exp(\theta(s_h,a^\prime))}, \quad \theta = (\theta(s_h,a))_{s_h\in\cS^{[\cH]}, a\in\cA_{s_h}} \in \R^{\sum_h d_h}, 
\end{equation}
where the notation $\cS^{[\cH]}$ defines the enlarged state space, containing all possible states associated to their epoch (see Remark~\ref{rem:S_times_h_definition} for more details). The tabular softmax parametrisation uses a single parameter for each possible state-action pair at all epochs. Other parametrised policies, e.g. neural networks, take states from all epochs, i.e. from the enlarged state space $\cS^{[\cH]}$, as input variables. The simultaneous PG algorithm trains all parameters at once and solves the optimisation problem (to maximize the state value function at time $0$) by gradient ascent over all parameters (all epochs) simultaneously. 

\begin{algorithm}

  \caption{Simultaneous Policy Gradient for finite-time MDPs}\label{alg:simultaneous-PG}

  \KwResult{Approximate policy $\hat{\pi}^\ast \approx \pi^\ast$}

  initialise $\gt^{(0)} \in\R^{\sum_h d_h}$\\

  Choose fixed step sizes $\eta >0$, number of training steps $N$ and start distribution $\mu$\\

  \For{$n=0,\dots,N -1$}{

      $\gt^{(n+1)} = \gt^{(n)} + \eta \, \nabla_\gt V_0^{\pi^{\gt^{(n)}}}(\mu)\big\vert_{\gt^{(n)}}$

  }

  Set $\hat{\pi}^\ast = \pi^{\gt^{(N)}}$

\end{algorithm}

Most importantly, the algorithm does not treat epochs differently, the same training effort goes into all epochs. For later use the objective function will be denoted by
\begin{equation}\label{eq:objective-sim}
  J(\theta, \mu) := V_0^{\pi^\gt}(\mu) = \E_{\mu}^{\pi^\gt}\Big[\sum_{h=0}^{H-1} r(S_h,A_h)\Big] %= \sum_{s\in\cS} \rho_\mu^{\pi^\gt}(s) \sum_{a\in\cA_{s}} \pi^\gt(a|s) r(s,a),
\end{equation}
Furthermore, let $\rho_\mu^{\pi^\theta}(s)= \sum_{h=0}^{H-1} \bbP_\mu^{\pi^\theta}(S_h =s)$ be the state-visitation measure on $\cS$ and $d_\mu^{\pi^\theta}(s) = \frac{1}{H}\rho_\mu^{\pi^\theta}(s)$ be the normalised state-visitation distribution. We denote by $J^\ast(\mu) = \sup_{\gt} J(\theta,\mu)$ the optimal value of the objective function and note that $J^\ast(\mu) =V_0^\ast(\mu) = \sup_{\pi \text{: Policy}} V_0^\pi(\mu)$ under the tabular softmax parametrisation, as an optimal policy can be approximated arbitrarily well.

\paragraph*{Dynamic Policy Gradient.} 

First of all, recall that the inherent structure of finite-time MDPs is a backwards induction principle (dynamic programming), see for instance \citep{puterman2005markov}. To see backwards induction used in learning algorithms we refer for instance to \citet[Sec 6.5]{bertsekastsitsiklis}. In a way, finite-time MDPs 
can be viewed as nested contextual bandits. The dynamic PG approach suggested in this article builds upon this intrinsic structure and sets on top a PG scheme. Consider $H$ different parameters $\gt_0, \dots, \gt_{H-1}$ such that $\gt_h\in\R^{d_h}$. A parametric policy $(\pi^{\gt_h})_{h=0}^{H-1}$ is defined such that the policy in epoch $h$ depends only on the parameter $\gt_h$. An example is the tabular softmax parametrisation formulated slightly differently than above. 
For each decision epoch $h\in\cH$ the tabular softmax parametrisation is given by
\begin{equation}\label{eq:softmax-parametrisation}
  \pi^{\gt_h}(a|s) = \frac{\exp(\gt_h(s,a))}{\sum_{a^\prime \in \cA} \exp(\gt_h (s,a^\prime))}, \quad  \gt_h = (\gt_h(s,a))_{s\in\cS_h, a\in\cA_s} \in \R^{d_h}.
\end{equation}
The total dimension of the parameter tensor $(\gt_0, \dots, \gt_{H-1})$ equals the one of $\gt$ from the \Eqref{eq:simulat} because $\gt_h(s_h,a) = \gt(s_h,a)$ for $s_h\in\cS_h \subset \cS^{[\cH]}$.  
The difference is that the epoch dependence is made more explicit in \Eqref{eq:softmax-parametrisation}.

The main idea of this approach is as follows. The dynamic programming perspective suggests to learn policies backwards in time. Thus, we start by training the last parameter vector $\gt_{H-1}$ on the sub-problem $V_{H-1}$, 
a one-step MDP which can be viewed as contextual bandit. 
After convergence up to some termination condition, it is known how to act near optimality in the last epoch and one can proceed to train the parameter vector from previous epochs by exploiting the knowledge of acting near optimal in the future. This is what the proposed dynamic PG algorithm does. A policy is trained up to some termination condition and then used to optimise an epoch earlier.

\begin{algorithm}

  \caption{Dynamic Policy Gradient for finite-time MDPs}\label{alg:PG}

  \KwResult{Approximate policy $\hat{\pi}^\ast \approx \pi^\ast$}

  initialise $\gt^{(0)} =(\gt_0^{(0)}, \dots, \gt_{H-1}^{(0)}) \in\Theta$\\

  \For{$h= H-1, \dots, 0$}{

    Choose fixed step size $\eta_h$, number of training steps $N_h$ and start distribution $\mu_h$ \\

    %For stochastic setting also choose batch size $K_h$\\

    \For{$n=0,\dots,N_h -1$}{

      %Calculate $G_h^{(n)} =\nabla J_h(\gt_h^{(n)},\hat{\pi}^{\ast}_{(h+1)},\mu_h)$ or $\widehat{\nabla} J_h^{K_h}(\gt_h^{(n)},\hat{\pi}^{\ast}_{(h+1)},\mu_h)$ \\

      $\gt_h^{(n+1)} = \gt_h^{(n)} + \eta_h \nabla_{\gt_h} V_h^{(\pi^{\gt_h}, \hat{\pi}^\ast_{(h+1)})}(\mu_h)\big\vert_{\gt_h^{(n)}}$

    }

    Set $\hat{\pi}_h^\ast = \pi^{\gt_h^{(N_h)}}$

  }

\end{algorithm}

A bit of notation is needed to analyse this approach. 
Given any fixed policy $\tilde{\pi}$, the objective function $J_h$ in epoch $h$ is defined to be the $h$-state value function in state under the extended policy $(\pi^{\gt_h}, \tilde{\pi}_{(h+1)}) := (\pi^{\gt_h}, \tilde{\pi}_{h+1}, \dots, \tilde{\pi}_{H-1})$,
\begin{equation}\label{eq:J_h-objective}
  J_{h}(\gt_h,\tilde{\pi}_{(h+1)},\mu_h) := V_h^{(\pi^{\gt_h}, \tilde{\pi}_{(h+1)})}(\mu_h) =  \E_{\mu_h}^{(\pi^{\gt_h}, \tilde{\pi}_{(h+1)})}\Big[\sum_{k=h}^{H-1} r(S_k,A_k)\Big].
\end{equation}
While the notation is a bit heavy the %interpretation is simple.
intuition behind is easy to understand. If the policy after epoch $h$ is already trained (this is $\tilde \pi_{(h+1)}$) then $J_h$ as a function of $\gt_h$ is the parametrised dependence of the value function when only the policy for epoch $h$ is changed. 
Gradient ascent is then used to find  a parameter $\gt_h^\ast$ that maximises $J_{h}(\cdot,\tilde{\pi}_{(h+1)},\delta_s)$, for all $ s\in \cS_h$, where $\delta_s$ the dirac measure on $s$. Note that to train $\gt_h$ one chooses $\tilde{\pi}_{(h+1)}= \hat{\pi}^\ast_{(h+1)}$ in Algorithm~\ref{alg:PG}.

A priori it is not clear if simultaneous or dynamic programming inspired training is more efficient. Dynamic PG has an additional loop but trains less parameters at once. We give a detailed analysis for the tabular softmax parametrisation but want to give a heuristic argument why simultaneous training is not favorable. %We will later prove, see Theorem~\ref{thm:policy-gradient-thm-sim} below, the policy gradient theorem
The policy gradient theorem, see Theorem~\ref{thm:policy-gradient-thm-sim}, states that 
\begin{equation*}
  \nabla J(\gt,\mu) = \sum_{s_h\in\cS^{[\cH]}} \tilde{\rho}_\mu^{\pi^\gt}(s_h) \sum_{a\in\cA_{s_h}} \pi^\gt(a|s_h) \nabla \log(\pi^\gt(a|s_h)) Q_h^{\pi^\gt}(s_h,a),
\end{equation*}
involving $Q$-values under the current policy\footnote{ See Appendix~\ref{app:prelim-proofs}, \Eqref{eq:def_A_function} and \Eqref{eq:def-rho-tilde} for the definition of the state-action value function $Q$ and the enlarged state visitation measure $\tilde{\rho}$.}. 
It implies that training policies at earlier epochs are massively influenced by estimation errors of $Q_h^{\pi^\gt}$.
Reasonable training of optimal decisions is only possible if all later epochs have been trained well, i.e. $Q_h^{\pi^\gt} \approx Q_h^\ast$. This may lead to inefficiency in earlier epochs when training all epochs simultaneously. It is important to note that the policy gradient formula is independent of the parametrisation. While our precise analysis is only carried out for tabular softmax parametrisations this general heuristic remains valid for all classes of policies.

\begin{assumption}
  Throughout the remaining manuscript we assume that the rewards are bounded in $[0,R^\ast]$, for some $R^\ast >0$. The positivity is no restriction of generality, bounded negative rewards can be shifted using the base-line trick.
\end{assumption}
  
In what follows we will always assume the tabular softmax parametrisation and analyse both PG schemes. First under the assumption of exact gradients, then with sampled gradients \` a la REINFORCE.

\section{Convergence of Softmax Policy Gradient with exact gradients}\label{sec:deterministic}

In the following, we analyse the convergence behavior of the simultaneous as well as the dynamic approach under the assumption to have access to exact gradient computation. The presented convergence analysis in both settings is inspired from the discounted setting considered recently in \citet{agarwal2020theory, mei2022global}. The idea is to combine smoothness of the objective function and a (weak) PL-inequality in order to derive a global convergence result. 

\subsection{Simultaneous Policy Gradient}\label{subsec:sim-deterministic}
To prove convergence in the simultaneous approach we will interpret the finite-time MDP as an undiscounted stationary problem with state-space $\cS^{[H]}$ and deterministic absorption time $H$. This MDP is undiscounted but terminates in finite-time. %We show that arguments from the recent literature can mostly be extended replacing $(1-\gamma)^{-1}$ by $H$.
Building upon \citet{agarwal2020theory, mei2022global,YuanGrower22} we prove that the objective function defined in \Eqref{eq:objective-sim} is $\beta$-smooth with parameter $\beta = H^2 R^\ast (2-\frac{1}{|\cA|})$ and satisfies a weak PL-inequality of the form
\begin{equation*}
      \lVert \nabla J(\theta,\mu)\rVert_2 \geq \frac{\min_{s_h\in \cS^{[\cH]}} \pi^\theta(a^\ast(s_h)|s_h)}{\sqrt{|\cS^{[\cH]}|}} \Big\lVert \frac{d_\mu^{\pi^\ast}}{d_\mu^{\pi^\theta}}\Big\rVert_\infty^{-1} (J^\ast(\mu) - J(\theta,\mu)).
  \end{equation*}
Here $\pi^\ast$ denotes a fixed but arbitrary deterministic optimal policy for the enlarged state space $\cS^{[\cH]}$ and $a^*(s_h) = \textrm{argmax}_{a\in\cA_{s_h}} \pi^{\ast}(a|s_h)$ is the best action in state $s_h$. The term 
\begin{equation}\label{eq:def-distribution-mismatch}
    \Big\lVert \frac{d_\mu^{\pi^\ast}}{d_\mu^{\pi^\theta}}\Big\rVert_\infty := \max_{s\in\cS} \frac{d_\mu^{\pi^\ast}(s)}{d_\mu^{\pi^\theta}(s)}
\end{equation}
is the distribution mismatch coefficient introduced in \citet[Def 3.1]{agarwal2020theory}. Both properties are shown in Appendix~\ref{app:conv-proofs-sim-det}. To ensure that the distribution mismatch coefficient can be bounded from below uniformly in $\gt$ (see also Remark~\ref{rem:rem-assumption-sim}) we make the following assumption.
\begin{assumption}\label{ass:sim}
	For the simultaneous PG algorithm we assume that the state space is constant over all epochs, i.e. $\cS_h = \cS$ for all epochs.
 \end{assumption}
 
As already pointed out in \citet{mei2022global} one key challenge in providing global convergence is to bound the term $\min_{s\in \cS} \pi^{\gt}(a_h^*(s)|s)$ from below uniformly in $\gt$ appearing in the gradient ascent updates. Techniques introduced in \citet{agarwal2020theory} can be extended to the finite-horizon setting to prove asymptotic convergence towards global optima. This can then be used to bound $c= c(\gt^{(0)}) = \inf_n \min_{s\in \cS} \pi^{\gt^{(n)}}(a_h^*(s)|s)>0$ (Lemma~\ref{lem:c_larger_0-sim}).
Combining smoothness and the gradient domination property results in the following global convergence result.

\begin{restatable}{theorem}{convergenceratesim}\label{thm:convergence-rate-sim-deterministic}
  Under Assumption~\ref{ass:sim}, let $\mu$ be a probability measure such that $\mu(s) >0$ for all $s\in\cS$, let $ \eta = \frac{1}{5H^2 R^\ast}$ and consider the sequence $(\theta^{(n)})$ generated by Algorithm~\ref{alg:simultaneous-PG} with arbitrary initialisation $\theta^{(0)}$. For $\epsilon>0$ choose the number of training steps as $N= \frac{10 H^5 R^\ast |\cS|}{c^2 \epsilon} \Big\lVert \frac{d_\mu^{\pi^\ast}}{\mu}\Big\rVert_\infty^2$. Then it holds that
 \begin{equation*}
  %J^\ast(\mu) - J(\theta^{(N)},\mu)  \leq \epsilon.%= 
  V_0^\ast(\mu ) - V_0^{\pi^{\theta^{(N)}}}(\mu) \leq \epsilon.
  %\frac{2 H^5 R^\ast (2-\frac{1}{|\cA|}) |\cS|}{c^2 t} \Big\lVert \frac{d_\mu^{\pi^\ast}}{\mu}\Big\rVert_\infty^2.
 \end{equation*}
\end{restatable}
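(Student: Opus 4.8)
The plan is to combine the two analytic properties that the excerpt announces for the objective $J(\theta,\mu)$---namely $\beta$-smoothness with $\beta = H^2 R^\ast(2-\tfrac{1}{|\cA|})$ and the weak PL-inequality stated just before the theorem---in exactly the way described by \citet{Karimi2016-PLinoptimiztion}: smoothness gives a per-step ascent lower bound, and the PL-inequality converts that ascent into a bound on the optimality gap. First I would set $\delta_n := J^\ast(\mu) - J(\theta^{(n)},\mu) = V_0^\ast(\mu) - V_0^{\pi^{\theta^{(n)}}}(\mu) \ge 0$ and invoke the standard descent lemma for a $\beta$-smooth function together with the gradient ascent update $\theta^{(n+1)} = \theta^{(n)} + \eta\,\nabla J(\theta^{(n)},\mu)$. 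With the chosen step size $\eta = \tfrac{1}{5H^2R^\ast} \le \tfrac{1}{\beta}$ (since $\beta \le 2H^2R^\ast$), the inequality $J(\theta^{(n+1)},\mu) \ge J(\theta^{(n)},\mu) + (\eta - \tfrac{\beta\eta^2}{2})\lVert\nabla J(\theta^{(n)},\mu)\rVert_2^2$ yields a clean lower bound of the form $\delta_n - \delta_{n+1} \ge \tfrac{\eta}{2}\lVert\nabla J(\theta^{(n)},\mu)\rVert_2^2$, using $\eta - \tfrac{\beta\eta^2}{2} \ge \tfrac{\eta}{2}$.

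Next I would feed the weak PL-inequality into this recursion. Squaring the PL bound gives $\lVert\nabla J(\theta^{(n)},\mu)\rVert_2^2 \ge \tfrac{c^2}{|\cS^{[\cH]}|}\,\lVert d_\mu^{\pi^\ast}/d_\mu^{\pi^{\theta^{(n)}}}\rVert_\infty^{-2}\,\delta_n^2$, where $c = \inf_n \min_{s}\pi^{\theta^{(n)}}(a^\ast(s)|s) > 0$ is the uniform lower bound supplied by Lemma~\ref{lem:c_larger_0-sim}. The distribution-mismatch coefficient must be controlled uniformly in $n$: I would use the elementary bound $d_\mu^{\pi^\theta}(s) \ge \tfrac{1}{H}\mu(s)$ (the start distribution contributes to the $h=0$ term of the state-visitation measure) to replace $\lVert d_\mu^{\pi^\ast}/d_\mu^{\pi^{\theta^{(n)}}}\rVert_\infty$ by the $\theta$-independent quantity $H\,\lVert d_\mu^{\pi^\ast}/\mu\rVert_\infty$; Assumption~\ref{ass:sim} ($\cS_h=\cS$) is what makes this uniform bound legitimate. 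Under Assumption~\ref{ass:sim} one also has $|\cS^{[\cH]}| = H|\cS|$. Collecting constants, the recursion becomes $\delta_n - \delta_{n+1} \ge C\,\delta_n^2$ for a constant $C$ of order $\tfrac{c^2}{H^5 R^\ast |\cS|}\lVert d_\mu^{\pi^\ast}/\mu\rVert_\infty^{-2}$; tracking the numerical factors ($\eta=\tfrac{1}{5H^2R^\ast}$, the $\tfrac12$, the $H^2$ from the mismatch-squared and the $H$ from $|\cS^{[\cH]}|$) should reproduce $C = \tfrac{c^2}{10H^5R^\ast|\cS|}\lVert d_\mu^{\pi^\ast}/\mu\rVert_\infty^{-2}$, which is precisely the reciprocal of the prefactor in the prescribed $N$.

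Finally I would solve the quadratic recursion $\delta_{n+1} \le \delta_n - C\delta_n^2$. The standard trick is to divide by $\delta_n\delta_{n+1}$: since $\delta_{n+1}\le\delta_n$ one gets $\tfrac{1}{\delta_{n+1}} - \tfrac{1}{\delta_n} \ge C\,\tfrac{\delta_n}{\delta_{n+1}} \ge C$, and telescoping over $n=0,\dots,N-1$ gives $\tfrac{1}{\delta_N} \ge \tfrac{1}{\delta_0} + CN \ge CN$. Hence $\delta_N \le \tfrac{1}{CN}$, and substituting $N = \tfrac{10H^5R^\ast|\cS|}{c^2\epsilon}\lVert d_\mu^{\pi^\ast}/\mu\rVert_\infty^2 = \tfrac{1}{C\epsilon}$ yields $\delta_N \le \epsilon$, which is the claim. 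I expect the main obstacle to be the uniform control of the distribution-mismatch coefficient rather than the recursion itself: one must verify that $c > 0$ genuinely holds along the whole trajectory (this is the content of Lemma~\ref{lem:c_larger_0-sim} and rests on the asymptotic-convergence argument adapted from \citet{agarwal2020theory}), and that the crude bound $d_\mu^{\pi^\theta}(s)\ge \tfrac1H\mu(s)$ is both valid and tight enough; the smoothness-to-recursion steps are then routine, and the bookkeeping of constants is the only other place where care is needed to land exactly on the stated $N$.
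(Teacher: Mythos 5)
Your proposal is correct and follows essentially the same route as the paper's proof: the descent lemma for the $\beta$-smooth objective combined with the squared weak PL-inequality, with $c>0$ from Lemma~\ref{lem:c_larger_0-sim} and the uniform bound $d_\mu^{\pi^\theta}(s)\ge\frac{1}{H}\mu(s)$ (Remark~\ref{rem:distribution-d-zu-mu}) making the recursion coefficient $\theta$-independent, yielding exactly the constant $C=\frac{c^2}{10H^5R^\ast|\cS|}\bigl\lVert d_\mu^{\pi^\ast}/\mu\bigr\rVert_\infty^{-2}$. The only (immaterial) difference is in solving the recursion: you telescope $1/\delta_n$ directly, which sidesteps the initial-gap condition $\delta_0\le 1/q$ that the paper verifies before invoking its Lemma~\ref{lem:help-dn}.
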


One can compare this result to \citet[Thm 4]{mei2022global} for discounted MDPs. A discounted MDP can be seen as an undiscounted MDP stopped at an independent geometric random variable with mean $(1-\gamma)^{-1}$. Thus, it comes as no surprise that algorithms with deterministic absorption time $H$ have analogous estimates with $H$ instead of $(1-\gamma)^{-1}$. 
See Remark~\ref{rem:comparison-sim-and-mei} for a detailed comparison. Furthermore, it is noteworthy that it cannot be proven that $c$ is independent of $H$. We omitted this dependency when we compare to the discounted case because the model dependent constant there could also depend on $\gamma$ in the same sense.

\subsection{Dynamic Policy Gradient}\label{subsec:dynamical-deterministic}

We now come to the first main contribution of this work, an improved bound for the convergence of the dynamic PG algorithm. The optimisation objectives are $J_h$ defined in \Eqref{eq:J_h-objective}. The structure of proving convergence is as follows. For each fixed $h\in\cH$ we provide global convergence  given that the policy after $h$ is fixed and denoted by $\tilde{\pi}$. After having established bounds for each decision epoch, we apply backwards induction to derive complexity bounds on the total error accumulated over all decision epochs. The $\beta$-smoothness for different $J_h$ is then reflected in different training steps for different epochs.

The backwards induction setting can be described as a nested sequence of contextual bandits (one-step MDPs) and thus, can be analysed using results from the discounted setting by choosing $\gamma =0$. Using PG estimates for dicounted MDPs \citep{mei2022global,YuanGrower22} we prove in Appendix \ref{app:proofs-dynamical-deterministic} that the objective $J_h$ from \Eqref{eq:J_h-objective} is a smooth function in $\gt_h$ with parameter $\beta_h = 2(H-h)R^\ast$ and satisfies also a weak PL-inequality of the form  
\begin{equation*}
  \lVert \nabla J_h(\gt_h,\tilde{\pi}_{(h+1)},\mu_h) \rVert_2 \geq \min_{s\in \cS_h} \pi^{\gt_h}(a_h^*(s)|s) (J_{h}^\ast(\tilde{\pi}_{(h+1)},\mu_h) - J_h(\gt_h,\tilde{\pi}_{(h+1)},\mu_h)).
\end{equation*}
It is crucial to keep in mind that classical theory from non-convex optimisation tells us that less smooth (large $\beta$) functions must be trained with more gradient steps. It becomes clear that the dynamic PG algorithm should spend less training effort on later epochs (earlier in the algorithm) and more training effort on earlier epochs (later in the algorithm). 
In fact, we make use of this observation by applying backwards induction in order to improve the convergence behavior depending on $H$ (see Theorem~\ref{thm:SPG-error-over-time}). 
The main challenge is again to bound $\min_{s\in \cS} \pi^{\gt_h}(a_h^*(s)|s)$ from below uniformly in $\gt_h$ appearing in the gradient ascent updates from Algorithm~\ref{alg:PG}. In this setting the required asymptotic convergence follows directly from the one-step MDP viewpoint using $\gamma=0$ obtained in \citet[Thm 5]{agarwal2020theory} and it holds $c_h =  \inf_{n\geq 0}\min_{s\in \cS_h} \pi^{\gt_h^{(n)}}(a_h^*(s)|s) > 0$ (Lemma~\ref{lem:c_lager_0_finite_time_without_reg}). 

There is another subtle advantage in the backwards induction point of view. The contextual bandit interpretation allows using refinements of estimates for the special case of contextual bandits. A slight generalisation of work of \citet{mei2022global} for stochastic bandits shows that the unpleasant unknown constants $c_h$ simplify if the PG algorithm is uniformly initialised:

\begin{restatable}{proposition}{chiseinsdurchA}
  For fixed $h\in\mathcal H$, let $\mu_h$ be a probability measure such that $\mu_h(s)>0$ for all $s \in\cS_h$ and let $0<\eta_h \le \frac{1}{2(H-h)R^*}$. Let $\gt_h^{(0)}\in\cR^{d_h}$ be an initialisation such that the initial policy is a uniform distribution, then $c_h = \frac{1}{|\cA|}>0$.
\end{restatable}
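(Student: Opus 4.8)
The plan is to exploit the one-step (contextual bandit) structure at epoch $h$ and to track the \emph{ordering} of the softmax logits rather than the monotonicity of the optimal-action probability itself. Since the sub-policy after epoch $h$ is frozen to $\hat{\pi}^\ast_{(h+1)}$, the state-action value $Q_h(s,a) := Q_h^{(\pi^{\gt_h},\hat{\pi}^\ast_{(h+1)})}(s,a)$ does not depend on $\gt_h$ (the action at epoch $h$ is the fixed argument $a$ and everything afterwards is frozen), so $J_h(\gt_h,\hat{\pi}^\ast_{(h+1)},\mu_h)=\sum_{s\in\cS_h}\mu_h(s)\sum_{a\in\cA_s}\pi^{\gt_h}(a|s)\,Q_h(s,a)$ is a genuine contextual bandit objective with per-state reward vector $Q_h(s,\cdot)$ and $a_h^\ast(s)=\argmax_{a}Q_h(s,a)$. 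Because $\mu_h(s)>0$ for every $s$ and the parameter block $\gt_h(s,\cdot)$ enters only through the state-$s$ summand, the gradient update decouples across states, and it suffices to fix one state $s$ and argue for the single-state softmax bandit.

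First I would record the exact-gradient update in the decoupled form
\[
  \gt_h^{(n+1)}(s,a)=\gt_h^{(n)}(s,a)+\eta_h\,\mu_h(s)\,\pi^{\gt_h^{(n)}}(a|s)\,A^{(n)}(s,a),
\]
where $A^{(n)}(s,a)=Q_h(s,a)-V^{(n)}(s)$ and $V^{(n)}(s)=\sum_{a'}\pi^{\gt_h^{(n)}}(a'|s)Q_h(s,a')$; this follows from the standard softmax derivative. The two facts I need are that the baseline $V^{(n)}(s)$ is common to all actions, so $A^{(n)}(s,a^\ast_h(s))-A^{(n)}(s,a)=Q_h(s,a^\ast_h(s))-Q_h(s,a)\ge0$ for every $a$, and that $A^{(n)}(s,a^\ast_h(s))\ge0$ since the maximal $Q$-value dominates its own average.

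The core step is an invariant proved by induction on $n$: the optimal action keeps the (weakly) largest logit, $\gt_h^{(n)}(s,a^\ast_h(s))\ge\gt_h^{(n)}(s,a)$ for all $a$. Writing $a^\ast=a^\ast_h(s)$ and $\pi^{(n)}(\cdot)=\pi^{\gt_h^{(n)}}(\cdot|s)$, uniform initialisation gives equality at $n=0$. For the inductive step I would subtract the two update equations to obtain the logit-gap recursion, whose increment is $\eta_h\mu_h(s)\big(\pi^{(n)}(a^\ast)A^{(n)}(s,a^\ast)-\pi^{(n)}(a)A^{(n)}(s,a)\big)$, and show it is nonnegative: $\pi^{(n)}(a^\ast)A^{(n)}(s,a^\ast)\ge\pi^{(n)}(a)A^{(n)}(s,a)$ because $\pi^{(n)}(a^\ast)\ge\pi^{(n)}(a)\ge0$ (softmax is order-preserving in the logits, so the induction hypothesis transfers to probabilities) together with $A^{(n)}(s,a^\ast)\ge A^{(n)}(s,a)$ and $A^{(n)}(s,a^\ast)\ge0$ — a short case split on the sign of $A^{(n)}(s,a)$ closes it. Hence the gap stays nonnegative for all $n$, so $\pi^{\gt_h^{(n)}}(a^\ast_h(s)|s)\ge\pi^{\gt_h^{(n)}}(a|s)$ for every $a$; the optimal-action probability is thus at least the average, i.e. $\pi^{\gt_h^{(n)}}(a^\ast_h(s)|s)\ge 1/|\cA|$ for all $n$ and all $s$. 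Since this lower bound is attained with equality at $n=0$ under uniform initialisation, the infimum defining $c_h$ equals $1/|\cA|$.

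The subtle point — and the reason the proof goes through the logit ordering rather than a more naive route — is that $\pi^{\gt_h^{(n)}}(a^\ast_h(s)|s)$ is \emph{not} monotone in $n$ in general: an above-average suboptimal action carrying large probability mass can transiently pull probability away from $a^\ast_h(s)$, so one cannot simply argue that the optimal-action probability never decreases. What uniform initialisation buys is that $a^\ast_h(s)$ starts with maximal probability, and the advantage-ordering structure above shows this maximality is preserved by every ascent step, while the maximum of $|\cA|$ numbers summing to one is at least $1/|\cA|$. The step-size restriction $\eta_h\le 1/(2(H-h)R^\ast)=1/\beta_h$ is not needed for the invariant itself (only $\eta_h>0$ is used); it is the standard $1/\beta_h$ smoothness step size ensuring the monotone ascent and asymptotic convergence of the one-step algorithm that underlie $c_h>0$ in the non-uniform case, and I keep it for consistency with the surrounding convergence analysis.
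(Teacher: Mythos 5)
Your proof is correct, and it reaches the result by a leaner route than the paper. The paper obtains the proposition as a corollary of the full machinery of Lemma~\ref{lem:c_lager_0_finite_time_without_reg}: uniform initialisation places $\gt_h^{(0)}$ in the region $\cR_h^2(s)$ of ordered action probabilities, Claim~3 there gives $\cR_h^2(s)\subseteq\cR_h^1(s)$ (ordered gradient components), Claim~1 shows $\cR_h^1(s)$ is forward-invariant, and Claim~2 then makes $\pi^{\gt_h^{(n)}}(a_h^\ast(s)|s)$ nondecreasing in $n$, so the infimum sits at $n=0$. You instead prove directly that the ordering region $\cR_h^2(s)$ is itself invariant: your inductive step (probability ordering implies gradient-component ordering via the sign case split on $A^{(n)}(s,a)$, then the logit-gap recursion) is precisely the paper's Claim~3 spliced with case~a) of Claim~1, and the uniform start guarantees the dynamics never leave case~a); you then conclude with the max-at-least-average bound rather than monotonicity. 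This buys genuine simplifications: you never touch case~b) of Claim~1, hence need neither the ascent lemma nor the step-size restriction --- your remark that only $\eta_h>0$ is used is accurate, since $c_h=1/|\cA|$ needs only the uniform lower bound for all $n$ plus equality at $n=0$, whereas the paper's invariance proof for $\cR_h^1(s)$ does invoke objective monotonicity (and thus $\eta_h\le 1/\beta_h$) in case~b) --- and you nowhere need the positive gap $\Delta^\ast(s)>0$. What the paper's route buys is reuse of lemmas required anyway for arbitrary initialisation, plus the stronger conclusion that the optimal-action probability increases monotonically. Two minor remarks: your cautionary aside that this probability is ``not monotone in $n$ in general'' applies only to non-uniform starts --- under your preserved ordering the gradient component at $a_h^\ast(s)$ is maximal, so monotonicity in fact follows in one line, exactly the paper's Claim~2 --- and, exactly like the paper, you blur $|\cA_s|$ and $|\cA|$: uniform initialisation gives $\pi^{\gt_h^{(0)}}(a_h^\ast(s)|s)=1/|\cA_s|$, so strictly one obtains $c_h=\min_{s\in\cS_h}1/|\cA_s|\ge 1/|\cA|$, a cosmetic discrepancy shared with the original statement.
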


This property is in sharp contrast to the simultaneous approach, where to the best of our knowledge it is not known how to lower bound $c$ explicitly. Comparing the proofs of $c>0$ and $c_h>0$ one can see that this advantage comes from the backward inductive approach and is due to fixed future policies which are not changing during training.  For fixed decision epoch $h$ combining $\beta$-smoothness and weak PL inequality yields the following global convergence result for the dynamic PG generated in Algorithm~\ref{alg:PG}.

\begin{restatable}{lemma}{convergencerateGD}\label{lem:convergene_rate_finite_time_without_reg}
  For fixed $h\in\mathcal H$, let $\mu_h$ be a probability measure such that $\mu_h(s)>0$ for all $s \in\cS_h$, let $\eta_h = \frac{1}{2(H-h)R^*}$ and consider the sequence $(\gt_h^{(n)})$ generated by Algorithm~\ref{alg:PG} with arbitrary initialisation $\gt_h^{(0)}$ and $\tilde{\pi}$. For $\epsilon>0$ choose the number of training steps as $N_h = \frac{4(H-h) R^*}{c_h^2 \epsilon}$. Then it holds that
  \begin{equation*}
      V_h^{(\pi_h^{\ast}, \tilde{\pi}_{(h+1)})}(\mu_h)-V_h^{(\pi^{\gt_h^{(N_h)}}, \tilde{\pi}_{(h+1)})}(\mu_h) %= J_h^\ast(\tilde{\pi}_{(h+1)},\mu_h) - J_h(\gt_h^{(N)},\tilde{\pi}_{(h+1)},\mu_h) 
      \leq \epsilon% \frac{4(H-h) R^*}{c_h^2 n }. 
  \end{equation*}
  Moreover, if $\gt_h^{(0)}$ initialises the uniform distribution the constants $c_h$ can be replaced by $\frac{1}{|\mathcal A|}$.
\end{restatable}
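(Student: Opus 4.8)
The plan is to run the classical ``$\beta$-smoothness plus a (weak) PL-inequality implies $\mathcal O(1/n)$ convergence'' scheme of \citet{Karimi2016-PLinoptimiztion}, specialised to the one-step objective $J_h$ using the two structural facts already recorded above: $J_h$ is $\beta_h$-smooth with $\beta_h = 2(H-h)R^\ast$, and it satisfies the weak PL-inequality with PL-factor $\min_{s\in\cS_h}\pi^{\gt_h}(a_h^\ast(s)|s)$. Write $\delta_n := J_h^\ast(\tilde{\pi}_{(h+1)},\mu_h) - J_h(\gt_h^{(n)},\tilde{\pi}_{(h+1)},\mu_h)$ for the optimality gap along the iterates of Algorithm~\ref{alg:PG}, abbreviating $J_h(\gt_h^{(n)}) := J_h(\gt_h^{(n)},\tilde{\pi}_{(h+1)},\mu_h)$ in what follows. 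Here I use that, under the tabular softmax parametrisation, the greedy one-step optimum against the fixed future policy is approximated arbitrarily well, so that $J_h^\ast(\tilde{\pi}_{(h+1)},\mu_h) = V_h^{(\pi_h^\ast,\tilde{\pi}_{(h+1)})}(\mu_h)$; consequently the value gap claimed in the statement equals exactly $\delta_{N_h}$, and it suffices to show $\delta_{N_h}\le\epsilon$.

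First I would convert $\beta_h$-smoothness into an ascent guarantee. For the gradient-ascent update of Algorithm~\ref{alg:PG} with the specific step size $\eta_h = \tfrac{1}{2(H-h)R^\ast} = 1/\beta_h$, the standard descent lemma for smooth functions gives
\[
  J_h(\gt_h^{(n+1)}) \ge J_h(\gt_h^{(n)}) + \tfrac{1}{2\beta_h}\big\lVert \nabla_{\gt_h} J_h(\gt_h^{(n)})\big\rVert_2^2 ,
\]
equivalently $\delta_{n+1}\le\delta_n - \tfrac{1}{2\beta_h}\lVert\nabla_{\gt_h}J_h(\gt_h^{(n)})\rVert_2^2$; in particular $(\delta_n)$ is non-increasing. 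Next I substitute the weak PL-inequality. The delicate point is that its PL-factor $\min_{s\in\cS_h}\pi^{\gt_h^{(n)}}(a_h^\ast(s)|s)$ depends on the iterate and could a priori degenerate to $0$ along the trajectory; this is precisely where the uniform lower bound $c_h = \inf_{n\ge 0}\min_{s\in\cS_h}\pi^{\gt_h^{(n)}}(a_h^\ast(s)|s) > 0$ from Lemma~\ref{lem:c_lager_0_finite_time_without_reg} enters. With it, $\lVert\nabla_{\gt_h}J_h(\gt_h^{(n)})\rVert_2 \ge c_h\,\delta_n$, and squaring and inserting into the previous display yields the scalar recursion $\delta_{n+1}\le\delta_n - \tfrac{c_h^2}{2\beta_h}\delta_n^2$.

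It then remains to solve this recursion. Setting $\omega := \tfrac{c_h^2}{2\beta_h}$ and assuming $\delta_n>0$ for all $n$ (otherwise the bound holds trivially), divide the inequality $\delta_n-\delta_{n+1}\ge\omega\delta_n^2$ by $\delta_n\delta_{n+1}$ and use monotonicity $\delta_{n+1}\le\delta_n$ to obtain $\tfrac{1}{\delta_{n+1}} - \tfrac{1}{\delta_n} \ge \omega$. Telescoping from $0$ to $n$ and discarding the nonnegative term $1/\delta_0$ gives $\delta_n \le \tfrac{1}{\omega n} = \tfrac{2\beta_h}{c_h^2 n} = \tfrac{4(H-h)R^\ast}{c_h^2 n}$. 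Inserting $n = N_h = \tfrac{4(H-h)R^\ast}{c_h^2\epsilon}$ yields $\delta_{N_h}\le\epsilon$, which is the assertion. The closing statement follows at once: under uniform initialisation the Proposition immediately above identifies $c_h = 1/|\cA|$, so the identical chain of inequalities applies verbatim with $c_h$ replaced by $1/|\cA|$. I expect the genuinely hard work to lie entirely in the two invoked ingredients proved in the appendix, namely the $\beta_h$-smoothness and weak PL-inequality for $J_h$ and, above all, the guarantee $c_h>0$ uniformly along the iterates; once these are in place the convergence itself reduces to the elementary recursion solved here.
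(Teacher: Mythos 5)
Your proposal is correct and follows essentially the same route as the paper: combine the $\beta_h$-smoothness ascent estimate with step size $\eta_h = 1/\beta_h$, the weak PL-inequality with the uniform lower bound $c_h>0$ from Lemma~\ref{lem:c_lager_0_finite_time_without_reg}, and then solve the resulting recursion $\delta_{n+1}\le\delta_n-\tfrac{c_h^2}{2\beta_h}\delta_n^2$ to get the $\mathcal O(1/n)$ rate. The only (harmless) deviation is that by telescoping and simply discarding the $1/\delta_0$ term you avoid the initial-gap condition $d_0\le 1/q$ that the paper's helper Lemma~\ref{lem:help-dn} imposes and then verifies via $J_h^\ast-J_h(\gt_h^{(0)})\le (H-h)R^\ast\le 2\beta_h/c_h^2$.
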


The error bound depends on the time horizon up to the last time point, meaning intuitively that an optimal policy for earlier time points in the MDP (smaller $h$) is harder to achieve and requires a longer learning period then later time points ($h$ near to $H$). We remark that the assumption on $\mu_h$ is not a sharp restriction and can be achieved by using a strictly positive start distribution $\mu$ on $\cS_0$ followed by a uniformly distributed policy. Note that assuming a positive start distribution is common in the literature and \citet{mei2022global} showed the necessity of this assumption. Accumulating errors over time we can now derive the analogous estimates to the simultaneous PG approach. We obtain a linear accumulation such that an $\frac{\epsilon}{H}$-error in each time point $h$ results in an overall error of $\epsilon$ which appears naturally from the dynamic programming structure of the algorithm. 

\begin{restatable}{theorem}{thmPGerrorovertime}\label{thm:PG-error-over-time}
   For all $h\in\cH$, let $\mu_h$ be probability measures such that $\mu_h(s) >0$ for all $s\in\cS_h$, let $\eta_h = \frac{1}{2(H-h)R^*}$. For $\epsilon>0$ choose the number of training steps as $N_h = \frac{4(H-h)H R^*}{c_h^2 \epsilon }\big\lVert  \frac{1}{\mu_h}\big\rVert_\infty$. Then for the final policy from Algorithm~\ref{alg:PG}, $\hat{\pi}^\ast = (\pi^{\gt_0^{(N_0)}}, \dots, \pi^{\gt_{H-1}^{(N_{H-1})}})$, it holds for all $s\in\cS_0$ that
  \begin{equation*}
      V_0^\ast(s) - V_0^{\hat{\pi}^\ast}(s) \leq \epsilon.
  \end{equation*}
  If $\gt_h^{(0)}$ initialises the uniform distribution the constants $c_h$ can be replaced by $\frac{1}{|\mathcal A|}$.
\end{restatable}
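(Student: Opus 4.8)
The plan is to run a backwards induction on the value-to-go, tracking at each epoch how much the trained policy loses against the true optimum and showing that each epoch contributes exactly an $\epsilon/H$ share of the total error. Concretely, for $h\in\cH$ and $s\in\cS_h$ I would introduce the value-to-go suboptimality
\[
  \delta_h(s) := V_h^\ast(s) - V_h^{\hat{\pi}^\ast_{(h)}}(s),
\]
where $V_h^\ast$ is the optimal value-to-go from epoch $h$ and $V_h^{\hat{\pi}^\ast_{(h)}} = V_h^{(\hat{\pi}^\ast_h,\dots,\hat{\pi}^\ast_{H-1})}$ is the value of the policy Algorithm~\ref{alg:PG} has produced for all epochs $\ge h$. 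Since the algorithm trains backwards, $\hat{\pi}^\ast_{(h+1)}$ is already fixed when $\gt_h$ is optimised, so $\delta_h$ is exactly the object on which the per-epoch Lemma~\ref{lem:convergene_rate_finite_time_without_reg} can be brought to bear. The target $V_0^\ast(s) - V_0^{\hat{\pi}^\ast}(s)\le\epsilon$ is then the statement $\lVert\delta_0\rVert_\infty\le\epsilon$.

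For the recursion I would insert the best-response value $\tilde V_h(s) := V_h^{(\pi_h^\ast,\hat{\pi}^\ast_{(h+1)})}(s)$, which reacts optimally at epoch $h$ while keeping the trained future fixed, and split
\[
  \delta_h(s) = \big(V_h^\ast(s) - \tilde V_h(s)\big) + \big(\tilde V_h(s) - V_h^{\hat{\pi}^\ast_{(h)}}(s)\big).
\]
For the first term I would use the Bellman equation $V_h^\ast(s) = \max_a\{r(s,a) + \sum_{s'} p(s'|s,a) V_{h+1}^\ast(s')\}$ and the analogous expression for $\tilde V_h$ with $V_{h+1}^{\hat{\pi}^\ast_{(h+1)}}$ in place of $V_{h+1}^\ast$; as both are maxima over $a$ of affine functionals that differ only in their continuation value, the elementary inequality $\lvert\max_a f - \max_a g\rvert \le \max_a\lvert f-g\rvert$ yields $V_h^\ast(s) - \tilde V_h(s) \le \max_a \sum_{s'} p(s'|s,a)\,\delta_{h+1}(s') \le \lVert\delta_{h+1}\rVert_\infty$. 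The second term is precisely the single-epoch optimality gap controlled by the per-epoch analysis, and it is nonnegative pointwise because $\tilde V_h$ is a best response at epoch $h$.

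It then remains to turn the $\mu_h$-averaged guarantee of Lemma~\ref{lem:convergene_rate_finite_time_without_reg} into a uniform-in-$s$ bound. Because $\tilde V_h(s) - V_h^{\hat{\pi}^\ast_{(h)}}(s)\ge 0$ for every $s$ and $\mu_h(s)>0$, I would write $\mu_h(s)\big(\tilde V_h(s) - V_h^{\hat{\pi}^\ast_{(h)}}(s)\big) \le \tilde V_h(\mu_h) - V_h^{\hat{\pi}^\ast_{(h)}}(\mu_h)$ and divide, giving $\tilde V_h(s) - V_h^{\hat{\pi}^\ast_{(h)}}(s) \le \lVert 1/\mu_h\rVert_\infty\big(\tilde V_h(\mu_h) - V_h^{\hat{\pi}^\ast_{(h)}}(\mu_h)\big)$. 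Applying Lemma~\ref{lem:convergene_rate_finite_time_without_reg} with target accuracy $\epsilon/(H\lVert 1/\mu_h\rVert_\infty)$ — which is exactly the choice $N_h = \tfrac{4(H-h)HR^\ast}{c_h^2\epsilon}\lVert 1/\mu_h\rVert_\infty$ stated in the theorem — bounds this term by $\epsilon/H$ uniformly in $s$. Combined with the first term this produces the scalar recursion $\lVert\delta_h\rVert_\infty \le \lVert\delta_{h+1}\rVert_\infty + \epsilon/H$, and since $\delta_H\equiv 0$, unrolling over the $H$ epochs gives $\lVert\delta_0\rVert_\infty\le\epsilon$. The uniform-initialisation refinement follows by substituting $c_h=1/|\cA|$ from Lemma~\ref{lem:convergene_rate_finite_time_without_reg}.

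The main obstacle is the interface between the per-epoch guarantee, which is averaged against $\mu_h$, and the error-propagation step, which needs the gap at every reachable state because the continuation distribution generated by the optimal policy at earlier epochs need not resemble $\mu_h$. Handling this through the pointwise $\lVert 1/\mu_h\rVert_\infty$ conversion (rather than a distribution-mismatch coefficient as in the simultaneous case) is what keeps the accumulation linear in $H$ and underlies the $H^3$-versus-$H^5$ improvement; the delicate point to verify is that the best-response gap is genuinely nonnegative at each individual state, so that this conversion is legitimate.
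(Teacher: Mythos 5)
Your proposal is correct and follows essentially the same route as the paper's proof: the best-response value $\tilde V_h(s) = V_h^{(\pi_h^\ast,\hat{\pi}^\ast_{(h+1)})}(s)$ is exactly the paper's intermediate quantity $J_h^\ast(\tilde{\pi}_{(h+1)},\delta_s)$, your pointwise conversion via nonnegativity and $\lVert 1/\mu_h\rVert_\infty$ is the paper's Equation~\ref{eq:help3}, and your Bellman-based continuation bound plus backwards unrolling of $\lVert\delta_h\rVert_\infty \le \lVert\delta_{h+1}\rVert_\infty + \epsilon/H$ matches the paper's induction (Equations~\ref{eq:help4}--\ref{eq:help5}). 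The only difference is cosmetic: you phrase the induction as a scalar recursion on $\lVert\delta_h\rVert_\infty$ rather than carrying the explicit hypothesis $\epsilon(H-h)/H$.
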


\subsection{Comparison of the algorithms}

Comparing Theorem~\ref{thm:PG-error-over-time} to the convergence rate for simultaneous PG in Theorem~\ref{thm:convergence-rate-sim-deterministic}, we first highlight that the constant $c_h$ in the dynamic approach can be explicitly computed under uniform initialisation. This has not yet been established in the simultaneous PG (see Remark~\ref{rem:c-sim}) and especially it cannot be guaranteed that $c$ is independent of the time horizon. Second, we compare the overall dependence of the training steps on the time horizon. In the dynamic approach $\sum_h N_h$ scales with $H^3$ in comparison to $H^5$ in the convergence rate for the simultaneous approach. In particular for large time horizons the theoretical analysis shows that reaching a given accuracy is more costly for simultaneous training of parameters. In the dynamic PG the powers are due to the smoothness constant, the $\frac{\epsilon}{H}$ error which we have to achieve in every epoch and finally the sum over all epochs. In comparison, in the simultaneous PG a power of $2$ is due to the smoothness constant, another power of $2$ is due to the distribution mismatch coefficient in the PL-inequality which we need to bound uniformly in $\gt$ (see also Remark~\ref{rem:distribution-d-zu-mu}) and the last power is due to the enlarged state space $|\cS^{[H]}| = |\cS|H$. 
Note that we just compare upper bounds. However, we refer to Appendix~\ref{app:example} for a toy example visualising that the rate of convergence in both approaches is of order $\cO(\frac{1}{n})$ and the constants in the dynamic approach are indeed better then for the simultaneous approach.

\section{Convergence Analysis of Stochastic Softmax Policy Gradient}\label{sec:stochastic}

In the previous section, we have derived global convergence guarantees for solving a finite-time MDP via simultaneous as well as dynamic PG with exact gradient computation. However, in practical scenarios assuming access to exact gradients is not feasible, since the transition function $p$ of the underlying MDP is unknown. In the following section, we want to relax this assumption by replacing the exact gradient by a stochastic approximation. To be more precise, we view a model-free setting where we are only able to generate trajectories of the finite-time MDP. These trajectories are used to formulate the stochastic PG method for training the parameters in both the simultaneous and dynamic approach. 

Although in both approaches we are able to guarantee almost sure asymptotic convergence similar to the exact PG scheme, we are no longer able to control the constants $c$ and $c_h$ respectively along trajectories of the stochastic PG scheme due to the randomness in our iterations. Therefore, the derived lower bound in the weak PL-inequality may degenerate in general. 
In order to derive complexity bounds in the stochastic scenario, we make use of the crucial property that $c$ (and $c_h$ respectively) remain strictly positive along the trajectory of the exact PG scheme. To do so, we introduce the stopping times $\tau$ and $\tau_h$ stopping the scheme when the stochastic PG trajectory is too far away from the exact PG trajectory (under same initialisation). Hence, conditioning on $\{\tau \ge n \}$ (and $\{\tau_h \ge n \}$ respectively) forces the stochastic PG to remain close to the exact PG scheme and hence, guarantees non-degenerated weak PL-inequalities. The proof structure in the stochastic setting is then two-fold: 
\begin{enumerate}
\item We derive a rate of convergence of the stochastic PG scheme under non-degenerated weak PL-inequality on the event $\{\tau\ge n\}$. Since we consider a constant step size, the batch size needs to be increased sufficiently fast for controlling the variance occurring through the stochastic approximation scheme. See Lemma~\ref{lem:conv-d_l-in-SGD-sim} and Lemma~\ref{lem:conv-d_l-in-SGD}. 
\item We introduce a second rule for increasing the batch-size depending on a tolerance $\delta>0$ leading to $\bbP(\tau\le n)<\delta$. This means, that one forces the stochastic PG to remain close to the exact PG with high probability. See Lemma~\ref{lem:prob_tau_leq_n-sim} and Lemma~\ref{lem:tau_leq_L}. 
\end{enumerate}

A similar proof strategy has been introduced in \citet{ding2022beyondEG} for proving convergence for entropy-regularised stochastic PG in the discounted case. 
Their analysis heavily depends on the existence of an optimal parameter which is due to regularisation. In the unregularised problem this is not the case since the softmax parameters usually diverge to $+/-\infty$ in order to approximate a deterministic optimal solution. Consequently, their analysis does not carry over straightforwardly to the unregularised setting. One of the main challenges in our proof is to construct a different stopping time, independent of optimal parameters, such that the stopping time still occurs with small probability given a large enough batch size.  
We again first discuss the simultaneous approach followed by the dynamic approach.

\paragraph{Simultaneous stochastic policy gradient estimator:}
Consider $K$ trajectories $(s_h^i, a_h^i)_{h=0}^{H-1}$, for $i=1, \dots,K$,  generated by $s_0^i\sim\mu$, $a_h^i\sim\pi^\gt(\cdot|s_h^i)$ and $s_h^i\sim p(\cdot|s_{h-1}^i,a_{h-1}^i)$ for $0<h<H$. The gradient estimator is defined by
\begin{equation}\label{eq:estimator-nabla-J-finite-time-sim}
  \widehat{\nabla} J^K(\gt,\mu) = \frac{1}{K} \sum_{i=1}^{K} \sum_{h=0}^{H-1} \nabla \log( \pi^\gt(a_h^i | s_h^i)) \hat{R}_h^i,
\end{equation} 
where $\hat{R}_h^i = \sum_{k=h}^{H-1} r(s_k^i, a_k^i)$ is an unbiased estimator of the $h$-state-action value function in $(s_h^i,a_h^i)$ under policy ${\pi}^\gt$. This gradient estimator is unbiased and has bounded variance (Lemma~\ref{lem:unbiased-bounded-var-sim}).
Then the stochastic PG updates for training the softmax parameter are given by
\begin{equation}\label{eq:SGD_updates-sim}
  \bar{\gt}^{(n+1)} = \bar{\gt}^{(n)} + \eta \widehat{\nabla} J^{K}(\bar{\gt}^{(n)},\mu).
\end{equation} 

Our main result for the simultaneous stochastic PG scheme is given as follows. 
\begin{restatable}{theorem}{SGDratesim}\label{thm:SGD-rate-sim}
	Under Assumption~\ref{ass:sim}, let $\mu$ be a probability measure such that $\mu(s)>0$ for all $s\in\cS$. Consider the final policy using Algorithm~\ref{alg:simultaneous-PG} with stochastic updates from \Eqref{eq:SGD_updates-sim} denoted by  $\hat{\pi}^\ast = \pi^{\bar{\gt}^{(N)}}$. Moreover, for any $\delta, \epsilon >0$ assume that the number of training steps satisfies $N \geq \big(\frac{21|\cS|H^5 R^\ast}{\epsilon \delta c^2 }\big)^2\Big\lVert \frac{d_\mu^{\pi^\ast}}{\mu} \Big\rVert_\infty^{4}$, let $\eta=\frac{1}{5 H^2 R^\ast \sqrt{N}}$ and $K \geq  \frac{10 \max\{R^\ast,1\}^2 N^3 }{ c^2 \delta^2}$. Then it holds true that
	\[\bbP\big( V_0^\ast(\mu) - V_0^{\hat{\pi}^\ast}(\mu) < \epsilon \big) > 1-\delta\,. \]
\end{restatable}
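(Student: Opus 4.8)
The plan is to run the stochastic iterates $(\bar\gt^{(n)})$ and the exact PG iterates $(\gt^{(n)})$ of Algorithm~\ref{alg:simultaneous-PG} from the \emph{same} initialisation and to couple them through a stopping time that records when they drift apart. I would fix a threshold (small enough that parameter closeness forces closeness of the softmax probabilities) and set $\tau := \inf\{n : \lVert \bar\gt^{(n)} - \gt^{(n)} \rVert_2 > \text{threshold}\}$, so that on $\{\tau \ge n\}$ the stochastic iterate stays in a tube around the exact trajectory. Since $\hat\pi^\ast = \pi^{\bar\gt^{(N)}}$, a union bound together with Markov's inequality gives
\begin{equation*}
  \bbP\big(J^\ast(\mu) - J(\bar\gt^{(N)},\mu) \ge \epsilon\big) \le \frac{1}{\epsilon}\,\E\big[(J^\ast(\mu) - J(\bar\gt^{(N)},\mu))\1_{\{\tau \ge N\}}\big] + \bbP(\tau < N),
\end{equation*}
and it suffices to bound the first term by $\delta/2$ and the second by $\delta/2$. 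These are exactly the two ingredients announced in the text, namely Lemma~\ref{lem:conv-d_l-in-SGD-sim} and Lemma~\ref{lem:prob_tau_leq_n-sim}.

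For the first term I would establish convergence on the good event. The point of the tube is that it transfers the constant $c = \inf_n \min_s \pi^{\gt^{(n)}}(a^\ast(s)|s) > 0$ (Lemma~\ref{lem:c_larger_0-sim}) to the stochastic path: on $\{\tau \ge n\}$ one has $\min_s \pi^{\bar\gt^{(n)}}(a^\ast(s)|s) \ge c/2$, so the weak PL-inequality of Section~\ref{subsec:sim-deterministic} stays non-degenerate along the stopped trajectory. I would then feed the stochastic update into the $\beta$-smoothness descent inequality, take conditional expectations using the unbiasedness and bounded-variance bound for $\widehat{\nabla} J^K$ (Lemma~\ref{lem:unbiased-bounded-var-sim}), and obtain a one-step recursion for $a_n := \E[(J^\ast(\mu) - J(\bar\gt^{(n)},\mu))\1_{\{\tau \ge n\}}]$ of the schematic form $a_{n+1} \le a_n - \tfrac{\eta c^2}{2}a_n^2 + \tfrac{\beta\eta^2}{2}\sigma^2/K$, where $\sigma^2$ is the variance bound. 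With the prescribed $\eta = (5H^2R^\ast\sqrt N)^{-1}$, the noiseless $\cO(1/(\eta c^2 N))$ decay drives the gap toward $\epsilon$, while the variance floor is pushed below $\epsilon\delta$ by the polynomial batch size; unrolling over $N$ steps yields $a_N \le \tfrac{\epsilon\delta}{2}$, and the stated quadratic scaling $N \sim (H^5\cdots/(\epsilon\delta c^2))^2$ emerges from solving $1/(\eta c^2 N) \le \epsilon\delta$.

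For the second term I would bound the stopping probability. The deviation $\bar\gt^{(n)} - \gt^{(n)}$ is driven by the accumulated gradient-estimation error $\widehat{\nabla} J^K(\bar\gt^{(k)},\mu) - \nabla J(\bar\gt^{(k)},\mu)$, whose second moment is of order $1/K$ by Lemma~\ref{lem:unbiased-bounded-var-sim}. Working with the stopped process and a maximal (martingale) inequality, one controls $\max_{n\le N}\lVert \bar\gt^{(n)} - \gt^{(n)}\rVert_2$ in terms of $N$ and $K^{-1/2}$; choosing $K$ of order $N^3/(c^2\delta^2)$ as in the statement then forces $\bbP(\tau < N) \le \delta/2$. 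Substituting the prescribed $N$ into the first bound and combining the two estimates gives the claim.

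The main obstacle is this second step: keeping the two trajectories inside the tube over the \emph{entire} horizon with only a polynomially large batch. A naive step-by-step comparison through Lipschitz continuity of the gradient produces a Gronwall factor $(1+\eta\beta)^N$, which is exponential in $\sqrt N$ for $\eta \sim 1/\sqrt N$ and would demand an exponentially large $K$; avoiding this blow-up is the delicate part, and it is precisely where the stopped process together with the sharp variance bound must be exploited. The conceptual difficulty underlying it, as stressed in the text, is that — in contrast to the regularised analysis of \citet{ding2022beyondEG} — there is no optimal parameter to anchor $\tau$ against, since the softmax parameters diverge to $\pm\infty$; the stopping time must therefore be phrased purely through proximity to the exact PG path, and one must simultaneously guarantee that this proximity both prevents $c$ from degenerating in the weak PL-inequality and is maintainable with probability at least $1-\delta$ despite the constant, non-decaying step size.
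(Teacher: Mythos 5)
Your proposal reproduces the paper's architecture almost verbatim: the same stopping time $\tau$ (the paper takes threshold $c/4$, so that the $\sqrt{2}$-Lipschitz continuity of $\gt\mapsto\pi^\gt(a^\ast(s)|s)$ from Lemma~\ref{lem:softmax-2-Lipschitz} yields $\min_{s}\pi^{\bar\gt^{(n)}}(a^\ast(s)|s)\ge c/2$ on $\{n\le\tau\}$, exactly as in Lemma~\ref{lem:c_larger_0_SGD-sim}), the same Markov/union-bound split over $\{\tau\ge N\}$ and $\{\tau\le N\}$, and the same smoothness-plus-weak-PL recursion $d_{n+1}\le d_n - w\,d_n^2 + B/K_n$ resolved by the induction $d_n\le 4/(3wn)$ (Lemma~\ref{lem:conv-d_l-in-SGD-sim}); your extraction of the quadratic scaling of $N$ from $1/(\eta c^2 N)\le\epsilon\delta$ (up to the $|\cS|H^3\lVert d_\mu^{\pi^\ast}/\mu\rVert_\infty^2$ factors from the PL constant) matches the paper's proof of the theorem. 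So the first half of your plan is essentially the paper's proof.

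The genuine gap is in the second half: you never actually establish $\bbP(\tau\le N)\le\delta/2$ with the stated polynomial batch size. You correctly identify the Gronwall factor $(1+\eta\beta)^{N}$ as the obstacle, but then defer to ``the stopped process together with the sharp variance bound'' --- neither of which removes it: stopping does not change the pathwise comparison inequality, and the variance bound only scales the prefactor by $K^{-1/2}$. For the record, the paper's Lemma~\ref{lem:prob_tau_leq_n-sim} takes precisely the naive telescoping route you set aside, arriving at $\bbP(\tau\le n)\le 4n\eta(1+\eta\beta)^{n-1}\sqrt{\xi/K}\,/\,c$, and then disposes of the exponential factor via the claim that $(1+1/\sqrt{x})^{x-1}\le x$ for all $x\ge 1$. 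Your instinct here is mathematically sound: that inequality is false already for $x=9$ (indeed $(4/3)^8\approx 9.99>9$), since $(1+1/\sqrt{x})^{x-1}=e^{\sqrt{x}(1+o(1))}$; with $\eta$ of order $1/\sqrt{N}$ the factor really is $e^{\Theta(\sqrt{N})}$, and pushing the Markov bound below $\delta/2$ by this route would require $K$ of order $e^{\Theta(\sqrt{N})}$ rather than $N^3/(c^2\delta^2)$. In short, your attempt is incomplete precisely at the step where the paper's own argument is defective; a correct completion needs either a smaller step size making $(1+\eta\beta)^N=O(1)$ (say $\eta$ of order $1/N$, at the cost of re-deriving Lemma~\ref{lem:conv-d_l-in-SGD-sim} with the slower rate) or a genuinely different control of $\max_{t\le N}\lVert\bar\gt^{(t)}-\gt^{(t)}\rVert$ that avoids the crude Lipschitz feedback term --- an appeal to the variance bound alone will not do.
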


\paragraph{Dynamic stochastic policy gradient estimator:}
For fixed $h$ consider $K_h$ trajectories $(s_k^i, a_k^i)_{k=h}^{H-1}$, for $i=1, \dots,K_h$,  generated by $s_h^i\sim\mu_h$, $a_h^i\sim\pi^{\gt}$ and $a_k^i\sim \tilde{\pi}_k$ for $h<k<H$. The estimator is defined by
\begin{equation}\label{eq:estimator-nabla-J-finite-time}
  \widehat{\nabla} J_h^K(\gt, \tilde{\pi}_{(h+1)},\mu_h) = \frac{1}{K_h} \sum_{i=1}^{K_h} \nabla \log( \pi^\gt(a_h^i | s_h^i)) \hat{R}_h^i,
\end{equation}
where $\hat{R}_h^i = \sum_{k=h}^{H-1} r(s_k^i, a_k^i)$ is an unbiased estimator of the $h$-state-action value function in $(s_h^i,a_h^i)$ under policy $\tilde{\pi}$. Then the stochastic PG updates for training the parameter $\gt_h$ are given by
\begin{equation}\label{eq:SGD_updates}
  \bar{\gt}_h^{(n+1)} = \bar{\gt}_h^{(n)} + \eta_h \widehat{\nabla} J_h^{K_h}(\bar{\gt}_h^{(n)},\tilde{\pi}_{(h+1)},\mu_h).
\end{equation} 
Our main result for the dynamic stochastic PG scheme is given as follows. 
\begin{restatable}{theorem}{thmSPGerrorovertime}\label{thm:SPG-error-over-time}
  For all $h\in\mathcal H$, let $\mu_h$ be probability measures such that $\mu_h(s) >0$ for all $h\in\cH$, $s\in\cS_h$. Consider the final policy using Algorithm~\ref{alg:PG} with stochastic updates from \Eqref{eq:SGD_updates} denoted by $\hat{\pi}^\ast = (\pi^{\bar{\gt}_0^{(N_0)}}, \dots, \pi^{\bar{\gt}_{H-1}^{(N_{H-1})}})$. Moreover, for any $\delta, \epsilon>0$ assume that the numbers of training steps satisfy $N_h \ge \Big(\frac{12(H-h)R^\ast  H^2 \big\lVert  \frac{1}{\mu_h}\big\rVert_\infty}{\delta c_h^2 \epsilon}\Big)^2$, let $\eta_h= \frac{1}{2(H-h)R^\ast  \sqrt{N_h}}$ and $K_h \geq \frac{5 N_h^3 H^2}{ c_h^2 \delta^2}$. Then it holds true that 
  \begin{equation*}
     \bbP\Big(\forall s\in\cS_0  : V_{0}^\ast(s) - V_{0}^{\hat{\pi}^\ast}(s)< \epsilon\Big)> 1- \delta.
  \end{equation*}
\end{restatable}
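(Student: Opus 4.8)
The argument follows the exact-gradient result Theorem~\ref{thm:PG-error-over-time}, but each single-epoch optimisation now succeeds only with high probability, so the backwards induction must be coupled with a union bound over a family of stopping times. First I would reduce to a single-epoch statement: fix $h$ together with a deterministic future policy $\tilde{\pi}_{(h+1)}$, and compare the stochastic iterates $(\bar{\gt}_h^{(n)})$ from \eqref{eq:SGD_updates} with the exact iterates $(\gt_h^{(n)})$ generated by Algorithm~\ref{alg:PG} under the same initialisation. Since the estimator \eqref{eq:estimator-nabla-J-finite-time} is unbiased with variance controlled by $1/K_h$, the two trajectories stay close when $K_h$ is large, and I would introduce the stopping time $\tau_h$ recording the first step at which $\bar{\gt}_h^{(n)}$ leaves a prescribed neighbourhood of the exact trajectory.

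On the event $\{\tau_h \ge n\}$ the quantity $\min_{s}\pi^{\bar{\gt}_h^{(n)}}(a_h^*(s)|s)$ remains bounded below (essentially by $c_h$, or by $1/|\cA|$ under uniform initialisation), so the weak PL-inequality for $J_h$ does not degenerate. Combining this non-degenerate PL-inequality with the $\beta_h$-smoothness $\beta_h = 2(H-h)R^\ast$ and the constant step size $\eta_h = \frac{1}{2(H-h)R^\ast\sqrt{N_h}}$ yields, via Lemma~\ref{lem:conv-d_l-in-SGD}, a $\cO(1/\sqrt{N_h})$ bound on $\E\big[(J_h^\ast - J_h(\bar{\gt}_h^{(N_h)}))\1_{\{\tau_h \ge N_h\}}\big]$; the $\sqrt{N_h}$ in the denominator of $\eta_h$ is precisely what forces $N_h$ to scale like $\epsilon^{-2}$. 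Separately, Lemma~\ref{lem:tau_leq_L} shows that the batch size $K_h \ge \frac{5N_h^3 H^2}{c_h^2\delta^2}$ keeps the stochastic trajectory close enough to the exact one that $\bbP(\tau_h \le N_h) < \delta/H$; the cubic growth $K_h \sim N_h^3$ is dictated by summing the per-step variance contributions along $N_h$ iterates while holding the deviation below the stopping threshold with the prescribed probability.

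Together these two lemmas give, for each $h$, an event $\{\tau_h \ge N_h\}$ of probability at least $1-\delta/H$ on which the $\mu_h$-weighted suboptimality $J_h^\ast(\tilde{\pi}_{(h+1)},\mu_h) - J_h(\bar{\gt}_h^{(N_h)},\tilde{\pi}_{(h+1)},\mu_h)$ is of order $\epsilon/H$, which converts to a pointwise bound over $\cS_h$ through the factor $\|1/\mu_h\|_\infty$. I would then run the same backwards induction as in Theorem~\ref{thm:PG-error-over-time}, writing $V_0^\ast(s) - V_0^{\hat{\pi}^\ast}(s)$ as a telescoping sum over epochs of single-epoch gaps, each measured against the already-fixed future policy $\hat{\pi}^\ast_{(h+1)}$. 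The linear accumulation turns an $\epsilon/H$ error per epoch into a total error below $\epsilon$ on the intersection $\bigcap_{h}\{\tau_h \ge N_h\}$, and a union bound gives $\bbP\big(\bigcap_h \{\tau_h \ge N_h\}\big) \ge 1 - \sum_{h=0}^{H-1}\delta/H = 1-\delta$, on which the claimed uniform bound over $s\in\cS_0$ holds.

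\emph{Main obstacle.} The delicate point is the single-epoch analysis on the stopped event: one must design a stopping time that keeps the PL-constant non-degenerate \emph{without} referring to an optimal parameter — impossible here, since unregularised softmax parameters diverge to infinity — while ensuring it is still reached only with probability $\cO(\delta/H)$ under a batch size polynomial in $N_h$. Balancing the constant step size $\eta_h \sim N_h^{-1/2}$ against the estimator variance so that the convergence bound and the stopping probability hold simultaneously, and propagating the random future policy $\hat{\pi}^\ast_{(h+1)}$ cleanly through the induction, is where the real work lies.
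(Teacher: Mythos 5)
Your plan reproduces the paper's proof structure almost exactly: the same stopping time $\tau_h$ comparing the stochastic iterates to the exact trajectory under shared initialisation, the same two ingredients (the stopped-event convergence rate of Lemma~\ref{lem:conv-d_l-in-SGD} and the stopping-probability bound of Lemma~\ref{lem:tau_leq_L}), the same per-epoch targets $\epsilon/H$ and $\delta/H$ with the pointwise conversion through $\lVert 1/\mu_h \rVert_\infty$, and the same backwards induction as in Theorem~\ref{thm:PG-error-over-time} with the future policy $\hat{\pi}^\ast_{(h+1)}$ held fixed.

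There is, however, one step that as written would fail. You assert that the two lemmas yield ``an event $\{\tau_h \ge N_h\}$ of probability at least $1-\delta/H$ on which the $\mu_h$-weighted suboptimality is of order $\epsilon/H$'', and you later take the good event for the union bound to be $\bigcap_h \{\tau_h \ge N_h\}$. But Lemma~\ref{lem:conv-d_l-in-SGD} only controls the \emph{expectation} $\E\big[(J_h^\ast - J_h(\bar{\gt}_h^{(N_h)},\tilde{\pi}_{(h+1)},\mu_h))\mathbf{1}_{\{\tau_h \ge N_h\}}\big]$; on the event $\{\tau_h \ge N_h\}$ itself the realised gap can still be large, so no pointwise statement holds on that event. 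The paper bridges this with an explicit Markov-inequality step (Lemma~\ref{lem:SPG-convergence-rate}), bounding $\bbP\big(\{\tau_h \ge N_h\} \cap \{\text{gap} \ge \epsilon'\}\big) \le \E[\cdot]/\epsilon'$ and splitting the per-epoch budget $\delta/H$ into one half for this Markov bound and one half for $\bbP(\tau_h \le N_h)$. This is also why $N_h$ must scale like $(\epsilon\delta)^{-2}$ and not merely $\epsilon^{-2}$ as your sketch suggests: the $\delta$ enters through the Markov step, not only through the batch size. Consequently the correct good event is the intersection of the accuracy events $A_h = \{\forall s \in \cS_h : J_h^\ast(\tilde{\pi}_{(h+1)},\delta_s) - J_h(\bar{\gt}_h^{(N_h)},\tilde{\pi}_{(h+1)},\delta_s) < \epsilon/H\}$, each with $\bbP(A_h^C) \le \delta/H$; with that replacement your induction goes through exactly as in the paper, which tracks it via the inclusion $A_{h-1} \cap B_h \subseteq B_{h-1}$ for the cumulative events $B_h = \{\forall s : V_h^\ast(s) - V_h^{\hat{\pi}^\ast}(s) < \epsilon(H-h)/H\}$.
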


\paragraph{Comparison}
In both scenarios the derived complexity bounds for the stochastic PG uses a very large batch size and small step size. It should be noted that the choice of step size and batch size are closely connected and both strongly depend on the number of training steps $N$. Specifically, as $N$ increases, the batch size increases, while the step size tends to decrease to prevent exceeding the stopping time with high probability. However, it is possible to increase the batch size even further and simultaneously benefit from choosing a larger step size, or vice versa. 

An advantage of the dynamic approach is that $c_h$ can be explicitly known for uniform initialisation. Hence, the complexity bounds for the dynamic approach results in a practicable algorithm, while $c$ is unknown and possibly arbitrarily small for the simultaneous approach. Finally, we will also compare the complexity with respect to the time horizon. For the simultaneous approach the number of training steps scales with $H^{10}$, and the batch size with $H^{30}$, while in the dynamic approach the overall number of training steps scale with $H^7$ and the batch size with $H^{20}$. We are aware that these bounds are far from tight and irrelevant for practical implementations. Nevertheless, these bounds highlight once more the advantage of the dynamic approach in comparison to the simultaneous approach and show (the non-trivial fact) that the algorithms can be made to converge without knowledge of exact gradients and without regularisation.

\section{Conclusion and Future Work}
In this paper, we have presented a convergence analysis of two PG methods for undiscounted MDPs with finite-time horizon in the tabular parametrisation. Assuming exact gradients we have obtained an $\cO(1/n)$-convergence rate for both approaches where the behavior regarding the time horizon and the model-dependent constant $c$ is better in the dynamic approach than in the simultaneous approach. In the model-free setting we have derived complexity bounds to approximate the error to global optima with high probability using stochastic PG. It would be desirable to derive tighter bounds using for example adaptive step sizes or variance reduction methods that lead to more realistic batch sizes. 

Similar to many recent results, the presented analysis relies on the tabular parametrisation. 
However, the heuristic from the policy gradient theorem does not, and the dynamic programming perspective suggests that parameters should be trained backwards in time. It would be interesting future work to see how this theoretical insight can be implemented in lower dimensional parametrisations using for instance neural networks.

\subsubsection*{Acknowledgments}
Special thanks to the anonymous reviewers for their constructive feedback and insightful discussions, which greatly improved this paper. We also acknowledge the valuable input received from the anonymous reviewers of previous submissions.
%Sara Klein's contribution was supported by the Hanns-Seidel-Stiftung e.V.
Sara Klein thankfully acknowledges the funding support by the Hanns-Seidel-Stiftung e.V. and is grateful to the DFG RTG1953 ”Statistical Modeling of Complex Systems and Processes” for funding this research.

\bibliography{iclr2024_conference}
\bibliographystyle{iclr2024_conference}

\newpage
\appendix
\section{Preliminary results}\label{app:prelim-proofs}

Before we prove some preliminary results we will give a more detailed description of the enlarged state space $\cS^{[\cH]}$ and introduce more functions and notation used throughout the proofs.

\begin{remark}\label{rem:S_times_h_definition}
    The enlarged state space introduced for the simultaneous approach encompasses all possible states across all epochs. Therefore initially, states are associated with their respective epochs, resulting in disjoint state spaces between epochs, which are subsequently fused into a single comprehensive state space $\cS^{[\cH]}$. Formally, this means that for every state space $\cS_h =\{s^1,\dots,s^{L_h}\}$ one constructs disjoint sets $\cD_h =\cS_h \times \{h\} = \{s_h^1,\dots,s_h^{L_h}\}$ for $h=0,\dots,H-1$. Then, $\cS^{[\cH]} := \cD_{0} \uplus\dots\uplus \cD_{H-1}$ contains all possible states associated with their epoch.  %S_0 \times \dots \times \cS_{H-1}
\end{remark}

The $h$-state-action value function for every tuple $(s,a)\in \cS_h \times \cA_s$ is defined by 
\begin{align}\label{eq:def_Q_function}
  Q_h^{\pi_{(h+1)}}(s,a) := r(s,a) + \sum_{s^\prime \in \cS_{h+1}} p(s^\prime |s,a)V_{h+1}^{\pi_{(h+1)}}(s^\prime), \quad  h \leq H-2,
\end{align}
where $V_h^\pi(s) = V_h^\pi(\delta_s)$ the $h$-state value function with start state $s\in\cS_h$.
Note that $Q_h$ is independent of policy $\pi_h$ and for $H-1$, $Q_{H-1}(s,a) := r(s,a)$ independently of any policy. Furthermore, define the $h$-state-action advantage function 
\begin{align}\label{eq:def_A_function}
  A_h^{\pi_{(h)}}(s,a) := Q_h^{\pi_{(h+1)}}(s,a) - V_h^{\pi_{(h)}}(s), \quad s\in\cS_h, a\in\cA_s.
\end{align}
In the following, we will suppress the dependence of $\pi_{(h)}$ and write $\pi$ in the superscripts of $V_h$, $Q_h$ and $A_h$, when the policy is clear out of context. 

\begin{remark}\label{rem:V-Q-A-simultan}
    Note that we can drop the subscript $h$ in the value function, state-action value function or advantage function, when we define them on the enlarged state-space $\cS^{[\cH]}$. Then, $V$ is a vector of dimension $|\cS^{[\cH]}|$ and $Q$ and $A$ are matrices of dimension $|\cS^{[\cH]}|\times|\cA|$. 
    Hence, using the state $s_h\in\cS^{[\cH]}$ assigned to the epoch $h$, we use the notation $V^{\pi^\gt}(s_h) :=  V_h^{\pi^\gt}(s_h)$ to denote the assigned value function in epoch $h$. Similar also for $Q$ and $A$.
\end{remark}

Moreover we define the state visitation measure on the enlarged state space as 
\begin{equation}\label{eq:def-rho-tilde}
    \tilde{\rho}_\mu^{\pi^\gt}(s_h) := \bbP_\mu^{\pi^\gt}(S_h=s_h),
\end{equation}
for every $s_h\in\cS^{[\cH]}$ and the state visitation distribution as $\tilde{d}_\mu^{\pi^\gt} = \frac{1}{H} \tilde{\rho}_\mu^{\pi^\gt}$. Note that it holds $\sum_{s_h\in\cS^{[\cH]}}\tilde{\rho}_\mu^{\pi^\gt}(s_h) = \sum_{s\in\cS}{\rho}_\mu^{\pi^\gt}(s)=H$.

The performance difference lemma \citep{Kakade2002ApproximatelyOA} is a useful identity to compare policies. It turns out to be very useful to prove convergence of PG methods \citep{agarwal2020theory}. For finite-time MDPs we obtain the following version.
\begin{lemma}[Performance difference lemma]\label{lem:performance_difference_allgemein}
  For any $h \in \cH$ and for any pair of policies $\pi$ and $\pi^\prime$ the following holds true for every $s\in\cS_h$:
  \begin{align*}
      V_h^{\pi}(s) - V_h^{\pi^\prime}(s) = \sum\limits_{k=h}^{H-1} \E_{S_h=s}^{\pi} \Big[ A_k^{\pi^\prime} (S_k,A_k) \Big].
  \end{align*}
\end{lemma}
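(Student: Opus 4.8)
The plan is to prove the identity by expanding the advantage terms on the right-hand side and exploiting a telescoping structure. Starting from the definition $A_k^{\pi^\prime}(s,a) = Q_k^{\pi^\prime}(s,a) - V_k^{\pi^\prime}(s)$ in \Eqref{eq:def_A_function} together with the one-step expansion $Q_k^{\pi^\prime}(s,a) = r(s,a) + \sum_{s^\prime\in\cS_{k+1}} p(s^\prime|s,a)\, V_{k+1}^{\pi^\prime}(s^\prime)$ from \Eqref{eq:def_Q_function}, I would first rewrite each summand, adopting the convention $V_H^{\pi^\prime}\equiv 0$ (consistent with $Q_{H-1}=r$), as
\[
\E_{S_h=s}^{\pi}\big[A_k^{\pi^\prime}(S_k,A_k)\big] = \E_{S_h=s}^{\pi}\big[r(S_k,A_k)\big] + \E_{S_h=s}^{\pi}\big[V_{k+1}^{\pi^\prime}(S_{k+1})\big] - \E_{S_h=s}^{\pi}\big[V_k^{\pi^\prime}(S_k)\big].
\]
The crucial step is the middle term: under the trajectory law generated by $\pi$ the next state satisfies $S_{k+1}\sim p(\cdot\mid S_k,A_k)$, so by the tower property the inner sum $\sum_{s^\prime} p(s^\prime|S_k,A_k)\,V_{k+1}^{\pi^\prime}(s^\prime)$ has the same $\pi$-expectation as $V_{k+1}^{\pi^\prime}(S_{k+1})$. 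This is exactly where the Markov structure of the finite-time MDP enters.

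Next I would sum over $k$ from $h$ to $H-1$. The reward terms combine, by the definition \Eqref{eq:def_value_function} of the value function, into $\sum_{k=h}^{H-1}\E_{S_h=s}^{\pi}[r(S_k,A_k)] = V_h^{\pi}(s)$. The remaining two families of terms form a telescoping sum $\sum_{k=h}^{H-1}\big(\E_{S_h=s}^{\pi}[V_{k+1}^{\pi^\prime}(S_{k+1})] - \E_{S_h=s}^{\pi}[V_k^{\pi^\prime}(S_k)]\big)$, which collapses to $\E_{S_h=s}^{\pi}[V_H^{\pi^\prime}(S_H)] - \E_{S_h=s}^{\pi}[V_h^{\pi^\prime}(S_h)]$. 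Using $V_H^{\pi^\prime}\equiv 0$ and the fact that $S_h=s$ holds deterministically, this equals $-V_h^{\pi^\prime}(s)$. Adding the two contributions yields $V_h^{\pi}(s) - V_h^{\pi^\prime}(s)$, which is precisely the left-hand side.

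The only genuine obstacle is careful bookkeeping at the time boundary and keeping track of which policy governs which object: the expectations $\E_{S_h=s}^{\pi}$ are all taken under the $\pi$-dynamics, whereas the value, $Q$- and advantage functions are evaluated under $\pi^\prime$. One must verify that the $k=H-1$ summand contributes only $\E_{S_h=s}^{\pi}[r(S_{H-1},A_{H-1})] - \E_{S_h=s}^{\pi}[V_{H-1}^{\pi^\prime}(S_{H-1})]$ with no stray future term, so that the telescoping closes cleanly. An alternative route is backwards induction on $h$, assuming the identity for $h+1$ and peeling off the first epoch; however, the direct telescoping argument avoids re-deriving the inductive hypothesis and handles all epochs uniformly, so that is the route I would take.
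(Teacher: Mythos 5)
Your proof is correct and is essentially the same argument as the paper's: both rest on the telescoping sum $\sum_{k=h}^{H-1}\big(V_{k+1}^{\pi^\prime}(S_{k+1})-V_k^{\pi^\prime}(S_k)\big)$ under the $\pi$-dynamics, the tower-property identification $\E_{S_h=s}^{\pi}\big[r(S_k,A_k)+V_{k+1}^{\pi^\prime}(S_{k+1})\mid S_k,A_k\big]=Q_k^{\pi^\prime}(S_k,A_k)$, and the boundary conventions $V_H^{\pi^\prime}\equiv 0$ and $S_h=s$ deterministic. The only difference is direction — you expand the advantages on the right-hand side and telescope down, while the paper inserts the telescoping zero $\pm\sum_k V_k^{\pi^\prime}(S_k)$ into the left-hand side and regroups — a cosmetic reversal of the identical computation.
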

\begin{proof}
  \begin{align*}
      V_h^\pi(s) - V_h^{\pi^\prime}(s) &= \E_{S_h=s}^{\pi_{(h)}}\Big[ \sum\limits_{k=h}^{H-1} r(S_k,A_k) \Big] - V_h^{\pi^\prime}(s)\\
      &=  \E_{S_h=s}^{\pi_{(h)}}\Big[ \sum\limits_{k=h}^{H-1} r(S_k,A_k)  + \sum\limits_{k=h}^{H-1} V_k^{\pi^\prime}(S_k) - \sum\limits_{k=h}^{H-1} V_k^{\pi^\prime}(S_k) \Big] -V_h^{\pi^\prime}(s)\\
      &=  \E_{S_h=s}^{\pi_{(h)}}\Big[ \sum\limits_{k=h}^{H-1} r(S_k,A_k)  +  \sum\limits_{k=h+1}^{H-1} V_k^{\pi^\prime}(S_k) - \sum\limits_{k=h}^{H-1} V_k^{\pi^\prime}(S_k) \Big]\\%}_{ = \sum\limits_{k=h}^{H-2} V_{k+1}^{\pi^\prime}(S_{k+1})}
      &=  \E_{S_h=s}^{\pi_{(h)}}\Big[ \sum\limits_{k=h}^{H-1} r(S_k,A_k)  +  \sum\limits_{k=h}^{H-2} V_{k+1}^{\pi^\prime}(S_{k+1}) - \sum\limits_{k=h}^{H-1} V_k^{\pi^\prime}(S_k) \Big]\\
      &=  \E_{S_h=s}^{\pi_{(h)}}\Big[ \sum\limits_{k=h}^{H-1} \big(r(S_k,A_k)  +   V_{k+1}^{\pi^\prime}(S_{k+1}) -  V_k^{\pi^\prime}(S_k) \big)\Big]\\%}_{ = Q_k^{\pi^\prime}(S_k,A_k)}
      &=  \E_{S_h=s}^{\pi_{(h)}}\Big[ \sum\limits_{k=h}^{H-1}  A_k^{\pi^\prime}(S_k, A_k) \Big]\\
      &=  \sum\limits_{k=h}^{H-1}  \E_{S_h=s}^{\pi_{(h)}}\Big[  A_k^{\pi^\prime}(S_k, A_k) \Big],
  \end{align*}
  where we have used that $r(S_k,A_k)  +   V_{k+1}^{\pi^\prime}(S_{k+1})= Q_k^{\pi^\prime}(S_k,A_k)$.
  In the fifth equation we used the notation $V_H \equiv 0$ and note that $Q_{H-1}\equiv r$ independent of any policy.
\end{proof}

This implies a corollary for the two objectives $J(\gt,\mu)$ and $J_h(\gt_h,\tilde{\pi}_{(h+1)},\mu_h)$.

\begin{corollary}\label{cor:optimal-performance-diff-for-both}
   For the objective $J(\gt,\mu)$ defined in \Eqref{eq:objective-sim} and $J_{h}(\gt_h,\tilde{\pi}_{(h+1)},\mu_h)$ defined in~\Eqref{eq:J_h-objective} it holds
   \begin{align*}
      J^\ast(\mu)- J(\gt,\mu) = \E_{\mu}^{\pi^\ast}\Big[ \sum\limits_{h=0}^{H-1}  A_h^{\pi^\gt}(S_h, A_h) \Big] = \sum_{s_h\in\cS^{[\cH]}} \tilde{\rho}_\mu^{\pi^\ast} (s_h) A_h^{\pi^\gt}(s_h,a^\ast(s_h))
   \end{align*}
   and
   \begin{align*}
      J_h^\ast(\tilde{\pi}_{(h+1)},\mu) - J_h(\gt_h,\tilde{\pi}_{(h+1)},\mu_h) = \E_{\mu}^{\pi^\ast}\Big[ A_h^{(\pi^\gt, \tilde{\pi}_{(h+1)})}(S_h, A_h) \Big].
   \end{align*}

\end{corollary}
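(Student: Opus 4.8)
The plan is to derive both identities directly from the performance difference lemma (Lemma~\ref{lem:performance_difference_allgemein}), followed by elementary simplifications. The only conceptual step is an averaging argument that collapses the future terms in the second identity.

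For the first identity I would apply Lemma~\ref{lem:performance_difference_allgemein} at epoch $h=0$ to the pair of policies $(\pi^\ast, \pi^\gt)$ and an arbitrary start state $s\in\cS_0$. Since $J^\ast(\mu)=V_0^{\pi^\ast}(\mu)$ and $J(\gt,\mu)=V_0^{\pi^\gt}(\mu)$, integrating the resulting pointwise identity against $\mu$ gives at once
\[
  J^\ast(\mu)-J(\gt,\mu)=\E_\mu^{\pi^\ast}\Big[\sum_{h=0}^{H-1}A_h^{\pi^\gt}(S_h,A_h)\Big].
\]
For the second equality I would expand epoch by epoch, writing $\E_\mu^{\pi^\ast}[A_h^{\pi^\gt}(S_h,A_h)]=\sum_{s\in\cS_h}\bbP_\mu^{\pi^\ast}(S_h=s)\sum_{a}\pi^\ast(a|s)A_h^{\pi^\gt}(s,a)$. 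Because $\pi^\ast$ is a deterministic optimal policy, the inner sum over actions collapses to the single action $a^\ast(s)$, and identifying $\bbP_\mu^{\pi^\ast}(S_h=s_h)=\tilde\rho_\mu^{\pi^\ast}(s_h)$ on the enlarged state space $\cS^{[\cH]}$ (see \Eqref{eq:def-rho-tilde}) yields the stated sum.

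For the second identity the key observation is that the two policies to be compared, $\pi:=(\pi_h^\ast,\tilde\pi_{(h+1)})$ (realising $J_h^\ast$) and $\pi^\prime:=(\pi^{\gt_h},\tilde\pi_{(h+1)})$, agree on every epoch after $h$. Applying Lemma~\ref{lem:performance_difference_allgemein} at epoch $h$ to this pair and integrating against $\mu_h$ produces a sum $\sum_{k=h}^{H-1}\E_{\mu_h}^{\pi}[A_k^{\pi^\prime}(S_k,A_k)]$. I would then show that every term with $k\ge h+1$ vanishes: the advantage satisfies $\sum_a\pi^\prime_k(a|s)A_k^{\pi^\prime}(s,a)=0$ for all $s$ (immediate from $A_k^{\pi^\prime}=Q_k^{\pi^\prime}-V_k^{\pi^\prime}$ together with $\sum_a\pi^\prime_k(a|s)Q_k^{\pi^\prime}(s,a)=V_k^{\pi^\prime}(s)$), and under $\pi$ the action at any epoch $k\ge h+1$ is drawn from $\tilde\pi_k=\pi^\prime_k$, so conditioning on $S_k$ and using the tower property annihilates the $k$-th summand. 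Only the $k=h$ term survives, which is precisely $\E_{\mu_h}^{\pi^\ast}[A_h^{(\pi^{\gt_h},\tilde\pi_{(h+1)})}(S_h,A_h)]$.

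The integration against the start distribution and the collapse of the deterministic action sum are routine. The one genuinely substantive step is the vanishing of the future advantage terms in the second identity; this is exactly where the backwards-induction structure is exploited, since the shared future policy $\tilde\pi_{(h+1)}$ forces the later-epoch advantages to average to zero and reduces the comparison to the single-step (contextual-bandit) advantage at epoch $h$.
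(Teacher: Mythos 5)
Your proposal is correct and follows essentially the same route as the paper: both identities are derived from Lemma~\ref{lem:performance_difference_allgemein}, with the first collapsing to the visitation-measure form via the deterministic optimal policy, and the second reducing to the single epoch-$h$ advantage because the two compared policies share the future policy $\tilde{\pi}_{(h+1)}$, which makes every summand with $k\ge h+1$ vanish (your tower-property argument conditioning on $S_k$ is the same cancellation the paper carries out by conditioning on $S_{h+1}$, and the paper likewise states this as a general claim for any two policies agreeing after epoch $h$). No gaps.
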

\begin{proof}
   The first claim follows directly from Lemma~\ref{lem:performance_difference_allgemein} and the definition of the state visitation measure in \Eqref{eq:def-rho-tilde}. 

   For the second claim, we proof a more general result:
   For any $h \in \cH$ and two policies $\pi$ and $\pi^\prime$: If $\pi_{(h+1)} = \pi^\prime_{(h+1)}$, it holds that
   \begin{align*}
      V_h^{\pi}(s) - V_h^{\pi^\prime}(s) = \E_{S_h =s}^{\pi_{(h)}} \Big[ A_h^{\pi^\prime} (S_h,A_h) \Big].
   \end{align*}
   To see this, let $k>h$, then 
   \begin{align*}
       &\E_{S_h=s}^{\pi_{(h)}}\Big[  A_k^{\pi^\prime}(S_k, A_k) \Big] \\
       &= \sum_{a\in\cA} \pi_h(a|s) \sum_{s^\prime\in\cS} p(s^\prime|s,a) \E_{S_{h+1} = s^\prime}^{\pi_{(h+1)}}\Big[ Q_k^{\pi^\prime} (S_k,A_k) - V_k^{\pi^\prime}(S_k) \Big] \\
       &= \sum_{a\in\cA} \pi_h(a|s) \sum_{s^\prime\in\cS} p(s^\prime|s,a) \E_{S_{h+1} = s^\prime}^{\pi^\prime_{(h+1)}}\Big[ Q_k^{\pi^\prime} (S_k,A_k) - V_k^{\pi^\prime}(S_k) \Big] \\
       &= \sum_{a\in\cA} \pi_h(a|s) \sum_{s^\prime\in\cS} p(s^\prime|s,a) \Big( \E_{S_{h+1} = s^\prime}^{\pi^\prime_{(h+1)}}\Big[ \E_{S_k}^{\pi^\prime}[Q_k^{\pi^\prime} (S_k,A_k)]  \Big] - \E_{S_{h+1} = s^\prime}^{\pi^\prime_{(h+1)}}\Big[ V_k^{\pi^\prime}(S_k)\Big]\Big)\\
       &= \sum_{a\in\cA} \pi_h(a|s) \sum_{s^\prime\in\cS} p(s^\prime|s,a) \Big( \E_{S_{h+1} = s^\prime}^{\pi^\prime_{(h+1)}}\Big[ V_k^{\pi^\prime}(S_k)\Big] - \E_{S_{h+1} = s^\prime}^{\pi^\prime_{(h+1)}}\Big[ V_k^{\pi^\prime}(S_k)\Big]\Big)\\
       &= 0.
   \end{align*}
   The claim follows with Lemma~\ref{lem:performance_difference_allgemein}.
\end{proof}

Next we derive the policy gradient theorems for finite-time horizon MDPs in both, the simultaneous and the dynamic approach.

\begin{theorem}[Policy Gradient Theorem for the simultaneous approach]\label{thm:policy-gradient-thm-sim}
   Consider any parametrisation $\pi^\gt$ on the enlarged state space $\cS^{[\cH]}$, then the gradient of the $J(\gt,\mu)$ defined in \Eqref{eq:objective-sim} is given by
   \begin{align*}
      \nabla J(\gt,\mu) &= \E_\mu^{\pi^\gt}\Big[ \sum_{h=0}^{H} \nabla \log(\pi^\gt(A_h|S_h)) Q_h^{\pi^\gt}(S_h,A_h) \Big]\\
      &= \sum_{s_h\in\cS^{[\cH]}} \tilde{\rho}_\mu^{\pi^\gt}(s_h) \sum_{a\in\cA_{s_h}} \pi^\gt(a|s_h) \nabla \log(\pi^\gt(a|s_h)) Q_h^{\pi^\gt}(s_h,a_h).
   \end{align*}
\end{theorem}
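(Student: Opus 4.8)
The plan is to apply the log-derivative (likelihood-ratio) trick to the trajectory distribution and then pass from the full return to the reward-to-go $Q_h$ by a causality argument. First I would write $J(\gt,\mu)$ as a sum over trajectories $\tau=(s_0,a_0,\dots,s_{H-1},a_{H-1})$ weighted by their probability
\[
\bbP_\mu^{\pi^\gt}(\tau) = \mu(s_0)\prod_{h=0}^{H-1}\pi^\gt(a_h|s_h)\prod_{h=0}^{H-2}p(s_{h+1}|s_h,a_h),
\]
so that $J(\gt,\mu)=\sum_\tau \bbP_\mu^{\pi^\gt}(\tau)\sum_{h=0}^{H-1}r(s_h,a_h)$. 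Because $\cS$ and $\cA$ are finite this is a finite sum, so differentiation and summation commute with no integrability concern.

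Next I would differentiate and use $\nabla \bbP_\mu^{\pi^\gt}(\tau)=\bbP_\mu^{\pi^\gt}(\tau)\,\nabla\log\bbP_\mu^{\pi^\gt}(\tau)$. Since $\mu(s_0)$ and the transition factors $p(s_{h+1}|s_h,a_h)$ do not depend on $\gt$, the score telescopes to $\nabla\log\bbP_\mu^{\pi^\gt}(\tau)=\sum_{h=0}^{H-1}\nabla\log\pi^\gt(a_h|s_h)$, which yields the REINFORCE form
\[
\nabla J(\gt,\mu) = \E_\mu^{\pi^\gt}\Big[\Big(\sum_{h=0}^{H-1}\nabla\log\pi^\gt(A_h|S_h)\Big)\Big(\sum_{k=0}^{H-1}r(S_k,A_k)\Big)\Big].
\]

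The main obstacle is to replace the full return by the reward-to-go, which I would handle by expanding the double sum and conditioning on each cross term. For $k<h$ the reward $r(S_k,A_k)$ is measurable with respect to the history $(S_0,A_0,\dots,S_h)$; conditioning on this history and using the elementary score identity $\sum_a \pi^\gt(a|s)\nabla\log\pi^\gt(a|s)=\nabla\sum_a\pi^\gt(a|s)=\nabla 1=0$ shows that each such term vanishes by the tower property. For $k\ge h$, conditioning on $(S_h,A_h)$ and invoking the Markov property gives $\E^{\pi^\gt}[\sum_{k=h}^{H-1}r(S_k,A_k)\mid S_h,A_h]=Q_h^{\pi^\gt}(S_h,A_h)$ by the definition of $Q_h$ in \eqref{eq:def_Q_function}. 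Combining the two cases produces the first claimed identity $\nabla J(\gt,\mu)=\E_\mu^{\pi^\gt}\big[\sum_{h=0}^{H-1}\nabla\log\pi^\gt(A_h|S_h)\,Q_h^{\pi^\gt}(S_h,A_h)\big]$.

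Finally I would turn the expectation into the explicit sum. For each epoch $h$ the inner expectation over $(S_h,A_h)$ factorises as $\sum_{s\in\cS_h}\bbP_\mu^{\pi^\gt}(S_h=s)\sum_{a\in\cA_s}\pi^\gt(a|s)\nabla\log\pi^\gt(a|s)\,Q_h^{\pi^\gt}(s,a)$. Recalling that the enlarged-space visitation measure satisfies $\tilde{\rho}_\mu^{\pi^\gt}(s_h)=\bbP_\mu^{\pi^\gt}(S_h=s_h)$ (see \eqref{eq:def-rho-tilde}) and that summing over $h$ together with $\cS_h$ is exactly a single sum over $\cS^{[\cH]}$ (Remark~\ref{rem:S_times_h_definition}), the epoch-indexed sum collapses into the claimed sum over the enlarged state space. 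I expect the causality step to be the only delicate part; everything else is bookkeeping made rigorous by finiteness of $\cS$ and $\cA$.
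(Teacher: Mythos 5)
Your proposal is correct and follows essentially the same route as the paper's proof: the trajectory-level log-derivative trick over the finite set of trajectories, the causality step that discards rewards with $k<h$ by conditioning on the history and using $\sum_{a}\pi^\gt(a|s)\nabla\log\pi^\gt(a|s)=\nabla 1 = 0$, the tower-property conditioning on $(S_h,A_h)$ to obtain $Q_h^{\pi^\gt}$, and the final rewriting via the enlarged-space visitation measure $\tilde{\rho}_\mu^{\pi^\gt}$. There are no gaps; your treatment of the $k<h$ cross terms is exactly the argument the paper gives at the end of its proof.
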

\begin{proof}
   The second equality follows directly from the definition of the state visitation measure in \Eqref{eq:def-rho-tilde}.\\
   For the first equality consider the probability of a trajectory $\tau = (s_0, a_0, \dots, s_{H-1}, a_{H-1})$ under the policy $\pi^\gt$ and initial state distribution $\mu$, i.e.
   \begin{align*}
     p_\mu^{\pi^\gt}(\tau) = \mu(s_h) \pi^\gt(a_0|s_0) \prod_{k=1}^{H-1} p(s_{k}|s_{k-1},a_{k-1}) \pi^\gt(a_k |s_k).
   \end{align*}
   Then, 
   \begin{align*}
     \nabla \log(p_\mu^{\pi^\gt}(\tau)) &= \nabla \Big(\log(\mu(s_h)) + \log( \pi^\gt(a_0|s_0)) \\
     & \quad +  \sum_{k=1}^{H-1} \log(p(s_{k}|s_{k-1},a_{k-1})) + \log(\pi^\gt(a_k |s_k))\Big)\\
     &= \nabla \sum_{k=0}^{H-1}\log(\pi^\gt(a_k|s_k)),
   \end{align*}
   which is known as the $\log$-trick. 
   Let $\cW$ be the set of all trajectories from $0$ to $H-1$. Note that $\cW$ is finite due to the assumption that state and action space is finite. Then,
   \begin{align*}
     \begin{split}
     \nabla J(\gt,\mu)  &= \nabla \sum_{\tau\in\cW}p_\mu^{\pi^\gt}(\tau) \sum_{k=0}^{H-1} r(s_k,a_k)\\
     &= \sum_{\tau\in\cW} p_\mu^{\pi^\gt}(\tau) \nabla \log(p_\mu^{\pi^\gt}(\tau))  \sum_{k=0}^{H-1} r(s_k,a_k)\\
     &= \sum_{\tau\in\cW} p_\mu^{\pi^\gt}(\tau)  \sum_{h=0}^{H-1}\nabla \log(\pi^\gt(a_h|s_h))   \sum_{k=0}^{H-1} r(s_k,a_k)\\ 
     &= \sum_{\tau\in\cW} p_\mu^{\pi^\gt}(\tau)  \sum_{h=0}^{H-1} \nabla \log(\pi^\gt(a_h|s_h))   \sum_{k=h}^{H-1} r(s_k,a_k)\\  
     &= \E_{\mu}^{\pi^\gt}\Big[\sum_{h=0}^{H-1} \nabla \log(\pi^\gt(A_h|S_h)) \sum_{k=h}^{H-1} r(S_k,A_k) \Big]\\
     &= \E_{\mu}^{\pi^\gt}\Big[\sum_{h=0}^{H-1} \nabla \log(\pi^\gt(A_h|S_h)) \E_{S_h}^{\pi^\gt}\Big[\sum_{k=h}^{H-1} r(S_k,A_k) \big|S_h,A_h\Big]\Big]\\
     &=  \E_{\mu}^{\pi^\gt}\Big[\sum_{h=0}^{H-1} \nabla \log(\pi^\gt(A_h|S_h)) \, Q_h^{\pi^\gt}(S_h,A_h)\Big].
     \end{split}
   \end{align*}
   In the forth equation we have used that for every $k < h$ it holds
   \begin{align*}
      \E_{\mu}^{\pi^\gt}\Big[\nabla \log(\pi^\gt(A_h|S_h)) r(S_k,A_k) \Big] 
      =  \E_{\mu}^{\pi^\gt}\Big[ \E_{\mu}^{\pi^\gt}\Big[\nabla \log(\pi^\gt(A_h|S_h)) \Big| S_0,A_0, \dots S_{h-1},A_{h-1}, S_h\Big] r(S_k,A_k) \Big]
   \end{align*}
   and furthermore
   \begin{align*}
      &\E_{\mu}^{\pi^\gt}\Big[\nabla \log(\pi^\gt(A_h|S_h)) \Big| S_0,A_0, \dots S_{h-1},A_{h-1},S_h\Big] \\
      &= \E_{\mu}^{\pi^\gt}\Big[\nabla \log(\pi^\gt(A_h|S_h)) \Big| S_h\Big] \\
      &= \sum_{a\in\cA_{S_h}} \pi^\gt(a|S_h) \nabla \log(\pi^\gt(A_h|S_h)) \\
      &= \nabla \Big(\sum_{a\in\cA_{S_h}} \pi^\gt(a|S_h)\Big) =0.
   \end{align*}
 \end{proof}

\begin{theorem}[Policy Gradient Theorem for the dynamic approach]\label{thm:policy-gradient-theorem-dynamical}
   For a fixed policy $\tilde{\pi}$ and $h\in\cH$ the gradient of $J_{h}(\gt_h,\tilde{\pi}_{(h+1)},\delta_s)$ defined in~\Eqref{eq:J_h-objective} is given by
   \begin{align*}
     \nabla J_{h}(\gt_h,\tilde{\pi}_{(h+1)},\delta_s) = \E_{S_h=s, A_h\sim \pi^{\gt_h}(\cdot|s)}[ \nabla \log(\pi^\gt(A_h|S_h)) Q_h^{\tilde{\pi}}(S_h,A_h)] .
   \end{align*}
 \end{theorem}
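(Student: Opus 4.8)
The plan is to exploit the defining feature of the dynamic objective: in $J_h(\gt_h,\tilde{\pi}_{(h+1)},\delta_s)=V_h^{(\pi^{\gt_h}, \tilde{\pi}_{(h+1)})}(s)$, the only factor that depends on the parameter $\gt_h$ being differentiated is the epoch-$h$ policy $\pi^{\gt_h}$, since the continuation policy $\tilde{\pi}_{(h+1)}$ is held fixed throughout. This makes the argument much shorter than the trajectory-based proof of Theorem~\ref{thm:policy-gradient-thm-sim}. First I would condition on the first action chosen in epoch $h$ to obtain the one-step decomposition
\begin{equation*}
   J_h(\gt_h,\tilde{\pi}_{(h+1)},\delta_s) = V_h^{(\pi^{\gt_h}, \tilde{\pi}_{(h+1)})}(s) = \sum_{a\in\cA_s} \pi^{\gt_h}(a|s)\, Q_h^{\tilde{\pi}}(s,a),
\end{equation*}
which follows immediately from the definition of the $h$-state-action value function in \Eqref{eq:def_Q_function}, namely $Q_h^{\tilde{\pi}}(s,a)=r(s,a)+\sum_{s'\in\cS_{h+1}} p(s'|s,a) V_{h+1}^{\tilde{\pi}}(s')$, together with the tower/iterated-expectation property of $\E^{(\pi^{\gt_h},\tilde{\pi}_{(h+1)})}_{\delta_s}$.

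The crucial observation, already recorded after \Eqref{eq:def_Q_function}, is that $Q_h^{\tilde{\pi}}(s,a)$ is \emph{independent of the epoch-$h$ policy}, and in particular independent of $\gt_h$: it is built only from the immediate reward $r$, the transition $p$, and the fixed future policy $\tilde{\pi}_{(h+1)}$. Hence, differentiating the one-step decomposition with respect to $\gt_h$, the $Q$-factor passes through the gradient as a constant, giving $\nabla J_h = \sum_{a\in\cA_s} \nabla \pi^{\gt_h}(a|s)\, Q_h^{\tilde{\pi}}(s,a)$.

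Finally I would apply the log-trick $\nabla \pi^{\gt_h}(a|s) = \pi^{\gt_h}(a|s)\,\nabla \log \pi^{\gt_h}(a|s)$ to rewrite the sum as
\begin{equation*}
   \nabla J_h(\gt_h,\tilde{\pi}_{(h+1)},\delta_s) = \sum_{a\in\cA_s} \pi^{\gt_h}(a|s)\, \nabla \log \pi^{\gt_h}(a|s)\, Q_h^{\tilde{\pi}}(s,a) = \E_{S_h=s,\, A_h\sim\pi^{\gt_h}(\cdot|s)}\big[\nabla \log(\pi^{\gt_h}(A_h|S_h))\, Q_h^{\tilde{\pi}}(S_h,A_h)\big],
\end{equation*}
which is exactly the claimed identity. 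The argument has essentially no hard step: the main point to verify carefully is the independence of $Q_h^{\tilde{\pi}}$ from $\gt_h$, since this is precisely what collapses the full trajectory gradient to a single expectation over the epoch-$h$ action and reflects the contextual-bandit / one-step-MDP viewpoint underlying the dynamic algorithm. One could alternatively derive the formula by specialising the trajectory log-trick computation of Theorem~\ref{thm:policy-gradient-thm-sim} and noting that all gradient contributions from epochs $k\neq h$ vanish because $\tilde{\pi}_{(h+1)}$ carries no $\gt_h$-dependence, but the direct one-step route above is the cleanest.
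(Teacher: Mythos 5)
Your proof is correct, but it takes a genuinely different (and shorter) route than the paper. The paper proves Theorem~\ref{thm:policy-gradient-theorem-dynamical} by writing out the probability $p_s^{(\pi^{\gt}, \tilde{\pi}_{(h+1)})}(\tau)$ of an entire trajectory from $h$ to $H-1$, applying the log-trick to that trajectory density (so that all factors involving $p$ and $\tilde{\pi}_k$, $k>h$, drop out of $\nabla \log p_s^{(\pi^\gt,\tilde{\pi}_{(h+1)})}(\tau)$, leaving only $\nabla \log \pi^\gt(a_h|s_h)$), and then using the tower property to replace the sampled return by $Q_h^{\tilde{\pi}}(S_h,A_h)$. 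You instead differentiate the one-step decomposition $J_h(\gt_h,\tilde{\pi}_{(h+1)},\delta_s)=\sum_{a\in\cA_s}\pi^{\gt_h}(a|s)\,Q_h^{\tilde{\pi}}(s,a)$ directly, using that $Q_h^{\tilde{\pi}}$ carries no $\gt_h$-dependence, and then reintroduce the expectation via $\nabla \pi^{\gt_h}=\pi^{\gt_h}\nabla\log\pi^{\gt_h}$; all interchanges of $\nabla$ with sums are trivially valid since $\cS$ and $\cA$ are finite. This is fully consistent with the paper's own toolkit — the same decomposition appears verbatim at the start of the proof of Lemma~\ref{lem:c_lager_0_finite_time_without_reg}, and the independence of $Q_h^{\tilde{\pi}}$ from $\gt_h$ is noted right after \Eqref{eq:def_Q_function} — and it makes the contextual-bandit structure of the dynamic objective explicit in one line. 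What the paper's trajectory-based computation buys in exchange is the intermediate identity $\nabla J_h = \E_{S_h=s}^{(\pi^\gt,\tilde{\pi}_{(h+1)})}[\nabla\log(\pi^\gt(A_h|S_h))\sum_{k=h}^{H-1} r(S_k,A_k)]$, which is exactly what underwrites the sampled-return estimator in \Eqref{eq:estimator-nabla-J-finite-time} and its unbiasedness (Lemma~\ref{lem:SGD-unbiased-bounded-var} explicitly refers back to "the proof of the policy gradient Theorem"); with your route one would re-derive that representation by the same tower-property step you already invoke, so nothing essential is lost.
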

 \begin{proof}
   The probability of a trajectory $\tau = (s_h, a_h, \dots, s_{H-1}, a_{H-1})$ under the policy $(\pi^\gt, \tilde{\pi}_{(h+1)}) = (\pi^\gt, \tilde{\pi}_{h+1}, \dots, \tilde{\pi}_{H-1})$ and initial state distribution $\delta_s$ is given by 
   \begin{align*}
     p_s^{(\pi^\gt, \tilde{\pi}_{(h+1)})}(\tau) = \delta_s(s_h) \pi^\gt(a_h|s_h) \prod_{k=h+1}^{H-1} p(s_{k}|s_{k-1},a_{k-1}) \tilde{\pi}_k(a_k |s_k).
   \end{align*}
   Then, 
   \begin{align*}
     \nabla \log(p_s^{(\pi^\gt, \tilde{\pi}_{(h+1)})}(\tau)) &= \nabla \Big(\log(\delta_s(s_h)) + \log(\pi^\gt(a_h|s_h)) \\
     & \quad +  \sum_{k=h+1}^{H-1} \log(p(s_{k}|s_{k-1},a_{k-1})) + \log(\tilde{\pi}_k(a_k |s_k))\Big)\\
     &= \nabla \log(\pi^\gt(a_h|s_h)),
   \end{align*}
   which is known as the $\log$-trick. 
   Let $\cW$ be the set of all trajectories from $h$ to $H-1$. Note that $\cW$ is finite due to the assumption that state and action space is finite. Then for $s \in \cS_h$
   \begin{align*}
     \begin{split}
      \nabla J_{h}(\gt_h,\tilde{\pi}_{(h+1)},\delta_s) &= \nabla \sum_{\tau\in\cW} p_s^{(\pi^\gt, \tilde{\pi}_{(h+1)})}(\tau) \sum_{k=h}^{H-1} r(s_k,a_k)\\
     &= \sum_{\tau\in\cW} p_s^{(\pi^\gt, \tilde{\pi}_{(h+1)})}(\tau) \nabla \log(p_s^{A.4\pi^\gt, \tilde{\pi}_{(h+1)})})  \sum_{k=h}^{H-1} r(s_k,a_k)\\
     &= \sum_{\tau\in\cW} p_s^{(\pi^\gt, \tilde{\pi}_{(h+1)})}(\tau) \nabla \log(\pi^\gt(a_h|s_h))  \sum_{k=h}^{H-1} r(s_k,a_k)\\ 
     &= \E_{S_h =s}^{(\pi^\gt,\tilde{\pi}_{(h+1)})}\Big[\nabla \log(\pi^\gt(A_h|S_h)) \sum_{k=h}^{H-1} r(S_k,A_k)\Big]\\ 
     &= \E_{S_h =s}^{(\pi^\gt,\tilde{\pi}_{(h+1)})}\Big[\nabla \log(\pi^\gt(A_h|S_h)) \E_{S_h}^{\tilde{\pi}}\Big[\sum_{k=h}^{H-1} r(S_k,A_k) \big|S_h,A_h\Big]\Big]\\
     &=  \E_{S_h =s, A_h \sim \pi^\gt(\cdot|s)}\Big[\nabla \log(\pi^\gt(A_h|S_h)) \, Q_h^{\tilde{\pi}}(S_h,A_h)\Big].
     \end{split}
   \end{align*}
 \end{proof}

 Using these two theorems we can explicitly derive the derivatives of our objective functions under the softmax parametrisation.
 First, we compute the derivative of the softmax policy for every $s\in\cS_h$ and $a\in\cA_s$,
\begin{align*}
  \pi^{\theta}(a|s) = \frac{e^{\theta(s,a)}}{\sum_{a^\prime\in\cA}e^{\theta(s,a^\prime)}},
\end{align*}
with parameter $\theta \in \R^{d_h}$:
\begin{align*}
    \frac{\partial \log(\pi^{\theta}(a|s))}{\partial \theta(a^\prime,s^\prime)}
    = \mathbf{1}_{\{s=s^\prime\}} (\mathbf{1}_{\{a=a^\prime\}} - \pi^{\theta}(a^\prime|s^\prime)).
\end{align*}
Hence,
\begin{align*}
    %&\nabla \log(\pi_{\theta})= \textrm{diag}(1)_{|\cS||\cA|\times |\cS||\cA|} - \textrm{diag}\Big(\big(\pi^{\theta_t}(a|s)\big)_{a\in\cA}^T\Big)_{s\in \cS} \quad \in \R^{|S||\cA|\times |S||\cA|}, \\
    &\nabla \log( \pi^{\theta}(a|s)) = \Big(\mathbf{1}_{ \{s = s^\prime\}}( \mathbf{1}_{\{a =a^\prime\} } - \pi^{\theta} (a^\prime|s^\prime) ) \Big)_{s^\prime \in \cS_h, a^\prime \in \cA_{s^\prime}} \quad \in \R^{d_h}.
\end{align*}

 \begin{lemma}\label{lem:derivative-value-func-sim}
   The partial derivative of the objective defined in~\Eqref{eq:objective-sim} is given by:
   \begin{align*}
       \frac{\partial J(\theta,\mu)}{\partial \theta(s_h,a)} =  \tilde{\rho}_\mu^{\pi^\theta}(s_h) \pi^\theta(a|s_h) A_h^{\pi^\theta}(s_h,a),
   \end{align*}
   for every $s_h\in\cS^{[\cH]}$ and $a\in\cA_{s_h}$.
\end{lemma}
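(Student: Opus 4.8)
The plan is to differentiate the policy gradient formula of Theorem~\ref{thm:policy-gradient-thm-sim} coordinate-wise and simplify it using the softmax score $\nabla\log(\pi^\theta(a|s))$ computed just above this lemma. Concretely, I would fix a coordinate $(s_h,a)$ with $s_h\in\cS^{[\cH]}$ and $a\in\cA_{s_h}$, rename the summation index in the policy gradient theorem to $s'$ to avoid a clash with the fixed target state, and read off the corresponding component
\[
\frac{\partial J(\theta,\mu)}{\partial\theta(s_h,a)} = \sum_{s'\in\cS^{[\cH]}} \tilde{\rho}_\mu^{\pi^\theta}(s') \sum_{a'\in\cA_{s'}} \pi^\theta(a'|s') \frac{\partial \log(\pi^\theta(a'|s'))}{\partial \theta(s_h,a)} Q^{\pi^\theta}(s',a'),
\]
where $Q^{\pi^\theta}$ denotes the state-action value function on the enlarged state space in the sense of Remark~\ref{rem:V-Q-A-simultan}.

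First I would substitute the score $\frac{\partial \log(\pi^\theta(a'|s'))}{\partial \theta(s_h,a)} = \mathbf{1}_{\{s'=s_h\}}(\mathbf{1}_{\{a'=a\}} - \pi^\theta(a|s_h))$. The indicator $\mathbf{1}_{\{s'=s_h\}}$ is the only place where the structure of the tabular softmax parametrisation enters: since $\theta(s_h,a)$ influences the policy only at the single augmented state $s_h$ of its own epoch, every summand with $s'\neq s_h$ vanishes and the outer sum collapses to $s'=s_h$, giving
\[
\frac{\partial J(\theta,\mu)}{\partial\theta(s_h,a)} = \tilde{\rho}_\mu^{\pi^\theta}(s_h) \sum_{a'\in\cA_{s_h}} \pi^\theta(a'|s_h)(\mathbf{1}_{\{a'=a\}} - \pi^\theta(a|s_h)) Q_h^{\pi^\theta}(s_h,a').
\]

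Next I would split the action sum. The term carrying $\mathbf{1}_{\{a'=a\}}$ isolates $a'=a$ and produces $\pi^\theta(a|s_h)Q_h^{\pi^\theta}(s_h,a)$, while the remaining term factors out $\pi^\theta(a|s_h)$ and leaves $\sum_{a'}\pi^\theta(a'|s_h)Q_h^{\pi^\theta}(s_h,a')$, which I would recognise as $V_h^{\pi^\theta}(s_h)$ via the identity $V_h = \sum_{a'}\pi(a'|\cdot)Q_h(\cdot,a')$. Pulling out the common factor $\pi^\theta(a|s_h)$ gives $\tilde{\rho}_\mu^{\pi^\theta}(s_h)\pi^\theta(a|s_h)(Q_h^{\pi^\theta}(s_h,a)-V_h^{\pi^\theta}(s_h))$, and the definition $A_h^{\pi^\theta}=Q_h^{\pi^\theta}-V_h^{\pi^\theta}$ from \Eqref{eq:def_A_function} yields the claim.

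This is essentially routine bookkeeping once the policy gradient theorem and the softmax score are available, so there is no substantial obstacle. The only step demanding a moment of care is the collapse of the outer state sum: one must invoke that under the tabular softmax parametrisation $\theta(s_h,a)$ affects $\pi^\theta$ solely through the state $s_h$ of its own epoch, so that $\mathbf{1}_{\{s'=s_h\}}$ correctly annihilates all cross-state and cross-epoch contributions. After that, the advantage function emerges automatically from the baseline-type cancellation $Q_h-V_h$.
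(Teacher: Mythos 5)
Your proposal is correct and follows essentially the same route as the paper's proof: both substitute the tabular softmax score $\mathbf{1}_{\{s'=s_h\}}(\mathbf{1}_{\{a'=a\}}-\pi^\theta(a|s_h))$ into the policy gradient theorem, collapse the sum to the single augmented state $s_h$ (the paper does this via the indicator $\mathbf{1}_{\{S_h=s_h\}}$ inside the expectation form, you via the equivalent state-visitation-measure form), and then split the action sum using $\sum_{a'}\pi^\theta(a'|s_h)Q_h^{\pi^\theta}(s_h,a')=V_h^{\pi^\theta}(s_h)$ to obtain the advantage. The only cosmetic difference is your starting point, and your explicit remark that cross-epoch contributions vanish because $s_h$ can only be visited at epoch $h$ makes precise a step the paper leaves implicit.
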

\begin{proof}
   Let $s_h\in\cS^{[\cH]}$ and $a\in\cA_{s_h}$. Using Theorem~\ref{thm:policy-gradient-thm-sim}, it holds that
   \begin{align*}
       \frac{\partial J(\theta,\mu)}{\partial \theta(s_h,a)} &= \E_\mu^{\pi^\theta}\Big[ \sum_{h=0}^{H-1} \frac{\partial}{\partial \theta(s_h,a)}\log(\pi^\theta(A_h|S_h)) Q_h^{\pi^\theta}(S_h,A_h) \Big] \\
       &= \E_\mu^{\pi^\theta}\Big[ \sum_{h=0}^{H-1} \mathbf{1}_{\{S_h=s_h\}} \bigl( \mathbf{1}_{\{A_h=a\}} - \pi^\theta(a|s_h)\bigr) Q_h^{\pi^\theta}(S_h,A_h) \Big] \\
       &= \bbP_\mu^{\pi^\theta}(S_h =s_h) \sum_{a^\prime} \pi^\theta(a^\prime |s_h) \bigl( \mathbf{1}_{\{a^\prime=a\}} - \pi^\theta(a|s_h)\bigr) Q_h^{\pi^\theta}(s_h,a^\prime) \\
       &= \tilde{\rho}_\mu^{\pi^\theta}(s_h) \Bigl( \pi^\theta(a|s_h) Q^{\pi^\theta}(s_h,a) -  \sum_{a^\prime} \pi^\theta(a^\prime |s_h) \pi^\theta(a|s_h) Q_h^{\pi^\theta}(s_h,a^\prime)  \Bigr)\\
       &= \tilde{\rho}_\mu^{\pi^\theta}(s_h) \pi^\theta(a|s_h) A_h^{\pi^\theta}(s_h,a).
   \end{align*}
\end{proof}

\begin{lemma}\label{lem:derivative-value-func-dynamic}
   For fix $h\in\cH$, the partial derivative of the objective defined in~\Eqref{eq:J_h-objective} is given by:
   \begin{align*}
     \frac{\partial J_h(\gt,\tilde{\pi}_{(h+1)},\mu_h)}{\partial \gt(s,a)}  = \mu_h(s) \pi^\gt(a|s) A_h^{(\pi^{\gt}, \tilde{\pi}_{(h+1)})}(s,a),
   \end{align*}
   for every $s\in\cS_h$ and $a\in\cA_{s}$
 \end{lemma}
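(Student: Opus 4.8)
The plan is to mirror the proof of Lemma~\ref{lem:derivative-value-func-sim}, but now starting from the dynamic policy gradient theorem (Theorem~\ref{thm:policy-gradient-theorem-dynamical}) instead of the simultaneous one. Since the objective $J_h(\gt,\tilde{\pi}_{(h+1)},\mu_h)$ is defined via the $h$-state value function with initial distribution $\mu_h$, I would first write $\mu_h = \sum_{s\in\cS_h}\mu_h(s)\,\delta_s$ and use linearity so that $\nabla J_h(\gt,\tilde{\pi}_{(h+1)},\mu_h) = \sum_{s\in\cS_h}\mu_h(s)\,\nabla J_h(\gt,\tilde{\pi}_{(h+1)},\delta_s)$. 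Theorem~\ref{thm:policy-gradient-theorem-dynamical} then gives $\nabla J_h(\gt,\tilde{\pi}_{(h+1)},\delta_s) = \E_{S_h=s,\,A_h\sim\pi^{\gt}(\cdot|s)}[\nabla\log(\pi^\gt(A_h|S_h))\,Q_h^{\tilde\pi}(S_h,A_h)]$.

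Next I would substitute the explicit softmax log-derivative computed just before Lemma~\ref{lem:derivative-value-func-sim}, namely $\frac{\partial\log(\pi^\theta(a|s))}{\partial\theta(s',a')} = \mathbf{1}_{\{s=s'\}}(\mathbf{1}_{\{a=a'\}}-\pi^\theta(a'|s'))$. Fixing a coordinate $(s,a)$ with $s\in\cS_h$, $a\in\cA_s$, the indicator $\mathbf{1}_{\{S_h=s\}}$ collapses the outer expectation over the start state (because under $\delta_{s'}$ we have $S_h=s'$ deterministically), leaving only the $s'=s$ term weighted by $\mu_h(s)$. The remaining expectation is over $A_h\sim\pi^\gt(\cdot|s)$, so I would expand it as a sum $\sum_{a'\in\cA_s}\pi^\gt(a'|s)(\mathbf{1}_{\{a'=a\}}-\pi^\gt(a|s))\,Q_h^{\tilde\pi}(s,a')$, exactly as in the simultaneous proof.

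The final step is the short algebraic simplification: splitting the sum gives $\pi^\gt(a|s)Q_h^{\tilde\pi}(s,a) - \pi^\gt(a|s)\sum_{a'}\pi^\gt(a'|s)Q_h^{\tilde\pi}(s,a')$. Recognising $\sum_{a'}\pi^\gt(a'|s)Q_h^{\tilde\pi}(s,a') = V_h^{(\pi^\gt,\tilde\pi_{(h+1)})}(s)$ and then using the advantage-function definition $A_h^{(\pi^\gt,\tilde\pi_{(h+1)})}(s,a) = Q_h^{\tilde\pi}(s,a) - V_h^{(\pi^\gt,\tilde\pi_{(h+1)})}(s)$ (equation~\ref{eq:def_A_function}, with $Q_h$ independent of $\pi_h$) factors the expression into $\pi^\gt(a|s)\,A_h^{(\pi^\gt,\tilde\pi_{(h+1)})}(s,a)$. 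Multiplying back by the weight $\mu_h(s)$ yields the claimed formula.

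I do not anticipate a genuine obstacle here; this is essentially a transcription of the simultaneous-case argument to the dynamic objective. The only point requiring mild care is bookkeeping the conditioning: one must be sure that under $\delta_s$ the state-visitation weight attached to coordinate $s$ is simply $\mu_h(s)$ rather than an enlarged visitation measure $\tilde\rho$ as in Lemma~\ref{lem:derivative-value-func-sim}. This is precisely the structural simplification afforded by the contextual-bandit viewpoint: only the one-step distribution at epoch $h$ enters, because the parameter $\gt_h$ influences the trajectory solely through the action at epoch $h$, which is exactly what Theorem~\ref{thm:policy-gradient-theorem-dynamical} already encodes in its $\log$-trick reduction to a single $\nabla\log(\pi^\gt(a_h|s_h))$ term.
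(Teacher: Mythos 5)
Your proposal is correct and follows essentially the same route as the paper's proof: linearity over $\mu_h$, the dynamic policy gradient theorem (Theorem~\ref{thm:policy-gradient-theorem-dynamical}), substitution of the softmax log-derivative, and the identification $\sum_{a'}\pi^\gt(a'|s)Q_h^{\tilde\pi}(s,a') = V_h^{(\pi^\gt,\tilde{\pi}_{(h+1)})}(s)$ to produce the advantage function. Your closing remark about why only $\mu_h(s)$ (rather than an enlarged visitation measure $\tilde\rho$) appears is exactly the structural point implicit in the paper's calculation.
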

 \begin{proof}
   By the policy gradient Theorem~\ref{thm:policy-gradient-theorem-dynamical},
   \begin{align*}
     \nabla J_h(\gt,\tilde{\pi}_{(h+1)},\mu_h)
     &= \nabla \E_{s\sim \mu_h}[J_{h}(\theta,\tilde{\pi}_{(h+1)},\delta_s)]\\
     &= \sum_{s\in\cS} \mu_h(s) \nabla J_{h}(\theta,\tilde{\pi}_{(h+1)},\delta_s)\\
     &= \sum_{s\in\cS} \mu_h(s)  \E_{S_h=s, A_h\sim \pi^\gt(\cdot|s)}[ \nabla \log(\pi^\gt(A_h|S_h)) Q_h^{\tilde{\pi}}(S_h,A_h)].
   \end{align*}
   Next we plug in the derivative of the softmax parametrisation and obtain
   \begin{align*}
     &\nabla J_h(\gt,\tilde{\pi}_{(h+1)},\mu_h) \\
     &= \sum_{s\in\cS} \mu_h(s)  \E_{S_h=s, A_h\sim \pi^\gt(\cdot|s)}\Big[ \Big(\mathbf{1}_{ \{S_h = s^\prime\}}( \mathbf{1}_{\{A_h =a^\prime\} } - \pi^{\theta} (a^\prime|s^\prime) ) \Big)_{s^\prime \in \cS_h, a^\prime \in \cA_{s^\prime}} Q_h^{\tilde{\pi}}(S_h,A_h)\Big] \\
     &= \Big( \sum_{s\in\cS} \mu_h(s) \sum_{a\in\cA_s} \pi^\gt(a|s) \mathbf{1}_{ \{s = s^\prime\}}( \mathbf{1}_{\{a =a^\prime\} } - \pi^{\theta} (a^\prime|s^\prime) )  Q_h^{\tilde{\pi}}(s,a)\Big)_{s^\prime \in \cS_h, a^\prime \in \cA_{s^\prime}} \\
     &= \Big( \mu_h(s^\prime) \pi^\gt(a^\prime|s^\prime) Q_h^{\tilde{\pi}}(s^\prime,a^\prime) - \mu_h(s^\prime)\pi^{\theta} (a^\prime|s^\prime) \sum_{a\in\cA_s} \pi^\gt(a|s^\prime)  Q_h^{\tilde{\pi}}(s^\prime,a) \Big)_{s^\prime \in \cS_h, a^\prime \in \cA_{s^\prime}} \\
     &= \Big( \mu_h(s^\prime) \pi^\gt(a^\prime|s^\prime) (Q_h^{\tilde{\pi}}(s^\prime,a^\prime) -  V_h^{(\pi^\gt,\tilde{\pi}_{(h+1)})}(s^\prime)) \Big)_{s^\prime \in \cS_h, a^\prime \in \cA_{s^\prime}} \\
     &= \Big( \mu_h(s^\prime) \pi^\gt(a^\prime|s^\prime) A_h^{(\pi^\gt,\tilde{\pi}_{(h+1)})}(s^\prime, a^\prime) \Big)_{s^\prime \in \cS_h, a^\prime \in \cA_{s^\prime}}, 
   \end{align*}
   where we used that $\sum_{a\in\cA_s} \pi^\gt(a|s^\prime)  Q_h^{\tilde{\pi}}(s^\prime,a) = J_{h}(\gt,\tilde{\pi}_{(h+1)},\delta_{s^\prime}) = V_h^{(\pi^\gt,\tilde{\pi}_{(h+1)})}(s^\prime)$.
 \end{proof}

\section{Proofs of Section~\ref{sec:deterministic}}
\subsection{Proofs of Section~\ref{subsec:sim-deterministic}}\label{app:conv-proofs-sim-det}

\begin{lemma}\label{lem:smoothness-simultan}
    The objective $J(\gt,\mu)$ from~\Eqref{eq:objective-sim} is smooth in $\gt$ with parameter $\beta = H^2 R^\ast (2-\frac{1}{|\cA|})$.
\end{lemma}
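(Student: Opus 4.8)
The plan is to bound the spectral norm of the Hessian $\nabla^2_\theta J(\theta,\mu)$ uniformly in $\theta$, since $\sup_\theta \lVert \nabla^2_\theta J(\theta,\mu)\rVert_2 \le \beta$ is equivalent to $\beta$-smoothness. As the Hessian is symmetric, it suffices to bound the directional second derivative $\big\lvert \tfrac{d^2}{d\alpha^2} J(\theta+\alpha u,\mu)\big\rvert_{\alpha=0}\big\rvert$ uniformly over all $\theta$ and all unit vectors $u\in\R^{\sum_h d_h}$. I write $u=(u_s)_{s\in\cS^{[\cH]}}$ for the decomposition of $u$ into the blocks $u_s\in\R^{|\cA_s|}$ attached to each enlarged state; since these blocks partition the coordinates of $u$, one has $\sum_{s\in\cS^{[\cH]}}\lVert u_s\rVert_2^2 = \lVert u\rVert_2^2 = 1$, which is the identity that will absorb the sum over states at the end.

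The second step is to differentiate the trajectory representation $J(\theta+\alpha u,\mu)=\sum_{\tau\in\cW} p_\mu^{\pi^{\theta+\alpha u}}(\tau)\,R(\tau)$ with $R(\tau)=\sum_{k=0}^{H-1} r(s_k,a_k)$, exactly as in the proof of Theorem~\ref{thm:policy-gradient-thm-sim}. Applying the log-trick twice (and noting that only the policy factors depend on $\alpha$, so the initial and transition terms drop out) gives the clean formula
\begin{equation*}
  \frac{d^2}{d\alpha^2} J(\theta+\alpha u,\mu)\Big\vert_{\alpha=0} = \E_\mu^{\pi^\theta}\Big[\Big(\big(\textstyle\sum_{h=0}^{H-1}\ell_h'\big)^2 + \textstyle\sum_{h=0}^{H-1}\ell_h''\Big)\,R(\tau)\Big],
\end{equation*}
where $\ell_h'=u(S_h,A_h)-\langle \pi^\theta(\cdot|S_h),u_{S_h}\rangle$ is the score in epoch $h$ and $\ell_h''=-\Var_{\pi^\theta(\cdot|S_h)}(u_{S_h})$ is the corresponding log-curvature of the softmax. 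These two softmax identities are short direct computations from the explicit form of $\log\pi^{\theta+\alpha u}(a|s)$ and are the only parametrisation-specific input.

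The third step is the bookkeeping that produces the two powers of $H$ and the constant $2-\tfrac1{|\cA|}$. Pointwise one has $0\le R(\tau)\le H R^\ast$, which contributes the first factor $H$. For the score-square term I would use Cauchy--Schwarz across the $H$ epochs, $\big(\sum_h \ell_h'\big)^2\le H\sum_h (\ell_h')^2$, which is the source of the \emph{second} factor $H$; taking conditional expectations then turns $\E[(\ell_h')^2\mid S_h]$ and $|\ell_h''|$ both into variances $\Var_{\pi^\theta(\cdot|S_h)}(u_{S_h})$, and summing over epochs against the enlarged visitation measure together with $\sum_{s}\lVert u_s\rVert_2^2=1$ collapses the state-sum. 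The constant $2-\tfrac1{|\cA|}$ (rather than a crude $2$) comes from combining the contribution of the squared-score term with that of the curvature term $\ell_h''$ in a single sharp per-state softmax estimate of the form $\lVert\nabla^2_{\theta_s}\!\sum_a \pi^\theta(a|s)Q(s,a)\rVert \le (2-\tfrac1{|\cA|})\,\lVert u_s\rVert_2^2\,\max_a |Q(s,a)|$, where $Q\in[0,HR^\ast]$.

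I expect the main obstacle to be precisely this last sharp softmax curvature estimate: getting the factor $2-\tfrac1{|\cA|}$ instead of $2$ requires tracking the cancellation between $\big(\sum_h\ell_h'\big)^2$ and $\sum_h\ell_h''$ at the level of each state block, rather than bounding the two terms independently. Everything else is routine once the trajectory formula and the coordinate-partition identity are in place. An equivalent and perhaps cleaner route, matching the cited analyses of \citet{agarwal2020theory,mei2022global,YuanGrower22}, is to view the problem as a stationary MDP on $\cS^{[\cH]}$ with deterministic absorption time $H$ and differentiate the value function through its backward recursion; there the two factors of $H$ appear as the value magnitude $HR^\ast$ and the total mass $\sum_{s_h}\tilde{\rho}_\mu^{\pi^\theta}(s_h)=H$ of the enlarged visitation measure, while the softmax Hessian again supplies $2-\tfrac1{|\cA|}$.
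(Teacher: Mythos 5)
Your proposal is correct and follows essentially the same route as the paper's proof: differentiate the trajectory representation twice via the log-trick, split the Hessian into a squared-score term and a log-curvature term, bound the return pointwise by $HR^\ast$, and control the two remaining softmax quantities per state. The paper carries this out at the level of matrix norms and outsources precisely your two per-state estimates to \citet{YuanGrower22}, namely $\E_{A\sim\pi^\theta(\cdot|s)}\big[\lVert\nabla\log\pi^\theta(A|s)\rVert_2^2\big]\le 1-\frac{1}{|\cA|}$ and $\E_{A\sim\pi^\theta(\cdot|s)}\big[\lVert\nabla^2\log\pi^\theta(A|s)\rVert\big]\le 1$. Two remarks on where you deviate. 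First, where you apply Cauchy--Schwarz, $\big(\sum_h\ell_h'\big)^2\le H\sum_h(\ell_h')^2$, the paper uses an equality instead: the scores are conditionally centered, $\E[\ell_h'\mid S_h]=0$, so all cross terms vanish and $\E\big[\big(\sum_h\ell_h'\big)^2\big]=\sum_h\E[(\ell_h')^2]$; the second factor of $H$ then arises from summing $H$ per-epoch bounds each at most $1-\frac{1}{|\cA|}$. Both versions land on $H^2R^\ast\big(1-\frac{1}{|\cA|}\big)$ for this term. Second, the ``main obstacle'' you anticipate does not exist: no cancellation between the squared-score and curvature terms is needed to obtain $2-\frac{1}{|\cA|}$. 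The constant is purely additive, $\big(1-\frac{1}{|\cA|}\big)+1$, with the $\frac{1}{|\cA|}$ saving coming from the score term alone via $\E_{A\sim\pi^\theta(\cdot|s)}\big[\lVert\nabla\log\pi^\theta(A|s)\rVert_2^2\big]=1-\lVert\pi^\theta(\cdot|s)\rVert_2^2\le 1-\frac{1}{|\cA|}$, while the curvature term is bounded crudely by $1$; so bounding the two terms independently, as you could equally do, already gives the claimed $\beta$. As a final aside, your directional bookkeeping is actually capable of more than the lemma asks: if you keep the orthogonality of the scores instead of Cauchy--Schwarz, the weights $\tilde{\rho}_\mu^{\pi^\theta}(s_h)\le 1$ (each enlarged state is visited at most once) together with $\sum_{s_h}\lVert u_{s_h}\rVert_2^2=1$ give $\E\big[\big(\sum_h\ell_h'\big)^2\big]\le 1-\frac{1}{|\cA|}$, i.e.\ a smoothness constant of order $HR^\ast$ rather than $H^2R^\ast$ --- the $H^2$ constant inherited from the discounted stationary analysis is not tight here precisely because parameters are not shared across epochs.
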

Comparing this result to Lemma E.1. in \citet{YuanGrower22} where the smoothness constant of a discounted MDP under softmax parametrisation is given by $\frac{R^\ast}{(1-\gamma)^2} \big(2-\frac{1}{|\cA|}\big)$, we can see that $\frac{1}{1-\gamma}$, the expectation of a geometric r.v. and the expected length of a discounted MDP, is replaced by $H$, the expected length of the finite-time MDP.
\begin{proof}
   We are going to bound the norm of the hessian. Therefore, we first calculate the first and second derivative if $J$ for finite-time horizon stationary MDPs.
   So, let $\tau=(s_0,a_0,s_1,\dots,s_{H-1},a_{H-1})$ be a trajectory of the MDP under policy $\pi^\theta$ and denote by $p_\mu^\theta$ the discrete probability density. Then,
   \begin{align*}
       \nabla J(\theta,\mu) 
       &= \nabla \Big( \sum_{\tau} p_\mu^\theta(\tau) \sum_{h=0}^{H-1} r(s_h,a_h) \Big)\\
       &=  \sum_{\tau} p_\mu^\theta(\tau) \Big(\sum_{h=0}^{H-1} \nabla \log(\pi^\theta(a_h|s_h)) \sum_{h=0}^{H-1}r(s_h,a_h) \Big)\\
       &= \E_\mu^{\pi^\theta}\Big[ \sum_{h=0}^{H-1} \nabla \log(\pi^\theta(a_h|s_h)) \sum_{h=0}^{H-1}r(s_h,a_h) \Big].
   \end{align*}
   For the second derivative we have
   \begin{align*}
       \nabla^2 J(\theta,\mu) 
       &=  \nabla \Big(\sum_{\tau} p_\mu^\theta(\tau) \big(\sum_{h=0}^{H-1} \nabla \log(\pi^\theta(a_h|s_h)) \sum_{h=0}^{H-1}r(s_h,a_h) \big)\Big)\\
       &= \underbrace{\sum_{\tau} p_\mu^\theta(\tau) \Big( \big( \sum_{h=0}^{H-1} \nabla \log(\pi^\theta(a_h|s_h))\big)  \big( \sum_{h=0}^{H-1} \nabla \log(\pi^\theta(a_h|s_h))\big)^T  \sum_{h=0}^{H-1}r(s_h,a_h) \Big)}_{(1)} \\
       &+ \underbrace{\sum_{\tau} p_\mu^\theta(\tau) \Big(\sum_{h=0}^{H-1} \nabla^2 \log(\pi^\theta(a_h|s_h)) \sum_{h=0}^{H-1}r(s_h,a_h) \Big)}_{(2)}.
   \end{align*}
   Using the bounded reward assumption we get for the second term, that
   \begin{align*}
       ||(2)|| 
       &\leq \E_\mu^{\pi^\theta} \Big[ \sum_{h=0}^{H-1} \lVert \nabla^2 \log(\pi^\theta(a_h|s_h))  \rVert \Big]  H R^\ast \\
       &= H R^\ast \sum_{h=0}^{H-1} \E_\mu^{\pi^\theta}\Big[ \lVert \nabla^2 \log(\pi^\theta(a_h|s_h))  \rVert \Big].
   \end{align*}
   By Lemma 4.8 in \citet{YuanGrower22}, we have for the softmax parametrisation that $ \E_\mu^{\pi^\theta}\Big[ \lVert \nabla^2 \log(\pi^\theta(a_h|s_h))  \rVert \Big] \leq 1$. Hence,
   \begin{align*}
       ||(2)|| 
       &\leq H^2 R^\ast.
   \end{align*}
   Next for the first term, 
   \begin{align*}
       ||(1)|| &\leq \E_\mu^{\pi^\theta}\Big[ \lVert \sum_{h=0}^{H-1}  \nabla \log(\pi^\theta(a_h|s_h))  \rVert^2 \Big] H R^\ast \\
       &=   H R^\ast \sum_{h=0}^{H-1} \E_\mu^{\pi^\theta}\Big[ \lVert \nabla \log(\pi^\theta(a_h|s_h))  \rVert^2 \Big] \\
       &\leq H^2 R^\ast \big( 1- \frac{1}{|\cA|}\big),
   \end{align*}
   where we first used the bounded reward assumption, then Lemma 3.6 and again Lemma 4.8 from \citet{YuanGrower22}.
   Finally, we obtain that
   \begin{align*}
       \lVert \nabla^2 J(\theta,\mu)\rVert \leq H^2 R^\ast \big(2-\frac{1}{|\cA|}\big).
   \end{align*}
\end{proof}

\begin{lemma}\label{lem:weak_pl_sim}
  It holds that 
    \begin{align*}
        \lVert \nabla J(\theta,\mu)\rVert_2 \geq \frac{\min_{s_h\in \cS^{[\cH]}} \pi^\theta(a^\ast(s_h)|s_h)}{\sqrt{|\cS^{[\cH]}|}} \Big\lVert \frac{d_\mu^{\pi^\ast}}{d_\mu^{\pi^\theta}}\Big\rVert_\infty^{-1} (J^\ast(\mu) - J(\theta,\mu)).
    \end{align*}
\end{lemma}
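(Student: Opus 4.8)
The plan is to establish this weak PL-inequality by combining the exact gradient formula of Lemma~\ref{lem:derivative-value-func-sim} with the performance-difference identity of Corollary~\ref{cor:optimal-performance-diff-for-both}, following the template developed for discounted softmax MDPs in \citet{agarwal2020theory, mei2022global} but now carried out on the enlarged state space $\cS^{[\cH]}$. The guiding idea is that the gradient, restricted to the coordinates $\theta(s_h,a^\ast(s_h))$ belonging to the optimal actions, already exposes the full optimality gap $J^\ast(\mu)-J(\theta,\mu)$, paying only the multiplicative penalties $\min_{s_h}\pi^\theta(a^\ast(s_h)|s_h)$ and a distribution-mismatch factor.

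First I would discard all but the optimal-action coordinates: dropping coordinates can only decrease the Euclidean norm, so $\lVert \nabla J(\theta,\mu)\rVert_2 \geq \big(\sum_{s_h\in\cS^{[\cH]}}(\partial J/\partial\theta(s_h,a^\ast(s_h)))^2\big)^{1/2}$. Applying the elementary inequality $\lVert x\rVert_2 \geq \lVert x\rVert_1/\sqrt{m}$ for $x\in\R^m$, here with $m=|\cS^{[\cH]}|$ (the subvector has one entry per enlarged state), converts this into $\tfrac{1}{\sqrt{|\cS^{[\cH]}|}}\sum_{s_h}\big|\partial J/\partial\theta(s_h,a^\ast(s_h))\big|$. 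Substituting the derivative from Lemma~\ref{lem:derivative-value-func-sim} then gives the lower bound $\tfrac{1}{\sqrt{|\cS^{[\cH]}|}}\sum_{s_h}\tilde{\rho}_\mu^{\pi^\theta}(s_h)\,\pi^\theta(a^\ast(s_h)|s_h)\,|A_h^{\pi^\theta}(s_h,a^\ast(s_h))|$.

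The second half matches this sum against the optimality gap. By Corollary~\ref{cor:optimal-performance-diff-for-both}, $J^\ast(\mu)-J(\theta,\mu)=\sum_{s_h}\tilde{\rho}_\mu^{\pi^\ast}(s_h)A_h^{\pi^\theta}(s_h,a^\ast(s_h)) \le \sum_{s_h}\tilde{\rho}_\mu^{\pi^\ast}(s_h)\,|A_h^{\pi^\theta}(s_h,a^\ast(s_h))|$. I would then multiply and divide each summand by $\tilde{\rho}_\mu^{\pi^\theta}(s_h)\pi^\theta(a^\ast(s_h)|s_h)$, which is strictly positive under the softmax parametrisation together with $\mu>0$, pull the resulting ratio out of the sum through its maximum, and recognise what remains as precisely the gradient sum of the previous step. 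This produces $J^\ast(\mu)-J(\theta,\mu)\le \big(\max_{s_h}\frac{\tilde{\rho}_\mu^{\pi^\ast}(s_h)}{\tilde{\rho}_\mu^{\pi^\theta}(s_h)\pi^\theta(a^\ast(s_h)|s_h)}\big)\sqrt{|\cS^{[\cH]}|}\,\lVert\nabla J(\theta,\mu)\rVert_2$, and factoring that maximum as $\tfrac{1}{\min_{s_h}\pi^\theta(a^\ast(s_h)|s_h)}\max_{s_h}\frac{\tilde{\rho}_\mu^{\pi^\ast}(s_h)}{\tilde{\rho}_\mu^{\pi^\theta}(s_h)}$ yields the claimed shape after rearranging.

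The step I expect to be the main obstacle is this last one. The quantity produced naturally by the argument is the ratio of \emph{enlarged}-state visitation measures $\max_{s_h\in\cS^{[\cH]}}\tilde{\rho}_\mu^{\pi^\ast}(s_h)/\tilde{\rho}_\mu^{\pi^\theta}(s_h)$, an epoch-by-epoch comparison, whereas the statement is phrased through the distribution mismatch coefficient $\lVert d_\mu^{\pi^\ast}/d_\mu^{\pi^\theta}\rVert_\infty$ from \eqref{eq:def-distribution-mismatch}, which aggregates the visitation measures over epochs on $\cS$. Reconciling these two, and equally importantly guaranteeing that the denominators $\tilde{\rho}_\mu^{\pi^\theta}(s_h)$ stay positive so that the coefficient is finite and can later be bounded uniformly in $\theta$, is exactly where Assumption~\ref{ass:sim} (the constant state space $\cS_h=\cS$) is needed; without it a state may be unreachable at some epoch and the ratio degenerates. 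I would secure positivity of $\tilde{\rho}_\mu^{\pi^\theta}$ from $\mu>0$ and the full support of the softmax policy, and treat the passage from the per-epoch ratio to the mismatch coefficient with care, since this is the one point at which the finite-time structure genuinely departs from the discounted template.
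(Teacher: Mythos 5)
Your derivation coincides with the paper's proof of Lemma~\ref{lem:weak_pl_sim} essentially step for step: drop the non-optimal-action coordinates, apply $\lVert x \rVert_2 \geq \lVert x \rVert_1 / \sqrt{|\cS^{[\cH]}|}$, insert the partial derivatives from Lemma~\ref{lem:derivative-value-func-sim}, and compare against the performance-difference identity of Corollary~\ref{cor:optimal-performance-diff-for-both}; you merely run the final comparison in the reverse direction (upper-bounding the gap by the gradient rather than lower-bounding the gradient by the gap), which is an equivalent rearrangement.

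On the one step you deferred: the paper does \emph{not} perform a passage from the per-epoch ratio $\max_{s_h \in \cS^{[\cH]}} \tilde{\rho}_\mu^{\pi^\ast}(s_h)/\tilde{\rho}_\mu^{\pi^\theta}(s_h)$ to an epoch-aggregated coefficient on $\cS$; it reads the mismatch coefficient on the enlarged state space, where $\tilde{d}_\mu^{\pi} = \frac{1}{H}\tilde{\rho}_\mu^{\pi}$, so the normalisation cancels and the ratio of visitation measures is identically the ratio of visitation distributions. This is the right reading, because the inequality "per-epoch ratio $\leq$ aggregated ratio" is false in general (a state can be visited by $\pi^\ast$ almost exclusively at one epoch where $\pi^\theta$ rarely visits it, while the aggregated measures are comparable), so your planned reconciliation step would not go through termwise. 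Relatedly, Assumption~\ref{ass:sim} is not needed for the lemma itself: whenever $\tilde{\rho}_\mu^{\pi^\ast}(s_h)>0$, the full support of the softmax policy forces $\tilde{\rho}_\mu^{\pi^\theta}(s_h)>0$ (any $\pi^\ast$-trajectory has positive probability under $\pi^\theta$), so the coefficient is finite for each fixed $\theta$, and states with $\tilde{\rho}_\mu^{\pi^\ast}(s_h)=0$ contribute nothing to the performance-difference sum. The assumption enters only downstream, in Remark~\ref{rem:distribution-d-zu-mu} and Remark~\ref{rem:rem-assumption-sim}, where the coefficient must be bounded \emph{uniformly in $\theta$} via $d_\mu^{\pi^\theta} \geq \mu/H$ for the convergence theorem — that is the place where the aggregated distribution on $\cS$, and hence the constant state space, is genuinely used.
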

\begin{proof}
   The idea of the proof follows the outline of \citet[Lem. 8]{mei2022global} from the discounted setting. It holds
   \begin{align*}
       \lVert \nabla J(\theta,\mu)\rVert_2 &= \Big[ \sum_{s_h \in \cS^{[\cH]}} \sum_a \Bigl( \frac{\partial V_0^{\pi^\theta}(\mu) }{\partial \theta(s_h,a) }\Bigr)^2\Big]^{1/2}\\
       &\geq \Big[ \sum_{s_h \in \cS^{[\cH]}} \Bigl( \frac{\partial V_0^{\pi^\theta}(\mu) }{\partial \theta(s_h,a^\ast(s_h)) }\Bigr)^2\Big]^{1/2}\\
       &\geq \frac{1}{\sqrt{|\cS^{[\cH]}|}}   \sum_{s_h \in \cS^{[\cH]}} \Big\vert \frac{\partial V_0^{\pi^\theta}(\mu) }{\partial \theta(s_h,a^\ast(s_h)) }\Big\vert\\
       &= \frac{1}{\sqrt{|\cS^{[\cH]}|}}   \sum_{s_h \in \cS^{[\cH]}} \tilde{\rho}_\mu^{\pi^\theta}(s_h) \pi^\theta(a^\ast(s_h)|s_h) |A^{\pi^\theta}(s_h,a^\ast(s_h))|\\
       &\geq  \frac{\min_{s_h\in \cS^{[\cH]}} \pi^\theta(a^\ast(s_h)|s_h)}{\sqrt{|\cS^{[\cH]}|}}  \sum_{s_h \in \cS^{[\cH]}} \tilde{\rho}_\mu^{\pi^\ast}(s_h) \Big\lVert \frac{d_\mu^{\pi^\ast}}{d_\mu^{\pi^\theta}}\Big\rVert_\infty^{-1} A^{\pi^\theta}(s_h,a^\ast(s_h))\\
       &=  \frac{\min_{s_h\in \cS^{[\cH]}} \pi^\theta(a^\ast(s_h)|s_h)}{\sqrt{|\cS^{[\cH]}|}} \Big\lVert \frac{d_\mu^{\pi^\ast}}{d_\mu^{\pi^\theta}}\Big\rVert_\infty^{-1}  \underbrace{\sum_{s_h \in \cS^{[\cH]}} \rho_\mu^{\pi^\ast}(s_h)  A^{\pi^\theta}(s_h,a^\ast(s_h))}_{=\E_\mu^{\pi^\ast}[\sum_{h=0}^{H-1} A_h^{\pi^\theta}(S_h,A_h)]}\\
       &= \frac{\min_{s_h\in \cS^{[\cH]}} \pi^\theta(a^\ast(s_h)|s_h)}{\sqrt{||\cS^{[\cH]}|}} \Big\lVert \frac{d_\mu^{\pi^\ast}}{d_\mu^{\pi^\theta}}\Big\rVert_\infty^{-1} (J^\ast(\mu) - J(\theta,\mu)).
   \end{align*}
   The third line is due to Cauchy-Schwarz, afterwards we used the derivative of the objective function from Lemma~\ref{lem:derivative-value-func-sim}. For the firths line, not that $ \Big\lVert \frac{\tilde{\rho}_\mu^{\pi^\ast}}{\tilde{\rho}_\mu^{\pi^\theta}}\Big\rVert_\infty =  \Big\lVert \frac{d_\mu^{\pi^\ast}}{d_\mu^{\pi^\theta}}\Big\rVert_\infty$ by definition of the state visitation measures and the distribution mismatch coefficient (see \Eqref{eq:def-distribution-mismatch}). Finally, the last equation is due to Corollary~\ref{cor:optimal-performance-diff-for-both} from the performance difference lemma.
\end{proof}

\begin{remark}\label{rem:distribution-d-zu-mu}
    Note that in order to use this weak PL-inequality uniformly we also have to bound the distribution mismatch coefficient uniform in $\gt$. Therefore, under Assumption~\ref{ass:sim} it holds $d_\mu^{\pi^\theta}(s) \geq \frac{1}{H} \mu(s)$ by definition for any $\gt$, since
    \begin{equation}
        d_\mu^{\pi^\theta}(s) = \frac{1}{H}\sum_{h=0}^{H-1} \bbP_\mu^{\pi^\gt}(S_h = s) \geq \frac{1}{H} \mu(s).
    \end{equation}
    Hence, we obtain that 
    \begin{align*}
        \lVert \nabla J(\theta,\mu)\rVert_2 \geq \frac{\min_{s_h\in \cS^{[\cH]}} \pi^\theta(a^\ast(s_h)|s_h)}{H \sqrt{|\cS|H}} \Big\lVert \frac{d_\mu^{\pi^\ast}}{\mu}\Big\rVert_\infty^{-1} (J^\ast(\mu) - J(\theta,\mu)).
    \end{align*}
\end{remark}

\begin{remark}\label{rem:rem-assumption-sim}
    Without Assumption~\ref{ass:sim} the expression 
    \begin{equation}
        \sum_{s\in\cS} \sum_{h=0}^{H-1} \bbP_\mu^{\pi^\gt}(S_h = s)  = \sum_{s_h\in\cS_h} \bbP(S_h = s_h)
    \end{equation}
    cannot be bounded from below by $\mu$, since the probability to visit states in later epochs depends crucially on $\gt$. This cannot be covered by $\mu$ as the state $s_h$ might not belong to $\cS_0$.
\end{remark}

\begin{lemma}\label{lem:c_larger_0-sim}
  Let $\mu$ be a probability measure such that $\mu(s) >0$ for all $s\in\cS$ and let $0< \eta \leq \frac{1}{5H^2 R^\ast}$. 
  Consider the sequence $(\theta^{(n)})$ generated by Algorithm~\ref{alg:simultaneous-PG} for arbitrary $\theta^{(0)}\in\cR^{\sum_h d_h}$. Then,
  $c = c(\gt^{(0)}) = \inf_n \min_{s_h \in\cS^{[\cH]}} \pi^{\gt^{(n)}}(a^\ast(s_h)|s_h) >0.$ 
\end{lemma}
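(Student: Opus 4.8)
The plan is to follow the strategy of \citet{agarwal2020theory}, adapted to the enlarged, undiscounted, finite-time MDP on $\cS^{[\cH]}$ (viewed as a stationary MDP absorbed at time $H$): first establish \emph{asymptotic} global convergence of the exact PG iterates, and only then read off strict positivity of $c$. Note that at every finite $n$ the softmax assigns strictly positive mass, so $\pi^{\theta^{(n)}}(a^\ast(s_h)|s_h)>0$; hence it suffices to prove $\liminf_n \pi^{\theta^{(n)}}(a^\ast(s_h)|s_h)>0$ for each of the finitely many states $s_h\in\cS^{[\cH]}$, since the infimum over $n$ of the minimum over states is then a positive number.

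First I would set up the ascent lemma. By Lemma~\ref{lem:smoothness-simultan} the objective is $\beta$-smooth with $\beta = H^2R^\ast(2-\tfrac1{|\cA|})<2H^2R^\ast$, so $\eta\le\tfrac1{5H^2R^\ast}$ satisfies $\eta\le\tfrac1\beta$. The descent lemma for $\beta$-smooth functions then yields
\begin{equation*}
  J(\theta^{(n+1)},\mu)\ge J(\theta^{(n)},\mu)+\tfrac{\eta}{2}\lVert\nabla J(\theta^{(n)},\mu)\rVert_2^2 .
\end{equation*}
Since $J(\cdot,\mu)\le J^\ast(\mu)\le HR^\ast$ is bounded above, $J(\theta^{(n)},\mu)$ is nondecreasing and convergent, so $\sum_n\lVert\nabla J(\theta^{(n)},\mu)\rVert_2^2<\infty$ and in particular $\nabla J(\theta^{(n)},\mu)\to 0$.

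Next I would use the explicit gradient of Lemma~\ref{lem:derivative-value-func-sim}, namely $\partial_{\theta(s_h,a)}J = \tilde\rho_\mu^{\pi^{\theta}}(s_h)\,\pi^{\theta}(a|s_h)\,A_h^{\pi^{\theta}}(s_h,a)$. On the first epoch $\tilde\rho_\mu^{\pi^{\theta}}(s)\ge\mu(s)>0$, so on $\cS_0$ vanishing of the gradient already forces $\pi^{\theta^{(n)}}(a|s)\,A_0^{\pi^{\theta^{(n)}}}(s,a)\to 0$; the corresponding statement for later epochs is propagated forward through the enlarged-state-space structure as in \citet{agarwal2020theory}, the finite-$n$ positivity of $\tilde\rho$ keeping the relevant weights positive along the subsequences used. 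The genuine difficulty is what to infer from these limits: the reference action $a^\ast(s_h)$ is the one preferred by the \emph{fixed} optimal policy $\pi^\ast$, not the action currently favoured by $Q_h^{\pi^{\theta^{(n)}}}$, so while the future policy is still badly learned $A_h^{\pi^{\theta^{(n)}}}(s_h,a^\ast(s_h))$ may be negative and $\theta^{(n)}(s_h,a^\ast(s_h))$ need not be monotone. This is precisely why asymptotic convergence must be secured before positivity.

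The main obstacle is therefore the asymptotic analysis, which I would carry out by the bookkeeping argument of \citet{agarwal2020theory}. Along any convergent subsequence of policies one shows that no action can retain a strictly positive limiting advantage: a positive limiting $A_h(s_h,a)$ together with a positive visitation weight would push $\theta^{(n)}(s_h,a)$ up indefinitely and hence prevent $\pi^{\theta^{(n)}}(a|s_h)$ from tending to zero, contradicting $\pi^{\theta^{(n)}}(a|s_h)A_h^{\pi^{\theta^{(n)}}}(s_h,a)\to 0$. Consequently the limit satisfies the Bellman optimality condition $\max_a Q_h^{\pi^{\theta^{(n)}}}(s_h,a)-V^{\pi^{\theta^{(n)}}}(s_h)\to 0$, giving $V_0^{\pi^{\theta^{(n)}}}(\mu)\to V_0^\ast(\mu)$ and $Q_h^{\pi^{\theta^{(n)}}}\to Q_h^\ast$. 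Once the $Q$-functions have converged, the reference action has nonnegative limiting advantage, so from some index onward the coordinate gradient $\partial_{\theta(s_h,a^\ast(s_h))}J\ge 0$ and $\theta^{(n)}(s_h,a^\ast(s_h))$ is nondecreasing, while strictly suboptimal actions' parameters eventually decrease; this keeps $\theta^{(n)}(s_h,a^\ast(s_h))-\max_a\theta^{(n)}(s_h,a)$ bounded below and hence $\liminf_n\pi^{\theta^{(n)}}(a^\ast(s_h)|s_h)>0$. Minimising over the finite set $\cS^{[\cH]}$ and recalling strict positivity at every finite $n$ yields $c>0$.
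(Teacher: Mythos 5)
Your proposal is correct in outline and shares the paper's overall architecture: softmax positivity at every finite $n$ reduces the lemma to a $\liminf$ bound at each of the finitely many $s_h\in\cS^{[\cH]}$, and that bound is only secured \emph{after} asymptotic global convergence, which you, like the paper (Theorem~\ref{thm:asympothic_convergence-sim-allgemein}, proved in Appendix~\ref{app:asymptotic-conv-sim}), obtain by adapting the argument of \citet{agarwal2020theory}. Where you genuinely diverge is the closing step. The paper finishes with a Mei-style invariant-region argument: it defines $\cR_1(s_h)$ (the $a^\ast$-coordinate of the gradient dominates all others), $\cR_2(s_h)$, $\cR_3(s_h)$ (state-action values and values close to their limits) and $\cN_c(s_h)$, proves forward invariance and that $\pi^{\gt^{(n)}}(a^\ast(s_h)|s_h)$ is \emph{nondecreasing} once inside, and uses asymptotic convergence only to guarantee entry at some finite $n_0(s_h)$, so the infimum equals a minimum over a finite prefix. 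You instead derive eventual coordinatewise monotonicity of the parameters themselves ($\gt^{(n)}(s_h,a^\ast(s_h))$ eventually nondecreasing, strictly suboptimal coordinates eventually nonincreasing), which directly bounds the softmax probability below after a finite index. This is simpler and suffices for the lemma, at the price of yielding only a lower bound rather than eventual monotonicity of the probability itself; both routes implicitly require the optimal action to be unique at each $s_h$ (the paper's $\Delta^\ast(s_h)>0$), and both gloss the positivity of $\tilde{\rho}_\mu^{\pi^\gt}$ at later-epoch states in the same way.

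Two steps need attention as written. First, the inference ``the reference action has nonnegative limiting advantage, so from some index onward the coordinate gradient $\partial_{\theta(s_h,a^\ast(s_h))}J\ge 0$'' is invalid as stated: the limiting advantage of $a^\ast(s_h)$ is exactly zero, and a sequence converging to zero may be negative for all $n$, so no eventual sign follows from the limit alone. The repair is short and uses only facts you already have: since $Q^{\pi^{\gt^{(n)}}}(s_h,\cdot)\to Q^\ast(s_h,\cdot)$ with $Q^\ast(s_h,a^\ast(s_h))\ge Q^\ast(s_h,a)+\Delta^\ast(s_h)$ for $a\neq a^\ast(s_h)$, eventually $a^\ast(s_h)$ is the strict argmax of $Q^{\pi^{\gt^{(n)}}}(s_h,\cdot)$; because $V^{\pi^{\gt^{(n)}}}(s_h)=\sum_a \pi^{\gt^{(n)}}(a|s_h)\,Q^{\pi^{\gt^{(n)}}}(s_h,a)$ is a convex combination, it is then dominated by $Q^{\pi^{\gt^{(n)}}}(s_h,a^\ast(s_h))$, so $A^{\pi^{\gt^{(n)}}}(s_h,a^\ast(s_h))\ge 0$ and the coordinate gradient is nonnegative from that index on (this is in substance what the paper's sets $\cR_2(s_h),\cR_3(s_h)$ encode). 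The analogous limit argument for suboptimal actions is unproblematic because their limiting advantages are strictly negative. Second, your one-line contradiction inside the asymptotic-convergence sketch (``a positive limiting advantage would push $\gt^{(n)}(s_h,a)$ up indefinitely and prevent $\pi^{\gt^{(n)}}(a|s_h)\to 0$'') is not by itself a contradiction, since other coordinates can grow faster; the genuine argument is the considerably more delicate machinery of Lemmas~\ref{lem:max_to_infty_min_to_minus_infty-sim}--\ref{lem:sum_thetas_in_B_null_to_infty-sim} built on the partition $B_0^{s_h}(a_+)$. Since you explicitly defer to the bookkeeping of \citet{agarwal2020theory} for this stage, the plan is acceptable, but the shortcut as phrased would not survive scrutiny on its own.
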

The proof is adapted to the finite-time horizon from \citet[Lem. 9]{mei2022global}.
\begin{proof}
   We will drop the $\mu$ in $J(\gt,\mu)$ for the rest of the proof. 
   Define for all $s_h\in \cS^{[\cH]}$,
   \begin{align*}
       &\Delta^* (s_h) = Q^{\infty}(s_h,a_h^*(s)) - \max\limits_{a \neq a^*(s_h)} Q^{\infty}(s_h,a) > 0, \quad \textrm{and} \quad
       \Delta^* = \min\limits_{s_h \in \cS^{[\cH]}} \Delta^*(s_h) > 0,
   \end{align*}
   where $Q^\infty$ is the optimal $Q$-function from Lemma~\ref{lem:value_and_q_function_monotone-sim}.

   Now consider for any $s_h \in \cS^{[\cH]}$ the following sets 
   \begin{align*}
       &\cR_1 (s_h) = \Big\{ \gt : \frac{\partial J(\gt)}{\partial \gt(s_h,a^*(s_h))} \geq \frac{\partial J(\gt)}{\partial \gt(s_h,a)}, \, \textrm{for all } a \neq a^*(s_h) \Big\}, \\
       &\cR_2(s_h) = \Big\{ \gt :  Q^{\pi^{\gt}}(s_h,a^*(s_h)) \geq Q^\infty(s_h,a^*(s_h))  - \frac{\Delta^*(s_h)}{2} \Big\},\\
       &\cR_3(s_h) = \Big\{ \gt^{(n)} :  V^{\pi^{\gt^{(n)}}}(s_h) \geq Q^{\pi^{\gt^{(n)}}}(s_h,a^*(s_h))  - \frac{\Delta^*(s_h)}{2}, \textrm{ for all } n\geq 1 \textrm{ large enough} \Big\}.
   \end{align*}
   Furthermore, we define $c(s_h) = \frac{|\cA| H R^*}{\Delta^*(s_h)}-1$ and
   \begin{align*}
       N_c(s_h) = \Big\{ \gt : \pi^\gt(a_h^*(s_h)|s_h) \geq \frac{c(s_h)}{c(s_h)+1}\Big\}.
   \end{align*}
   
   We divide the proof into the following Claims:
   \begin{enumerate}
       \item[Claim 1.] $\cR(s_h) = \cR_1(s_h) \cap \cR_2(s_h)\cap \cR_3(s_h)$ is a \textit{nice} region, i.e. 
       \begin{enumerate}[label=(\roman*)]
           \item\label{claim1_i-sim} $\gt^{(n)} \in \cR(s_h) \Rightarrow \gt^{(n+1)} \in \cR(s_h)$. 
           \item\label{claim1_ii-sim} $\pi^{\gt^{(n+1)}}(a^*(s_h)|s_h) \geq \pi^{\gt^{(n)}}(a^*(s_h)|s_h)$.
       \end{enumerate}
       \item[Claim 2.] $\cN_c(s_h) \cap \cR_2(s_h) \cap \cR_3(s_h) \subseteq \cR_1(s_h) \cap \cR_2(s_h)\cap \cR_3(s_h)$.
       \item[Claim 3.] For every $s_h \in \cS^{[\cH]}$, there exists a finite-time $n_0(s_h) \geq 1$, such that $$\theta^{(n_0(s_h))} \in \cN_c(s_h) \cap \cR_2(s_h) \cap \cR_3(s_h)\subseteq \cR_1(s_h) \cap \cR_2(s_h) \cap \cR_3(s_h) $$ and thus $$\inf_{n \geq 1} \pi^{\gt^{(n)}}(a^*(s_h)|s_h) = \min_{1\leq n\leq n_0(s_h)} \pi^{\gt^{(n)}}(a^*(s_h)|s_h)$$.
   \end{enumerate}
   If all three claims hold true, we can finally define $n_0 = \max_{s_h\in \cS^{[\cH]}} n_0(s_h)$, such that
   \begin{align*}
     \inf_{n \geq 1}\min_{s\in \cS^{[\cH]}} \pi^{\gt^{(n)}}(a^*(s_h)|s_h) = \min_{1\leq n\leq n_0}\min_{ s_h\in \cS^{[\cH]}} \pi^{\gt^{(n)}}(a^*(s_h)|s_h) >0.
   \end{align*}
   Due to the positiveness of the softmax parametrisation the assertion follows.

   \textbf{Claim 1.} We first prove \ref{claim1_i-sim}. Let $\gt^{(n)} \in \cR(s_h)$ and $a \neq a^*(s_h)$. Then $ \gt^{(n+1)} \in \cR_3(s_h)$ by definition of $\cR_3(s_h)$. To see that $\gt^{(n+1)}\in\cR_2(s_h)$ note that
   \begin{align*}
       Q^{\pi^{\gt^{(n+1)}}}(s_h,a^\ast(s_h)) &= Q_h^{\pi^{\gt^{(n+1)}}}(s_h,a^\ast(s_h))\\
       &= Q_h^{\pi^{\gt^{(n)}}}(s_h,a^\ast(s_h)) + Q_h^{\pi^{\gt^{(n+1)}}}(s_h,a^\ast(s_h)) - Q_h^{\pi^{\gt^{(n)}}}(s_h,a^\ast(s_h))\\
       &= Q_h^{\pi^{\gt^{(n)}}}(s_h,a^\ast(s_h)) + r(s_h,a^\ast(s_h)) +\sum_{s^\prime \cS^{[\cH]}} p(s^\prime |s_h,a^\ast(s_h)) V_{h+1}^{\pi^{\gt^{(n+1)}}}(s^\prime) \\
       &- r(s_h,a^\ast(s_h)) - \sum_{s^\prime \cS^{[\cH]}} p(s^\prime |s_h,a^\ast(s_h)) V_{h+1}^{\pi^{\gt^{(n)}}}(s^\prime)\\
       &= Q_h^{\pi^{\gt^{(n)}}}(s_h,a^\ast(s_h)) + \sum_{s^\prime \cS^{[\cH]}} p(s^\prime |s_h,a^\ast(s_h)) \Big(V_{h+1}^{\pi^{\gt^{(n+1)}}}(s^\prime)- V_{h+1}^{\pi^{\gt^{(n)}}}(s^\prime)\Big)\\
       &\geq Q_h^{\pi^{\gt^{(n)}}}(s_h,a^\ast(s_h)) = Q^{\pi^{\gt^{(n)}}}(s_h,a^\ast(s_h))\\
       &\geq Q^\infty(s_h,a^*(s_h))  - \frac{\Delta^*(s_h)}{2},
   \end{align*}
   where the first inequality is due to monotonicity of $V^{\pi^{\gt^{(n+1)}}}(s^\prime)$ in $n$ for every $s^\prime \in \cS^{[\cH]}$ and the last inequality follows from $\gt^{(n)} \in\cR_2(s_h)$.

   Next we show $\gt^{(n+1)}\in\cR_1(s_h)$. 
   Therefore we first show that 
   \begin{align}\label{eq:diff-q-functions-sim}
       Q^{\pi^{\gt^{(n)}}}(s_h,a^\ast(s_h)) - Q^{\pi^{\gt^{(n)}}}(s_h,a) \geq \frac{\Delta^\ast(s_h)}{2},
   \end{align}
   for all $a\neq a^\ast(s_h)$.
   This holds true, because
   \begin{align*}
       &Q^{\pi^{\gt^{(n)}}}(s_h,a^\ast(s_h)) - Q^{\pi^{\gt^{(n)}}}(s_h,a) \\
       &= Q^{\pi^{\gt^{(n)}}}(s_h,a^\ast(s_h)) - Q^\infty(s_h,a^\ast(s_h)) +Q^\infty(s_h,a^\ast(s_h))-Q^{\pi^{\gt^{(n)}}}(s_h,a)\\
       &\geq -\frac{\Delta^\ast(s_h)}{2} +Q^\infty(s_h,a^\ast(s_h)) -Q^\infty(s_h,a) +Q^\infty(s_h,a)-Q^{\pi^{\gt^{(n)}}}(s_h,a)\\
       &\geq -\frac{\Delta^\ast(s_h)}{2}+\Delta^\ast(s_h) + \sum_{s^\prime \in \cS^{[\cH]}} p(s^\prime|s_h,a) (V^\infty(s^\prime) - V^{\pi^{\gt^{(n)}}}(s^\prime))\\
       &\geq\frac{\Delta^\ast(s_h)}{2}.
   \end{align*}
   The first inequality follows from $\gt^{(n)} \in\cR_2(s)$, second by the definition of $\Delta^\ast(s_h)$ and the last from mononicity of $V^{\pi^{\gt^{(n)}}}(s^\prime)$ for every $s^\prime$ and $V^\infty$ beeing the limit.
   Using Lemma~\ref{lem:derivative-value-func-sim} we obtain for any $a \neq a^\ast(s_h)$ that
   \begin{align}\label{eq:equivalence_for_partial_deriv-sim}
     \begin{split}
       & \quad \, \,\frac{\partial J(\gt^{(n)})}{\partial \gt(s_h,a^*(s_h))} \geq \frac{\partial J(\gt^{(n)})}{\partial \gt(s_h,a)} \\
       &\Leftrightarrow \pi^{\gt^{(n)}}(a^*(s_h)|s_h) \big( Q_h^{\pi^{\gt^{(n)}}}(s_h,a^*(s_h)) - V_h^{\pi^{\gt^{(n)}}}(s_h) \big) \geq \pi^{\gt^{(n)}}(a|s) \big( Q_h^{\pi^{\gt^{(n)}}}(s_h,a) - V_h^{\pi^{\gt^{(n)}}}(s_h) \big).
     \end{split}
   \end{align}
   We divide into two cases: 
   \begin{itemize}
     \item[a)] $\pi^{\gt^{(n)}}(a^*(s_h)|s_h) \geq \pi^{\gt^{(n)}}(a|s_h)$,
     \item[b)] $\pi^{\gt^{(n)}}(a^*(s_h)|s_h) < \pi^{\gt^{(n)}}(a|s_h)$.
   \end{itemize}
   In $a)$ the assumption $\pi^{\gt^{(n)}}(a^*(s_h)|s_h) \geq \pi^{\gt^{(n)}}(a|s_h)$ implies $\gt^{(n)}(s_h,a^*(s_h)) \geq \gt^{(n)}(s_h,a)$. Thus,
   \begin{align*}
    \gt^{(n+1)}(s,a^*(s_h)) &= \gt^{(n)}(s,a^*(s_h)) + \eta \frac{\partial J(\gt^{(n)})}{\partial \gt^{(n)}(s_h,a^*(s_h))} \\
       &\geq \gt^{(n)}(s,a) + \eta \frac{\partial J(\gt^{(n)})}{\partial \gt^{(n)}(s_h,a)} \\
       &= \gt^{(n+1)}(s,a),
   \end{align*}
   which implies $\pi^{\gt^{(n+1)}}(a^*(s_h)|s_h) \geq \pi^{\gt^{(n+1)}}(a|s_h)$. Moreover, we have 
   \begin{align*}
       Q_h^{\pi^{\gt^{(n+1)}}}(s_h,a^*(s_h))-Q_h^{\pi^{\gt^{(n+1)}}}(s_h,a) \geq \frac{\Delta^\ast(s_h)}{2} \geq 0, \\
       Q_h^{\pi^{\gt^{(n+1)}}}(s_h,a^*(s_h)) - V_h^{\pi^{\gt^{(n+1)}}}(s_h) \geq Q_h^{\pi^{\gt^{(n+1)}}}(s_h,a) -V_h^{\pi^{\gt^{(n+1)}}}(s_h).
   \end{align*}
   Thus, both together yields
   \begin{align*}
       \pi^{\gt^{(n+1)}}(a^*(s_h)|s_h) \big( Q_h^{\pi^{\gt^{(n+1)}}}(s_h,a^*(s_h)) - V_h^{\pi^{\gt^{(n+1)}}}(s_h) \big) \geq \pi_t^{\gt^{(n+1)}}(a|s_h) \big( Q_h^{\pi^{\gt^{(n+1)}}}(s_h,a) - V_h^{\pi^{\gt^{(n+1)}}}(s_h) \big),
   \end{align*}
   which is by \Eqref{eq:equivalence_for_partial_deriv-sim} equivalent to 
   \begin{align*}
       \frac{\partial J(\gt^{(n+1)})}{\partial \gt^{(n+1)}(s,a^*(s_h))}\geq \frac{\partial J(\gt^{(n+1)})}{\partial \gt^{(n+1)}(s_h,a)}.
   \end{align*}
   Hence, $\gt^{(n+1)} \in \cR_1(s_h)$.\\
   In $b)$ assume now that $\pi^{\gt^{(n)}}(a^*(s_h)|s_h) < \pi^{\gt^{(n)}}(a|s_h)$. As $\gt^{(n)} \in \cR_1(s_h)$ \Eqref{eq:equivalence_for_partial_deriv-sim} is also true in this case and rearranging of terms gives 
   \begin{align}\label{eq:equivalence_for_partial_deriv_b-sim}
     \begin{split}
       & \quad \, \,\frac{\partial J(\gt^{(n)})}{\partial \gt^{(n)}(s_h,a^*(s_h))} \geq \frac{\partial J(\gt^{(n)})}{\partial \gt^{(n)}(s_h,a)} \\
       &\Leftrightarrow Q_h^{\pi^{\gt^{(n)}}}(s_h,a^*(s_h)) - Q_h^{\pi^{\gt^{(n)}}}(s_h,a) \geq \Big( 1- \frac{\pi^{\gt^{(n)}}(a^*(s_h)|s_h)}{\pi^{\gt^{(n)}}(a|s_h) } \Big)   \big( Q_h^{\pi^{\gt^{(n)}}}(s_h,a^*(s_h)) -  V_h^{\pi^{\gt^{(n)}}}(s_h) \big) \\
       &\Leftrightarrow Q_h^{\pi^{\gt^{(n)}}}(s_h,a^*(s_h)) - Q_h^{\pi^{\gt^{(n)}}}(s_h,a)  \geq \big( 1- \exp(\gt^{(n)}(s_h,a^*(s_h)) - \gt^{(n)}(s_h,a) \big)    \big( Q_h^{\pi^{\gt^{(n)}}}(s_h,a^*(s_h)) -  V_h^{\pi^{\gt^{(n)}}}(s_h) \big) .
     \end{split}
   \end{align}
   Note next that by $\theta^{(n)} \in \cR_1(s_h)$ and definition of $\cR_1(s_h)$ we have
   \begin{align*}
       &\gt^{(n+1)}(s_h, a^*(s_h)) - \gt^{(n+1)}(s_h, a)  \\
       &= \gt^{(n)}(s_h, a^*(s_h)) + \eta \frac{\partial J(\gt^{(n)})}{\partial \gt^{(n)}(s_h,a^*(s_h))} - \gt^{(n)}(s_h,a) - \eta \frac{\partial J(\gt^{(n)})}{\partial \gt^{(n)}(s_h,a)}\\
       &\geq \gt^{(n)}(s_h, a^*(s_h)) - \gt^{(n)}(s_h, a)
   \end{align*}
   and is follows $ \big( 1- \exp(\gt^{(n+1)}(s_h, a^*(s_h)) - \gt^{(n+1)}(s_h, a)) \big)  \leq  \big( 1- \exp(\gt^{(n)}(s_h, a^*(s_h)) - \gt^{(n)}(s_h, a)) \big) <1$ by assumption $b)$.
   We already know $\gt^{(n+1)}\in \cR_3(s_h)$ and therefore $V_h^{\pi^{\gt^{(n+1)}}}(s_h) \geq Q_h^{\pi^{\gt^{(n+1)}}}(s_h,a^*(s_h))- \frac{\Delta^*(s)}{2}$. This leads to 
   \begin{align*}
       Q_h^{\pi^{\gt^{(n+1)}}}(s_h,a^*(s_h)) - V_h^{\pi^{\gt^{(n+1)}}}(s_h) \leq \frac{\Delta^*(s)}{2} \leq Q_h^{\pi^{\gt^{(n+1)}}}(s_h,a^*(s_h)) - Q_h^{\pi^{\gt^{(n+1)}}}(s_h,a),
   \end{align*}
   where the last inequality is due to~\Eqref{eq:diff-q-functions-sim}. Combining everything leads to 
   \begin{align*}
     &\big( 1- \exp(\gt^{(n+1)}(s, a_h^*(s)) - \gt^{(n+1)}(s, a)) \big)  \Big[ Q_h^{\pi^{\gt^{(n+1)}}}(s_h,a^*(s_h)) - V_h^{\pi^{\gt^{(n+1)}}}(s_h)\Big] \\
     &\leq Q_h^{\pi^{\gt^{(n+1)}}}(s_h,a^*(s_h)) - Q_h^{\pi^{\gt^{(n+1)}}}(s_h,a),
   \end{align*}
   which is by \Eqref{eq:equivalence_for_partial_deriv_b-sim} equivalent to $\gt^{(n+1)} \in \cR_1(s_h)$.

   Now we come to Claim~\ref{claim1_ii-sim}.  
   \begin{align*}
       &\pi^{\gt^{(n+1)}}(a^*(s_h)|s_h) \\
       &= \frac{\exp(\gt^{(n+1)}(s_h, a^*(s_h)))}{\sum\limits_{a\in \cA} \exp(\gt^{(n+1)}(s_h, a))}\\
       &= \frac{\exp(\gt^{(n)}(s_h, a^*(s_h)) + \eta \frac{\partial J(\gt^{(n)})}{\partial \gt^{(n)}(s_h,a^*(s_h))})}{\sum\limits_{a\in \cA} \exp(\gt^{(n)}(s_h,a)+ \eta \frac{\partial J(\gt^{(n)})}{\partial \gt^{(n)}(s_h,a)})}\\
       &\geq \frac{\exp(\gt^{(n)}(s_h, a^*(s_h)) ) \exp( \eta \frac{\partial J(\gt^{(n)})}{\partial\gt^{(n)}(s_h,a^*(s_h))})}{\sum\limits_{a\in \cA} \exp(\gt^{(n)}(s_h,a)) \exp( \eta \frac{\partial J(\gt^{(n)})}{\partial \gt^{(n)}(s_h,a^*(s_h))})}\\
       &= \pi^{\gt^{(n)}}(a^*(s_h)|s_h),
   \end{align*}
   where the inequality follows by $\gt^{(n)}\in \cR_1(s_h)$.

   \textbf{Claim 2.} Assume $\gt \in \cN_c(s_h) \cap \cR_2(s_h)\cap \cR_3(s_h)$ and divide again in two cases.
   If $a)$ $\pi^\gt(a^*(s_h)|s_h) \geq \max\limits_{a\in\cA}\pi^\gt(a|s_h)$, then for all $a\neq a^*(s_h)$ we have
   \begin{align*}
       &\frac{\partial J(\gt)}{\partial \gt(s_h,a^*(s_h))} \\
       &= \tilde{\rho}_\mu^{\pi^{\gt}}(s_h)\pi^\gt(a^*(s_h)|s_h) A^{\pi^{\gt}}(s_h, a^*(s_h)) \\
       & \geq \tilde{\rho}_\mu^{\pi^{\gt}}(s_h) \pi^\gt(a|s_h) A^{\pi^{\gt}}(s_h, a) \\
       &= \frac{\partial J(\gt)}{\partial \gt(s_h,a)}. 
   \end{align*}
   Where the inequality follows from $A^{\pi^{\gt}}(s_h, a^*(s_h)) -A^{\pi^{\gt}}(s_h, a) = Q^{\pi^{\gt}}(s_h, a^*(s_h)) - Q^{\pi^{\gt}}(s_h, a) \geq \frac{\Delta^\ast(s_h)}{2} >0$ by \Eqref{eq:diff-q-functions-sim}.
   Hence, $\gt \in \cR_1(s_h)$.\\
   The case $b)$ where $\pi^\gt(a^*(s_h)|s_h) < \max\limits_{a\in\cA}\pi^\gt(a|s_h)$ is not possible for $\gt\in \cN_c(s_h)$. Assume there exists $a\neq a^*(s_h)$ such that $\pi^\gt(a^*(s_h)|s_h) < \pi^\gt(a|s_h)$. Then
   \begin{align*}
       \pi^\gt(a^*(s_h)|s_h) + \pi^\gt(a|s_h) > \frac{2 c(s_h)}{c(s_h) +1 } = \frac{\frac{2 |\cA| H R^*}{\Delta^*(s)} - 2}{ \frac{|\cA|H R^*}{\Delta^*(s)}} = 2- \frac{2 \Delta^*(s)}{|\cA|HR^*} \geq 2- \frac{2}{|\cA|} \geq 1,
   \end{align*}
   because $\Delta^*(s) \leq HR^*$ by definition and $|\cA| \geq 2$. This is a contradiction as $\pi^\theta$ is a probability distribution and Claim 2 is proven. \\
   \textbf{Claim 3.} By the asymptotic convergence of Theorem~\ref{thm:asympothic_convergence-sim-allgemein}, we have that $\pi^{\gt^{(n)}}(a^\ast(s_h)|s_h) \to 1 $ for $n\to \infty$. Thus, there exists an $N_0(s_h)>0$, such that  $\pi^{\gt^{(n)}}(a^\ast(s_h)|s_h) \geq \frac{c(s_h)}{c(s_h)+1}$ for all $n \geq N_0(s_h)$, i.e. $\gt^{(n)} \in N_c(s_h)$ for all $n \geq N_0(s_h)$. \\
   Furthermore, as $Q^{\pi^{\gt^{(n)}}}(s_h,a^\ast(s_h))  \to Q^\infty(s_h,a^\ast(s_h))$ for $n \to \infty$ there exists $N_1(s_h)$ such that $\gt^{(n)} \in \cR_2(s_h)$ for all $n \geq N_1(s_h)$.\\
   Moreover, as $Q^{\pi^{\gt^{(n)}}}(s_h,a^\ast(s_h))  \to Q^\infty(s_h,a^\ast(s_h)) = V^\infty(s_h)$  and $V^{\pi^{\gt^{(n)}}}(s_h) \to  V^\infty(s_h)$ for $n \to \infty$ there exists $N_2(s_h)$ such that $\gt^{(n)} \in \cR_3(s_h)$ for all $n \geq N_2(s_h)$.\\
   We choose $n_0(s_h) = \max\{N_0(s_h),N_1(s_h), N_2(s_h)\}$ which proves Claim 3. \\
\end{proof}

\convergenceratesim*
\begin{proof}
  We will show that 
  \begin{align*}
    J^\ast(\mu) - J(\theta^{(n)},\mu)  = V_0^\ast(\mu ) - V_0^{\pi^{\theta^{(n)}}}(\mu)  \leq \frac{10 H^5 R^\ast |\cS|}{c^2 n} \Big\lVert \frac{d_\mu^{\pi^\ast}}{\mu}\Big\rVert_\infty^2,
  \end{align*}
  then the claim follows immediately from this. 

   For any $\beta$-smooth function $f: \R^d \to \R$ the descent lemma gives \citep[see][Lemma 5.7]{beck2017}
   \begin{align*}
    f(y) \leq f(x) + \nabla f(x)^T (y-x) + \frac{\beta}{2} \lVert y-x \rVert^2.
   \end{align*}
   As $-f$ is also $\beta$-smooth we follow
   \begin{align*}
    -f(y) \leq -f(x) - \nabla f(x)^T (y-x) + \frac{\beta}{2} \lVert y-x \rVert^2,
   \end{align*}
   which is equivalent to 
   \begin{align}\label{eq:inequality_for_smooth_function}
    f(y) \geq f(x)+ \nabla f(x)^T (y-x) -\frac{\beta}{2} \lVert y-x \rVert^2.
   \end{align}
   Now for gradient ascent updates 
   \begin{align*}
    x_{k+1} = x_k + \alpha \nabla f(x_k)
   \end{align*}
   we have that 
   \begin{align*}
    f(x_{k+1}) &\geq f(x_k) + \nabla f(x_k)^T (x_{k+1}-x_k) - \frac{\beta}{2} \lVert x_{k+1} -x_k \rVert^2 \\
    &= f(x_k) + \alpha \lVert\nabla f(x_k)\rVert^2 - \frac{\beta \alpha^2}{2}\lVert\nabla f(x_k)\rVert^2 \\
    &= f(x_k) + \Big( \alpha - \frac{\beta \alpha^2}{2} \Big) \lVert\nabla f(x_k)\rVert^2.
   \end{align*}
   It follows for the maximum $f^\ast$ of $f$ that 
   \begin{align*}
    f^\ast - f(x_{k+1}) &\leq f^\ast - f(x_k) - \Big( \alpha - \frac{\beta \alpha^2}{2} \Big) \lVert\nabla f(x_k)\rVert^2.
   \end{align*}
   Now assume that there exists a $b>0$ such that $\lVert\nabla f(x_k)\rVert > b (f^\ast - f(x_k))$ for all $k\geq 0$, then 
   \begin{align*}
      f^\ast - f(x_{k+1}) &\leq f^\ast - f(x_k) - \Big( \alpha - \frac{\beta \alpha^2}{2} \Big) b^2 (f^\ast - f(x_k))^2.
   \end{align*}
   We choose the step size $\alpha \leq \frac{1}{\beta}$, then 
   \begin{align*}
      f^\ast - f(x_{k+1}) &\leq f^\ast - f(x_k) -  \frac{\alpha c^2}{2}  (f^\ast - f(x_k))^2.
   \end{align*}
   
   When $f^\ast - f(x_1) \leq \frac{2}{\alpha b^2}$, then $ f^\ast - f(x_n) \leq \frac{2}{\alpha b^2 n}$ (see Lemma~\ref{lem:help-dn}).

   We apply this to our objective $J(\gt,\mu)$ with $\alpha = \eta = 5H^2 R^\ast$ and $b = \frac{c}{\sqrt{|\cS|H} H} \lVert\frac{d_\mu^{\pi^\ast}}{\mu} \rVert^{-1}$. Note for $b$, that $d_\mu^{\pi^\theta}(s) \geq \frac{1}{H} \mu(s)$ by definition for any $\gt$ (see also Remark~\ref{rem:distribution-d-zu-mu}.
   So, we only need to check that 
   $$J^\ast(\mu ) - J(\gt^{(0)},\mu) \leq \frac{2 H^2 R^\ast 5 H^2 H |\cS|}{c^2}\Big\lVert \frac{d_\mu^{\pi^\ast}}{\mu}\Big\rVert_\infty^{2}.$$ 
   This is directly given by the bounded reward assumption and the fact that $c<1$ and $\Big\lVert \frac{d_\mu^{\pi^\ast}}{\mu}\Big\rVert_\infty^{2}>1$. Then, we yield the claim 
   \begin{align*}
       J^\ast(\mu)-J(\gt^{(n)},\mu) \leq \frac{10 H^5 R^\ast |\cS|}{c^2 n} \Big\lVert \frac{d_\mu^{\pi^\ast}}{\mu}\Big\rVert_\infty^2.
   \end{align*}
\end{proof}

\begin{remark}\label{rem:comparison-sim-and-mei}
    In the discounted setting \citet{mei2022global} obtain the factor $(1-\gamma)^{-6}$, where a power of $3$ is due to their smoothness constant $\frac{8}{(1-\gamma)^3}$\footnote{Choosing the learning rate $(1-\gamma)^3/8$ leads to the $(1-\gamma)^{-3}$ factor in the convergence rate. Using recent results in \citep[Lem 4]{YuanGrower22} one can improve the smoothness constant for discounted MDPs. Still, the global convergence result in \citep[Thm 5]{agarwal2020theory} holds only for a learning rate $(1-\gamma)^3/8$ and hence does not lead to direct improvement in the convergence rate.}, a power of $2$ is due to the distribution mismatch coefficient and the additional power is due to comparing value functions with a different start distribution $\rho$ instead of $\mu$. Comparing to our results, directly using $\mu$ leads also to a factor $(1-\gamma)^{-5}$. 
    For the simultaneous PG the smoothness of order $H^2 R^\ast$ leads to a $H^2$ in the convergence rate, then the distribution mismatch coefficient adds another $H^2$ and the additional $H$ comes from the PL-inequality, as the cardinality of the enlarged state space under Assumption~\ref{ass:sim} is $|\cS^{[\cH]}|= |\cS| H$. As mentioned in the article, it cannot be proven that $c$ is independent of $H$. We omitted this dependency when we compare to the discounted case because the model dependent constant there could also depend on $\gamma$ in the same sense.
\end{remark}

\begin{lemma}\label{lem:help-dn}
   Let $(d_n)_{n\in\N_0}$ be a positive sequence, such that $d_{n+1} \leq d_n - q d_n^2$ for some $q>0$ and $d_0 <\frac{1}{q}$, then $d_n \leq \frac{1}{qn}$.
\end{lemma}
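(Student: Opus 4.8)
The plan is to pass to reciprocals, which converts the quadratic recursion into an essentially linear one that telescopes. First I would record the elementary fact that the sequence is non-increasing: since $q d_n^2 \ge 0$, the hypothesis gives $d_{n+1} \le d_n - q d_n^2 \le d_n$, so by induction $d_n \le d_0 < \tfrac{1}{q}$ for every $n$. Thus each $d_n$ lies in the interval $(0,\tfrac{1}{q})$ (positivity is assumed, the upper bound comes from $d_0<\tfrac1q$), which keeps every reciprocal appearing below well defined and positive.

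The key step is to bound the increment of $1/d_n$ from below. Using positivity of $d_n$ and $d_{n+1}$ together with the recursion, I would write
\[
\frac{1}{d_{n+1}} - \frac{1}{d_n} = \frac{d_n - d_{n+1}}{d_n\, d_{n+1}} \ge \frac{q d_n^2}{d_n\, d_{n+1}} = \frac{q d_n}{d_{n+1}} \ge q,
\]
where the first inequality uses $d_n - d_{n+1} \ge q d_n^2$ (the defining assumption) and the last uses the monotonicity $d_{n+1}\le d_n$ established in the first step. This is the whole point of switching to reciprocals: the nonlinear per-step decay becomes an additive gain of at least $q$.

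Summing this telescoping estimate over the first $n$ steps yields $\tfrac{1}{d_n} \ge \tfrac{1}{d_0} + q n \ge q n$, where I simply discard the positive term $\tfrac{1}{d_0}$. Inverting gives $d_n \le \tfrac{1}{q n}$ for $n \ge 1$, which is the claim (and in fact one even gets the slightly stronger $d_n < \tfrac{1}{qn}$).

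I do not expect any genuine obstacle here; this is a standard auxiliary lemma. The only point requiring mild care is establishing the monotonicity $d_{n+1}\le d_n$ before taking reciprocals, since it is exactly what validates the final inequality in the displayed chain and confines each $d_n$ to $(0,\tfrac1q)$. This quadratic-to-linear reciprocal device is the standard mechanism producing $\cO(1/n)$ decay, and it is precisely what drives the $1/n$ rate invoked in the proof of Theorem~\ref{thm:convergence-rate-sim-deterministic}.
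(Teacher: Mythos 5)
Your proof is correct and follows essentially the same route as the paper: pass to reciprocals, use the recursion together with monotonicity to get $\frac{1}{d_{n+1}} \geq \frac{1}{d_n} + q$, and telescope. The only cosmetic difference is that you discard the term $\frac{1}{d_0}$ at the end, while the paper keeps it (using $d_0 < \frac{1}{q}$) to pass through the slightly sharper intermediate bound $d_n \leq \frac{1}{q(n+1)}$ before relaxing to $\frac{1}{qn}$.
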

\begin{proof}
   We use an argument similar to \citet[Thm.~2.1.14]{nesterov2}. It holds
  \begin{align*}
    \frac{1}{d_{n+1}} \geq \frac{1}{d_n} + \frac{q d_n}{ d_{n+1}} \geq \frac{1}{d_n} + q,
  \end{align*}
  where the first inequality is due to dividing by $d_n d_{n+1}$ and the second inequality follows by monotonicity. Using a telescope-sum argument we obtain
  \begin{align*}
    \frac{1}{d_n}= \frac{1}{d_0} + \sum_{k=0}^{n-1} \frac{1}{d_{k+1}} - \frac{1}{d_{k}} \geq \frac{1}{d_0} + nq.
  \end{align*}
  Finally,
  \begin{align*}
      d_n \leq \frac{1}{nq + \frac{1}{d_0}}\leq \frac{1}{q(n+1)} \leq \frac{1}{q n}.
   \end{align*}
\end{proof}

\subsection{Proofs of Section~\ref{subsec:dynamical-deterministic}}\label{app:proofs-dynamical-deterministic}

\begin{lemma}\label{lem:beta-smoothness}
  Let $h \in \cH$, then the objective $J_h(\gt_h,\tilde{\pi}_{(h+1)},\mu_h)$ from \Eqref{eq:J_h-objective} is smooth in $\gt_h$ with parameter $\beta_h = 2(H-h)R^\ast$.
\end{lemma}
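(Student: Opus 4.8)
The plan is to exploit the fact that, in the dynamic approach, the objective $J_h(\gt_h,\tilde{\pi}_{(h+1)},\mu_h)$ is a genuine one-step (contextual bandit) objective in the variable $\gt_h$. Writing it out as
\[
  J_h(\gt_h,\tilde{\pi}_{(h+1)},\mu_h) = \sum_{s\in\cS_h} \mu_h(s) \sum_{a\in\cA_s} \pi^{\gt_h}(a|s)\, Q_h^{\tilde{\pi}}(s,a),
\]
the crucial observation is that $Q_h^{\tilde{\pi}}(s,a)$ does \emph{not} depend on $\gt_h$: by definition~\eqref{eq:def_Q_function} it is determined entirely by the fixed future policy $\tilde{\pi}_{(h+1)}$. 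Consequently, differentiating twice in $\gt_h$ never produces terms coming from a parameter-dependence of the $Q$-function, and the epoch-$h$ score $\nabla\log\pi^{\gt_h}$ enters only once. This is precisely the structural source of the improvement over the simultaneous case, where in Lemma~\ref{lem:smoothness-simultan} the score is summed over all $H$ epochs and multiplied by a reward bounded by $HR^\ast$, producing the extra power of $H$.

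First I would record the elementary bound $0\le Q_h^{\tilde{\pi}}(s,a)\le (H-h)R^\ast$, immediate from the reward assumption since at most $H-h$ rewards, each in $[0,R^\ast]$, contribute. Next I would note that the Hessian $\nabla^2_{\gt_h}J_h$ is block diagonal across states: the softmax at $s$ uses only the coordinates $\gt_h(s,\cdot)$, and since $J_h(\mu_h)=\sum_s \mu_h(s)J_h(\delta_s)$ with $J_h(\delta_s)$ depending only on $\gt_h(s,\cdot)$, the operator norm of the full Hessian equals $\max_{s\in\cS_h}\mu_h(s)\,\lVert\nabla^2_{\gt_h(s,\cdot)}J_h(\delta_s)\rVert$, and $\mu_h(s)\le 1$ lets me drop the weighting entirely. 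For a fixed state $s$, differentiating $\sum_a \pi^{\gt_h}(a|s)Q_h^{\tilde{\pi}}(s,a)$ twice (using that $Q_h^{\tilde{\pi}}$ is constant in $\gt_h$) gives
\[
  \nabla^2_{\gt_h(s,\cdot)}J_h(\delta_s) = \E_{A\sim\pi^{\gt_h}(\cdot|s)}\Big[\big(\nabla\log\pi^{\gt_h}(A|s)\,\nabla\log\pi^{\gt_h}(A|s)^\top + \nabla^2\log\pi^{\gt_h}(A|s)\big)Q_h^{\tilde{\pi}}(s,A)\Big].
\]
Taking norms, pulling out the bound on $Q_h^{\tilde{\pi}}$, and invoking the softmax estimates of \citet{YuanGrower22} (Lemma 3.6 for $\E[\lVert\nabla\log\pi^{\gt_h}\rVert^2]\le 1-\frac{1}{|\cA|}$ and Lemma 4.8 for $\E[\lVert\nabla^2\log\pi^{\gt_h}\rVert]\le 1$) yields $\lVert\nabla^2_{\gt_h(s,\cdot)}J_h(\delta_s)\rVert\le (H-h)R^\ast(2-\frac{1}{|\cA|})\le 2(H-h)R^\ast$, which is the claimed $\beta_h$.

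The computation itself is routine, so the content is structural rather than technical. The step I expect to require the most care is the clean reduction to the bandit: verifying that $Q_h^{\tilde{\pi}}$ genuinely carries no $\gt_h$-dependence—so that the Hessian has exactly the two-term form above with no correction—and that the block-diagonal argument legitimately reduces the problem to a single per-state block with the benign factor $\mu_h(s)\le 1$. Equivalently, one may phrase the whole argument as the discounted smoothness result of \citet{YuanGrower22} specialised to $\gamma=0$, with the effective per-step reward $Q_h^{\tilde{\pi}}$ bounded by $(H-h)R^\ast$ in place of $R^\ast$; either formulation makes clear why the dynamic objective avoids the quadratic-in-$H$ smoothness constant of the simultaneous scheme.
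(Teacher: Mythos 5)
Your proposal is correct and follows essentially the same route as the paper: the paper's proof also interprets $J_h$ as a one-step ($\gamma=0$) problem with effective rewards $Q_h^{\tilde{\pi}}\in[0,(H-h)R^\ast]$ and cites the softmax estimates of \citet{YuanGrower22} to conclude $\beta_h = 2(H-h)R^\ast$. Your version merely unpacks that citation by writing out the Hessian $\E_{A\sim\pi^{\gt_h}(\cdot|s)}[(\nabla\log\pi^{\gt_h}\,\nabla\log\pi^{\gt_h}{}^\top+\nabla^2\log\pi^{\gt_h})Q_h^{\tilde{\pi}}]$ explicitly and adding the (valid) block-diagonal reduction across states, which is a self-contained rendering of the same argument.
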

\begin{proof}
  Note that we can interpret the objective function $J_h(\gt_h,\tilde{\pi}_{(h+1)},\mu_h)$ as a value function of a one-step discounted MDP with $\gamma=0$ and bounded rewards between $[0,R^\ast (H-h)]$. Hence, we can use \citet[Lem 4.4 and 4.8]{YuanGrower22} to obtain that the softmax policy $\pi^{\gt_h}$ fulfills the desired properties with
   \begin{align*}
      &\E_{ A\sim \pi^{\gt_h}}\Big[||\nabla \log \pi^{\gt_h}(A|s)||_2^2 \Big]\leq 1-\frac{1}{|\cA_s|} \leq 1\quad \forall s\in\cS\\
      &\E_{ A\sim \pi^{\gt_h}}\Big[||\nabla^2 \log \pi^{\gt_h}(A|s)||_2\Big] \leq 1,
   \end{align*}
   which leads to a smoothness constant $\beta_h = 2(H-h)R^\ast$ for the objective function $J_h$.
\end{proof}

\begin{lemma}\label{lem:PL-type_inequality_finite_time}
  It holds that
  \begin{align*}
    \lVert \nabla J_h(\gt_h,\tilde{\pi}_{(h+1)},\mu_h) \rVert_2 \geq \min_{s\in \cS_h} \pi^{\gt_h}(a_h^*(s)|s) (J_{h}^\ast(\tilde{\pi}_{(h+1)},\mu_h) - J_h(\gt_h,\tilde{\pi}_{(h+1)},\mu_h)).
  \end{align*}
\end{lemma}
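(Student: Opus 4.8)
The plan is to adapt the weak-PL argument of the simultaneous case (Lemma~\ref{lem:weak_pl_sim}) to the one-step objective $J_h$, exploiting that for a \emph{fixed} future policy $\tilde{\pi}_{(h+1)}$ the map $\gt_h \mapsto J_h(\gt_h,\tilde{\pi}_{(h+1)},\mu_h)$ is a contextual bandit. Concretely, $Q_h^{\tilde{\pi}}(s,a)=r(s,a)+\sum_{s'}p(s'|s,a)V_{h+1}^{\tilde{\pi}}(s')$ does not depend on $\gt_h$, so $J_h(\gt_h,\tilde{\pi}_{(h+1)},\mu_h)=\sum_{s\in\cS_h}\mu_h(s)\sum_{a\in\cA_s}\pi^{\gt_h}(a|s)\,Q_h^{\tilde{\pi}}(s,a)$ decouples into one softmax bandit per state. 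This is precisely the $\gamma=0$ instance of the discounted analysis, and it is the reason I expect no distribution-mismatch coefficient to appear: the one-step state-visitation measure is identically $\mu_h$, so the factor $\lVert d_\mu^{\pi^\ast}/d_\mu^{\pi^\gt}\rVert_\infty$ of Lemma~\ref{lem:weak_pl_sim} collapses to $1$.

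First I would invoke the gradient formula of Lemma~\ref{lem:derivative-value-func-dynamic}, namely $\partial J_h/\partial\gt_h(s,a)=\mu_h(s)\,\pi^{\gt_h}(a|s)\,A_h^{(\pi^{\gt_h},\tilde{\pi}_{(h+1)})}(s,a)$, and lower bound the full $\ell_2$-norm by keeping, in each state $s$, only the coordinate of the greedy action $a_h^\ast(s)=\argmax_{a}Q_h^{\tilde{\pi}}(s,a)$. Factoring out $\min_{s\in\cS_h}\pi^{\gt_h}(a_h^\ast(s)|s)$ then leaves a sum of nonnegative terms $\mu_h(s)\,A_h^{(\pi^{\gt_h},\tilde{\pi}_{(h+1)})}(s,a_h^\ast(s))$, where nonnegativity is immediate since this advantage equals $\max_a Q_h^{\tilde{\pi}}(s,a)-V_h^{(\pi^{\gt_h},\tilde{\pi}_{(h+1)})}(s)\ge 0$. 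By the second identity of Corollary~\ref{cor:optimal-performance-diff-for-both} (with $\pi^\ast$ taken to be an optimal policy for the sub-problem, i.e.\ greedy for $Q_h^{\tilde{\pi}}$) this sum is exactly the optimality gap $J_h^\ast(\tilde{\pi}_{(h+1)},\mu_h)-J_h(\gt_h,\tilde{\pi}_{(h+1)},\mu_h)$, which would close the estimate.

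The hard part will be the passage from the $\ell_2$-norm of the gradient, which aggregates the state contributions quadratically, to the optimality gap, which aggregates them linearly in $\mu_h$; a naive Cauchy--Schwarz here would reintroduce the unwanted $1/\sqrt{|\cS_h|}$ factor of the simultaneous proof. I expect the right route to be pointwise: I would first establish the bound for each Dirac start $\delta_s$, where the single-state softmax \L ojasiewicz inequality of \citet{mei2022global} gives $\lVert\nabla J_h(\cdot,\tilde{\pi}_{(h+1)},\delta_s)\rVert_2\ge \pi^{\gt_h}(a_h^\ast(s)|s)\big(J_h^\ast(\tilde{\pi}_{(h+1)},\delta_s)-J_h(\gt_h,\tilde{\pi}_{(h+1)},\delta_s)\big)$ with \emph{no} dimensional constant, precisely because a single state contributes a single greedy coordinate whose magnitude already controls the gap. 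The delicate point I would scrutinize most carefully is how this per-state estimate is to be combined under a general weighting $\mu_h$ without losing the $|\cS_h|$-free constant — this is exactly where the one-step (contextual-bandit) decoupling has to be used, and where I would check that the advantage of the greedy action remains nonnegative state-by-state so that the aggregation against $\mu_h$ is legitimate.
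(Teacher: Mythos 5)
Your plan reproduces the paper's proof almost step for step: the paper likewise starts from the gradient formula of Lemma~\ref{lem:derivative-value-func-dynamic}, keeps only the greedy coordinate in each state, factors out $\min_{s\in\cS_h}\pi^{\gt_h}(a_h^*(s)|s)$, and closes with the performance-difference identity of Corollary~\ref{cor:optimal-performance-diff-for-both}; your observation that at $\gamma=0$ the distribution-mismatch coefficient collapses to $1$ is also correct. But the ``delicate point'' you flagged and left open is a genuine gap, and it is not one that can be closed, because the lemma as printed is false once $|\cS_h|\ge 3$. Since the gradient decouples into disjoint per-state blocks, its norm is the quadratic aggregate $\big[\sum_{s\in\cS_h}\mu_h(s)^2\,\pi^{\gt_h}(a_h^*(s)|s)^2\,A_h^{(\pi^{\gt_h},\tilde{\pi}_{(h+1)})}(s,a_h^*(s))^2\big]^{1/2}$ (plus further nonnegative coordinates), whereas the optimality gap is the linear aggregate $\sum_{s}\mu_h(s)\,A_h^{(\pi^{\gt_h},\tilde{\pi}_{(h+1)})}(s,a_h^*(s))$, and $\ell_2\ge\ell_1$ fails in dimension greater than one. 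Concretely, take $h=H-1$, $|\cS_{h}|=n$, two actions with $r(s,a^*)=1$ and $r(s,b)=0$ in every state, $\mu_h$ uniform, and logits with $\pi^{\gt_h}(a^*|s)=p$ for all $s$: then $\lVert\nabla J_h\rVert_2=\sqrt{2/n}\,p(1-p)$ while the right-hand side of the lemma equals $p(1-p)$, so the stated inequality fails for every $n\ge3$. Your pointwise route is sound state by state (that is exactly the bandit \L{}ojasiewicz inequality of \citet{mei2022global}), but recombining the blocks against $\mu_h$ necessarily costs a factor: Cauchy--Schwarz gives $\lVert\nabla J_h\rVert_2\ge\frac{\min_s\pi^{\gt_h}(a_h^*(s)|s)}{\sqrt{|\cS_h|}}\,(J_h^\ast-J_h)$, and bounding the weighted sum by its largest term gives the alternative $\lVert\nabla J_h\rVert_2\ge\min_s\pi^{\gt_h}(a_h^*(s)|s)\,\min_s\mu_h(s)\,(J_h^\ast-J_h)$; no constant free of both $|\cS_h|$ and $\mu_h$ is available.

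You should therefore not fault your attempt for stopping where it did. The paper's own justification of this step --- ``the first inequality is due to the non-negativity of all other terms, and we just drop them'' --- legitimately discards coordinates \emph{inside} the square root, but it does not license converting the surviving $\ell_2$ sum over states into the $\ell_1$ sum appearing on the next line of the paper's display, and that conversion is precisely what your proposal (rightly) refused to perform. Either corrected variant above supports the downstream analysis, at the price that the constants in Lemma~\ref{lem:convergene_rate_finite_time_without_reg} and Theorem~\ref{thm:PG-error-over-time} acquire an additional $|\cS_h|$- or $\lVert 1/\mu_h\rVert_\infty^{2}$-type factor (the $\min_s\mu_h(s)$ variant meshes naturally with the $\lVert 1/\mu_h\rVert_\infty$ already present there). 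In short: your reconstruction \emph{is} the paper's argument, your suspicion localized the flaw exactly, and the missing idea is not a cleverer aggregation exploiting the contextual-bandit decoupling but a weaker, correct form of the inequality.
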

\begin{proof}
   First note that by the definition of $\pi_h^\ast$, we have $J_h^\ast(\tilde{\pi}_{(h+1)},\mu_h) = V_h^{(\pi_h^\ast, \tilde{\pi}_{(h+1)})}(\mu_h)$, because the tabular softmax parametrisation can approximate any deterministic policy arbitrarily well. 
   %We denote by $J_{h}^\ast(\tilde{\pi}_{(h+1)},\delta_s) = V_h^{(\pi_h^\ast, \tilde{\pi}_{(h+1)})}(s)$ the optimal $h$-state value function for all $s \in\cS_h$, when the policy after $h$ is fixed. 
   Using the performance difference lemma in the dynamic setting from Corollary~\ref{cor:optimal-performance-diff-for-both} and the derivative of the objective given in Lemma~\ref{lem:derivative-value-func-dynamic}, we obtain
   \begin{align*}
       &\Big\lVert \frac{\partial J_h(\gt_h,\tilde{\pi}_{(h+1)},\mu_h)}{\partial \gt_h} \Big\rVert_2 \\
       &= \Big\lVert \sum\limits_{s\in \cS_h} \mu_h(s) \frac{\partial J_{h}(\gt_h,\tilde{\pi}_{(h+1)},\delta_s)}{\partial \gt_h} \Big\rVert_2 \\
       &= \Big[\sum_{s^\prime \in\cS_h}\sum\limits_{a^\prime\in \cA_{s^\prime}} \Big(\sum\limits_{s\in \cS_h} \mu_h(s) \frac{\partial J_{h}(\gt_h,\tilde{\pi}_{(h+1)},\delta_s)}{\partial \gt_h(s^\prime,a^\prime)} \Big)^2 \Big]^{\frac{1}{2}} \\
       & \geq \sum\limits_{s\in \cS_h} \mu_h(s) \Big| \frac{\partial J_{h}(\gt_h,\tilde{\pi}_{(h+1)},\delta_s)}{\partial \gt_h(s,a_h^*(s))} \Big|\\
       &= \sum\limits_{s\in \cS_h} \mu_h(s) \pi^{\gt_h}(a_h^*(s)|s) A_h^{(\pi^{\gt_h}, \tilde{\pi}_{(h+1)})}(s,a_h^*(s))\\%}_{=\E_{S_h=s}^{\pi^*}[A_h^{\pi^\gt}(S_t,A_t)]} \\
       %&\overset{\text{Corollary~\ref{lem:performance_difference_fixed_after_h}}}{ =}  \sum\limits_{s\in \cS_h} \mu_h(s) \pi^{\gt}(a_h^*(s)|s) \Big( J_{h,s}^\ast - J_{h,s}(\gt) \Big)\\
       &=\sum\limits_{s\in \cS_h} \mu_h(s) \pi^{\gt_h}(a_h^*(s)|s) \Big( J_{h}^\ast(\tilde{\pi}_{(h+1)},\delta_s) - J_{h}(\gt_h,\tilde{\pi}_{(h+1)},\delta_s) \Big)\\
       & \geq \min_{s\in \cS_h} \pi^{\gt_h}(a_h^*(s)|s)   \Big(J_h^\ast(\tilde{\pi}_{(h+1)},\mu_h) - J_h(\gt_h,\tilde{\pi}_{(h+1)},\mu_h)\Big).\\
   \end{align*}
   The first inequality is due to the non-negativity of all other terms, and we just drop them.
\end{proof}

\begin{lemma}\label{lem:c_lager_0_finite_time_without_reg}
  Let $\mu_h$ be a probability measure such that $\mu_h(s) >0$ for all $s\in\cS_h$ and let $0<\eta_h \leq \frac{1}{2(H-h)R^*}$. 
  Consider the sequence $(\gt_h^{(n)})$ generated by Algorithm~\ref{alg:PG} for arbitrary $\gt_h^{(0)}\in\cR^{d_h}$ and $\tilde{\pi}$. Then, 
  $c_h = c_h(\gt_h^{(n)}) =  \inf_{n\geq 0}\min_{s\in \cS_h} \pi^{\gt_h^{(n)}}(a_h^*(s)|s) > 0.$
\end{lemma}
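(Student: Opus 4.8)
The plan is to mirror the structure of the proof of Lemma~\ref{lem:c_larger_0-sim}, but to exploit the decisive simplification of the dynamic setting: because the sub-policy $\tilde{\pi}_{(h+1)}$ after epoch $h$ is held fixed during the training of $\gt_h$, the state-action value function $Q_h^{\tilde{\pi}}(s,a)$ does \emph{not} depend on $\gt_h$. Thus $J_h(\cdot,\tilde{\pi}_{(h+1)},\mu_h)$ is literally the objective of a contextual bandit (a one-step MDP with $\gamma=0$) whose reward vector in each context $s\in\cS_h$ is the fixed vector $(Q_h^{\tilde{\pi}}(s,a))_{a\in\cA_s}$. In particular the optimal action $a_h^*(s)=\argmax_{a\in\cA_s} Q_h^{\tilde{\pi}}(s,a)$ and the positive gap
\[
  \Delta^*(s) := Q_h^{\tilde{\pi}}(s,a_h^*(s)) - \max_{a\neq a_h^*(s)} Q_h^{\tilde{\pi}}(s,a) > 0
\]
are \emph{constants} fixed at the outset, rather than quantities defined through an unknown limit $Q^\infty$ as in the simultaneous proof.

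First I would observe that the problem decouples over contexts. By Lemma~\ref{lem:derivative-value-func-dynamic} the partial derivative $\frac{\partial J_h}{\partial \gt_h(s,a)} = \mu_h(s)\,\pi^{\gt_h}(a|s)\,A_h^{(\pi^{\gt_h},\tilde{\pi}_{(h+1)})}(s,a)$ depends, for fixed $s$, only on $\pi^{\gt_h}(\cdot|s)$ and on the fixed row $Q_h^{\tilde{\pi}}(s,\cdot)$; since the softmax parameters for distinct states are disjoint, the gradient-ascent dynamics of $\pi^{\gt_h^{(n)}}(\cdot|s)$ are governed entirely by the single context $s$ (and $\mu_h(s)>0$ only rescales the step). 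It therefore suffices to prove, for each fixed $s\in\cS_h$, that $\inf_{n\ge0}\pi^{\gt_h^{(n)}}(a_h^*(s)|s)>0$, and then take the minimum over the finite set $\cS_h$.

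For a fixed context $s$, I would run the ``nice region'' argument of Lemma~\ref{lem:c_larger_0-sim} in its collapsed form. Because $Q_h^{\tilde{\pi}}$ is constant, the analogue of the region $\cR_2$ of that proof is automatically satisfied and the key separation $Q_h^{\tilde{\pi}}(s,a_h^*(s)) - Q_h^{\tilde{\pi}}(s,a)\ge\Delta^*(s)$ holds by definition for all $n$ (indeed with the full gap rather than $\Delta^*/2$). What remains is the region
\[
  \cR_1(s) = \Big\{\gt_h : \tfrac{\partial J_h}{\partial \gt_h(s,a_h^*(s))} \ge \tfrac{\partial J_h}{\partial \gt_h(s,a)}\ \ \forall a\neq a_h^*(s)\Big\}
\]
together with a near-convergence region enforcing $V_h^{(\pi^{\gt_h},\tilde{\pi}_{(h+1)})}(s)\ge Q_h^{\tilde{\pi}}(s,a_h^*(s)) - \Delta^*(s)/2$. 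As in Claims~1 and~2 of Lemma~\ref{lem:c_larger_0-sim}, the case split according to whether $\pi^{\gt_h^{(n)}}(a_h^*(s)|s)\ge \pi^{\gt_h^{(n)}}(a|s)$ or not shows this region is forward-invariant under the update $\gt_h^{(n+1)}=\gt_h^{(n)}+\eta_h\nabla J_h$ and that $\pi^{\gt_h^{(n)}}(a_h^*(s)|s)$ is non-decreasing once inside. To see that the iterate actually enters the region, I would invoke the asymptotic convergence of softmax PG for the one-step MDP, which follows directly from \citet[Thm~5]{agarwal2020theory} with $\gamma=0$ (as already noted in the main text): this yields $\pi^{\gt_h^{(n)}}(a_h^*(s)|s)\to1$, hence $V_h^{(\pi^{\gt_h},\tilde{\pi}_{(h+1)})}(s)\to Q_h^{\tilde{\pi}}(s,a_h^*(s))$, so there is a finite $n_0(s)$ with $\gt_h^{(n_0(s))}$ in the nice region. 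Consequently $\inf_{n\ge0}\pi^{\gt_h^{(n)}}(a_h^*(s)|s)=\min_{0\le n\le n_0(s)}\pi^{\gt_h^{(n)}}(a_h^*(s)|s)$, a minimum of finitely many strictly positive numbers, hence positive.

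The main obstacle is, as in the simultaneous case, the forward-invariance verification of $\cR_1(s)$ --- specifically the case $\pi^{\gt_h^{(n)}}(a_h^*(s)|s)<\pi^{\gt_h^{(n)}}(a|s)$, which is exactly where the near-convergence estimate $Q_h^{\tilde{\pi}}(s,a_h^*(s))-V_h^{(\pi^{\gt_h},\tilde{\pi}_{(h+1)})}(s)\le\Delta^*(s)/2$ is needed to control the update. The payoff of the dynamic viewpoint is that this step is strictly easier than in Lemma~\ref{lem:c_larger_0-sim}: one avoids entirely the monotonicity-in-$n$ of $V^{\pi^{\gt^{(n)}}}$ and the convergence $Q^{\pi^{\gt^{(n)}}}\to Q^\infty$, since the gap $\Delta^*(s)$ and the comparison target $Q_h^{\tilde{\pi}}$ are fixed throughout training.
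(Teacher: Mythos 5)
Your proposal is correct and follows essentially the same route as the paper's proof: reduce to a contextual bandit with the fixed reward vector $Q_h^{\tilde{\pi}}(s,\cdot)$ and constant gap $\Delta^*(s)$, decouple over contexts (the $\mu_h(s)$ factor only rescales the step), establish a forward-invariant nice region on which $\pi^{\gt_h^{(n)}}(a_h^*(s)|s)$ is non-decreasing, invoke the asymptotic convergence of \citet[Thm 5]{agarwal2020theory} with $\gamma=0$ to enter that region in finite time, and conclude by taking a minimum over a finite prefix and the finite state space. The only cosmetic difference is that you retain an explicit $\Delta^*(s)/2$ near-convergence region (whose invariance anyway rests on ascent-lemma monotonicity of $J_{h,s}$), whereas the paper dispenses with it, using instead the probability-ordering region $\{\gt : \pi^{\gt}(a_h^*(s)|s)\ge \pi^{\gt}(a|s)\ \forall a\}$ and handling the case $\pi^{\gt_h^{(n)}}(a_h^*(s)|s)<\pi^{\gt_h^{(n)}}(a|s)$ directly via the monotone improvement $J_{h,s}(\gt_h^{(n+1)})\ge J_{h,s}(\gt_h^{(n)})$.
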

The idea of the proof is based on \citet[Lemma 5]{mei2022global} for bandits and extended to the contextual bandit case.

\begin{proof}
   Throughout the proof we abuse notation as follows:
   \begin{itemize}
     \item As we consider a fixed time point $h$ we will only write $\gt_n$ instead of $\gt_h^{(n)}$.
     \item We denote the objective function by $J_h(\gt)$ instead of $J_h(\gt,\tilde{\pi}_{(h+1)},\mu_h)$ for a fixed policy $\tilde{\pi}$ and start distribution $\mu_h$. Furthermore, we will just write $J_h^\ast$ instead of $J_h^\ast(\tilde{\pi}_{(h+1)},\mu_h)$.
     \item We will write $J_{h,s}(\gt)$ for the objective function which starts almost surly in $s \in\cS_h$, i.e. $J_{h,s}(\gt) = J_h(\gt,\tilde{\pi}_{(h+1)},\delta_s)$.
   \end{itemize}

   First note that
   \begin{align*}
         J_{h,s}(\gt_h) = \sum_{a\in \cA_s} \pi^{\theta_h}(a|s) Q_h^{\tilde{\pi}}(s,a),
   \end{align*}
   where $Q_h^{\tilde{\pi}}(s,a)$ is independent of $\gt$. We will drop the subscript $\tilde{\pi}$ in $Q_h$ for the rest of the proof and define for all $s\in\cS_h$,
   \begin{align*}
         &\Delta^* (s) = Q_h(s,a_h^*(s)) - \max\limits_{a \neq a_h^*(s)} Q_h(s,a) > 0, \quad \textrm{and} \quad
         \Delta^* = \min\limits_{s\in \cS_h} \Delta^*(s) > 0.
   \end{align*}
 
   Consider the following sets
   \begin{align*}
         &\cR_h^1(s) = \{ \theta : \frac{\partial J_{h,s}(\gt)}{\partial \gt(s,a_h^\ast(s))} \geq \frac{\partial J_{h,s}(\gt)}{\partial \gt(s,a)} \forall a \neq a_h^\ast(s) \}\\
         &\cR_h^2(s) = \{ \theta: \pi^\theta(a_h^\ast(s)|s) \geq \pi^\theta(a|s) \forall a \neq a_h^\ast(s) \} \\
         &\cN_h(s) = \{ \theta: \pi^\theta(a_h^\ast(s)|s) \geq \frac{c_h(s)}{c_h(s)+1}\},
   \end{align*}
   for $c_h(s) = \frac{|\cA| (H-h)R^\ast}{\Delta_h^\ast(s)}-1$ and $\Delta_h^\ast(s) = Q_h(s,a^\ast(s))-\max_{a\neq a^\ast} Q_h(s,a)$.
   Then consider the following Claims:
   \begin{enumerate}
         \item $\gt_n\in\cR_h^1(s) \Rightarrow \gt_{n+1} \in \cR_h^1(s)$,
         \item If $\gt_n \in \cR_h^1(s)$, then $\pi^{\gt_{n+1}} (a_h^\ast(s)|s) \geq \pi^{\gt_n} (a_h^\ast(s)|s)$,
         \item $\cN_h(s) \subseteq \cR_h^2(s) \subseteq \cR_h^1(s)$.
   \end{enumerate}
   \textbf{Claim 1.} Let $\gt_n \in \cR_h^1(s)$ and $a \neq a_h^*(s)$. Using  the derivative of the value function we obtain
   \begin{align}\label{eq:equivalence_for_partial_deriv2}
       \begin{split}
         & \quad \, \,\frac{\partial J_{h,s}(\gt_n)}{\partial \gt(s,a_h^*(s))} \geq \frac{\partial J_{h,s}(\gt_n)}{\partial \gt(s,a)} \\
         &\Leftrightarrow \pi^{\gt_n}(a_h^*(s)|s) \big( Q_h(s,a_h^*(s)) - J_{h,s}(\gt_n) \big) \geq \pi^{\gt_n}(a|s) \big( Q_h(s,a) - J_{h,s}(\gt_n) \big).
       \end{split}
   \end{align}
   We divide into two cases: 
   \begin{itemize}
       \item[a)] $\pi^{\gt_n}(a_h^*(s)|s) \geq \pi^{\gt_n}(a|s)$,
       \item[b)] $\pi^{\gt_n}(a_h^*(s)|s) < \pi^{\gt_n}(a|s)$.
   \end{itemize}
   In $a)$ the assumption $\pi^{\gt_n}(a_h^*(s)|s) \geq \pi^{\gt_n}(a|s)$ implies $\gt_n(s,a_h^*(s)) \geq \gt_n(s,a)$. Thus,
   \begin{align*}
         \gt_{n+1}(s,a_h^*(s)) &= \gt_n(s,a_h^*(s)) + \eta_h \mu_h(s) \frac{\partial J_{h,s}(\gt_n)}{\partial \gt_n(s,a_h^*(s))} \\
         &\geq \gt_n(s,a) + \eta_h \mu_h(s) \frac{\partial J_{h,s}(\gt_n)}{\partial \gt_n(s,a)} \\
         &= \gt_{n+1}(s,a),
   \end{align*}
   which implies $\pi^{\gt_{n+1}}(a_h^*(s)|s) \geq \pi^{\gt_{n+1}}(a|s)$. By the optimality of $a_h^*(s)$ we follow
   \begin{align*}
         \pi_t^{\gt_{n+1}}(a_h^*(s)|s) \big( Q_h(s,a_h^*(s)) - J_{h,s}(\gt_{n+1}) \big) \geq \pi_t^{\gt_{n+1}}(a|s) \big( Q_h(s,a) - J_{h,s}(\gt_{n+1}) \big),
   \end{align*}
     which is by \Eqref{eq:equivalence_for_partial_deriv2} equivalent to 
   \begin{align*}
         \frac{\partial J_{h,s}(\gt_{n+1})}{\partial \gt_{n+1}(s,a_h^*(s))}\geq \frac{\partial J_{h,s}(\gt_{n+1})}{\partial \gt_{n+1}(s,a)}.
   \end{align*}
   Hence, $\gt_{n+1} \in \cR_h^1(s)$.\\
   In $b)$ assume now that $\pi^{\gt_n}(a_h^*(s)|s) < \pi^{\gt_n}(a|s)$. As $\gt_n \in \cR_h^1(s)$ \Eqref{eq:equivalence_for_partial_deriv2} is also true in this case and rearranging of terms gives 
   \begin{align}\label{eq:equivalence_for_partial_deriv_b2}
       \begin{split}
         & \quad \, \,\frac{\partial J_{h,s}(\gt_n)}{\partial \gt_n(s,a_h^*(s))} \geq \frac{\partial J_{h,s}(\gt_n)}{\partial \gt_n(s,a)} \\
         &\Leftrightarrow Q_h(s,a_h^*(s)) - Q_h(s,a) \geq \Big( 1- \frac{\pi^{\gt_n}(a_h^*(s)|s)}{\pi^{\gt_n}(a|s) } \Big)   \big( Q_h(s,a_h^*(s)) - J_{h,s}(\gt_n)\big) \\
         &\Leftrightarrow Q_h(s,a_h^*(s)) - Q_h(s,a) \geq \big( 1- \exp(\gt_n(s,a_h^*(s)) - \gt_n(s,a) \big)   \big( Q_h(s,a_h^*(s)) - J_{h,s}(\gt_n)\big).
       \end{split}
   \end{align}
     Note next that by $\theta^{(n)} \in \cR_h^1(s)$ and definition of $\cR_h^1(s)$ we have
   \begin{align*}
         &\gt_{n+1}(s, a_h^*(s)) - \gt_{n+1}(s, a)  \\
         &= \gt_{n}(s, a_h^*(s)) + \eta_h\mu_h(s) \frac{\partial J_{h,s}(\gt_n)}{\partial \gt_n(s,a_h^*(s))} - \gt_n(s,a) - \eta_h\mu_h(s) \frac{\partial J_{h,s}(\gt_n)}{\partial \gt_n(s,a)}\\
         &\geq \gt_{n}(s, a_h^*(s)) - \gt_{n}(s, a)
   \end{align*}
     and is follows $ \big( 1- \exp(\gt_{n+1}(s, a_h^*(s)) - \gt_{n+1}(s, a)) \big)  \leq  \big( 1- \exp(\gt_{n}(s, a_h^*(s)) - \gt_{n}(s, a)) \big) <1$ by assumption $b)$.
     By the ascent lemma for smooth functions we get monotonicity in the objective function, so
   \begin{align*}
         Q_h(s,a_h^*(s)) - J_{h,s}(\gt_{n+1}) \leq  Q_h(s,a_h^*(s)) - J_{h,s}(\gt_{n}),
   \end{align*}
     where the last inequality is due to the definition of $\Delta^*(s)$. Combining everything leads to 
     \begin{align*}
       &\big( 1- \exp(\gt_{n+1}(s, a_h^*(s)) - \gt_{n+1}(s, a)) \big)  \Big[ Q_h(s,a_h^*(s)) - J_{h,s}(\gt_{n+1})\Big] \\
       &\leq \big( 1- \exp(\gt_{n}(s, a_h^*(s)) - \gt_{n}(s, a)) \big)\Big[ Q_h(s,a_h^*(s)) - J_{h,s}(\gt_{n})\Big] \\
       &\leq Q_h(s,a_h^*(s)) - Q_h(s,a),
     \end{align*}
     which is by \Eqref{eq:equivalence_for_partial_deriv_b2} equivalent to $\gt_{n+1} \in \cR_1(s)$.\\
 
     \textbf{Claim 2.} If $\gt_n \in \cR_h^1(s)$, %and by Claim 1 thus also $\gt_h^{(n+1)}$ in $\cR_h^1(s)$, 
     then 
   \begin{align*}
         &\pi^{\gt_{n+1}}(a_h^*(s)|s) \\
         &= \frac{\exp(\gt_{n+1}(s, a_h^*(s)))}{\sum\limits_{a\in \cA} \exp(\gt_{n+1}(s, a))}\\
         &= \frac{\exp(\gt_{n}(s, a_h^*(s)) + \eta_h \mu_h(s)\frac{\partial J_{h,s}(\gt_n)}{\partial \gt_n(s,a_h^*(s))})}{\sum\limits_{a\in \cA_s} \exp(\gt_n(s,a)+ \eta_h \mu_h(s)\frac{\partial J_{h,s}(\gt_n)}{\partial \gt_n(s,a)})}\\
         &\geq \frac{\exp(\gt_{n}(s, a_h^*(s)) ) \exp( \eta_h \mu_h(s)\frac{\partial J_{h,s}(\gt_n)}{\partial\gt_n(s,a_h^*(s))})}{\sum\limits_{a\in \cA_s} \exp(\gt_n(s,a)) \exp( \eta_h \mu_h(s)\frac{\partial J_{h,s}(\gt_n)}{\partial \gt_n(s,a_h^*(s))})}\\
         &= \pi^{\gt_n}(a_h^*(s)|s),
   \end{align*}
   where the inequality follows by $\gt_n\in \cR_h^1(s)$.\\
 
   \textbf{Claim 3.} Let $\theta_n \in \cR_h^2(s)$, then by the optimality of $a^\ast(s)$,
   \begin{align}
         & \quad  \pi^{\gt_n}(a^\ast(s)|s) (Q_h(s,a_h^\ast(s)) - J_{h,s}(\gt_n)) \geq \pi^{\gt_n}(a|s) (Q_h(s,a) - J_{h,s}(\gt_n)) \\
         &\Leftrightarrow \frac{\partial J_{h,s}(\gt_n)}{\partial \gt_n(s,a^\ast(s))} \geq \frac{\partial J_{h,s}(\gt_n)}{\partial \gt_n(s,a)}.
   \end{align}
   Hence, $\gt_n \in \cR_h^1(s)$.\\
   On the other hand, let $\gt_n \in\cN_h(s)$, then assume there exists $a\neq a_h^*(s)$ such that $\pi^\gt(a_h^*(s)|s) < \pi^\gt(a|s)$. Then
   \begin{align*}
         \pi^\gt(a_h^*(s)|s) + \pi^\gt(a|s) > \frac{2 c(s)}{c(s) +1 } = \frac{\frac{2 |\cA| (H-h) R^*}{\Delta_h^*(s)} - 2}{ \frac{|\cA|(H-h)R^*}{\Delta_h^*(s)}} = 2- \frac{2 \Delta_h^*(s)}{|\cA|(H-h)R^*} \geq 2- \frac{2}{|\cA|} \geq 1,
   \end{align*}
   because $\Delta^*(s) \leq (H-h)R^*$ by definition and $|\cA| \geq 2$. This is a contradiction as $\pi^\theta$ is a probability distribution and Claim 3 is proven. \\

   To follow the claim of the lemma from the claims 1 to 3, we need asymptotic convergence to the global optimum. This is given by \citet[Theorem 5]{agarwal2020theory}, since we can interpret the objective $J_h$ as a one-step MDP with $\gamma=0$. Then, assuring that the step size is smaller than one over the smoothness parameter is enough to use the same proof as provided in \citet{agarwal2020theory}.

   So, there exists a time $t_0$ such that $\gt \in \cN_h(s)$ for all $s\in\cS$. Finally,
   \begin{align*}
         \inf_n \min_s \pi^{\gt_n}(a^\ast(s)|s) = \min_{0\leq n\leq t_0} \min_s \pi^{\gt_n}(a^\ast(s)|s) >0.
   \end{align*}
\end{proof}

\chiseinsdurchA*
\begin{proof}
  If we initialise uniformly, then $\gt_0 \in\cR_h^2(s)$ for all $s\in\cS_h$ from the proof of the previous Lemma~\ref{lem:c_lager_0_finite_time_without_reg}. Therefore, $\gt_n \in\cR_h^1(s)$ for all $n\geq 0$ and from Claim 2 we have 
  \begin{align*}
      c_h = \inf_n \min_s \pi^{\gt_h^{(n)}}(a^\ast(s)|s) = \min_s \pi^{\gt_h^{(0)}}(a^\ast(s)|s) = \frac{1}{|\cA|}.
  \end{align*}
\end{proof}

\begin{remark}\label{rem:c-sim}
    Let us shortly discuss why the constant $c_h$ in the dynamic case can be bound explicitly, but $c$ in the simultaneous case can not. In the dynamic case, the future policy is fixed and the optimal action $a^\ast(s)$ is chosen with respect to the best policy $\pi^\ast_h$ dependent on the fixed future policy $\tilde{\pi}_{(h+1)}$. This leads to monotone improvements in epoch $h$ (due to smoothness), and so the probability to choose the best action increases over the training steps.
    In the simultaneous approach the future policy changes in ever training step, as $\gt$ changes after every update. Smoothness just guarantees improvement in the value function, i.e. improvement in expectation over the time horizon. As $\pi^\ast$ is the final best policy, we have to bound $\min_{s_h\in\cS^{[\cH]}} \pi^\gt(a^\ast(s_h)|s_h)$ for every $s_h$ in the enlarged state space. This the cannot be followed from improvement in expectation.
\end{remark}

\convergencerateGD*
\begin{proof}
    First, note that $V_h^{(\pi_h^{\ast}, \tilde{\pi}_{(h+1)})}(\mu_h) = J_h^\ast(\tilde{\pi}_{(h+1)},\mu_h) $ and $V_h^{(\pi^{\gt_h^{(n)}}, \tilde{\pi}_{(h+1)})}(\mu_h) =J_h(\gt_h^{(n)},\tilde{\pi}_{(h+1)},\mu_h)$ by definition of $J_h$ and choice of $\pi_h^\ast$. We will proof
    \begin{equation*}
        J_h^\ast(\tilde{\pi}_{(h+1)},\mu_h) - J_h(\gt_h^{(n)},\tilde{\pi}_{(h+1)},\mu_h) \leq \frac{4(H-h)R^\ast}{c_h^2 n},
    \end{equation*}
    Then the claim follows directly from this.
    
   We use the same arguments as in the proof of Theorem~\ref{thm:convergence-rate-sim-deterministic} for our objective function $J_h(\gt_h,\tilde{\pi}_{(h+1)},\mu_h)$. Thus, we only need to assure, that 
   \begin{align*}
      J_h^\ast(\tilde{\pi}_{(h+1)},\mu_h) - J_h(\gt_h^{(0)},\tilde{\pi}_{(h+1)},\mu_h) \leq \frac{1}{q}.
   \end{align*}
   for $q= \frac{\alpha b^2}{2}$, with $\alpha = \eta_h = \frac{1}{\beta_h}$ and $b = c_h$. 
   It holds that
   \begin{align*}
      J_h^\ast(\tilde{\pi}_{(h+1)},\mu_h) - J_h(\gt_h^{(0)},\tilde{\pi}_{(h+1)},\mu_h) \leq (H-h)R^\ast \leq \frac{4 (H-h)R^\ast }{c_h^2} =\frac{2\beta_h}{c_h^2} = \frac{1}{q}
   \end{align*}
   and the claim follows as in the proof of Theorem~\ref{thm:convergence-rate-sim-deterministic}. 
\end{proof}

\thmPGerrorovertime*
\begin{proof}
  First note that by our choice of the future policy $\tilde{\pi} = \hat{\pi}^\ast$ we have 
  \begin{align}\label{eq:help1}
    J_{h}(\gt_h^{(N_h)},\tilde{\pi}_{(h+1)},\delta_s) = V_h^{\hat{\pi}^\ast}(s).
  \end{align}
  By Lemma~\ref{lem:convergene_rate_finite_time_without_reg} we obtain
  \begin{align*}
    J_h^\ast(\tilde{\pi}_{(h+1)},\mu_h) - J_h(\gt_h^{(N_h)},\tilde{\pi}_{(h+1)},\mu_h) \leq \frac{4(H-h) R^*}{c_h^2 N_h }. 
  \end{align*}
  For every $s\in\cS_h$, 
  \begin{align}\label{eq:help3}
    \begin{split}
    J_{h}^\ast(\tilde{\pi}_{(h+1)},\delta_s) - J_{h}(\gt_h^{(N_h)},\tilde{\pi}_{(h+1)},\delta_s) 
    &= \sum_{s^\prime\in\cS_h} \mu_h(s^\prime) \frac{\delta_s(s^\prime)}{\mu_h(s^\prime)} J_{h}^\ast(\tilde{\pi}_{(h+1)},\delta_s) - J_{h,s}(\gt_h^{(N_h)},\tilde{\pi}_{(h+1)},\delta_s)\\ 
    &\leq \Big\lVert \frac{1}{\mu_h}\Big\rVert_\infty \big(J_{h}^\ast(\tilde{\pi}_{(h+1)},\mu_h) - J_{h}(\gt_h^{(N_h)},\tilde{\pi}_{(h+1)},\mu_h) \big)\\
    &\leq \frac{4(H-h) R^*}{c_h^2 N_h }\Big\lVert  \frac{1}{\mu_h}\Big\rVert_\infty,
  \end{split}
  \end{align}
  where $\Big\lVert  \frac{1}{\mu_h}\Big\rVert_\infty = \max_{s\in\cS_h} \frac{1}{\mu_h(s)}>0$ by assumption.
  As $N_h = \frac{4(H-h)H R^*}{c_h^2 \epsilon }\Big\lVert  \frac{1}{\mu_h}\Big\rVert_\infty$, it holds that
  \begin{align}\label{eq:backinduction-beginning}
   J_{h}^\ast(\tilde{\pi}_{(h+1)},\delta_s) - J_{h}(\gt_h^{(N_h)},\tilde{\pi}_{(h+1)},\delta_s)  \leq \frac{\epsilon}{H}
  \end{align}
  for every $s\in\cS_h$. 
  For $h=H-1$ it follows directly by \Eqref{eq:help1} and the specialty of the last time point that for all $s\in\cS_{H-1}$, 
  \begin{align*}
    V_{H-1}^\ast(s) - V_{H-1}^{\hat{\pi}^\ast}(s) = J_{H-1}^\ast(\delta_s) - J_{H-1}(\gt_{H-1}^{(N_{H-1})},\delta_s) \leq \frac{\epsilon}{H}.
  \end{align*}
  Note that the last epoch is independent of $\tilde{\pi}$.
  Assume now that for all $s\in\cS_{h}$,
  \begin{align}\label{eq:backinduction-assumption}
    V_h^\ast(s) - V_{h}^{\hat{\pi}^\ast}(s) \leq \frac{\epsilon (H-h)}{H}.
  \end{align}
  Then it holds for all $s\in\cS_{h-1}$ that,
  \begin{align}\label{eq:help4}
  \begin{split}
    J_{h-1}^\ast(\tilde{\pi}_{(h)},\delta_s) &= \max_{a \in \cA_s} \Bigl( r(s,a) + \sum_{s^\prime \in \cS_h} p(s^\prime|s,a) V_h^\ast(s) - \sum_{s^\prime \in \cS_h} p(s^\prime|s,a) (V_h^\ast(s)- V_{h}^{\hat{\pi}^\ast}(s))\Bigr)\\
    &\geq\max_{a \in \cA_s} \Bigl( r(s,a) + \sum_{s^\prime \in \cS_h} p(s^\prime|s,a) V_h^\ast(s) \Bigr)- \frac{\epsilon(H-h)}{H}\\
    &= V_{h-1}^\ast(s) - \frac{\epsilon(H-h)}{H},
  \end{split}
  \end{align}
  by the Bellman expectation equation for finite-time MDPs (\cite{puterman2005markov}).
  We close the backward induction using \Eqref{eq:help1} such that for all $s\in\cS_{h-1}$,
  \begin{align}\label{eq:help5}
  \begin{split}
    V_{h-1}^\ast(s) - V_{h-1}^{\hat{\pi}^\ast}(s) &= V_{h-1}^\ast(s) - J_{h-1}^\ast(\tilde{\pi}_{(h)},\delta_s) + J_{h-1}^\ast(\tilde{\pi}_{(h)},\delta_s) - V_{h-1}^{\hat{\pi}^\ast}(s) \\
    &\leq \frac{\epsilon(H-h)}{H} + \frac{\epsilon}{H}\\
    &= \frac{\epsilon(H-(h-1))}{H}.
  \end{split}
  \end{align}
  Finally, it holds for $h=0$ and all $s\in\cS_{0}$ that
  \begin{align*}
    V_0^\ast(s) - V_0^{\hat{\pi}^\ast}(s) \leq\epsilon.
  \end{align*}
\end{proof}

\section{Asymptotic convergence for simultaneous PG}\label{app:asymptotic-conv-sim}

In this section we will proof asymptotic convergence of simultaneous softmax PG towards the global optimum. Therefore, we use the extended notation of the state value, state-action value and advantage function introduced in Remark~\ref{rem:V-Q-A-simultan}. For the rest of the section we will write $\gt_n =\gt^{(n)}$ to save notation.

\begin{theorem}\label{thm:asympothic_convergence-sim-allgemein}
  Let $\mu$ be a probability measure such that $\mu(s) >0$ for all $s\in\cS$ and let $0< \eta \leq \frac{1}{5H^2 R^\ast}$. 
  Consider the sequence $(\theta^{(n)})$ generated by Algorithm~\ref{alg:simultaneous-PG} for arbitrary $\theta^{(0)}\in\cR^{\sum_h d_h}$. Then, for all $s_h \in\cS^{[\cH]}$ we have $V^{\pi^{\gt^{(n)}}}(s_h) \to V^\ast(s_h)$ as $n \to \infty$. Especially we have $V_0^{\pi^{\gt^{(n)}}}(s) \to V_0^\ast(s)$ as $n \to \infty$ for all $s\in\cS_0$.
\end{theorem}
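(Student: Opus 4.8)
The plan is to transfer the asymptotic-convergence argument of \citet[Thm 5]{agarwal2020theory} to the enlarged stationary MDP on $\cS^{[\cH]}$, and then to peel off the epochs by backward induction so that the finite-horizon structure is respected. The three ingredients I would assemble are: (i) that gradient ascent with $\eta \le \tfrac{1}{5H^2R^\ast}\le \tfrac{1}{\beta}$ drives the gradient to zero; (ii) that the value and $Q$-functions converge monotonically to limits $V^\infty,Q^\infty$ (the content of the companion monotonicity lemma already invoked in the proof of Lemma~\ref{lem:c_larger_0-sim}); and (iii) a first-order ``stationarity implies optimality'' argument for the softmax parametrisation.

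First I would establish gradient vanishing. By Lemma~\ref{lem:smoothness-simultan} the objective is $\beta$-smooth with $\beta = H^2R^\ast(2-\tfrac{1}{|\cA|}) < 5H^2R^\ast$, so the ascent estimate \eqref{eq:inequality_for_smooth_function} gives $J(\gt^{(n+1)},\mu) - J(\gt^{(n)},\mu) \ge (\eta - \tfrac{\beta\eta^2}{2})\lVert \nabla J(\gt^{(n)},\mu)\rVert_2^2 \ge \tfrac{\eta}{2}\lVert \nabla J(\gt^{(n)},\mu)\rVert_2^2$. Since the rewards are bounded, $J(\gt^{(n)},\mu)$ is nondecreasing and bounded above, hence convergent; telescoping then yields $\sum_n \lVert \nabla J(\gt^{(n)},\mu)\rVert_2^2 < \infty$, and in particular $\nabla J(\gt^{(n)},\mu)\to 0$. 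Reading off the coordinates through Lemma~\ref{lem:derivative-value-func-sim} gives $\tilde{\rho}_\mu^{\pi^{\gt^{(n)}}}(s_h)\,\pi^{\gt^{(n)}}(a|s_h)\,A^{\pi^{\gt^{(n)}}}(s_h,a)\to 0$ for every $(s_h,a)$. Together with the monotone convergence $V^{\pi^{\gt^{(n)}}}(s_h)\uparrow V^\infty(s_h)$ and $Q^{\pi^{\gt^{(n)}}}(s_h,a)\uparrow Q^\infty(s_h,a)$, this leaves me to prove $V^\infty(s_h) = V^\ast(s_h) = \max_a Q^\infty(s_h,a)$ for all $s_h$.

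Then I would run a backward induction on $h = H-1,\dots,0$. Assuming $V^\infty(s') = V^\ast(s')$ for all $s'\in\cS_{h+1}$ (vacuous when $h=H-1$), the Bellman relation \eqref{eq:def_Q_function} gives $Q^\infty(s_h,\cdot) = Q_h^\ast(s_h,\cdot)$, so it suffices to show that the policy at $s_h$ concentrates on $\argmax_a Q_h^\ast(s_h,a)$. The key observation is that, by Lemma~\ref{lem:derivative-value-func-sim}, the update of $\gt(s_h,\cdot)$ is exactly softmax gradient ascent on a contextual bandit with slowly-varying payoffs $Q^{\pi^{\gt^{(n)}}}(s_h,\cdot)\to Q_h^\ast(s_h,\cdot)$, but with the state-specific, possibly time-varying step size $\eta\,\tilde{\rho}_\mu^{\pi^{\gt^{(n)}}}(s_h)$. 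On this reduced bandit I would import the softmax monotonicity argument of \citet[Lem. 9]{mei2022global} and \citet[Thm 5]{agarwal2020theory}: once $\pi^{\gt^{(n)}}(a^\ast(s_h)|s_h)$ dominates, the parameter gap $\gt(s_h,a^\ast)-\gt(s_h,a)$ is nondecreasing, so the probability of the optimal action increases and, using $\pi^{\gt^{(n)}}(a|s_h)A^{\pi^{\gt^{(n)}}}(s_h,a)\to 0$, the limiting advantages satisfy $A^\infty(s_h,a)\le 0$, forcing $V^\infty(s_h)=\max_a Q^\infty(s_h,a)$.

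The hard part will be controlling the state-specific step size $\tilde{\rho}_\mu^{\pi^{\gt^{(n)}}}(s_h)$ at later epochs. Unlike the discounted case, where $d_\mu^{\pi}(s)\ge (1-\gamma)\mu(s)$ holds uniformly, for $h>0$ the visitation $\tilde{\rho}_\mu^{\pi^{\gt^{(n)}}}(s_h) = \bbP_\mu^{\pi^{\gt^{(n)}}}(S_h=s_h)$ cannot be bounded below uniformly in $\gt$ (Remark~\ref{rem:rem-assumption-sim}); if it were merely summable, the effective learning rate at $s_h$ would vanish and the policy there could freeze before reaching the optimum. Resolving this is where the backward structure and the monotone value convergence must be combined: one has to show $\sum_n \tilde{\rho}_\mu^{\pi^{\gt^{(n)}}}(s_h)=\infty$ so that the rescaled bandit still converges, the genuinely delicate states being exactly those reached with vanishing probability, for which the monotone value convergence and the strict positivity of $\tilde{\rho}_\mu^{\pi^{\gt^{(n)}}}(s_h)$ at every finite $n$ must be leveraged carefully. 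Combining the per-state limits over the finite enlarged state space then yields $V^{\pi^{\gt^{(n)}}}(s_h)\to V^\ast(s_h)$ for all $s_h$, and specialising to $\cS_0$ gives the final claim.
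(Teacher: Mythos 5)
You correctly assemble the paper's first two ingredients: gradient vanishing from $\beta$-smoothness (Lemma~\ref{lem:smoothness-simultan}, exploited in Lemma~\ref{lem:prob_to_zero_for_suboptimal_a-sim}) and the monotone limits $V^\infty,Q^\infty$ (Lemma~\ref{lem:value_and_q_function_monotone-sim}). The gap is in your step (iii): you reduce each state $s_h$ to a softmax bandit run with the \emph{effective} step size $\eta\,\tilde{\rho}_\mu^{\pi^{\gt^{(n)}}}(s_h)$ and time-varying payoffs $Q^{\pi^{\gt^{(n)}}}(s_h,\cdot)$, and you yourself concede that the linchpin — a non-degeneracy condition such as $\sum_n \tilde{\rho}_\mu^{\pi^{\gt^{(n)}}}(s_h)=\infty$ or a liminf bound — is left "to be shown." No argument is offered, and none is obviously available: the softmax parameters generically diverge, so nothing rules out the visitation probability of a later-epoch state decaying summably fast (this is exactly the failure of the discounted bound $d_\mu^\pi \geq (1-\gamma)\mu$ noted in Remark~\ref{rem:rem-assumption-sim}); moreover the bandit results of \citet{mei2022global} you want to import are proved for a constant step size and fixed rewards, not for a state-specific, vanishing step size coupled with drifting payoffs. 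As written, the proposal therefore does not close.

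The paper's proof is structured precisely so that no such visitation lower bound or step-size-divergence condition is ever needed. For each fixed $s_h\in\cS^{[\cH]}$ it partitions actions into $I_0^{s_h},I_+^{s_h},I_-^{s_h}$ by the sign of $A^\infty=Q^\infty-V^\infty$ and derives a contradiction from $I_+^{s_h}\neq\emptyset$ via Lemmas~\ref{lem:Advantage_larger_Delta-sim}--\ref{lem:sum_thetas_in_B_null_to_infty-sim}: probabilities of $I_\pm$-actions vanish, parameters of $I_+$-actions are eventually increasing (hence bounded below) while those of $I_-$-actions diverge to $-\infty$, and softmax normalization (the identity $\sum_a \partial J(\gt_n)/\partial\gt_n(s_h,a)=0$, which conserves $\sum_a\gt_n(s_h,a)$) forces $\sum_{a\in B_0^{s_h}(a_+)}\gt_n(s_h,a)\to\infty$; yet for large $n$ each gradient increment of this sum equals $\eta\,\tilde{\rho}_\mu^{\pi^{\gt_n}}(s_h)\sum_{a\in B_0^{s_h}(a_+)}\pi^{\gt_n}(a|s_h)A_h^{\gt_n}(s_h,a)<0$, where only strict positivity of $\tilde{\rho}$ at each \emph{finite} $n$ is invoked — the divergence driving the contradiction comes from the parameter bookkeeping, not from accumulated step size. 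Two smaller points: your backward-induction target is stronger than necessary — one does not need the policy to concentrate on $\argmax_a Q_h^\ast(s_h,\cdot)$, only that $I_+^{s_h}=\emptyset$, which with $V^{\pi^{\gt^{(n)}}}(s_h)\leq\max_a Q^{\pi^{\gt^{(n)}}}(s_h,a)$ yields $V^\infty(s_h)=\max_a Q^\infty(s_h,a)$ and then $V^\infty=V^\ast$ epoch-wise, as in your Bellman step; and your summability estimate $\sum_n\lVert\nabla J(\gt^{(n)},\mu)\rVert_2^2<\infty$ is a fine, slightly more self-contained substitute for the paper's appeal to \citet[Thm 10.15]{beck2017}.
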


Before we can proof this result we have to proof a row of lemmata. The outline follows the proof of \citet[Theorem 5]{agarwal2020theory}. For the rest of this section we will just write $J(\gt)$ or $J^\ast$ instead of $J(\gt,\mu)$ or $J^\ast(\mu)$.

\begin{lemma}[Monotonicity]\label{lem:value_and_q_function_monotone-sim}
   If the learning rate satisfies $0<\eta \leq \frac{1}{H^2 R^\ast 5} \leq \frac{1}{H^2 R^\ast \big(2-\frac{1}{|\cA|}\big)} = \frac{1}{\beta}$ then $V^{\pi^{\gt_{n+1}}}(s_h) \geq V^{\pi^{\gt_{n}}}(s_h)$ and $Q^{\pi^{\gt_{n+1}}}(s_h,a) \geq Q^{\pi^{\gt_{n}}}(s_h,a)$ for all $s_h\in\cS^{[\cH]}$ and all $a\in\cA$. Furthermore, there exist limits $V^{\infty}(s)$ and $Q^\infty(s,a)$ such that 
   \begin{align*}
       &\lim\limits_{n\to\infty}V^{\pi^{\gt_{n}}}(s) = V^{\infty}(s) <\infty.\\
       &\lim\limits_{n\to\infty}Q^{\pi^{\gt_{n}}}(s,a) = Q^{\infty}(s,a) <\infty.
   \end{align*}
\end{lemma}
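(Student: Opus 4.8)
The plan is to prove both monotonicity statements simultaneously by \emph{backward induction on the epoch} $h$, exploiting that in a finite-time MDP the value at epoch $h$ depends only on the policy at epochs $\ge h$. Concretely, I would show by induction running from $h=H-1$ down to $h=0$ that for every $s\in\cS_h$ and $a\in\cA_s$ one has $Q_h^{\pi^{\gt_{n+1}}}(s,a)\ge Q_h^{\pi^{\gt_n}}(s,a)$ and $V_h^{\pi^{\gt_{n+1}}}(s)\ge V_h^{\pi^{\gt_n}}(s)$. This pointwise statement is strictly stronger than the scalar improvement $J(\gt_{n+1},\mu)\ge J(\gt_n,\mu)$ that the global smoothness constant $\beta$ alone would yield, so the global descent lemma is not enough; the key is to localise the argument to a single state and a single epoch.

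For the base case $h=H-1$, recall $Q_{H-1}\equiv r$ is policy-independent, so its monotonicity holds with equality. In the inductive step, assume both inequalities hold at epoch $h+1$. Then $Q_h$ monotonicity is immediate from $Q_h^{\pi}(s,a)=r(s,a)+\sum_{s'}p(s'|s,a)V_{h+1}^{\pi}(s')$ together with the inductive hypothesis $V_{h+1}^{\pi^{\gt_{n+1}}}\ge V_{h+1}^{\pi^{\gt_n}}$. For $V_h$ I would write the difference as
\[
V_h^{\pi^{\gt_{n+1}}}(s)-V_h^{\pi^{\gt_n}}(s)=\sum_a \pi^{\gt_{n+1}}(a|s)\big(Q_h^{\pi^{\gt_{n+1}}}(s,a)-Q_h^{\pi^{\gt_n}}(s,a)\big)+\sum_a\big(\pi^{\gt_{n+1}}(a|s)-\pi^{\gt_n}(a|s)\big)Q_h^{\pi^{\gt_n}}(s,a).
\]
The first sum is nonnegative by the $Q_h$ monotonicity just established and $\pi^{\gt_{n+1}}\ge 0$. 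It remains to show the second sum $T$ is nonnegative.

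For $T$, the decisive observation is that the tabular softmax parameters decouple across states: the block $\gt(s,\cdot)$ only influences $\pi^\gt(\cdot|s)$, and by Lemma~\ref{lem:derivative-value-func-sim} the update reads $\gt_{n+1}(s,\cdot)=\gt_n(s,\cdot)+\eta\,\tilde{\rho}_\mu^{\pi^{\gt_n}}(s)\,\nabla_{\gt(s,\cdot)}g$, where $g(\gt(s,\cdot)):=\sum_a\pi^\gt(a|s)Q_h^{\pi^{\gt_n}}(s,a)$ is a frozen one-step (contextual-bandit) objective with rewards $Q_h^{\pi^{\gt_n}}(s,\cdot)\in[0,(H-h)R^\ast]$ and $T=g(\gt_{n+1}(s,\cdot))-g(\gt_n(s,\cdot))$. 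Thus the update is an honest gradient-ascent step on $g$ with effective step size $\eta\,\tilde{\rho}_\mu^{\pi^{\gt_n}}(s)\le\eta$. Since $g$ is $2(H-h)R^\ast$-smooth (Lemma~\ref{lem:beta-smoothness} specialised to one step) and $\eta\le\frac{1}{5H^2R^\ast}\le\frac{1}{2(H-h)R^\ast}$, the ascent inequality in \Eqref{eq:inequality_for_smooth_function} gives $T\ge 0$, completing the induction. The main obstacle is precisely this step: recognising that the extra nonnegative factor $\tilde{\rho}_\mu^{\pi^{\gt_n}}(s)$ is a harmless rescaling of a genuine bandit gradient step and checking that the effective step size stays below the reciprocal bandit smoothness. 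Finally, monotonicity plus the uniform bounds $0\le V_h^{\pi^{\gt_n}}(s)\le (H-h)R^\ast$ and $0\le Q_h^{\pi^{\gt_n}}(s,a)\le (H-h)R^\ast$ let me invoke the monotone convergence theorem for bounded increasing sequences to obtain the finite limits $V^\infty$ and $Q^\infty$.
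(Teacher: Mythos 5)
Your proof is correct, and its core is the same device the paper uses: the only nontrivial step is the per-state inequality $\sum_a(\pi^{\gt_{n+1}}(a|s_h)-\pi^{\gt_n}(a|s_h))\,Q_h^{\pi^{\gt_n}}(s_h,a)\ge 0$, obtained by freezing the current $Q$- (equivalently $A$-) values, observing that the tabular softmax block $\gt(s_h,\cdot)$ is updated by an exact gradient-ascent step on this frozen one-step objective with effective step size $\eta\,\tilde{\rho}_\mu^{\pi^{\gt_n}}(s_h)\le\eta$, and invoking smoothness of that bandit objective. The paper's proof of Lemma~\ref{lem:value_and_q_function_monotone-sim} does exactly this with $F_{s_h}(\gt^{s_h})=\sum_a\pi^{\gt^{s_h}}(a|s_h)A_h^{\pi^{\gt_n}}(s_h,a)$ --- your $g$ differs from $F_{s_h}$ only by the additive constant $-V_h^{\pi^{\gt_n}}(s_h)$, so the gradients coincide --- citing the $5HR^\ast$ smoothness constant from Agarwal et al., while you use the sharper one-step constant $2(H-h)R^\ast$ via Lemma~\ref{lem:beta-smoothness}; both fit comfortably under $\eta\le\frac{1}{5H^2R^\ast}$, so this is immaterial. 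Where you genuinely diverge is the aggregation: the paper feeds the per-state inequality into the performance difference lemma, writing $V_h^{\pi^{\gt_{n+1}}}(s)-V_h^{\pi^{\gt_n}}(s)=\sum_{s_l}\tilde{\rho}_{s,h}^{\pi^{\gt_{n+1}}}(s_l)\sum_a\pi^{\gt_{n+1}}(a|s_l)A^{\pi^{\gt_n}}(s_l,a)$ and using that each inner sum dominates $\sum_a\pi^{\gt_n}(a|s_l)A^{\pi^{\gt_n}}(s_l,a)=0$, with $Q$-monotonicity then read off from the Bellman relation; you instead run a backward induction over epochs with the standard two-term telescoping split of $V_h^{\pi^{\gt_{n+1}}}-V_h^{\pi^{\gt_n}}$. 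Your route is slightly more elementary (it needs no performance difference lemma and no reasoning about the visitation measure of the \emph{new} policy) and it delivers $Q$- and $V$-monotonicity jointly inside the induction, whereas the paper's identity dispatches all epochs at once; the decoupling of parameter blocks across states of $\cS^{[\cH]}$ that you highlight is indeed the fact that makes the frozen-bandit interpretation legitimate in both versions, and a vanishing factor $\tilde{\rho}_\mu^{\pi^{\gt_n}}(s_h)$ is harmless in both (zero visitation probability means no update in that block, so $T=0$). The concluding passage to the limits $V^\infty$, $Q^\infty$ by monotone bounded convergence matches the paper exactly.
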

\begin{proof}
   We will show that $V_h^{\pi^{\gt_n}}(s) \leq V_h^{\pi^{\gt_{n+1}}}(s)$ for each state $s\in\cS$ (in the not enlarged state space) and each epoch $h$. Then by the bounded reward assumption there exists $V_h^\infty(s)$ such that $V_h^{\pi^{\gt_n}}(s) \to V_h^\infty$ for $n \to \infty$. If this holds true we see the mononicity and convergence of the Q-functions from the relation 
   \begin{align*}
       Q_h^{\pi^\gt}(s,a) = r(s,a) + \sum_{s^\prime \in\cS} p(s^\prime |s,a) V_{h+1}^{\pi^\gt}(s^\prime),
   \end{align*}
   with $V_H \equiv 0$.

   In order to show the claim we first see from the performance difference lemma, that
   \begin{align*}
       V_h^{\pi^{\gt_{n+1}}}(s) - V_h^{\pi^{\gt_n}}(s) 
       &= \E_{S_h=s}^{\pi^{\gt_{n+1}}}\Big[ \sum_{t=h}^{H-1} A_t^{\pi^{\gt_n}}(S_t,A_t) \Big] \\
       &= \sum_{s_l \in \cS^{[\cH]}} \tilde{\rho}_{s,h}^{\pi^{\gt_{n+1}}} (s_l) \sum_{a \in\cA} \pi^{\gt_{n+1}}(a|s_l) A^{\pi^{\gt_n}}(s_l,a),
   \end{align*}
   where $\tilde{\rho}_{s,h}^{\pi^{\gt_{n+1}}} (s_l) := \sum_{t=h}^{H-1}\bbP_{S_h=s}^{\pi^{\gt_{n+1}}} (S_t = s_l)$ the state visitation measure from epoch $h$ to $H-1$ on the enlarged state space $\cS^{[\cH]}$. Note that $\tilde{\rho}_{s,h}^{\pi^{\gt_{n+1}}} (s_l) = 0$ for $l <h$, as we cannot visit states from previous epochs.

   We will prove that $\sum_{a \in\cA} \pi^{\gt_{n+1}}(a|s_h) A^{\pi^{\gt_n}}(s_h,a) \geq \sum_{a \in\cA} \pi^{\gt_{n}}(a|s_h) A^{\pi^{\gt_n}}(s_h,a)$, for any $s_h\in\cS^{[\cH]}$. Then the fact that $\sum_{a \in\cA} \pi^{\gt_{n}}(a|s) A^{\pi^{\gt_n}}(s,a)=0$ leads to the desired result.

   Therefore, we consider the function 
   \begin{align*}
       F_{s_h}(\gt^{s_h}) := \sum_{a \in\cA} \pi^{\gt^{s_h}}(a|s_h) c(s_h,a) \quad s_h \in\cS^{[\cH]},
   \end{align*}
   for $\gt^{s_h} = (\gt(s_h,a))_{a\in\cA} \in \R^{|\cA|}$. 
   We will set $c(s_h,a) = A_h^{\pi^{\gt_n}}(s_h,a)$, but for $\gt_n$ fix, i.e. the following derivatives with respect to $\gt^{s_h}$ of $F$ are independent of $A^{\pi^{\gt_n}}$.
   From Agarwal Lemma C.2 we know that 
   \begin{align}
       \frac{\partial F_{s_h}(\gt^{s_h})}{\partial \gt(s_h,a)} \Big\vert_{\gt_n^{s_h}} = \pi^{\gt_{s_h}^{(n)}}(a|s_h) A_h^{\pi^{\gt_n}}(s_h,a).
   \end{align}
   Furthermore, $F_{s_h}(\gt_{s_h})$ is $5 H R^\ast$-smooth for every $s_h$ by Lemma D.1 in Agarwal and the bounded reward assumption. 
   Considering our gradient ascent updates from simultaneous training we get
   \begin{align}
       \gt_{n+1}(s_h,a)
       &= \gt_{n}(s_h,a) + \eta \frac{\partial V^{\pi^{\gt_n}}(\mu)}{\partial \gt_n(s_h,a)} \\
       &= \gt_{n}(s_h,a) + \eta \tilde{\rho}_\mu^{\pi^{\gt_n}}(s_h)  \pi^{\gt_n}(a|s_h)A^{\pi^{\gt_n}}(s_h,a)\\
       &= \gt_{n}(s_h,a) + \eta \tilde{\rho}_\mu^{\pi^{\gt_n}}(s_h) \frac{\partial F_{s_h}(\gt_{s_h})}{\partial \gt_n(s_h,a)} \Big\vert_{\gt_n^{s_h}} .
   \end{align}

   As $\eta \tilde{\rho}_\mu^{\pi^{\gt_n}}(s_h) = \eta H d_\mu^{\pi^{\gt_n}}(s_h)$ and $d_\mu^{\pi^{\gt_n}}(s_h)$ a probability measure we see that $\eta \tilde{\rho}_\mu^{\pi^{\gt_n}}(s_h) \leq \frac{1}{5 H R^\ast}$ by our choice of $\eta \leq \frac{1}{H^2 R^\ast 5}$. Then the descent lemma for the $5 H R^\ast$-smooth function $F_{s_h}$ gives the desired inequality
   \begin{align*}
       \sum_{a \in\cA} \pi^{\gt_{n+1}}(a|s_h) A^{\pi^{\gt_n}}(s_h,a) \geq \sum_{a \in\cA} \pi^{\gt_{n}}(a|s_h) A^{\pi^{\gt_n}}(s_h,a).
   \end{align*}

\end{proof}
 
\begin{remark}\label{rem:step-size-sim}
   We want to point out that the proof of Lemma~\ref{lem:value_and_q_function_monotone-sim} is crucial for the choice of the step size in the convergence analysis of the simultaneous PG algorithm. As we can only use the descent lemma for a step size $0<\eta\leq \frac{1}{5H^2 R^\ast}$, we can only achieve asymptotic convergence towards global minima under this assumption. Hence, we also need this step size requirement in the convergence analysis.
\end{remark}

We introduce the following definitions: 
\begin{align*}
      \Delta = \min_{\{(s_h,a)\in(\cS H) \times\cA \,:\, A^{\infty}(s_h,a)\neq 0\}} |A^{\infty}(s_h,a)|
\end{align*}
where $A^{\infty}(s_h,a) = Q^{\infty}(s_h,a) - V^\infty(s_h)$. 
 
We define the sets for each $s_h\in\cS^{[\cH]}$:
\begin{align*}
   I_0^{s_h} = \{ a\in \cA \,| \, Q^\infty(s_h,a) = V^{\infty}(s_h)\},\\
   I_+^{s_h} = \{ a\in \cA \,| \, Q^\infty(s_h,a) > V^{\infty}(s_h)\}, \\
   I_-^{s_h} = \{ a\in \cA \,| \, Q^\infty(s_h,a) < V^{\infty}(s_h)\}.
\end{align*}

We aim to prove that $I_+^{s_h}$ is an empty set, then $V^\infty(s_h) = V^\ast(s_h)$ the optimal value function (epoch wise true).
 
\begin{lemma}\label{lem:Advantage_larger_Delta-sim}
   There exists a time $N_1>0$ such that for all $n >N_1$, and $s_h\in \cS^{[\cH]}$, we have
\begin{align*}
   A^{\gt_n}(s_h,a) < -\frac{\Delta}{4} \textrm{ for } a\in I_{-}^{s_h}; \quad A^{\gt_n}(s_h,a) > \frac{\Delta}{4} \textrm{ for } a\in I_{+}^{s_h}.
   \end{align*}
\end{lemma}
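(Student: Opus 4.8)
The plan is to reduce everything to the pointwise convergence of the advantage functions together with the finiteness of the state-action space. First I would note that Lemma~\ref{lem:value_and_q_function_monotone-sim} gives, for every $s_h\in\cS^{[\cH]}$ and $a\in\cA$, both $Q^{\pi^{\gt_n}}(s_h,a)\to Q^\infty(s_h,a)$ and $V^{\pi^{\gt_n}}(s_h)\to V^\infty(s_h)$ as $n\to\infty$. Since $A^{\gt_n}(s_h,a)=Q^{\pi^{\gt_n}}(s_h,a)-V^{\pi^{\gt_n}}(s_h)$ and $A^\infty(s_h,a)=Q^\infty(s_h,a)-V^\infty(s_h)$, subtracting these two convergences immediately yields $A^{\gt_n}(s_h,a)\to A^\infty(s_h,a)$.

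The key quantitative input is the definition of $\Delta$: for every pair with $A^\infty(s_h,a)\neq 0$ one has $|A^\infty(s_h,a)|\geq \Delta$. In particular $A^\infty(s_h,a)\geq \Delta$ for $a\in I_+^{s_h}$ and $A^\infty(s_h,a)\leq -\Delta$ for $a\in I_-^{s_h}$. Combining this with the convergence, for each fixed $(s_h,a)$ I would pick an index $N_1(s_h,a)$ so that $|A^{\gt_n}(s_h,a)-A^\infty(s_h,a)|<\tfrac{3\Delta}{4}$ for all $n>N_1(s_h,a)$. For $a\in I_+^{s_h}$ this gives $A^{\gt_n}(s_h,a)>A^\infty(s_h,a)-\tfrac{3\Delta}{4}\geq \Delta-\tfrac{3\Delta}{4}=\tfrac{\Delta}{4}$, and the symmetric estimate for $a\in I_-^{s_h}$ produces $A^{\gt_n}(s_h,a)<-\tfrac{\Delta}{4}$. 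Because $\cS^{[\cH]}$ and $\cA$ are finite, setting $N_1=\max_{s_h,a}N_1(s_h,a)$ delivers a single index valid uniformly over all states and actions.

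There is no substantial obstacle here; the argument is pure bookkeeping on top of the convergence already established in Lemma~\ref{lem:value_and_q_function_monotone-sim}. The only points that require care are choosing the tolerance $\tfrac{3\Delta}{4}$ strictly below $\Delta$ so that the strict bounds $A^{\gt_n}(s_h,a)>\tfrac{\Delta}{4}$ and $A^{\gt_n}(s_h,a)<-\tfrac{\Delta}{4}$ survive the limit, and invoking finiteness of the enlarged state space and action space to convert the family of thresholds $N_1(s_h,a)$ into a single uniform $N_1$.
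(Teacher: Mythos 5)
Your proposal is correct and follows essentially the same route as the paper: both arguments combine the convergence of $Q^{\pi^{\gt_n}}$ and $V^{\pi^{\gt_n}}$ from Lemma~\ref{lem:value_and_q_function_monotone-sim} with the gap $\Delta$ and the finiteness of $\cS^{[\cH]}\times\cA$ to extract a uniform $N_1$. The only cosmetic difference is that the paper exploits the monotonicity $Q^{\pi^{\gt_n}}\leq Q^\infty$ and $V^{\pi^{\gt_n}}\leq V^\infty$ to work with a one-sided tolerance of $\tfrac{\Delta}{4}$, while you use a two-sided tolerance of $\tfrac{3\Delta}{4}$ on $A^{\gt_n}\to A^\infty$; both yield the stated strict bounds.
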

\begin{proof}
   Fix $s_h\in\cS^{[\cH]}$ arbitrarily. As $V^{\pi^{\gt_{n}}}(s_h) \to V^{\infty}(s_h)$ for $n \to \infty$ and $\cS$ is finite, we have that there exists $N_1 >0$ such that for all $n > N_1$ and $s_h\in \cS^{[\cH]}$,
   \begin{align*}
       V^{\pi^{\gt_{n}}}(s_h) > V^{\infty}(s_h) - \frac{\Delta}{4}.
   \end{align*}
   It follows for all $n >N_1$, $s_h\in\cS^{[\cH]}$ and $a\in I_{-}^{s_h}$ by the definition of  $\Delta$:
   \begin{align*}
     A^{\gt_n}(s_h,a) &= Q^{\gt_n}(s_h,a) - V^{\pi^{\gt_n}}(s_h)
       \leq Q^\infty(s_h,a) - V^{\infty}(s_h) +\frac{\Delta}{4} 
       \leq - \Delta +\frac{\Delta}{4}  
       < -\frac{\Delta}{4}.
   \end{align*}
   Similarly, for all $n >N_1$, $s_h\in\cS^{[\cH]}$ and $a\in I_{+}^{s_h}$ we obtain from monotonicity Lemma~\ref{lem:value_and_q_function_monotone-sim} and the definition of $ \Delta$,
   \begin{align*}
       A_h^{\gt_n}(s,a) &= Q^{\gt_n}(s_h,a) - V^{\pi^{\gt_n}}(s_h)
       \geq Q^\infty(s,a) - \frac{\Delta}{4} -V^{\infty}(s_h)
       \geq \Delta - \frac{\Delta}{4}
       > \frac{\Delta}{4}.
   \end{align*}
\end{proof}

\begin{lemma}\label{lem:prob_to_zero_for_suboptimal_a-sim}
   It holds that $\frac{\partial J(\gt_n)}{\partial \gt_n(s_h,a)} \to 0$ as $n \to \infty$ for all $s_h \in \cS^{[\cH]}$, $a\in \cA_s$. This implies that for $a \in I_+^{s_h} \cup I_-^{s_h}$, $\pi^{\gt_n}(a|s_h) \to 0$ and that $\sum_{a\in I_0^{s_h}}\pi^{\gt_n}(a|s_h) \to 1$ for $n \to \infty$.
\end{lemma}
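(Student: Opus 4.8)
The plan is to split the statement into its two assertions: first that every partial derivative vanishes in the limit, and then that this forces the probabilities of suboptimal actions to zero.

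For the first assertion I would argue from smoothness and monotonicity. By Lemma~\ref{lem:smoothness-simultan} the objective $J$ is $\beta$-smooth with $\beta = H^2R^\ast(2-\tfrac{1}{|\cA|}) < 5H^2R^\ast$, so the step size satisfies $\eta \le \frac{1}{5H^2R^\ast} \le \frac{1}{\beta}$. Applying the descent inequality~\Eqref{eq:inequality_for_smooth_function} to the gradient ascent update gives
\begin{equation*}
J(\gt_{n+1}) \ge J(\gt_n) + \Big(\eta - \frac{\beta\eta^2}{2}\Big)\lVert \nabla J(\gt_n)\rVert^2 \ge J(\gt_n) + \frac{\eta}{2}\lVert \nabla J(\gt_n)\rVert^2 .
\end{equation*}
Since $J$ is monotonically increasing along the iterates (Lemma~\ref{lem:value_and_q_function_monotone-sim}) and bounded above by $HR^\ast$ (bounded rewards summed over $H$ epochs), summing over $n$ telescopes to $\frac{\eta}{2}\sum_{n\ge 0}\lVert\nabla J(\gt_n)\rVert^2 \le J^\ast(\mu) - J(\gt_0) < \infty$. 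Hence $\lVert \nabla J(\gt_n)\rVert \to 0$, and in particular each coordinate $\frac{\partial J(\gt_n)}{\partial\gt_n(s_h,a)} \to 0$.

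For the second assertion I would substitute the closed form from Lemma~\ref{lem:derivative-value-func-sim},
\begin{equation*}
\frac{\partial J(\gt_n)}{\partial\gt_n(s_h,a)} = \tilde{\rho}_\mu^{\pi^{\gt_n}}(s_h)\,\pi^{\gt_n}(a|s_h)\,A^{\gt_n}(s_h,a) .
\end{equation*}
Fix $a \in I_+^{s_h}\cup I_-^{s_h}$. By Lemma~\ref{lem:Advantage_larger_Delta-sim} there is an $N_1$ with $|A^{\gt_n}(s_h,a)| > \frac{\Delta}{4}$ for all $n > N_1$, so the vanishing of the derivative yields $\tilde{\rho}_\mu^{\pi^{\gt_n}}(s_h)\,\pi^{\gt_n}(a|s_h) \to 0$. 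To conclude $\pi^{\gt_n}(a|s_h) \to 0$ I then divide by the visitation weight, which requires $\inf_n \tilde{\rho}_\mu^{\pi^{\gt_n}}(s_h) > 0$. Granting this, $\pi^{\gt_n}(a|s_h)\to0$ for every $a\in I_+^{s_h}\cup I_-^{s_h}$; since the action set is finite and $\sum_{a}\pi^{\gt_n}(a|s_h)=1$, the last claim follows as $\sum_{a\in I_0^{s_h}}\pi^{\gt_n}(a|s_h) = 1 - \sum_{a\in I_+^{s_h}\cup I_-^{s_h}}\pi^{\gt_n}(a|s_h) \to 1$.

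The main obstacle is exactly the uniform positivity of $\tilde{\rho}_\mu^{\pi^{\gt_n}}(s_h)$ along the iterates. At epoch $h=0$ this is free, since $\tilde{\rho}_\mu^{\pi^{\gt_n}}(s_0)=\mu(s)>0$ independently of the policy. For later epochs it is delicate: unlike the discounted analysis of \citet{agarwal2020theory}, where $d_\mu^{\pi}(s)\ge(1-\gamma)\mu(s)$ supplies a policy-independent visitation floor at every state, in the finite-horizon enlarged chain the reachability of $s_h$ can in principle degenerate as $\pi^{\gt_n}$ approaches a deterministic policy. Securing $\inf_n \tilde{\rho}_\mu^{\pi^{\gt_n}}(s_h)>0$ is therefore where positivity of $\mu$ together with Assumption~\ref{ass:sim} must be exploited, in the same spirit as the bound $d_\mu^{\pi^\theta}(s)\ge\frac1H\mu(s)$ of Remark~\ref{rem:distribution-d-zu-mu}; everything else is a routine consequence of smoothness and the advantage gap.
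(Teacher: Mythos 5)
Your first half is a correct inlining of what the paper outsources: the paper simply cites \citet[Thm.~10.15]{beck2017} for the fact that gradient iterates of a $\beta$-smooth function with step size $\eta<1/\beta$ have vanishing gradients, and your telescoping derivation of $\sum_n \lVert \nabla J(\gt_n)\rVert^2 \le \frac{2}{\eta}\big(J^\ast(\mu)-J(\gt_0,\mu)\big)<\infty$ from the ascent inequality, monotonicity (Lemma~\ref{lem:value_and_q_function_monotone-sim}) and boundedness is exactly the standard proof of that theorem, so this is the same argument, not a different route. The second half also matches the paper step for step: substitute Lemma~\ref{lem:derivative-value-func-sim}, invoke the gap $|A^{\gt_n}(s_h,a)|>\frac{\Delta}{4}$ for $n>N_1$ from Lemma~\ref{lem:Advantage_larger_Delta-sim}, divide out the visitation weight, and conclude the last claim from $\sum_{a\in\cA}\pi^{\gt_n}(a|s_h)=1$ over a finite action set.

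The one point of divergence is the division step, and there your caution is well placed but left open. The paper dispatches it with the single assertion that $\tilde{\rho}_\mu^{\pi^{\gt_n}}(s_h)>0$ ``by assumption on $\mu$ and the positivity of the softmax parametrisation,'' which is only per-iterate positivity; as you note, extracting $\pi^{\gt_n}(a|s_h)\to 0$ from $\tilde{\rho}_\mu^{\pi^{\gt_n}}(s_h)\,\pi^{\gt_n}(a|s_h)\to 0$ requires the uniform floor $\inf_n \tilde{\rho}_\mu^{\pi^{\gt_n}}(s_h)>0$, since otherwise the visitation could degenerate along a subsequence while $\pi^{\gt_n}(a|s_h)$ stays bounded away from zero. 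You are also right that Remark~\ref{rem:distribution-d-zu-mu} does not supply this floor: it bounds the aggregated quantity $d_\mu^{\pi^\gt}(s)\ge \frac{1}{H}\mu(s)$, whereas the lemma needs the per-epoch visitation $\tilde{\rho}_\mu^{\pi^\gt}(s_h)=\bbP_\mu^{\pi^\gt}(S_h=s_h)$, which for $h\ge 1$ admits no policy-independent lower bound (cf.\ Remark~\ref{rem:rem-assumption-sim}) — this is exactly where the finite-horizon enlarged chain lacks the $(1-\gamma)\mu(s)$ floor of the discounted setting. So your ``granting this'' flags a genuine soft spot that the paper's own proof glosses over rather than a defect peculiar to your write-up; but since you leave the floor unproven, your proposal as submitted is conditional at that step, and to close it one would either have to establish uniform positivity of the per-epoch visitation along the iterates (e.g.\ under an additional reachability or exploration condition on $p$ and $\mu$ for the enlarged state space) or restrict the conclusion to states whose visitation stays bounded away from zero. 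Everything else is correct and aligned with the paper.
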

 
\begin{proof}
   From~\cite[Theorem 10.15]{beck2017} we deduce for any $\beta$-smooth function $f:\R^d \to \R$, that $\lVert \nabla f(x^k) \rVert \to 0$ for $k \to \infty$, if $x^{k+1} = x^k - \eta \nabla f(x^k)$, when $\eta <\frac{1}{\beta}$. By Lemma~\ref{lem:smoothness-simultan} $J(\cdot)$ is $H^2 R^\ast (2-\frac{1}{|\cA|})$-smooth. It follows by our choice of $\eta <\frac{1}{5H^2 R^\ast}$ that $\frac{\partial J(\gt_n)}{\partial \gt_n(s_h,a)} \to 0$ as $n \to \infty$ for all $s_h \in \cS^{[\cH]}$, $a\in \cA_s$.
   Now remember  the derivative of the softmax parametrisation in the stationary case
   \begin{align*}
     \frac{\partial J(\gt_n)}{\partial \gt_n(s_h,a)}  = \tilde{\rho}_\mu^{\pi^{\gt_n}}(s_h) \pi^{\gt_n}(a|s_h) A^{\gt_n}(s_h,a),
   \end{align*}
   and by Lemma~\ref{lem:Advantage_larger_Delta-sim} $| A^{\gt_n}(s_h,a)|> \frac{\Delta}{4}$ for all $n >N_1$ and $ a \in I_+^{s_h} \cup I_-^{s_h}$. As $\tilde{\rho}_\mu^{\pi^{\gt_n}}(s_h) >0$ by assumption on $\mu$ and the positivity of the softmax parametrisation. It follows that $\pi^{\gt_n}(a|s) \to 0$  for $n\to \infty$ for all $ a \in I_+^{s_h} \cup I_-^{s_h}$ from $\frac{\partial J(\gt_n)}{\partial \gt_n(s_h,a)} \to 0$ as $n \to \infty$. \\
   The last claim, $\sum_{a\in I_0^s}\pi^{\gt_n}(a|s_h) \to 1$ for $n \to \infty$, follows immediately from $\sum_{a\in \cA_s} \pi^{\gt_n}(a|s_h) =1$ by:
   \begin{align*}
     \lim_{n\to \infty} \sum_{a\in I_0^{s_h}}\pi^{\gt_n}(a|s_h) &= \lim_{n\to\infty} \Bigl(\sum_{a\in \cA} \pi^{\gt_n}(a|s_h) - \sum_{a \in I_+^{s_h} \cup I_-^{s_h}}\pi^{\gt_n}(a|s_h)\Bigr)\\
     &= 1- \sum_{a \in I_+^{s_h} \cup I_-^{s_h}} \lim_{n\to\infty}\pi^{\gt_n}(a|s_h) \\
     &= 1.
   \end{align*}
\end{proof}

\begin{lemma}\label{lem:softmax_monotonicity_parameter-sim}
   For $a \in I_+^{s_h}$, the sequence $(\gt_n(s_h,a))_{n\geq0}$ is strictly increasing for $n > N_1$ and for $a\in I_-^{s_h}$, the sequence $(\gt_n(s_h,a))_{n\geq0}$ is strictly decreasing for $n > N_1$.
\end{lemma}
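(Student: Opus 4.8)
The plan is to reduce the claim to a direct sign–tracking argument on the coordinate update, using the closed form of the partial derivative together with the advantage sign control already established in Lemma~\ref{lem:Advantage_larger_Delta-sim}.

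First I would write out the simultaneous gradient ascent step restricted to the single coordinate $\gt_n(s_h,a)$, namely
\[
  \gt_{n+1}(s_h,a) = \gt_n(s_h,a) + \eta\, \frac{\partial J(\gt_n)}{\partial \gt_n(s_h,a)},
\]
and substitute the explicit derivative from Lemma~\ref{lem:derivative-value-func-sim} (in the enlarged-state-space notation of Remark~\ref{rem:V-Q-A-simultan}, dropping the subscript $h$). This yields the increment
\[
  \gt_{n+1}(s_h,a) - \gt_n(s_h,a) = \eta\, \tilde{\rho}_\mu^{\pi^{\gt_n}}(s_h)\, \pi^{\gt_n}(a|s_h)\, A^{\gt_n}(s_h,a),
\]
so the sign of the increment is governed entirely by the sign of $A^{\gt_n}(s_h,a)$.

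Next I would verify that the three leading factors are strictly positive, so that they do not affect the sign but do guarantee strictness. Here $\eta>0$ by hypothesis, $\pi^{\gt_n}(a|s_h)>0$ by the strict positivity of the softmax parametrisation, and $\tilde{\rho}_\mu^{\pi^{\gt_n}}(s_h)=\bbP_\mu^{\pi^{\gt_n}}(S_h=s_h)>0$ by the assumption $\mu(s)>0$ for all $s\in\cS$ together with positivity of the softmax transitions (the same positivity already invoked in Lemma~\ref{lem:prob_to_zero_for_suboptimal_a-sim}). It then remains only to apply Lemma~\ref{lem:Advantage_larger_Delta-sim}: for $n>N_1$ one has $A^{\gt_n}(s_h,a)>\tfrac{\Delta}{4}>0$ whenever $a\in I_+^{s_h}$ and $A^{\gt_n}(s_h,a)<-\tfrac{\Delta}{4}<0$ whenever $a\in I_-^{s_h}$. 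Substituting into the increment formula shows $\gt_{n+1}(s_h,a)-\gt_n(s_h,a)>0$ in the first case and $<0$ in the second, which is precisely the asserted strict monotonicity.

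There is essentially no genuine obstacle in this lemma; it is a bookkeeping consequence of the earlier results. The only point requiring care is to confirm that all three prefactors are \emph{strictly} positive rather than merely nonnegative, since strict monotonicity (as opposed to weak monotonicity) follows only once this is combined with the strict advantage bound $|A^{\gt_n}(s_h,a)|>\tfrac{\Delta}{4}$ from Lemma~\ref{lem:Advantage_larger_Delta-sim}.
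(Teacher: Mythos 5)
Your proposal is correct and follows essentially the same route as the paper's proof: both substitute the factorised partial derivative $\frac{\partial J(\gt_n)}{\partial \gt_n(s_h,a)} = \tilde{\rho}_\mu^{\pi^{\gt_n}}(s_h)\, \pi^{\gt_n}(a|s_h)\, A_h^{\gt_n}(s_h,a)$ from Lemma~\ref{lem:derivative-value-func-sim} into the coordinate-wise ascent update, read off the sign from Lemma~\ref{lem:Advantage_larger_Delta-sim}, and use strict positivity of $\eta$, of the softmax probabilities, and of the state-visitation measure to obtain strict monotonicity. Your explicit remark that strictness of all three prefactors is the only point requiring care matches the paper's reasoning exactly.
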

\begin{proof}
   With Lemma~\ref{lem:Advantage_larger_Delta-sim} we know that for $n > N_1$
   \begin{align*}
       A_h^{\gt_n}(s_h,a) >0 \, \textrm{ for } a\in I_+^{s_h}; \quad A_h^{\gt_n}(s_h,a) <0 \, \textrm{ for } a\in I_-^{s_h},
   \end{align*}
   and by the derivative of the value function
   \begin{align*}
       \frac{\partial J(\gt_n)}{\partial \gt_n(s_h,a)} = \tilde{\rho}_\mu^{\pi^{\gt_n}}(s_h) \pi^{\gt_n}(a|s_h) A_h^{\gt_n}(s_h,a).
   \end{align*}
   As $\tilde{\rho}_\mu^{\pi^{\gt_n}}(s_h)>0$ by the assumption $\mu(s)>0$ and the positivity of the softmax parametrisation, we have for all $n >N_1$
   \begin{align*}
     \frac{\partial J(\gt_n)}{\partial \gt_n(s_h,a)} >0 \, \textrm{ for } a\in I_+^{s_h};\quad  \frac{\partial J(\gt_n)}{\partial \gt_n(s_h,a)} <0 \, \textrm{ for } a\in I_-^{s_h}.
   \end{align*}
   This implies for $a \in I_+^{s_h}$, 
   \[\gt_{n+1}(s_h,a)-\gt_n(s_h,a) = \eta \frac{\partial J(\gt_n)}{\partial \gt(s_h,a)} >0,\] 
   i.e. $(\gt_n(s_h,a))_{n\geq 0}$ is strictly increasing for $n> N_1$ and similar for $a \in I_-^{s_h}$, 
   \[\gt_{n+1}(s_h,a)-\gt_n(s_h,a) = \eta \frac{\partial J(\gt_n)}{\partial \gt_n(s_h,a)} <0,\] 
   i.e. $(\gt_n(s_h,a))_{n\geq 0}$ is strictly decreasing for $n> N_1$.
\end{proof}
 
\begin{lemma}\label{lem:max_to_infty_min_to_minus_infty-sim}
   For all $s_h\in\cS^{[\cH]}$ where $I_+^{s_h} \neq \emptyset$, we have that 
   \begin{align*}
       \max_{a\in I_0^{s_h}} \gt_n(s_h,a) \to \infty \quad \textrm{ and } \quad \min_{a\in\cA} \gt_n(s_h,a) \to -\infty \quad \textrm{ for } n \to \infty.
   \end{align*}
\end{lemma}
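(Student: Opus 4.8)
The plan is to combine a conservation identity for the parameters at the state $s_h$ with the vanishing-probability and monotonicity facts already established. Fix $s_h\in\cS^{[\cH]}$ with $I_+^{s_h}\neq\emptyset$ and choose $a^+\in I_+^{s_h}$. First I would show that $\max_{a\in\cA}\gt_n(s_h,a)\to\infty$. By Lemma~\ref{lem:softmax_monotonicity_parameter-sim} the sequence $(\gt_n(s_h,a^+))_n$ is strictly increasing for $n>N_1$, hence bounded below, so $\exp(\gt_n(s_h,a^+))$ stays bounded away from $0$. Since Lemma~\ref{lem:prob_to_zero_for_suboptimal_a-sim} gives $\pi^{\gt_n}(a^+|s_h)=\exp(\gt_n(s_h,a^+))/\sum_{a'}\exp(\gt_n(s_h,a'))\to0$, the denominator must diverge, i.e. $\log\sum_{a'}\exp(\gt_n(s_h,a'))\to\infty$. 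As this log-sum-exp lies between $\max_{a'}\gt_n(s_h,a')$ and $\max_{a'}\gt_n(s_h,a')+\log|\cA|$, it follows that $\max_{a'}\gt_n(s_h,a')\to\infty$.

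Next I would identify where the maximum is attained and upgrade this to $\max_{a\in I_0^{s_h}}\gt_n(s_h,a)\to\infty$. For every $a\in I_+^{s_h}\cup I_-^{s_h}$, Lemma~\ref{lem:prob_to_zero_for_suboptimal_a-sim} yields $\pi^{\gt_n}(a|s_h)\to0$, i.e. $\gt_n(s_h,a)-\log\sum_{a'}\exp(\gt_n(s_h,a'))\to-\infty$; using again the sandwiching of the log-sum-exp by the maximum, this gives the gap estimate $\gt_n(s_h,a)-\max_{a'}\gt_n(s_h,a')\to-\infty$. Hence for all large $n$ no coordinate outside $I_0^{s_h}$ can attain the running maximum, so the maximum is realised within $I_0^{s_h}$ and therefore $\max_{a\in I_0^{s_h}}\gt_n(s_h,a)=\max_{a'}\gt_n(s_h,a')\to\infty$.

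For the second assertion, $\min_{a\in\cA}\gt_n(s_h,a)\to-\infty$, I would use that $\sum_{a\in\cA}\gt_n(s_h,a)$ is conserved along the iteration. From the update rule and Lemma~\ref{lem:derivative-value-func-sim} one has $\gt_{n+1}(s_h,a)=\gt_n(s_h,a)+\eta\,\tilde{\rho}_\mu^{\pi^{\gt_n}}(s_h)\,\pi^{\gt_n}(a|s_h)\,A^{\gt_n}(s_h,a)$, and summing over $a$ annihilates the increment because $\sum_a\pi^{\gt_n}(a|s_h)A^{\gt_n}(s_h,a)=V^{\pi^{\gt_n}}(s_h)-V^{\pi^{\gt_n}}(s_h)=0$; thus $\sum_a\gt_n(s_h,a)\equiv C$ for all $n$. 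Since $|\cA|\ge2$, the non-maximal coordinates sum to $C-\max_a\gt_n(s_h,a)$, whence $\min_a\gt_n(s_h,a)\le\frac{C-\max_a\gt_n(s_h,a)}{|\cA|-1}\to-\infty$ by the first part.

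The main obstacle is the second step: ruling out that the diverging maximum is realised by some action in $I_+^{s_h}$, which is a priori plausible since those coordinates are themselves increasing. The resolution is exactly the gap estimate $\gt_n(s_h,a)-\max_{a'}\gt_n(s_h,a')\to-\infty$, which shows that every suboptimal coordinate (positive or negative advantage in the limit) falls infinitely far below the maximum and hence cannot be the maximiser for large $n$, forcing the maximum into $I_0^{s_h}$.
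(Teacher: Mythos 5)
Your proof is correct and takes essentially the same route as the paper: both use the strictly increasing coordinate $\gt_n(s_h,a_+)$ together with $\pi^{\gt_n}(a_+|s_h)\to 0$ to force the partition function $\sum_{a'}\exp(\gt_n(s_h,a'))\to\infty$, then localise the diverging maximum inside $I_0^{s_h}$ (the paper via $\sum_{a\in I_0^{s_h}}\pi^{\gt_n}(a|s_h)\to 1$, you via the equivalent gap estimate $\gt_n(s_h,a)-\max_{a'}\gt_n(s_h,a')\to-\infty$ for $a\in I_+^{s_h}\cup I_-^{s_h}$), and both derive the second claim from conservation of $\sum_{a}\gt_n(s_h,a)$ along the gradient updates since $\sum_a\pi^{\gt_n}(a|s_h)A^{\gt_n}(s_h,a)=0$. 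Your final bound $\min_a\gt_n(s_h,a)\le\bigl(C-\max_a\gt_n(s_h,a)\bigr)/(|\cA|-1)$ is in fact a slightly more careful version of the paper's concluding inequality.
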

\begin{proof}
   By assumption $I_+^{s_h} \neq \emptyset$ there exists an $a_+ \in I_+^{s_h}$ and by Lemma~\ref{lem:prob_to_zero_for_suboptimal_a-sim} we have $\pi^{\gt_n}(a_+|s_h) \to 0$, as $n \to \infty$.
   Hence, by softmax parametrisation this is equivalent to 
   \begin{align*}
       \frac{\exp(\gt_n(s_h, a_+))}{\sum\limits_{a\in \cA} \exp(\gt_n(s_h, a))} \to 0, \, \textrm{ for } n \to \infty.
   \end{align*}
   Using Lemma~\ref{lem:softmax_monotonicity_parameter-sim}, i.e. $\gt_n(s_h, a_+)$ is strictly increasing for $n > N_1$, we imply that $\exp(\gt_n(s_h, a_+))$ is strictly increasing for $n > N_1$. This implies that 
   \begin{align*}
       \sum\limits_{a\in \cA} \exp(\gt_n(s_h, a)) \to \infty, \, \textrm{ for } n \to \infty.
   \end{align*}
   Again by Lemma~\ref{lem:prob_to_zero_for_suboptimal_a-sim} we know that 
   \begin{align*}
       \sum\limits_{a \in I_0^{s_h}} \pi^{\gt_n}(a|s_h) \to 1, \, \textrm{ for } n \to \infty,
   \end{align*}
   i.e. by definition
   \begin{align*}
     \sum\limits_{a \in I_0^{s_h}}  \frac{\exp(\gt_n(s_h, a))}{\sum\limits_{a^\prime\in \cA} \exp(\gt_n(s_h, a^\prime))} \to 1, \, \textrm{ for } n \to \infty.
   \end{align*}
   As $\sum\limits_{a^\prime\in \cA} \exp(\gt_n(s_h, a^\prime))\to\infty$ it follows that 
   \begin{align*}
       \sum\limits_{a \in I_0^{s_h}} \exp(\gt_n(s_h, a)) \to \infty, \, \textrm{ for } n \to \infty
   \end{align*}
   implying
   \begin{align*}
       \max_{a\in I_0^{s_h}}\gt_n(s_h, a) \to \infty,  \, \textrm{ for } n \to \infty.
   \end{align*}
   For the second claim it holds that
   \begin{align*}
       \sum\limits_{a\in \cA} \frac{\partial J(\gt_n)}{\partial \gt_n(s_h,a)} 
       &= \tilde{\rho}_\mu^{\pi^{\gt_n}}(s_h) \sum\limits_{a\in \cA} \pi^{\gt_n}(a|s_h) (Q_h^{\pi^{\gt_n}}(s_h,a)- V_h^{\pi^{\gt_n}}(s_h)) \\
       &= \tilde{\rho}_\mu^{\pi^{\gt_n}}(s_h)  (\E_{S_h=s}^{\pi^{\gt_n}}[Q_h^{\pi^{\gt_n}}(s_h,a)] - V_h^{\pi^{\gt_n}}(s_h)) \\
       &= \tilde{\rho}_\mu^{\pi^{\gt_n}}(s_h)  (V_h^{\pi^{\gt_n}}(s_h) - V_h^{\pi^{\gt_n}}(s_h)) \\
       &= 0.
   \end{align*}
   By induction, we obtain $\sum_{a \in \cA} \gt_n(s_h,a) = \sum_{a \in \cA} \gt_0(s_h,a) := c$ for every $n >0$ and hence 
   \begin{align*}
       \min_{a\in\cA} \gt_n(s_h,a) < \sum\limits_{a\in \cA} \gt_n(s_h,a) - \max_{a\in\cA} \gt_n(s_h,a) = -\max_{a\in\cA} \gt_n(s_h,a) + c.
   \end{align*}
   Since $\max_{a\in\cA} \gt_n(s_h,a) \to \infty$, because $\max_{a\in I_0^{s_h}}\gt_n(s_h,a)\to \infty$, we conclude $\min_{a\in\cA} \gt_n(s_h,a) \to -\infty$ for $n\to \infty$.
\end{proof}

\begin{lemma}\label{lem:pi_monoton_in_B_0-sim}
   Suppose $a_+ \in I_+^{s_h}$. If there exists $a \in I_0^{s_h}$ such that for some $n >N_1$, $\pi^{\gt_n}(a|s_h) \leq \pi^{\gt_n}(a_+|s_h)$, then for all $m >n$ it holds that $\pi^{\gt_m}(a|s_h) \leq \pi^{\gt_m}(a_+|s_h)$.
\end{lemma}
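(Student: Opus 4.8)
The plan is to translate the claim about policy probabilities into a claim about the softmax parameters and then prove persistence by a one-step induction. Since $\pi^{\gt_m}(a|s_h) \le \pi^{\gt_m}(a_+|s_h)$ is equivalent to $\gt_m(s_h,a) \le \gt_m(s_h,a_+)$ (the softmax is monotone in its parameter), it suffices to show that the difference $g_m := \gt_m(s_h,a_+) - \gt_m(s_h,a)$ stays nonnegative for all $m > n$ once $g_n \ge 0$. I would in fact prove the slightly stronger statement that $g_m$ is nondecreasing from $n$ onwards, which immediately yields persistence of the inequality.

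For the inductive step I would use the gradient-ascent update together with the derivative formula from Lemma~\ref{lem:derivative-value-func-sim}, namely $\frac{\partial J(\gt_m)}{\partial \gt_m(s_h,a)} = \tilde{\rho}_\mu^{\pi^{\gt_m}}(s_h)\,\pi^{\gt_m}(a|s_h)\,A^{\gt_m}(s_h,a)$, to write
\[
g_{m+1} - g_m = \eta\,\tilde{\rho}_\mu^{\pi^{\gt_m}}(s_h)\Big[\pi^{\gt_m}(a_+|s_h)\,A^{\gt_m}(s_h,a_+) - \pi^{\gt_m}(a|s_h)\,A^{\gt_m}(s_h,a)\Big].
\]
Because $\eta\,\tilde{\rho}_\mu^{\pi^{\gt_m}}(s_h) > 0$ (by the assumption $\mu>0$ and positivity of the softmax), the step reduces to showing that the bracketed quantity is nonnegative whenever $g_m \ge 0$, i.e. whenever $\pi^{\gt_m}(a_+|s_h) \ge \pi^{\gt_m}(a|s_h)$.

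To control the bracket I would split on the sign of $A^{\gt_m}(s_h,a)$. When $A^{\gt_m}(s_h,a) \le 0$ the bracket is at least $\pi^{\gt_m}(a_+|s_h)\,A^{\gt_m}(s_h,a_+) > 0$, since $a_+ \in I_+^{s_h}$ gives $A^{\gt_m}(s_h,a_+) > \Delta/4$ for $m > N_1$ by Lemma~\ref{lem:Advantage_larger_Delta-sim}. The delicate case is $A^{\gt_m}(s_h,a) > 0$, where both products are positive and no term may be discarded; here I would argue that each of the two factors of the $a_+$-term dominates the corresponding factor of the $a$-term. The probability factor already satisfies $\pi^{\gt_m}(a_+|s_h) \ge \pi^{\gt_m}(a|s_h)$ by the induction hypothesis, so it remains to compare the advantages, i.e. to show $Q^{\gt_m}(s_h,a_+) \ge Q^{\gt_m}(s_h,a)$ (the common term $V^{\gt_m}(s_h)$ cancels). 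For this I would combine three facts valid for $m > N_1$: monotonicity of the $Q$-values (Lemma~\ref{lem:value_and_q_function_monotone-sim}) gives $Q^{\gt_m}(s_h,a) \le Q^\infty(s_h,a) = V^\infty(s_h)$ since $a \in I_0^{s_h}$; the advantage bound gives $Q^{\gt_m}(s_h,a_+) > V^{\gt_m}(s_h) + \Delta/4$; and the defining property of $N_1$ gives $V^\infty(s_h) - V^{\gt_m}(s_h) < \Delta/4$. Chaining these yields $Q^{\gt_m}(s_h,a_+) > V^{\gt_m}(s_h) + \Delta/4 > V^\infty(s_h) \ge Q^{\gt_m}(s_h,a)$, hence $A^{\gt_m}(s_h,a_+) \ge A^{\gt_m}(s_h,a)$. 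Multiplying the two factorwise inequalities (all quantities positive) shows the bracket is nonnegative, completing the induction.

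The main obstacle is exactly this second case: with both advantages positive, nonnegativity of the increment hinges on simultaneously exploiting the uniform lower bound $\Delta/4$ on $A^{\gt_m}(s_h,a_+)$ and the fact, built into the choice of $N_1$, that $V^{\gt_m}(s_h)$ has already climbed to within $\Delta/4$ of its limit. I would therefore double-check that the single threshold $N_1$ from Lemma~\ref{lem:Advantage_larger_Delta-sim} supplies both $A^{\gt_m}(s_h,a_+) > \Delta/4$ and $V^\infty(s_h) - V^{\gt_m}(s_h) < \Delta/4$, so that no waiting time beyond $N_1$ is required and the induction can be launched at the given $n > N_1$.
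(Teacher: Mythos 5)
Your proof is correct and takes essentially the same route as the paper's: a one-step induction reducing the claim to the parameter ordering $\gt_m(s_h,a)\leq\gt_m(s_h,a_+)$ and then comparing the partial derivatives $\tilde{\rho}_\mu^{\pi^{\gt_m}}(s_h)\pi^{\gt_m}(\cdot|s_h)A^{\gt_m}(s_h,\cdot)$ via the key inequality $Q^{\gt_m}(s_h,a_+)>Q^{\gt_m}(s_h,a)$, obtained from Lemma~\ref{lem:Advantage_larger_Delta-sim} together with the monotonicity of Lemma~\ref{lem:value_and_q_function_monotone-sim}. Your chain $Q^{\gt_m}(s_h,a_+)>V^{\gt_m}(s_h)+\Delta/4>V^\infty(s_h)\geq Q^{\gt_m}(s_h,a)$ is an equivalent rearrangement of the paper's chain $Q^{\gt_m}(s_h,a_+)\geq Q^\infty(s_h,a_+)-\Delta/4\geq Q^\infty(s_h,a)+3\Delta/4>Q^{\gt_m}(s_h,a)$ (your explicit sign split on $A^{\gt_m}(s_h,a)$ just fills in a step the paper leaves implicit), and your final worry is resolved affirmatively: the $N_1$ in the proof of Lemma~\ref{lem:Advantage_larger_Delta-sim} is defined precisely so that $V^{\pi^{\gt_m}}(s_h)>V^\infty(s_h)-\Delta/4$ for all $m>N_1$ and all $s_h$, which simultaneously supplies both facts you need with no additional waiting time.
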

\begin{proof}
   Suppose there exists $a \in I_0^s$ such that for an $n>0$, $\pi^{\gt_n}(a|{s_h}) \leq \pi^{\gt_n}(a_+|{s_h})$. We show that $\pi^{\gt_{n+1}}(a|{s_h}) \leq \pi^{\gt_{n+1}}(a_+|{s_h})$, then the claim follows by induction. We have 
   \begin{align*}
       \frac{\partial J_h(\gt_n)}{\partial \gt_n({s_h},a)} 
       &= \tilde{\rho}_\mu^{\pi^{\gt_n}}(s_h) \pi^{\gt_n}(a|s_h) (Q_h^{\pi^{\gt_n}}(s_h,a)- V_h^{\pi^{\gt_n}}(s_h)) \\
       &\leq \tilde{\rho}_\mu^{\pi^{\gt_n}}(s_h) \pi^{\gt_n}(a_+|s_h) (Q_h^{\pi^{\gt_n}}(s_h,a_+)- V_h^{\pi^{\gt_n}}(s_h))\\
       &= \frac{\partial J(\gt_n)}{\partial \gt_n(s_h,a_+)},
   \end{align*}
   where the inequality follows with 
   \begin{align*}
       Q_h^{\pi^{\gt_n}}(s_h,a_+) &\geq Q_h^{\infty}(s_h,a_+)- \frac{\Delta}{4}\\
       &\geq Q_h^{\infty}(s_h,a) + \Delta - \frac{\Delta}{4}\\
       &> Q_h^{\pi^{\gt_n}}(s_h,a).
   \end{align*}
   The first inequaility is due to Lemma~\ref{lem:Advantage_larger_Delta-sim} and the second by the definition of $\Delta$ and $a\in I_0^{s_h}$. 
   Now by assumption we have $\pi^{\gt_n}(a|s_h) \leq \pi^{\gt_n}(a_+|s_h)$ and thus $\gt_n(s_h,a) \leq \gt_n(s_h,a_+)$. It follows
   \begin{align*}
       \gt_{n+1}(s_h,a) &= \gt_n(s_h,a) + \eta \frac{\partial J(\gt_n)}{\partial \gt_n(s_h,a)} 
       \leq  \gt(s_h,a_+) + \eta  \frac{\partial J(\gt_n)}{\partial \gt_n(s_h,a_+)} 
       = \gt_{n+1}(s_h,a_+).
   \end{align*}
\end{proof}
 
Now define for every $a_+ \in I_+^{s_h}$ the set
\begin{align*}
   B_0^{s_h}(a_+) = \{a \in I_0^{s_h} | \pi^{\gt_n}(a_+|s_h) \leq \pi^{\gt_n}(a|s_h) \textrm{ for all } l>0\}
\end{align*}
and denote its complement in $I_0^{s_h}$ as $\bar{B}_0^{s_h}(a_+) = I_0^{s_h} \setminus B_0^{s_h}(a_+)$.

\begin{lemma}\label{lem:sum_B_0_pi_to_1_max_B_0_pi_to_infty-sim}
   Suppose $I_+^{s_h} \neq \emptyset$. For all $a_+ \in I_+^{s_h}$, we have that $B_0^{s_h}(a_+) \neq \emptyset$ and 
   \begin{align*}
       \sum\limits_{a \in B_0^{s_h}(a_+) } \pi^{\gt_n}(a|s_h) \to 1, \, \textrm{ as } n \to \infty.
   \end{align*}
   This implies:
   \begin{align*}
       \max_{a\in B_0^{s_h}(a_+)} \gt_n(s_h,a) \to \infty, \, \textrm{ for } n \to \infty.
   \end{align*}
\end{lemma}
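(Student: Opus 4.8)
The plan is to partition the zero-advantage actions as $I_0^{s_h}=B_0^{s_h}(a_+)\uplus\bar B_0^{s_h}(a_+)$ and to show that asymptotically all probability mass sits on $B_0^{s_h}(a_+)$. The starting point is Lemma~\ref{lem:prob_to_zero_for_suboptimal_a-sim}, which already yields $\sum_{a\in I_0^{s_h}}\pi^{\gt_n}(a|s_h)\to 1$. Hence it suffices to prove $\sum_{a\in\bar B_0^{s_h}(a_+)}\pi^{\gt_n}(a|s_h)\to 0$: subtracting, $\sum_{a\in B_0^{s_h}(a_+)}\pi^{\gt_n}(a|s_h)\to 1$ follows at once, and since a sum over the empty set vanishes this simultaneously forces $B_0^{s_h}(a_+)\neq\emptyset$. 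Thus the first two assertions reduce to the decay of the complement.

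For the decay, I would fix $a\in\bar B_0^{s_h}(a_+)$ and first argue that $a$ is eventually dominated by $a_+$ in probability. By definition $a$ fails to dominate $a_+$ at some step, and Lemma~\ref{lem:pi_monoton_in_B_0-sim} shows that any inequality $\pi^{\gt_n}(a|s_h)\le\pi^{\gt_n}(a_+|s_h)$ holding at a step $n>N_1$ (with $N_1$ from Lemma~\ref{lem:Advantage_larger_Delta-sim}) persists for all later steps. Consequently, after $N_1$ the ordering of $\pi^{\gt_n}(a|s_h)$ and $\pi^{\gt_n}(a_+|s_h)$ can only change from strict domination by $a$ to domination by $a_+$, never the reverse, so every $a\in\bar B_0^{s_h}(a_+)$ satisfies $\pi^{\gt_n}(a|s_h)\le\pi^{\gt_n}(a_+|s_h)$ for all large $n$. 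Since $a_+\in I_+^{s_h}$ gives $\pi^{\gt_n}(a_+|s_h)\to 0$ by Lemma~\ref{lem:prob_to_zero_for_suboptimal_a-sim}, we obtain $\pi^{\gt_n}(a|s_h)\to 0$, and summing over the finite set $\bar B_0^{s_h}(a_+)$ gives the required limit.

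It then remains to transfer the probability statement to the parameters. Writing $Z_n=\sum_{a\in\cA}\exp(\gt_n(s_h,a))$ for the softmax normaliser, one has $\sum_{a\in B_0^{s_h}(a_+)}\exp(\gt_n(s_h,a))=Z_n\sum_{a\in B_0^{s_h}(a_+)}\pi^{\gt_n}(a|s_h)$. Repeating the argument from the proof of Lemma~\ref{lem:max_to_infty_min_to_minus_infty-sim} (the parameter $\gt_n(s_h,a_+)$ is increasing by Lemma~\ref{lem:softmax_monotonicity_parameter-sim} while $\pi^{\gt_n}(a_+|s_h)\to 0$, which forces $Z_n\to\infty$) and combining with $\sum_{a\in B_0^{s_h}(a_+)}\pi^{\gt_n}(a|s_h)\to 1$ yields $\sum_{a\in B_0^{s_h}(a_+)}\exp(\gt_n(s_h,a))\to\infty$. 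As $B_0^{s_h}(a_+)$ is finite, this gives $\max_{a\in B_0^{s_h}(a_+)}\exp(\gt_n(s_h,a))\to\infty$, equivalently $\max_{a\in B_0^{s_h}(a_+)}\gt_n(s_h,a)\to\infty$, which is the final claim.

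I expect the main obstacle to be the decay step, specifically the bookkeeping around $N_1$. The set $B_0^{s_h}(a_+)$ is defined by domination at every step, whereas the persistence in Lemma~\ref{lem:pi_monoton_in_B_0-sim} only controls crossings after $N_1$; ruling out an action that momentarily drops below $a_+$ before $N_1$ yet dominates it for all $n>N_1$ --- i.e. confirming that the dichotomy ``eventually dominates versus eventually dominated'' genuinely exhausts $I_0^{s_h}$ --- is the one place the argument must be handled with care. Once that dichotomy is secured, the remaining transfer through the softmax map is routine.
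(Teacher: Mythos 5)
Your route coincides with the paper's own proof: reduce the first two claims, via Lemma~\ref{lem:prob_to_zero_for_suboptimal_a-sim}, to showing the mass on $\bar{B}_0^{s_h}(a_+)$ vanishes; get that decay from the persistence Lemma~\ref{lem:pi_monoton_in_B_0-sim} together with $\pi^{\gt_n}(a_+|s_h)\to 0$; then transfer to the parameters through the divergent softmax normaliser exactly as in Lemma~\ref{lem:max_to_infty_min_to_minus_infty-sim} (your spelled-out version of that last step, $\sum_{a\in B_0^{s_h}(a_+)}\exp(\gt_n(s_h,a))=Z_n\sum_{a\in B_0^{s_h}(a_+)}\pi^{\gt_n}(a|s_h)\to\infty$ plus finiteness of $B_0^{s_h}(a_+)$, is correct and in fact more explicit than the paper, which simply cites the earlier lemma).

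The obstacle you flag at the end is, however, genuine and is left unresolved both in your write-up and in the paper. Under the literal definition $B_0^{s_h}(a_+)=\{a\in I_0^{s_h}:\pi^{\gt_n}(a_+|s_h)\le\pi^{\gt_n}(a|s_h)\text{ for all }n>0\}$ (the ``$l$'' is a typo for ``$n$''), an action $a\in I_0^{s_h}$ whose only failure of domination occurs at some $n_0\le N_1$, but which satisfies $\pi^{\gt_n}(a|s_h)>\pi^{\gt_n}(a_+|s_h)$ for all $n>N_1$, lies in $\bar{B}_0^{s_h}(a_+)$ yet is never crossed after $N_1$; Lemma~\ref{lem:pi_monoton_in_B_0-sim} only propagates crossings occurring \emph{after} $N_1$, so your assertion that every $a\in\bar{B}_0^{s_h}(a_+)$ is eventually dominated does not follow, and the mass of such an $a$ need not vanish ($a\in I_0^{s_h}$ only forces $A^{\gt_n}(s_h,a)\to 0$, not $\pi^{\gt_n}(a|s_h)\to 0$). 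The paper's proof contains the identical silent leap: it claims ``by definition of $\bar{B}_0^{s_h}(a_+)$ there exists $n'>N_1$'' with the reversed inequality, which the quantifier ``for all $n>0$'' does not supply. The repair is to read (or restate) the definition with the quantifier over $n>N_1$, i.e.\ $B_0^{s_h}(a_+)=\{a\in I_0^{s_h}:\pi^{\gt_n}(a_+|s_h)\le\pi^{\gt_n}(a|s_h)\text{ for all }n>N_1\}$. Then membership in the complement produces a failure at some $n'>N_1$, the persistence lemma applies, and your dichotomy really does exhaust $I_0^{s_h}$. This change is harmless downstream: Lemma~\ref{lem:pi_a_+_larger_pi_a_for_B_0_complement-sim} is unaffected, and Lemma~\ref{lem:sum_thetas_in_B_null_to_infty-sim} only needs $\gt_n(s_h,a)\ge\gt_n(s_h,a_+)$ for $n>N_1$ to inherit the lower bound from Lemma~\ref{lem:bounded_below_and_to_minus_infty-sim}, since only the asymptotics matter there. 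With that one adjustment your proof is complete.
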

\begin{proof}
   Let $a_+ \in I_+^{s_h}$ and consider $a \in \bar{B}_0^{s_h}(a_+)$. Then by definition of $\bar{B}_0^{s_h}(a_+)$ there exists $n^\prime >N_1$ such that $\pi^{\gt_{n^\prime}}(a_+|s_h) \geq \pi^{\gt_{n^\prime}}(a|s_h)$. Hence, by Lemma~\ref{lem:pi_monoton_in_B_0-sim} for all $n \geq n^\prime$ we have $\pi^{\gt_n}(a_+|s_h) \geq \pi^{\gt_n}(a|s_h)$. As $\pi^{\gt_n}(a_+|s_h)\to 0$ for $n \to \infty$. We obtain $\pi^{\gt_n}(a|s_h)\to 0$ for $n \to \infty$, for all $a \in \bar{B}_0^{s_h}(a_+)$. Since by Lemma~\ref{lem:prob_to_zero_for_suboptimal_a-sim} $\sum_{a\in I_0^{s_h}} \pi^{\gt_n}(a|s_h) \to 1$ for $n \to \infty$, we have that $B_0^{s_h}(a_+) \neq \emptyset$ and that $\sum_{a \in B_0^{s_h}(a_+)} \pi^{\gt_n}(a|s_h) \to 1$, as $n \to \infty$. The second claim follows from this as in Lemma~\ref{lem:max_to_infty_min_to_minus_infty-sim}.
 \end{proof}
 
 \begin{lemma}\label{lem:pi_a_+_larger_pi_a_for_B_0_complement-sim}
   Consider $s_h \in \cS\times \cH$ such that $I_+^{s_h}\neq \emptyset$. Then, for any $a_+ \in I_+^{s_h}$, there exists an $N_{a_+}$ such that for all $n > N_{a_+}$ we have
   \begin{align*}
       \pi^{\gt_n}(a_+|s_h) > \pi^{\gt_n}(a|s_h) \, \textrm{ for all } a \in \bar{B}_0^{s_h}(a_+).
   \end{align*}
 \end{lemma}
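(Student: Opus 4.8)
The plan is to track the \emph{parameter gap} $g_n := \gt_n(s_h,a_+) - \gt_n(s_h,a)$ for a fixed $a_+\in I_+^{s_h}$ and a fixed $a\in\bar B_0^{s_h}(a_+)$, and to show that once $g_n$ is nonnegative at some index beyond $N_1$ it is strictly increasing from then on. Since $\pi^{\gt_n}(a_+|s_h) > \pi^{\gt_n}(a|s_h)$ is equivalent to $g_n>0$ by the softmax parametrisation, strict positivity of $g_n$ is exactly what we need. The content of the lemma is really the upgrade from the \emph{non-strict} domination $\pi^{\gt_n}(a_+|s_h)\ge\pi^{\gt_n}(a|s_h)$, which the preceding lemmas already deliver, to a \emph{strict} one.

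First I would record a starting time. By the very argument used inside Lemma~\ref{lem:sum_B_0_pi_to_1_max_B_0_pi_to_infty-sim}, for each $a\in\bar B_0^{s_h}(a_+)$ there is an index $n_a>N_1$ with $\pi^{\gt_{n'}}(a_+|s_h)\ge\pi^{\gt_{n'}}(a|s_h)$ at $n'=n_a$, and Lemma~\ref{lem:pi_monoton_in_B_0-sim} propagates this to all $n\ge n_a$; equivalently $g_n\ge 0$ for all $n\ge n_a$. As $\bar B_0^{s_h}(a_+)\subseteq\cA$ is finite, these thresholds can later be combined into a single one.

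The core step is to show strict growth of the gap whenever $n>N_1$ and $g_n\ge 0$. Using the gradient from Lemma~\ref{lem:derivative-value-func-sim} in the gradient-ascent update gives
\[
g_{n+1}-g_n = \eta\,\tilde{\rho}_\mu^{\pi^{\gt_n}}(s_h)\Big[\pi^{\gt_n}(a_+|s_h)\big(Q^{\pi^{\gt_n}}(s_h,a_+)-V^{\pi^{\gt_n}}(s_h)\big)-\pi^{\gt_n}(a|s_h)\big(Q^{\pi^{\gt_n}}(s_h,a)-V^{\pi^{\gt_n}}(s_h)\big)\Big],
\]
where $\tilde{\rho}_\mu^{\pi^{\gt_n}}(s_h)>0$ because $\mu(s)>0$ and the softmax is strictly positive. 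For $n>N_1$ and $a_+\in I_+^{s_h}$, Lemma~\ref{lem:Advantage_larger_Delta-sim} yields $Q^{\pi^{\gt_n}}(s_h,a_+)-V^{\pi^{\gt_n}}(s_h)>\tfrac{\Delta}{4}>0$, while the same chain of inequalities as in the proof of Lemma~\ref{lem:pi_monoton_in_B_0-sim} (invoking Lemma~\ref{lem:Advantage_larger_Delta-sim}, the definition of $\Delta$, the fact $a\in I_0^{s_h}$, and monotone convergence $Q^{\pi^{\gt_n}}\uparrow Q^\infty$ from Lemma~\ref{lem:value_and_q_function_monotone-sim}) gives $Q^{\pi^{\gt_n}}(s_h,a_+)>Q^{\pi^{\gt_n}}(s_h,a)$. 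Writing $p_+:=\pi^{\gt_n}(a_+|s_h)\ge p:=\pi^{\gt_n}(a|s_h)>0$ (this is precisely $g_n\ge0$), $b_+:=Q^{\pi^{\gt_n}}(s_h,a_+)-V^{\pi^{\gt_n}}(s_h)>0$ and $b:=Q^{\pi^{\gt_n}}(s_h,a)-V^{\pi^{\gt_n}}(s_h)<b_+$, the elementary comparison $p_+b_+\ge p\,b_+>p\,b$ shows the bracket is strictly positive, so $g_{n+1}>g_n$.

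Combining the two steps, at $n_a$ we have $g_{n_a}\ge0$, hence $g_{n_a+1}>g_{n_a}\ge0$, and by induction $g_n$ is strictly positive and strictly increasing for all $n>n_a$; equivalently $\pi^{\gt_n}(a_+|s_h)>\pi^{\gt_n}(a|s_h)$ there. Setting $N_{a_+}=\max_{a\in\bar B_0^{s_h}(a_+)} n_a<\infty$ gives the statement for all $n>N_{a_+}$ simultaneously. I expect the only genuinely delicate point to be this strict-versus-non-strict upgrade, which rests on having $Q^{\pi^{\gt_n}}(s_h,a_+)$ strictly above both $V^{\pi^{\gt_n}}(s_h)$ and $Q^{\pi^{\gt_n}}(s_h,a)$ for $n>N_1$; a secondary bookkeeping issue is ensuring the starting index $n_a$ can be chosen larger than $N_1$ so that Lemma~\ref{lem:Advantage_larger_Delta-sim} applies, which is inherited from the way $\bar B_0^{s_h}(a_+)$ is handled in Lemma~\ref{lem:sum_B_0_pi_to_1_max_B_0_pi_to_infty-sim}.
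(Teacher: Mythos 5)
Your proposal is correct and follows essentially the same route as the paper: for each $a\in\bar{B}_0^{s_h}(a_+)$ you obtain a threshold time after which $\pi^{\gt_n}(a_+|s_h)$ dominates $\pi^{\gt_n}(a|s_h)$ (via the definition of $\bar{B}_0^{s_h}(a_+)$ together with the persistence mechanism of Lemma~\ref{lem:pi_monoton_in_B_0-sim}), and then set $N_{a_+}$ to the maximum of these finitely many thresholds, exactly as the paper does. The paper's own proof is a two-line assertion of the per-action times $n_a$, so your parameter-gap argument $g_n=\gt_n(s_h,a_+)-\gt_n(s_h,a)$ merely spells out, correctly, the strict-inequality propagation (strictly positive derivative gap coming from $Q^{\pi^{\gt_n}}(s_h,a_+)>Q^{\pi^{\gt_n}}(s_h,a)$ and $A^{\pi^{\gt_n}}(s_h,a_+)>\Delta/4$ for $n>N_1$) that the paper leaves implicit, including the same inherited bookkeeping point, which you rightly flag, that the starting index $n_a$ is taken larger than $N_1$ in the same way as in Lemma~\ref{lem:sum_B_0_pi_to_1_max_B_0_pi_to_infty-sim}.
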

 \begin{proof}
   For every $a \in \bar{B}_0^{s_h}(a_+)$ exists time $n_a$ such that 
   \begin{align*}
       \pi^{\gt_n}(a_+|s_h) > \pi^{\gt_n}(a|s_h) \, \textrm{ for all } a \in \bar{B}_0^{s_h}(a_+)
   \end{align*}
   for all $n >n_a$ by definition. Set $N_{a_+} = \max_{a \in \bar{B}_0^{s_h}(a_+)} n_a$ and the proof is completed.
\end{proof}
 
\begin{lemma}\label{lem:bounded_below_and_to_minus_infty-sim}
   Assume again $I_+^{s_h} \neq \emptyset$. For all actions $a \in I_+^{s_h}$, we have that $\gt_n(s_h,a)$ is bounded from below as $n \to \infty$. And for all $a \in I_-^{s_h}$, we have that $\gt_n(s_h,a) \to -\infty$ as $n \to \infty$.
\end{lemma}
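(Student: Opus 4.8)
The plan is to separate the two claims: the first is a one-line consequence of monotonicity, while the second carries all of the work. For $a\in I_+^{s_h}$, Lemma~\ref{lem:softmax_monotonicity_parameter-sim} states that $(\gt_n(s_h,a))_n$ is strictly increasing once $n>N_1$, so $\gt_n(s_h,a)\ge\gt_{N_1+1}(s_h,a)$ for all such $n$, which is exactly the asserted lower bound; nothing further is needed. For $a\in I_-^{s_h}$ the same lemma gives that $(\gt_n(s_h,a))_n$ is strictly decreasing for $n>N_1$, hence $\ell:=\lim_{n\to\infty}\gt_n(s_h,a)$ exists in $[-\infty,\gt_{N_1}(s_h,a))$. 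I would argue by contradiction and assume $\ell>-\infty$. Inserting the derivative from Lemma~\ref{lem:derivative-value-func-sim} into the gradient-ascent step yields
\begin{equation*}
\gt_n(s_h,a)-\gt_{n+1}(s_h,a)=-\eta\,\tilde{\rho}_\mu^{\pi^{\gt_n}}(s_h)\,\pi^{\gt_n}(a|s_h)\,A^{\gt_n}(s_h,a),
\end{equation*}
and Lemma~\ref{lem:Advantage_larger_Delta-sim} bounds $A^{\gt_n}(s_h,a)<-\tfrac{\Delta}{4}$ for $n>N_1$. Telescoping from $N_1$ gives
\begin{equation*}
\gt_{N_1}(s_h,a)-\ell\ \ge\ \frac{\eta\Delta}{4}\sum_{n>N_1}\tilde{\rho}_\mu^{\pi^{\gt_n}}(s_h)\,\pi^{\gt_n}(a|s_h),
\end{equation*}
so a finite limit forces the series $\sum_n\tilde{\rho}_\mu^{\pi^{\gt_n}}(s_h)\,\pi^{\gt_n}(a|s_h)$ to converge. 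The whole task thus reduces to showing that this series in fact diverges.

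To reach the contradiction I would bound the normaliser. With $M_n:=\max_{a'}\gt_n(s_h,a')$ one has $\pi^{\gt_n}(a|s_h)\ge\tfrac{1}{|\cA|}e^{\gt_n(s_h,a)-M_n}$, and under the assumption $\ell>-\infty$ (so $\gt_n(s_h,a)\ge\ell$ for all $n$) this is at least $\tfrac{e^{\ell}}{|\cA|}e^{-M_n}$. By Lemma~\ref{lem:sum_B_0_pi_to_1_max_B_0_pi_to_infty-sim}, applied to any $a_+\in I_+^{s_h}$ (nonempty by hypothesis), the maximum $M_n$ is eventually attained on $B_0^{s_h}(a_+)$ and tends to $+\infty$; the $\cO(1/n)$ nature of the scheme makes it natural to expect $M_n=\cO(\log n)$, whence $\sum_n e^{-M_n}=\infty$. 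Combined with a uniform positive lower bound on the visitation weight $\tilde{\rho}_\mu^{\pi^{\gt_n}}(s_h)$ this contradicts summability and forces $\ell=-\infty$. A convenient bookkeeping device along the way is the difference $\gt_n(s_h,a)-\gt_n(s_h,a_+)$: both the $I_-^{s_h}$-drift and the $I_+^{s_h}$-drift push it strictly downward, so it is monotone decreasing, and together with part~1 this reduces the behaviour of $\gt_n(s_h,a)$ to the same series.

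The step I expect to be the genuine obstacle is the uniform lower bound on $\tilde{\rho}_\mu^{\pi^{\gt_n}}(s_h)=\bbP_\mu^{\pi^{\gt_n}}(S_h=s_h)$. In the discounted template of \citet{agarwal2020theory} the visitation satisfies $d_\mu^{\pi}\ge(1-\gamma)\mu$ uniformly in the policy, which makes the analogous drift series diverge for free; here the weight is the \emph{per-epoch} visitation, which the preceding lemmas only guarantee to be strictly positive for each fixed $n$ and which need not be bounded away from $0$ when $h\ge1$, since earlier-epoch policies may route probability mass away from $s_h$. I therefore anticipate that most of the effort goes either into showing this factor does not decay along the exact-gradient trajectory, or into disposing of the asymptotically unvisited states by a separate reachability argument; once the weight is controlled, the telescoping contradiction closes exactly as in the stationary analyses of \citet{agarwal2020theory, mei2022global}.
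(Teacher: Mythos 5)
Your handling of the first claim coincides with the paper's (monotonicity from Lemma~\ref{lem:softmax_monotonicity_parameter-sim} plus a fixed lower bound at time $N_1$), and your telescoping reduction for $a\in I_-^{s_h}$ — finiteness of the limit forces $\sum_n \tilde{\rho}_\mu^{\pi^{\gt_n}}(s_h)\,\pi^{\gt_n}(a|s_h)<\infty$ — is a correct start. But the two steps you then defer are genuine gaps, not technicalities. First, your divergence argument rests on $M_n=\cO(\log n)$, justified only by appeal to ``the $\cO(1/n)$ nature of the scheme''; at this point of the development no rate exists. The $\cO(1/n)$ rate of Theorem~\ref{thm:convergence-rate-sim-deterministic} needs $c>0$ from Lemma~\ref{lem:c_larger_0-sim}, which needs the asymptotic convergence of Theorem~\ref{thm:asympothic_convergence-sim-allgemein}, whose proof uses the very lemma you are proving — so invoking any convergence rate here is circular. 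The only a priori control is the uniform gradient bound, which allows $M_n$ to grow linearly, in which case $\sum_n e^{-M_n}<\infty$ and your contradiction evaporates. Second, the uniform lower bound on the per-epoch visitation $\tilde{\rho}_\mu^{\pi^{\gt_n}}(s_h)$ for $h\ge 1$, which you yourself flag as the obstacle, is never established in the paper and is not available by the surrounding lemmas: for $h\geq 1$ the mass reaching $s_h$ depends on the evolving earlier-epoch policies (cf.\ Remark~\ref{rem:rem-assumption-sim}), so your proposal as written cannot be closed by either of the two routes you sketch.

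The paper's actual proof avoids both obstacles with a comparison trick you do not attempt: it takes the action $a'$ supplied by Lemma~\ref{lem:max_to_infty_min_to_minus_infty-sim} with $\gt_n(s_h,a')\to-\infty$, assumes $\gt_n(s_h,a)\downarrow b>-\infty$, and compares the two coordinates' drifts through the \emph{ratio} of partial derivatives. By Lemma~\ref{lem:derivative-value-func-sim} both derivatives carry the same factor $\tilde{\rho}_\mu^{\pi^{\gt_{n'}}}(s_h)$, and the softmax normaliser cancels as well, so on the decreasing steps $n'\in\cT^{(n)}$ (where $\gt_{n'}(s_h,a')\le b-\delta$ while $\gt_{n'}(s_h,a)>b$) one gets $\bigl|\partial J/\partial\gt(s_h,a)\bigr| \ge e^{\delta}\tfrac{\Delta}{4HR^*}\bigl|\partial J/\partial\gt(s_h,a')\bigr|$, using $|A^{\gt_{n'}}(s_h,a)|\ge \Delta/4$ from Lemma~\ref{lem:Advantage_larger_Delta-sim}. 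Summing, the unbounded negative drift $Z_n\to-\infty$ accumulated by the $a'$-coordinate transfers to the $a$-coordinate and forces $\gt_n(s_h,a)\to+\infty$, contradicting its monotone decrease. This ratio/coupling argument is exactly the missing idea in your proposal: it needs neither a visitation lower bound nor any growth estimate on $M_n$, because both unknown quantities cancel before they ever have to be controlled.
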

\begin{proof}
   The first claim follows directly with Lemma~\ref{lem:softmax_monotonicity_parameter-sim} as $\gt_n(s_h,a)$ is strictly increasing for all $a \in I_+^{s_h}$, $n>N_1$, and thus for all $n> N_1$ we have $\gt_n(s_h,a) \geq \gt_{N_1}(s_h,a)$.
   Now suppose $a \in I_-^{s_h}$, then by Lemma~\ref{lem:softmax_monotonicity_parameter-sim} we have that $\gt_n(s_h,a)$ is strictly decreasing for $n >N_1$. Assume there exists $b$ such that $\lim\limits_{n\to \infty} \gt_n(s_h,a) =b$, then $\gt_n(s_h,a) > b$ for all $n > N_1$. 
   By Lemma~\ref{lem:max_to_infty_min_to_minus_infty-sim} there exists an action $a^\prime \in \cA$ such that $\gt_n(s_h,a^\prime)\to -\infty$ for $n \to \infty$. 
   Consider $\delta >0$ such that $\gt_{N_1}(s_h,a^\prime) \geq b- \delta$. Define for all $n > N_1$ 
   \begin{align*}
       \tau(n) = \max\{ k\in (N_1,n]: \gt_k(s_h,a^\prime) \geq b -\delta \}.
   \end{align*}
   Define also
   \begin{align*}
       \cT^{(n)} = \Big\{ \tau(n) < n^\prime < n : \frac{\partial J(\gt_{n^\prime})}{\partial \gt_{n^\prime}(s_h, a^\prime)}  \leq 0 \Big\},
   \end{align*}
   as the set of all indices $n^\prime$ in $(\tau(n), n)$, where $\gt_{n^\prime}(s_h, a^\prime)$ is decreasing.
   Next we define $Z_n := \sum_{n^\prime \in \cT^{(n)}} \frac{\partial J(\gt_{n^\prime})}{\partial \gt_{n^\prime}(s_h, a^\prime)} $, then it holds that
   \begin{align*}
       Z_n &= \sum_{n^\prime \in \cT^{(n)}}\frac{\partial J(\gt_{n^\prime})}{\partial \gt_{n^\prime}(s_h, a^\prime)}  \\
       &\leq \sum_{n^\prime = \tau(n)+1}^{n-1} \frac{\partial J(\gt_{n^\prime})}{\partial \gt_{n^\prime}(s_h, a^\prime)} \\
       &\leq \sum_{n^\prime = \tau(n)}^{n-1} \frac{\partial J(\gt_{n^\prime})}{\partial \gt_{n^\prime}(s_h, a^\prime)}  + \Big| \frac{\partial J(\gt_{\tau(n)})}{\partial \gt_{\tau(n)}(s_h, a^\prime)} \Big|.\\
   \end{align*}
   By Lemma~\ref{lem:derivative-value-func-sim} and the bounded reward assumption we have
   \begin{align*}
     \Big| \frac{\partial J(\gt_{\tau(n)})}{\partial \gt_{\tau(n)}(s_h, a^\prime)} \Big| = \tilde{\rho}_\mu^{\pi^{\gt_{\tau(n)}}}(s_h) \pi^{\gt_{\tau(n)}}(a^\prime|s_h) |A_h^{\gt_{\tau(n)}}(s_h,a^\prime)| \leq H^2 R^*. 
   \end{align*}
   Hence,
   \begin{align*}
     Z_n &\leq \sum_{n^\prime = \tau(n)}^{n-1} \frac{\partial J(\gt_{n^\prime})}{\partial \gt_{n^\prime}(s_h, a^\prime)} + H^2 R^* \\
       &= \frac{1}{\eta} (\gt_{n}(s_h,a^\prime)-\gt_{\tau(n)}(s_h,a^\prime)) + H^2 R^* \\
       &\leq \frac{1}{\eta} (\gt_{n}(s_h,a^\prime)-b + \delta) + H^2 R^*. 
   \end{align*}
   Then $\gt_{n}(s_h,a^\prime) \to -\infty$ for $ n\to \infty$ implies that $Z_n \to -\infty$ for $n \to \infty$.
   As we chose $a \in I_-^{s_h}$ it holds that $|A_h^{\gt_n}(s_h,a)| \geq \frac{\Delta}{4}$ for $n > N_1$ with Lemma~\ref{lem:Advantage_larger_Delta-sim} and so for all $n^\prime \in \cT^{(n)}$:
   \begin{align*}
       \Bigg| \frac{ \frac{\partial J(\gt_{n^\prime})}{\partial \gt_{n^\prime}(s_h, a) }}{ \frac{\partial J(\gt_{n^\prime})}{\partial\gt_{n^\prime}(s_h, a^\prime)} } \Bigg| 
       &= \Bigg| \frac{  \pi^{\gt_{n^\prime}}(a|s_h) A_h^{\gt_{n^\prime}}(s_h,a) }{  \pi^{\gt_{n^\prime}}(a^\prime|s_h) A_h^{\gt_{n^\prime}}(s_h,a^\prime) }\Bigg| \\
       &\geq \frac{  \pi^{\gt_{n^\prime}}(a|s_h) }{  \pi^{\gt_{n^\prime}}(a^\prime|s_h) } \frac{\Delta}{4HR^*} \\
       &= \exp(\gt_{n^\prime}(s_h,a)- \gt_{n^\prime}(s_h,a^\prime))\frac{\Delta}{4HR^*} \\
       &\geq \exp(b - (b - \delta))\frac{\Delta}{4HR^*} \\
       &= \exp(\delta) \frac{\Delta}{4HR^*},\\
   \end{align*}
   where we used in the last inequality that $\gt_{n^\prime}(s_h,a^\prime)\leq b-\delta$ for all $n^\prime > \tau(n)$ and $\gt_{n^\prime}(s_h,a) >b$ for all $n^\prime>N_1$. By the definition of $\cT^{(n)}$ these inequalities holds especially for all $n^\prime \in \cT^{(n)}$.
   Using this we can imply that for all $n > N_1$ with $\cT^{(n)} \neq \emptyset$,
   \begin{align*}
       \frac{1}{\eta} \Big( \gt_{N_1}(s_h,a) - \gt_n(s_h,a)\Big) 
       &= \sum_{n^\prime = N_1+1}^{n-1}\frac{\partial J(\gt_{n^\prime})}{\partial \gt_{n^\prime}(s_h, a) } \\
       &\leq \sum_{n^\prime \in \cT^{(n)}}\frac{\partial J(\gt_{n^\prime})}{\partial \gt_{n^\prime}(s_h, a) } \\
       &\leq \exp(\delta) \frac{\Delta}{4HR^*} \sum_{n^\prime \in \cT^{(n)}}\frac{\partial J(\gt_{n^\prime})}{\partial \gt_{n^\prime}(s_h, a^\prime) } \\
       &= \exp(\delta) \frac{\Delta}{4HR^*} Z_n,
   \end{align*}
   where the first inequality holds because $\gt_{n^\prime}(s_h, a)$ is strictly decreasing for $n^\prime > N_1$, i.e. $\frac{\partial J(\gt_{n^\prime})}{\partial \gt_{n^\prime}(s_h, a) } <0$ for all $n^\prime \in \{N_1+1, \dots, n-1\}$. In the second inequality we used
   \begin{align*}
     \Bigg| \frac{ \frac{\partial J(\gt_{n^\prime})}{\partial \gt_{n^\prime}(s_h, a) }}{ \frac{\partial J(\gt_{n^\prime})}{\partial\gt_{n^\prime}(s_h, a^\prime)} } \Bigg| \geq \exp(\delta) \frac{\Delta}{4HR^*}.
   \end{align*}
   Note that $\frac{\partial J(\gt_{n^\prime})}{\partial \gt_{n^\prime}(s_h, a) } < 0$ and $\frac{\partial J(\gt_{n^\prime})}{\partial \gt_{n^\prime}(s_h, a^\prime) }<0$ for $n^\prime \in \cT^{(n)}$ so that the sign of the inequality reverses. \\
   Finally, we deduce from $Z_n \to -\infty$ that $\gt_n(s_h, a) \to \infty $ for $n\to \infty$, which is a contradiction to $\gt_n(s_h,a)$ strictly decreasing for all $n >N_1$.
\end{proof}

\begin{lemma}\label{lem:sum_thetas_in_B_null_to_infty-sim}
   Consider $s\in\cS^{[\cH]}$ such that $I_+^{s_h} \neq \emptyset$. Then for any $a_+ \in I_+^{s_h}$ it holds that
   \begin{align*}
       \sum_{a\in B_0^{s_h}(a_+)} \gt_n(s_h,a) \to \infty, \quad \textrm{ for } \, n \to \infty.
   \end{align*}
\end{lemma}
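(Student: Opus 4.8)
The plan is to exploit a lower-bound/upper-blowup dichotomy: I would show that every summand $\gt_n(s_h,a)$ with $a\in B_0^{s_h}(a_+)$ stays bounded from below by a fixed constant, while Lemma~\ref{lem:sum_B_0_pi_to_1_max_B_0_pi_to_infty-sim} already tells us that the largest of these summands diverges to $+\infty$. Together these two facts force the sum to diverge.

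First I would unpack the definition of $B_0^{s_h}(a_+)$. By construction every $a\in B_0^{s_h}(a_+)$ satisfies $\pi^{\gt_n}(a_+|s_h)\le \pi^{\gt_n}(a|s_h)$ for all $n$. Since the softmax probabilities at the fixed state $s_h$ share the common normaliser $\sum_{a'\in\cA}\exp(\gt_n(s_h,a'))$, this inequality is equivalent to the parameter inequality $\gt_n(s_h,a_+)\le \gt_n(s_h,a)$ for every $a\in B_0^{s_h}(a_+)$ and every $n$. Next I would invoke Lemma~\ref{lem:bounded_below_and_to_minus_infty-sim}: because $a_+\in I_+^{s_h}$, the sequence $(\gt_n(s_h,a_+))_n$ is bounded from below, say by a constant $C$. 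Chaining these two observations gives $\gt_n(s_h,a)\ge C$ for all $a\in B_0^{s_h}(a_+)$ and all sufficiently large $n$.

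Finally I would split the sum into its maximal term and the remaining terms,
\begin{align*}
  \sum_{a\in B_0^{s_h}(a_+)} \gt_n(s_h,a) = \max_{a\in B_0^{s_h}(a_+)} \gt_n(s_h,a) + \sum_{\substack{a\in B_0^{s_h}(a_+)\\ a\neq a^\star_n}} \gt_n(s_h,a),
\end{align*}
where $a^\star_n$ attains the maximum. The trailing $|B_0^{s_h}(a_+)|-1$ terms are each at least $C$, so the trailing sum remains bounded below by the constant $(|B_0^{s_h}(a_+)|-1)C$, while the maximal term tends to $+\infty$ by Lemma~\ref{lem:sum_B_0_pi_to_1_max_B_0_pi_to_infty-sim}. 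Hence the entire sum diverges to $+\infty$, as claimed.

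There is no serious analytic obstacle here; the whole statement is a short corollary of the preceding lemmata. The only genuinely substantive step is recognising the correct chain of facts: translating membership in $B_0^{s_h}(a_+)$ into the parameter inequality $\gt_n(s_h,a)\ge\gt_n(s_h,a_+)$, and pairing it with the boundedness-from-below of the $I_+^{s_h}$ coordinates from Lemma~\ref{lem:bounded_below_and_to_minus_infty-sim}. Once each summand is pinned below by a constant and at least one summand is known to blow up, divergence of the finite sum is immediate.
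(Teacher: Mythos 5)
Your proposal is correct and follows essentially the same route as the paper's proof: both translate membership in $B_0^{s_h}(a_+)$ into the parameter inequality $\gt_n(s_h,a)\geq \gt_n(s_h,a_+)$, invoke Lemma~\ref{lem:bounded_below_and_to_minus_infty-sim} to bound all these coordinates below, and combine this with the divergence of the maximum from Lemma~\ref{lem:sum_B_0_pi_to_1_max_B_0_pi_to_infty-sim}. Your explicit split of the sum into the maximal term plus the remaining (uniformly lower-bounded) terms merely spells out the final step the paper leaves implicit.
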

\begin{proof}
   Let $a_+ \in I_+^{s_h}$ and $a \in B_0^{s_h}(a_+)$. Then by definition of $B_0^{s_h}(a_+)$ we have 
   \begin{align*}
       \pi^{\gt_n}(a_+|s_h) \leq \pi^{\gt_n}(a|s_h) 
   \end{align*} 
   for all $n > 0$ and hence by softmax parametrisation $\gt_n(s_h,a_+) \leq \gt_n(s_h,a)$ for all $n > 0$. By Lemma~\ref{lem:bounded_below_and_to_minus_infty-sim} we have that $\gt_n(s_h,a_+)$ and thus also $\gt_n(s_h,a)$ is bounded from below for $n \to \infty$. Together with 
   \begin{align*}
       \max_{\{a\in B_0^{s_h}(a_+)\}} \gt_n( s_h,a) \to \infty, \quad \textrm{ for } \, n\to \infty
   \end{align*} 
   by Lemma~\ref{lem:sum_B_0_pi_to_1_max_B_0_pi_to_infty-sim} we deduce the claim.
\end{proof}

Finally, we are ready to prove the asymptotic convergence of simultaneous PG with tabular softmax parametrisation.

\begin{proof}[Proof of Theorem~\ref{thm:asympothic_convergence-sim-allgemein}]
   We have to show that $I_+^{s_h} = \emptyset$ for all $s_h \in \cS^{[\cH]}$. So assume there exists $s_h\in \cS^{[\cH]}$ such that $I_+^{s_h} \neq \emptyset$ and let $a_+ \in I_+^{s_h}$. Then by Lemma~\ref{lem:sum_thetas_in_B_null_to_infty-sim} we have
   \begin{align}\label{eq:assumption-sim}
       \sum_{a\in B_0^{s_h}(a_+)} \gt_n(s_h,a) \to \infty, \quad \textrm{ for }\, n \to \infty.
   \end{align}
   For any $a \in I_-^{s_h}$ we have by Lemma~\ref{lem:bounded_below_and_to_minus_infty-sim} that 
   \begin{align*}
       \frac{\pi^{\gt_n}(a|s_h)}{\pi^{\gt_n}(a_+|s_h)} = \exp(\underbrace{\gt_n(s_h,a)}_{\to -\infty}-\underbrace{\gt_n(s_h,a_+) }_{\text{bounded from below}}) \to 0, \quad n \to \infty.
   \end{align*}
   Hence, there exists $N_2>N_1$ such that for all $n >N_2$
   \begin{align*}
       \frac{\pi^{\gt_n}(a|s_h)}{\pi^{\gt_n}(a_+|s_h)} < \frac{\Delta}{16|\cA|HR^*},
   \end{align*}
   which leads for $n >N_2$ to
   \begin{align}\label{eq:absch1-sim}
       - HR^* \sum_{a\in I_-^{s_h}} \pi^{\gt_n}(a|s_h) > - \frac{\Delta}{16}\pi^{\gt_n}(a_+|s_h).
   \end{align}
   Note that if $I_-^{s_h} = \emptyset$ we can just ignore this sum later on.\\
   Next consider $a \in \bar{B}_0^{s_h}(a_+) \subseteq I_0^{s_h}$. By the definition of $I_0^{s_h}$ we have that $A_h^{\gt_n}(s_h,a) \to A_h^{\infty}(s_h,a ) = 0$ for $n \to \infty$. By  Lemma~\ref{lem:pi_a_+_larger_pi_a_for_B_0_complement-sim} we have for $n \geq N_{a_+}$
   \begin{align*}
       1 <  \frac{\pi^{\gt_n}(a_+|s_h)}{\pi^{\gt_n}(a|s_h)}.
   \end{align*}
   Thus, there exists $N_3 > \max\{N_2, N_{a_+}\}$ such that for all $n \geq N_3$
   \begin{align*}
       |A_h^{\gt_n}(s_h,a)| < \frac{\pi^{\gt_n}(a_+|s_h)}{\pi^{\gt_n}(a|s_h)} \frac{\Delta}{16 |\cA|}.
   \end{align*}
   This implies
   \begin{align*}
       \sum_{a\in \bar{B}_0^{s_h}(a_+)} \pi^{\gt_n}(a|s_h) |A_h^{\gt_n}(s_h,a)|  < \pi^{\gt_n}(a_+|s_h)\frac{\Delta}{16 }
   \end{align*}
   and so 
   \begin{align}\label{eq:absch2-sim}
       -\pi^{\gt_n}(a_+|s_h)\frac{\Delta}{16 } <  \sum_{a\in \bar{B}_0^{s_h}(a_+)} \pi^{\gt_n}(a|s) A_h^{\gt_n}(s_h,a)<\pi^{\gt_n}(a_+|s_h)\frac{\Delta}{16 },
   \end{align}
   for all $n >N_3$. We can conclude again for $n >N_3$,
   \begin{align*}
       0 &=  \sum_{a\in \cA} \pi^{\gt_n}(a|s_h) A_h^{\gt_n}(s_h,a) \\
       &=  \sum_{a\in B_0^{s_h}(a_+)} \pi^{\gt_n}(a|s_h) A_h^{\gt_n}(s_h,a) +  \sum_{a\in \bar{B}_0^{s_h}(a_+)} \pi^{\gt_n}(a|s_h) A_h^{\gt_n}(s_h,a)\\%}_{ > -\pi^{\gt_n}(a_+|s)\frac{\Delta}{16 } } \\
       &\quad +  \sum_{a\in I_+^{s_h}} \pi^{\gt_n}(a|s_h) A_h^{\gt_n}(s_h,a) +  \sum_{a\in I_-^{s_h}} \pi^{\gt_n}(a|s_h) A_h^{\gt_n}(s_h,a)\\%}_{> -(H-h)R^*} \\
       &> \sum_{a\in B_0^{s_h}(a_+)} \pi^{\gt_n}(a|s_h) A_h^{\gt_n}(s_h,a)  -\pi^{\gt_n}(a_+|s_h)\frac{\Delta}{16 } + \pi^{\gt_n}(a_+|s_h) \frac{\Delta}{4}- HR^* \sum_{a\in I_-^{s_h}} \pi^{\gt_n}(a|s_h) \\
       &\geq \sum_{a\in B_0^{s_h}(a_+)} \pi^{\gt_n}(a|s_h) A_h^{\gt_n}(s_h,a)  -\pi^{\gt_n}(a_+|s_h)\frac{\Delta}{16 } + \pi^{\gt_n}(a_+|s) \frac{\Delta}{4}- \frac{\Delta}{16}\pi^{\gt_n}(a_+|s_h) \\
       &> \sum_{a\in B_0^{s_h}(a_+)} \pi^{\gt_n}(a|s_h) A_h^{\gt_n}(s_h,a), 
   \end{align*}
   where we used Equation~\Eqref{eq:absch2-sim} and Lemma~\ref{lem:Advantage_larger_Delta-sim} in the first inequality and Equation~\Eqref{eq:absch1-sim} in the second inequality. 
   Finally, by our assumption and Equation~\Eqref{eq:assumption-sim} for $n >N_3$,
   \begin{align*}
       \infty\, \overset{n \to\infty}{\longleftarrow} &\sum_{a\in B_0^{s_h}(a_+)} (\gt_n(s,a) - \gt_{N_3}(s_h,a)) \\
       &= \eta \sum_{n^\prime=N_3}^{n} \sum_{a\in B_0^{s_h}(a_+)} \frac{\partial J(\gt_{n^\prime})}{\partial \gt_{n^\prime} (s_h,a) } \\
       &= \eta \sum_{n^\prime=N_3}^{n} \tilde{\rho}_\mu^{\pi^{\gt_{n^\prime}}}(s_h) \sum_{a\in B_0^{s_h}(a_+)} \pi^{\gt_{n^\prime}}(a|s_h) A_h^{\gt_{n^\prime}}(s_h,a),
   \end{align*}
   which contradicts $\sum_{a\in B_0^s(a_+)} \pi^{\gt_n}(a|s) A_h^{\gt_n}(s,a)<0$ for all $n > N_3$.
\end{proof}

\section{Proofs of section~\ref{sec:stochastic}}\label{app:proofs-stochastic}

\subsection{Simultaneous Approach}\label{app:proofs-sim-stochastic}

We first proof that the gradient estimator is unbiased and has bounded variance.
\begin{lemma}\label{lem:unbiased-bounded-var-sim}
   Consider the estimator from \Eqref{eq:estimator-nabla-J-finite-time-sim}. For any $K>0$ it holds that 
   \begin{align*}
     \E_{\mu}^{\pi^\gt}[\widehat{\nabla} J^{K}(\gt,\mu)] = \nabla J(\gt,\mu)
   \end{align*}
   and 
   \begin{align*}
      \E_{\mu}^{\pi^\gt}[\lVert \widehat{\nabla} J^{K}(\gt,\mu) - \nabla J(\gt,\mu)\rVert^2] \leq \frac{3H^4 \max\{R^\ast,1\}^4}{K} =: \frac{\xi}{K}
   \end{align*}
\end{lemma}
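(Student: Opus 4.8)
The plan is to reduce both claims to a statement about a single trajectory and then exploit independence across the $K$ sampled trajectories. Write $\hat{g}^i := \sum_{h=0}^{H-1} \nabla \log(\pi^\gt(a_h^i|s_h^i))\,\hat{R}_h^i$ for the per-trajectory contribution, so that $\widehat{\nabla} J^K(\gt,\mu) = \frac{1}{K}\sum_{i=1}^K \hat{g}^i$, and note that the $\hat{g}^i$ are i.i.d.\ because the $K$ trajectories are generated independently from the same law induced by $\pi^\gt$ and $\mu$.

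For unbiasedness it suffices to show $\E_\mu^{\pi^\gt}[\hat{g}^1] = \nabla J(\gt,\mu)$, since then $\E[\widehat{\nabla}J^K] = \frac{1}{K}\sum_i \E[\hat g^i] = \nabla J(\gt,\mu)$. This identity is not a fresh computation: it is exactly the intermediate step in the proof of Theorem~\ref{thm:policy-gradient-thm-sim} just before the conditional expectation is collapsed into $Q$-values, namely $\nabla J(\gt,\mu) = \E_\mu^{\pi^\gt}\big[\sum_{h=0}^{H-1}\nabla\log(\pi^\gt(A_h|S_h))\sum_{k=h}^{H-1}r(S_k,A_k)\big]$, whose right-hand side is precisely $\E_\mu^{\pi^\gt}[\hat g^1]$. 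So unbiasedness follows directly from the policy gradient theorem.

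For the variance, independence and the zero mean of $\hat g^i - \nabla J(\gt,\mu)$ kill the cross terms, giving $\E[\lVert \widehat{\nabla}J^K - \nabla J\rVert^2] = \frac{1}{K}\E[\lVert \hat g^1 - \nabla J\rVert^2] \le \frac{1}{K}\E[\lVert \hat g^1\rVert^2]$, where the last inequality is $\E\lVert X - \E X\rVert^2 \le \E\lVert X\rVert^2$. It remains to bound the second moment of a single-trajectory estimator \emph{deterministically}. By the triangle inequality $\lVert\hat g^1\rVert \le \sum_{h=0}^{H-1}\lVert\nabla\log(\pi^\gt(a_h^1|s_h^1))\rVert\,\hat R_h^1$. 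The score norm is controlled by the explicit softmax-gradient formula derived above: on the block of $s$ the vector $\nabla\log\pi^\gt(a|s)$ has entries $\mathbf{1}_{\{a=a'\}} - \pi^\gt(a'|s)$, so $\lVert\nabla\log\pi^\gt(a|s)\rVert^2 = (1-\pi^\gt(a|s))^2 + \sum_{a'\neq a}\pi^\gt(a'|s)^2 \le 2(1-\pi^\gt(a|s))^2 \le 2$. The return is bounded by the bounded-reward assumption, $0 \le \hat R_h^1 = \sum_{k=h}^{H-1} r(s_k^1,a_k^1) \le (H-h)R^\ast \le HR^\ast$. Combining these, $\lVert\hat g^1\rVert \le H\cdot\sqrt2\cdot HR^\ast = \sqrt2\,H^2 R^\ast$, hence $\E[\lVert\hat g^1\rVert^2] \le 2H^4(R^\ast)^2$. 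Finally $2H^4(R^\ast)^2 \le 3H^4\max\{R^\ast,1\}^4$ (check the two cases $R^\ast\ge 1$ and $R^\ast<1$ separately), and dividing by $K$ yields the stated bound with $\xi = 3H^4\max\{R^\ast,1\}^4$.

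I expect no genuine obstacle here; the statement is essentially bookkeeping on top of the policy gradient theorem. The only two points needing care are (i) recognising that the per-trajectory estimator is already unbiased, so that no new identity beyond Theorem~\ref{thm:policy-gradient-thm-sim} is required, and (ii) producing a \emph{pathwise} bound on the score function, for which one must use the explicit entrywise softmax formula rather than the averaged bound $\E\lVert\nabla\log\pi\rVert^2 \le 1$ borrowed from \citet{YuanGrower22}. The loose constant $3$ and the fourth power of $\max\{R^\ast,1\}$ are only there to absorb the $\sqrt2$ and to cover both reward regimes at once, so matching them exactly is not a real constraint; a sharper argument using that the per-epoch scores have disjoint support on the enlarged state space would even give the better constant $2H^3(R^\ast)^2$.
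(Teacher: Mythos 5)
Your proof is correct, and its skeleton matches the paper's: reduce to one trajectory by i.i.d.-ness, get unbiasedness from the intermediate display in the proof of Theorem~\ref{thm:policy-gradient-thm-sim} (the step before the inner return is collapsed into $Q_h$ via conditioning), then control the single-trajectory second moment using bounded rewards. Where you genuinely diverge is the variance step. The paper expands $\lVert \widehat{\nabla} J^{1} - \nabla J\rVert^2 \leq \lVert\widehat{\nabla} J^{1}\rVert^2 + 2\lVert\widehat{\nabla} J^{1}\rVert\,\lVert\nabla J\rVert + \lVert\nabla J\rVert^2$, which forces it to prove a separate bound on $\lVert\nabla J(\gt,\mu)\rVert$ from the explicit derivative formula and to control the score only in expectation via \citet[Lem 4.8]{YuanGrower22}, yielding a three-term bound absorbed into $\xi$. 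You instead use $\E\lVert X - \E X\rVert^2 \leq \E\lVert X\rVert^2$ (and your i.i.d. reduction is correctly stated as an equality, since the cross terms vanish exactly), combined with the deterministic softmax bound $\lVert\nabla\log\pi^\gt(a|s)\rVert \leq \sqrt{2}$, giving the single estimate $\E\lVert\hat g^1\rVert^2 \leq 2H^4(R^\ast)^2 \leq \xi$. This buys self-containedness (no external lemma, no gradient-norm bound) and a slightly sharper constant; it is also technically cleaner, because the paper only establishes the first-moment bound $\E\lVert\widehat{\nabla} J^{1}\rVert \leq H^2 R^\ast$ yet invokes the second-moment bound $\E\lVert\widehat{\nabla} J^{1}\rVert^2 \leq H^4 (R^\ast)^2$ in its final display — Jensen points the wrong way there, and your pathwise bound $\lVert\hat g^1\rVert \leq \sqrt{2}\,H^2 R^\ast$ is precisely what closes that gap. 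Your closing observation is also sound: since the per-epoch scores occupy disjoint parameter blocks of $\cS^{[\cH]}$, one has $\lVert\hat g^1\rVert^2 = \sum_{h}\lVert\nabla\log\pi^\gt(a_h^1|s_h^1)\rVert^2(\hat R_h^1)^2 \leq 2\sum_{h}(H-h)^2(R^\ast)^2$, an $O(H^3)$ bound, though this sharpening is immaterial for the stated $\xi$.
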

\begin{proof}
   By the definition of $\widehat{\nabla} J^{K}$ we have
   \begin{align*}
     &\E_{\mu}^{\pi^\gt}[\widehat{\nabla} J^{K}(\gt,\mu)]\\
     &=\E_{\mu}^{\pi^\gt}\Big[\frac{1}{K} \sum_{i=1}^{K} \sum_{h=0}^{H-1} \nabla \log( \pi^\gt(A_h^i | S_h^i)) \hat{R}_h^i \Big]\\
     &=\E_{\mu}^{\pi^\gt}\Big[ \sum_{h=0}^{H-1} \nabla \log( \pi^\gt(A_h | S_h)) \hat{R}_h\Big]\\
     &=\E_{\mu}^{\pi^\gt}\Big[ \sum_{h=0}^{H-1} \nabla \log( \pi^\gt(A_h | S_h))  \sum_{k=h}^{H-1} r(S_k,A_k) \Big],
   \end{align*}
   where we used that we consider independent samples for $i=1,\dots,K$. From the proof of the policy gradient Theorem, we obtain that
   \begin{align*}
     \E_{\mu}^{\pi^\gt}[\widehat{\nabla} J^{K}(\gt,\mu)]=  \nabla J(\gt,\mu).
   \end{align*}
   For the second claim we first see that
   \begin{align*}
       \lVert \nabla J(\gt,\mu) \rVert &= \Big( \sum_{s \in\cS^{[\cH]}} \sum_{a\in\cA} (H d_\mu^{\pi^\gt}(s) \pi^\gt(a|s) A^{\pi^\gt}(s,a) )^2\Big)^{\frac{1}{2}} \\
       &\leq H^2 R^\ast \Big(\sum_{s \in\cS^{[\cH]}} \sum_{a\in\cA} (d_\mu^{\pi^\gt}(s) \pi^\gt(a|s) )^2\Big)^{\frac{1}{2}} \\
       &\leq H^2 (R^\ast)^2,
   \end{align*}
   because $\pi^\gt(\cdot|s) \leq 1$, $d_\mu^{\pi^\gt}(s)\leq 1$ and both are probability distributions.

   Next we have that 
   \begin{align*}
    \E_{\mu}^{\pi^\gt}[\lVert \widehat{\nabla} J^1(\gt,\mu) \rVert]
       &\leq \E_{\mu}^{\pi^\gt}\Big[\sum_{h=0}^{H-1} \lVert \nabla \log( \pi^\gt(A_h | S_h)) \rVert |\hat{R}_h|\Big] \\
       &\leq H^2 R^\ast \E_{\mu}^{\pi^\gt}\Big[\lVert \nabla \log( \pi^\gt(A_h | S_h)) \rVert\Big]\\
       & \leq H^2 R^\ast,
   \end{align*}
   where the last inequality follows with by \citet[Lem 4.8]{YuanGrower22} and Jensen's inequality.

   Thus,
   \begin{align*}
     &\E_{\mu}^{\pi^\gt}[\lVert \widehat{\nabla} J^{K}(\gt,\mu) - \nabla J(\gt,\mu)\rVert^2]\\
     &\leq \frac{1}{K}\E_{\mu}^{\pi^\gt}\Bigl[\lVert\widehat{\nabla} J^{1}(\gt,\mu) - \nabla J(\gt,\mu) \rVert^2\Bigr]\\
     &\leq \frac{1}{K} \E_{\mu}^{\pi^\gt}\Big[ \lVert\widehat{\nabla} J^{1}(\gt) \rVert^2 + 2\lVert\widehat{\nabla} J^{1}(\gt,\mu) \rVert \lVert \nabla J(\gt) \rVert +  \lVert \nabla J(\gt,\mu) \rVert^2 \Big] \\
     &\leq \frac{1}{K} \Bigl[H^4  (R^\ast)^2 + H^4  (R^\ast)^2 +H^4 (R^\ast)^4 \Bigr].
   \end{align*}
   Define $\xi = 3H^4 \max\{R^\ast,1\}^4  \geq H^4 (R^\ast)^2 + H^4  (R^\ast)^2 +H^4 (R^\ast)^4$ proves the claim.
\end{proof}

Recall the stochastic PG updates for training the softmax parameter from \Eqref{eq:SGD_updates-sim}
\begin{align*}
  \bar{\gt}^{(n+1)} = \bar{\gt}^{(n)} + \eta \widehat{\nabla} J^{K}(\bar{\gt}^{(n)},\mu).
\end{align*} 
In the following denote by $(\gt^{(n)})_{n\geq 0}$ the deterministic sequence generated by Algorithm~\ref{alg:simultaneous-PG} such that the initial parameter agree, $\gt^{(0)} = \bar{\gt}^{(0)}$, and the step size $\eta$ is the same for both processes. The natural filtration of $(\bar{\gt}_h^{(n)})_{n\geq 0}$ is denoted by $(\cF^{(n)})_{n\geq 0}$. 
Recall that for the deterministic scheme we could assure $c = \inf_n \min_{s_h \in\cS^{[\cH]}} \pi^{\gt_n}(a^\ast(s_h)|s_h) $ is bounded away from $0$ by Lemma~\ref{lem:c_larger_0-sim}. This cannot be guaranteed for the stochastic trajectory. The idea of the convergence analysis for stochastic softmax PG is now to define the following stopping time
\begin{align*}
    \tau := \min\{ n \geq 0 : \lVert \gt^{(n)} - \bar{\gt}^{(n)} \rVert_2 \geq \frac{c}{4} \}.
\end{align*}  
This means, $\tau$ is the first time when the stochastic process $(\bar{\gt}^{(n)})_{n\geq 0}$  is \textit{too far away} from the PG trajectory $(\gt^{(n)})_{n\geq 0}$. Hence, all challenges encountered in the deterministic case transfer to the stochastic context, indicating that the model dependent constant $c$ naturally appears in the error bounds of the stochastic case. We emphasise that $\tau$ is a stopping time with respect to the filtration $(\cF^{(n)})_{n\geq 0}$ by construction. 

First, consider the event $\{n \leq \tau\}$, i.e. $\lVert \gt^{(n)} - \bar{\gt}^{(n)} \rVert_2 \leq \frac{c_h}{4}$. 
Then, it follows from the $\sqrt{2}$-Lipschitz continuity of $\gt \mapsto \pi^\gt(a^\ast(s)|s)$ that $\min_{0 \leq n \leq \tau} \min_{s\in\cS}\pi^{\bar{\gt}^{(n)}}(a^\ast(s)|s) \geq\frac{c}{2} >0$.

\begin{lemma}\label{lem:softmax-2-Lipschitz}
   The softmax policy $\pi^\gt(a|s)$ is $\sqrt{2}$-Lipschitz with respect to $\gt \in\R^d$ for every $s,a$. 
\end{lemma}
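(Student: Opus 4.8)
The plan is to establish Lipschitz continuity by uniformly bounding the Euclidean norm of the gradient $\nabla_\theta \pi^\theta(a|s)$, since a continuously differentiable function whose gradient is bounded by $L$ in $\ell_2$-norm is automatically $L$-Lipschitz (integrate the gradient along the segment connecting two parameter vectors and apply Cauchy--Schwarz). Thus it suffices to show $\lVert \nabla_\theta \pi^\theta(a|s)\rVert_2 \le \sqrt 2$ for every fixed $s,a$ and every $\theta\in\R^d$.

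First I would compute the gradient explicitly. Combining $\nabla \pi^\theta(a|s) = \pi^\theta(a|s)\,\nabla \log \pi^\theta(a|s)$ with the derivative of the log-softmax already recorded above, namely $\frac{\partial \log \pi^\theta(a|s)}{\partial \theta(s',a')} = \mathbf 1_{\{s=s'\}}(\mathbf 1_{\{a=a'\}} - \pi^\theta(a'|s'))$, gives
\begin{equation*}
\frac{\partial \pi^\theta(a|s)}{\partial \theta(s',a')} = \pi^\theta(a|s)\,\mathbf 1_{\{s=s'\}}\big(\mathbf 1_{\{a=a'\}} - \pi^\theta(a'|s)\big).
\end{equation*}
The key observation is that all components with $s'\neq s$ vanish, so the gradient is supported on the single block indexed by $s$. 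Writing $p_{a'} := \pi^\theta(a'|s)$, the squared norm therefore collapses to a sum over $a'\in\cA_s$:
\begin{equation*}
\big\lVert\nabla_\theta \pi^\theta(a|s)\big\rVert_2^2 = p_a^2 \sum_{a'}\big(\mathbf 1_{\{a=a'\}} - p_{a'}\big)^2 = p_a^2\Big((1-p_a)^2 + \sum_{a'\neq a} p_{a'}^2\Big).
\end{equation*}

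Next I would bound the bracketed term by $2$. Since $(1-p_a)^2\le 1$ and, by nonnegativity of the $p_{a'}$, $\sum_{a'\neq a} p_{a'}^2 \le \big(\sum_{a'\neq a} p_{a'}\big)^2 = (1-p_a)^2 \le 1$, the bracket is at most $2$; combined with $p_a^2\le 1$ this yields $\lVert\nabla_\theta \pi^\theta(a|s)\rVert_2^2 \le 2$, i.e. $\lVert\nabla_\theta \pi^\theta(a|s)\rVert_2 \le \sqrt 2$, as required. There is essentially no serious obstacle here: the only points requiring a little care are recognising that the gradient lives entirely in the $s$-block (so that the bound is independent of the dimension $d$ and of the number of states), and opting for the slightly lossy but clean estimate $\sum_{a'\neq a}p_{a'}^2\le(1-p_a)^2$ instead of optimising the constant.
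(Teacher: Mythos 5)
Your proposal is correct and follows essentially the same route as the paper: both compute the gradient $\frac{\partial \pi^\theta(a|s)}{\partial \theta(s',a')} = \mathbf{1}_{\{s=s'\}}\,\pi^\theta(a|s)\big(\mathbf{1}_{\{a=a'\}} - \pi^\theta(a'|s)\big)$, observe it is supported on the block indexed by $s$, and bound its squared Euclidean norm by $2$ via elementary estimates on the probabilities. Your write-up is in fact slightly more careful than the paper's, since you state explicitly the standard fact that a bounded gradient implies the Lipschitz property, which the paper uses implicitly.
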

\begin{proof}
   The derivative of the softmax function is 
   \begin{align*}
     \frac{\partial \pi^\gt(a|s)}{\partial \gt(s^\prime,a^\prime)} &= \mathbf{1}_{s^\prime =s} \Big[ \frac{\mathbf{1}_{a^\prime = a} \exp(\gt(s,a)) \big(\sum_{\tilde{a}\in\cA_s} \exp(\gt(s,\tilde{a}))\big) - \exp(\gt(s,a)) \exp(\gt(s,a^\prime))}{\big(\sum_{\tilde{a}\in\cA_s} \exp(\gt(s,\tilde{a}))\big)^2} \Big]\\
     &= \mathbf{1}_{s^\prime =s} \Big[ \mathbf{1}_{a^\prime = a} \pi^\gt(a|s) - \pi^\gt(a|s) \pi^\gt(a^\prime|s)\Big].\\
   \end{align*}
   Therefore,
   \begin{align*}
     \lVert \nabla \pi^\gt(a|s)\rVert_2 &= \sqrt{\sum_{\tilde{a}\in\cA_s} \Bigl(\mathbf{1}_{a^\prime = a} \pi^\gt(a|s) - \pi^\gt(a|s) \pi^\gt(a^\prime|s)\Bigr)^2}\\
     &\leq   \sqrt{\pi^\gt(a|s) ^2 - 2 \pi^\gt(a|s)^3 + \sum_{\tilde{a}\in\cA_s}  \pi^\gt(a^\prime|s)^2 \pi^\gt(a|s)^2}\\
     &\leq \sqrt{2}.
   \end{align*}
\end{proof}

\begin{lemma}\label{lem:c_larger_0_SGD-sim}
  Let $\mu$ be a probability measure such that $\mu(s)>0$ for all $s\in\cS$ and consider the sequence $(\bar{\gt}^{(n)})_{n\geq 0}$ generated by \Eqref{eq:SGD_updates-sim}. Then, it holds almost surely that $\min_{0 \leq n \leq \tau} \min_{s\in\cS_h} \pi^{\bar{\gt}^{(n)}} (a^\ast(s)|s) \geq \frac{c}{2}$ is strictly positive.
\end{lemma}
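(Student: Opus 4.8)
The plan is to transfer the uniform lower bound that holds along the \emph{deterministic} policy gradient trajectory to the stochastic iterates, using that the two trajectories stay close up to the stopping time $\tau$. Two ingredients are already available. First, by Lemma~\ref{lem:c_larger_0-sim} (the very definition of $c$) the deterministic iterates satisfy $\pi^{\gt^{(n)}}(a^\ast(s)|s)\ge c$ for every $n\ge 0$ and every state $s$. Second, by Lemma~\ref{lem:softmax-2-Lipschitz} the map $\gt\mapsto\pi^\gt(a|s)$ is $\sqrt{2}$-Lipschitz in $\gt$ for each fixed pair $(s,a)$. The stopping time $\tau=\min\{n:\lVert\gt^{(n)}-\bar{\gt}^{(n)}\rVert_2\ge c/4\}$ is designed exactly so that before it triggers the separation between the exact and stochastic parameters is at most $c/4$.

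First I would argue pathwise. Fix a realisation and an index $n$ with $0\le n<\tau$; by the definition of $\tau$ the trajectories have not yet separated, so $\lVert\gt^{(n)}-\bar{\gt}^{(n)}\rVert_2\le c/4$. Applying the Lipschitz estimate of Lemma~\ref{lem:softmax-2-Lipschitz} with $a=a^\ast(s)$ gives
\begin{equation*}
  \big|\pi^{\gt^{(n)}}(a^\ast(s)|s)-\pi^{\bar{\gt}^{(n)}}(a^\ast(s)|s)\big|
  \le \sqrt{2}\,\lVert\gt^{(n)}-\bar{\gt}^{(n)}\rVert_2\le \frac{\sqrt{2}}{4}\,c .
\end{equation*}
Combining this with $\pi^{\gt^{(n)}}(a^\ast(s)|s)\ge c$ through the reverse triangle inequality yields
\begin{equation*}
  \pi^{\bar{\gt}^{(n)}}(a^\ast(s)|s)\ge c-\frac{\sqrt{2}}{4}\,c=\Big(1-\frac{\sqrt{2}}{4}\Big)c\ge \frac{c}{2},
\end{equation*}
where the last step uses $\sqrt{2}/4\le 1/2$. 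This bounds $\pi^{\bar{\gt}^{(n)}}(a^\ast(s)|s)$ from below by $c/2$ for all $s$ and all $0\le n<\tau$; taking the minimum over $s$ and $n$ and recalling $c>0$ from Lemma~\ref{lem:c_larger_0-sim} delivers the claim on every path, hence almost surely. It then remains only to cover the endpoint $n=\tau$ to obtain the stated minimum over $0\le n\le\tau$.

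The only delicate point — and the main obstacle — is precisely that endpoint: by construction the separation is bounded by $c/4$ \emph{strictly before} the crossing, whereas at $n=\tau$ one only knows $\lVert\gt^{(\tau)}-\bar{\gt}^{(\tau)}\rVert_2\ge c/4$, with no a priori control from above. The estimate above therefore applies directly for $0\le n<\tau$, which is in fact the range invoked in the sequel, since all subsequent estimates condition on the event $\{\tau\ge n\}$ and only use the iterates $\bar{\gt}^{(0)},\dots,\bar{\gt}^{(n-1)}$ lying before the stopping time. The threshold $c/4$ is chosen with slack — the Lipschitz budget only needs separation up to $\tfrac{\sqrt{2}}{4}c$ to preserve the bound $c/2$ — so the endpoint can additionally be absorbed by a routine one-step estimate: the increment $\lVert(\gt^{(n+1)}-\bar{\gt}^{(n+1)})-(\gt^{(n)}-\bar{\gt}^{(n)})\rVert_2$ is $O(\eta H^2)$, using the bounds on $\lVert\nabla J\rVert$ and $\lVert\widehat{\nabla}J^K\rVert$ from the proof of Lemma~\ref{lem:unbiased-bounded-var-sim}, and this is negligible for the small step sizes $\eta$ employed in Theorem~\ref{thm:SGD-rate-sim}, keeping the separation at $\tau$ within the Lipschitz budget.
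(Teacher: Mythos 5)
Your proof is essentially the paper's own: it combines the uniform lower bound $\pi^{\gt^{(n)}}(a^\ast(s)\vert s)\ge c$ along the exact trajectory (Lemma~\ref{lem:c_larger_0-sim}) with the $\sqrt{2}$-Lipschitz continuity of Lemma~\ref{lem:softmax-2-Lipschitz} and the separation bound $c/4$ enforced by the stopping time, giving $\pi^{\bar{\gt}^{(n)}}(a^\ast(s)\vert s)\ge\bigl(1-\tfrac{\sqrt{2}}{4}\bigr)c\ge\tfrac{c}{2}$, exactly as in the paper. Your endpoint discussion at $n=\tau$ is in fact more careful than the paper, whose proof silently applies the $c/4$ bound for all $n\le\tau$; note only that your side remark is slightly off --- the sequel does use the iterate $\bar{\gt}^{(n)}$ itself on the event $\{n\le\tau\}$ (the PL step in Lemma~\ref{lem:conv-d_l-in-SGD-sim}), which includes $n=\tau$ --- so your one-step-increment absorption (valid since $\lVert\widehat{\nabla}J^{K}\rVert\le\sqrt{2}H^{2}R^\ast$ almost surely and $\eta$ is small under the choices of Theorem~\ref{thm:SGD-rate-sim}) is the necessary patch rather than an optional one.
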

\begin{proof}
   For every $n \leq \tau$ we obtain by the $\sqrt{2}$-Lipschitz continuity in Lemma~\ref{lem:softmax-2-Lipschitz} that
   \begin{align*}
     \pi^{\bar{\gt}^{(n)}}(a^\ast(s)|s) &\geq \pi^{\gt^{(n)}}(a^\ast(s)|s) -  \vert \pi^{\gt^{(n)}}(a^\ast(s)|s)- \pi^{\bar{\gt}^{(n)}}(a^\ast(s)|s)\vert \\
     &\geq \pi^{\gt^{(n)}}(a^\ast(s)|s) -  \sqrt{2} \lVert \bar{\gt}^{(n)} - \gt^{(n)} \rVert_2 \\
     &> \frac{c}{2} >0,
   \end{align*}
   holds almost surely. 
   The claim follows directly.
\end{proof}

This allows us to use the weak PL-inequality of Lemma~\ref{lem:weak_pl_sim} to derive a convergence rate on the event $\{n \leq \tau\}$ in the following sense: 
\begin{lemma}\label{lem:conv-d_l-in-SGD-sim}
  Under Assumption~\ref{ass:sim}, let $\mu$ be a probability measure such that $\mu(s)>0$ for all $s\in\cS$ and consider the sequence $(\bar{\gt}^{(n)})_{n\geq 0}$ generated by \Eqref{eq:SGD_updates-sim}. Suppose that
  \begin{itemize}
    \item the batch size $K^{(n)}\geq  \frac{9}{8} \frac{c^2  \max\{R^\ast,1\}^2 (1- \frac{1}{2\sqrt{N}})}{N^{3/2}|\cS|H^{19}}\Big\lVert \frac{d_\mu^{\pi^\ast}}{\mu}\Big\rVert_\infty^{-2} n^2$ is increasing for fix $N \geq 1$,
    \item the step size $\eta = \frac{1}{5 H^2 R^\ast \sqrt{N}}$.
  \end{itemize}  
  Then, 
  \begin{align*}
    \E\big[ (J^{\ast}(\mu) - J(\bar{\theta}^{(n)},\mu) ) \mathbf{1}_{\{ n \leq \tau \}}\big] \leq \frac{20 |\cS|H^5 R^\ast }{c^2 \frac{1}{\sqrt{N}} (1-\frac{1}{2\sqrt{N}}) n }\Big\lVert \frac{d_\mu^{\pi^\ast}}{\mu}\Big\rVert_\infty^{2}.
  \end{align*} 
\end{lemma}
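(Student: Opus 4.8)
The plan is to run the same ``descent lemma plus gradient domination'' argument as in Theorem~\ref{thm:convergence-rate-sim-deterministic}, but carried out in conditional expectation and localised to the event $\{n\le\tau\}$ on which the stochastic trajectory stays close to the exact one. Throughout write $d_n:=\E[(J^\ast(\mu)-J(\bar\gt^{(n)},\mu))\mathbf{1}_{\{n\le\tau\}}]$ for the quantity to be bounded.

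\textbf{Stochastic ascent step.} First I would apply the smoothness inequality \eqref{eq:inequality_for_smooth_function} for the $\beta$-smooth objective $J$ (Lemma~\ref{lem:smoothness-simultan}) to the update \eqref{eq:SGD_updates-sim}, obtaining $J(\bar\gt^{(n+1)})\ge J(\bar\gt^{(n)})+\eta\,\nabla J(\bar\gt^{(n)})^{T}\widehat\nabla J^{K^{(n)}}(\bar\gt^{(n)})-\tfrac{\beta\eta^2}{2}\lVert\widehat\nabla J^{K^{(n)}}(\bar\gt^{(n)})\rVert^2$. Taking $\E[\,\cdot\mid\cF^{(n)}]$ and using that the estimator is unbiased with variance at most $\xi/K^{(n)}$ (Lemma~\ref{lem:unbiased-bounded-var-sim}) turns the cross term into $\lVert\nabla J(\bar\gt^{(n)})\rVert^2$ and the quadratic term into $\lVert\nabla J(\bar\gt^{(n)})\rVert^2+\xi/K^{(n)}$, so that $\E[J(\bar\gt^{(n+1)})\mid\cF^{(n)}]\ge J(\bar\gt^{(n)})+(\eta-\tfrac{\beta\eta^2}{2})\lVert\nabla J(\bar\gt^{(n)})\rVert^2-\tfrac{\beta\eta^2\xi}{2K^{(n)}}$. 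Since $\beta\le 5H^2R^\ast$ and $\eta=\tfrac{1}{5H^2R^\ast\sqrt N}$ give $\beta\eta\le 1/\sqrt N$, we get $\eta-\tfrac{\beta\eta^2}{2}\ge\eta(1-\tfrac{1}{2\sqrt N})$, which is exactly where the factor $(1-\tfrac{1}{2\sqrt N})$ in the statement comes from.

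\textbf{Localised PL inequality.} The event $\{n\le\tau\}$ lies in $\cF^{(n)}$ because $\tau$ is an $(\cF^{(n)})$-stopping time, and on it Lemma~\ref{lem:c_larger_0_SGD-sim} gives $\min_{s}\pi^{\bar\gt^{(n)}}(a^\ast(s)\mid s)\ge c/2$. Hence the weak PL inequality of Lemma~\ref{lem:weak_pl_sim}, combined with $d_\mu^{\pi^\theta}\ge\tfrac1H\mu$ from Remark~\ref{rem:distribution-d-zu-mu}, yields on $\{n\le\tau\}$ the bound $\lVert\nabla J(\bar\gt^{(n)})\rVert_2^2\ge b^2\,(J^\ast(\mu)-J(\bar\gt^{(n)},\mu))^2$ with $b=\tfrac{c/2}{H\sqrt{|\cS|H}}\lVert\tfrac{d_\mu^{\pi^\ast}}{\mu}\rVert_\infty^{-1}$. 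I would multiply the inequality of the previous step by the $\cF^{(n)}$-measurable indicator $\mathbf{1}_{\{n\le\tau\}}$ and substitute this lower bound into the $\lVert\nabla J\rVert^2$ term, while simply discarding the indicator on the (nonnegative) error term.

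\textbf{Reduction to a scalar recursion.} Taking total expectation and pulling the $\cF^{(n)}$-measurable factor $\mathbf{1}_{\{n\le\tau\}}$ through the conditional expectation collapses the left side to $\E[(J^\ast-J(\bar\gt^{(n+1)}))\mathbf{1}_{\{n\le\tau\}}]$; since $\mathbf{1}_{\{n+1\le\tau\}}\le\mathbf{1}_{\{n\le\tau\}}$ and $J^\ast-J\ge0$, this dominates $d_{n+1}$. For the quadratic term, Cauchy--Schwarz together with $\bbP(n\le\tau)\le1$ gives $\E[\mathbf{1}_{\{n\le\tau\}}(J^\ast-J)^2]\ge d_n^2$. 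Setting $q:=\eta(1-\tfrac{1}{2\sqrt N})b^2$ and $\epsilon_n:=\tfrac{\beta\eta^2\xi}{2K^{(n)}}$ produces the perturbed recursion $d_{n+1}\le d_n-q\,d_n^2+\epsilon_n$, and a direct computation shows that $1/q$ equals precisely the constant appearing in the claimed bound, so the goal becomes $d_n\le 1/(qn)$.

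\textbf{Solving the recursion.} Finally I would invoke an error-tolerant variant of Lemma~\ref{lem:help-dn}: starting from the crude bound $d_1\le HR^\ast\le 1/q$, one shows by induction that $d_n-q d_n^2$ stays below $1/(q(n{+}1))$ up to the margin left by the quadratic decrease, and the prescribed growth of $K^{(n)}$ is what keeps $\epsilon_n$ inside that margin. \emph{The hard part is exactly this last step}: controlling the perturbation $\epsilon_n$ against the quadratic gain without degrading the leading constant, which is what forces the delicate matching of the batch-size constant (the factor $\tfrac98$ and the powers of $H$ and $N$) to $q$ and $b^2$. A secondary point, easy to get wrong, is the measurability bookkeeping in the two middle steps --- that $\{n\le\tau\}\in\cF^{(n)}$ and that the indicator commutes with $\E[\,\cdot\mid\cF^{(n)}]$ --- since this is precisely what licenses using the $c/2$ lower bound inside the PL inequality along the random trajectory.
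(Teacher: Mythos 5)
Your outline reproduces the paper's proof essentially step for step: the smoothness/ascent inequality applied to the update \eqref{eq:SGD_updates-sim}, conditional expectation with the unbiasedness and variance bound $\xi/K^{(n)}$ from Lemma~\ref{lem:unbiased-bounded-var-sim}, the factor $(1-\tfrac{1}{2\sqrt N})$ via $\beta\eta\le 1/\sqrt N$, localisation using that $\{n\le\tau\}\in\cF^{(n)}$ together with the $c/2$ bound of Lemma~\ref{lem:c_larger_0_SGD-sim}, the weak PL inequality of Lemma~\ref{lem:weak_pl_sim} combined with $d_\mu^{\pi^\theta}\ge\mu/H$ (Remark~\ref{rem:distribution-d-zu-mu}), and finally the scalar recursion $d_{n+1}\le d_n-q\,d_n^2+\epsilon_n$. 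All of this, including the measurability bookkeeping and the Jensen/Cauchy--Schwarz step $\E[\mathbf{1}_{\{n\le\tau\}}(J^\ast-J)^2]\ge d_n^2$, matches the paper's argument and is sound.

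The genuine gap is in the last step, exactly where you flag the difficulty but resolve it incorrectly: with batch size growing only like $n^2$, the perturbed recursion \emph{cannot} be pushed to the exact constant $d_n\le\tfrac{1}{qn}$. To propagate $d_n\le\tfrac{C}{qn}$ one needs $f\bigl(\tfrac{C}{qn}\bigr)+\epsilon_n\le\tfrac{C}{q(n+1)}$ with $f(x)=x-qx^2$, and the available margin is $\tfrac{C^2}{qn^2}-\tfrac{C}{qn(n+1)}$; at $C=1$ this collapses to $\tfrac{1}{qn^2(n+1)}=\Theta(n^{-3})$, while $\epsilon_n=B/K^{(n)}=\Theta(n^{-2})$ for quadratic $K^{(n)}$. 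Indeed, matching the worst-case recursion with $\epsilon_n=A/n^2$ forces the limiting constant $\lambda$ to solve $\lambda^2-\lambda=Aq$, so $\lambda=\tfrac{1+\sqrt{1+4Aq}}{2}>1$ for every fixed $A>0$: the sharp constant $1/q$ is unreachable. The paper therefore runs the induction at the relaxed level $d_n\le\tfrac{4}{3wn}$ (i.e.\ $C=4/3$, margin $\tfrac{4}{9wn^2}$, matched by $K^{(n)}\ge\tfrac{9}{4}wBn^2$) and only afterwards bounds $\tfrac{4}{3w}$ by the constant in the statement. So replace ``the goal becomes $d_n\le 1/(qn)$'' by the goal $d_n\le\tfrac{4}{3qn}$, check $d_1\le HR^\ast\le\tfrac{4}{3q}$, and absorb the extra $\tfrac{4}{3}$ into the generous prefactor. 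One caveat on your constant-matching claim: your (careful) PL constant carries $(c/2)^2=c^2/4$, whereas the paper's display carries $c^2$; with your honest $q$ the relaxed induction yields $\tfrac{4}{3q}$, i.e.\ a prefactor $\tfrac{80}{3}>20$, so the stated constant is only recovered under the paper's looser bookkeeping of that factor.
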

\begin{proof}
   Throughout the proof we drop the $\mu$ in $J$ and $J^\ast$.

   First, we deduce from the $\beta$-smoothness of $J$, as in the proof of Theorem~\ref{thm:convergence-rate-sim-deterministic} that almost surely %with $\beta= HR^\ast (1-\sqrt{|\cA|}) < 5 HR^\ast =:\tilde{\beta}$ (Lemma~\ref{lem:smoothness-simultan}) that
   \begin{align*}
      J(\bar\gt^{(n+1)}) &\geq J(\bar\theta^{(n)}) +  \big(\nabla J(\bar\theta^{(n)})\big)^T (\bar\gt^{(n+1)}-\bar\gt^{(n)})  - \frac{\beta }{2} \lVert \bar\gt^{(n+1)}-\bar\gt^{(n)} \rVert^2. \\
   \end{align*}
   We continue with
   \begin{align*}
      J(\bar\gt^{(n+1)}) &\geq J(\bar\theta^{(n)}) + \eta \bigl(\nabla J(\bar\theta^{(n)})\bigr)^T \widehat{\nabla}J^{K}(\bar\theta^{(n)}) - \frac{\beta \eta^2}{2} \lVert \widehat{\nabla}J^{K}(\bar\theta^{(n)})\rVert^2\\
      &= J(\bar\theta^{(n)}) + \eta \bigl(\nabla J(\bar\theta^{(n)})\bigr)^T \nabla J(\bar\theta^{(n)}) +\eta \bigl(\nabla J(\bar\theta^{(n)})\bigr)^T \bigl(\widehat{\nabla}J^{K}(\bar\theta^{(n)}) - \nabla J(\bar\theta^{(n)})\bigr)\\
      &\quad - \frac{\beta \eta^2}{2} \lVert \bigl(\widehat{\nabla}J^{K}(\bar\theta^{(n)}) - \nabla J(\bar\theta^{(n)})\bigr)+  \nabla J(\bar\theta^{(n)}) \rVert^2.
   \end{align*}
   Thus,
   \begin{align*}
       J(\bar\gt^{(n+1)}) &\geq J(\bar\theta^{(n)},) + \big( \eta - \frac{\beta \eta^2}{2}\big) \lVert \nabla J(\bar\theta^{(n)})\rVert^2 +  \big(\eta- \beta \eta^2\big) \langle \nabla J(\bar\theta^{(n)}), \phi_{n} \rangle - \frac{\beta \eta^2}{2} \lVert \phi_{n} \rVert^2, \\
   \end{align*}
   where $\phi_n := \widehat{\nabla}J^K(\bar\theta^{(n)}) - \nabla J(\bar\theta^{(n)})$. 
   Next we take the conditional expectation on $\cF_n$. Then by Lemma~\ref{lem:unbiased-bounded-var-sim} we obtain
   \begin{align*}
       E\Bigl[ J(\bar\gt^{(n+1)})\vert \cF_n \Bigr] 
       \geq J(\bar\theta^{(n)}) + \Big( \eta - \frac{\beta \eta^2}{2}\Big) \lVert \nabla J(\bar\theta^{(n)})\rVert^2 - \frac{\beta \eta^2 \xi}{2 K_n}.  
   \end{align*}
   Subtracting this equation form $J^\ast$ and taking the expectation under the event $\{n+1\leq \tau\}$ results in: 
   \begin{align*}
       &\E\Big[ (J^{\ast} - J(\bar\gt^{(n+1)}) ) \mathbf{1}_{\{ n+1 \leq \tau \}}\Big]\\
       &=\E\Big[ \E\Bigl[(J^{\ast}- J(\bar\gt^{(n+1)}) ) \vert \cF_n \Bigr]  \mathbf{1}_{\{ n+1 \leq \tau \}}\Big]\\
      &\leq\E\Big[ \Big(J^{\ast} - \E\Bigl[J(\bar\gt^{(n+1)}) \vert \cF_n\Bigr] \Big) \mathbf{1}_{\{ n \leq \tau \}}\Big]\\
       &\leq \E\Big[ (J^{\ast}- J(\bar\theta^{(n)})) \mathbf{1}_{\{ n \leq \tau \}}\Big] - \Big( \eta - \frac{\beta \eta^2}{2}\Big) \E\Big[ \lVert \nabla J(\bar\theta^{(n)})\rVert^2   \mathbf{1}_{\{ n \leq \tau \}}\Big]  +\frac{\beta\eta^2 \xi}{2 K_n} \\
       &\leq \E\Big[ (J^{\ast}- J(\bar\theta^{(n)})) \mathbf{1}_{\{ n \leq \tau \}}\Big] - \eta(1-\frac{1}{2\sqrt{N}}) \E\Big[ \lVert \nabla J(\bar\theta^{(n)})\rVert^2   \mathbf{1}_{\{ n \leq \tau \}}\Big]  +\frac{\beta\eta^2 \xi}{2 K_n}, \\
   \end{align*}
   where we used that $\{n+1\leq \tau_h\} =\{\tau_h \leq n\}^C$ is $\cF_n$-measurable and that $\mathbf{1}_{\{ n+1 \leq \tau_h \}}\leq \mathbf{1}_{\{ n\leq \tau_h \}}$ a.s.
   With the PL-type inequality Lemma~\ref{lem:weak_pl_sim} and $\min_{0\leq n\leq \tau}\min_{s\in\cS} \pi^{\bar\theta^{(n)}}(a^\ast(s)|s)\geq \frac{c}{2}$ by Lemma~\ref{lem:c_larger_0_SGD-sim} we have
   \begin{align*}
       &\E\Big[ (J^{\ast} - J(\bar\gt^{(n+1)}) ) \mathbf{1}_{\{ n+1 \leq \tau \}}\Big]\\
       &\leq \E\Big[(J^{\ast} - J(\bar\theta^{(n)}))   \mathbf{1}_{\{ n \leq \tau \}}\Big] - \eta(1-\frac{1}{2\sqrt{N}})\frac{c^2}{|\cS|H} \Big\lVert \frac{d_\mu^{\pi^\ast}}{d_\mu^{\pi^\theta}}\Big\rVert_\infty^{-2} \E\Big[  (J^{\ast} - J(\bar\theta^{(n)}))  \mathbf{1}_{\{ n\leq \tau \}}\Big]^2+\frac{\beta \eta^2 \xi}{2 K_n}\\
       &\leq \E\Big[(J^{\ast} - J(\bar\theta^{(n)}))   \mathbf{1}_{\{ n \leq \tau \}}\Big] - \eta(1-\frac{1}{2\sqrt{N}})\frac{c^2}{|\cS|H^3} \Big\lVert \frac{d_\mu^{\pi^\ast}}{\mu}\Big\rVert_\infty^{-2} \E\Big[  (J^{\ast} - J(\bar\theta^{(n)}))  \mathbf{1}_{\{ n\leq \tau \}}\Big]^2+\frac{\beta \eta^2 \xi}{2 K_n},
   \end{align*}
   where we used in the last inequality that under Assumption~\ref{ass:sim} we have $d_\mu^{\pi^\theta}(s) \geq \frac{1}{H} \mu(s)$ (see Remark~\ref{rem:distribution-d-zu-mu}). 
   For $d_n :=\E\Big[ (J^{\ast} - J(\bar\theta^{(n)}) ) \mathbf{1}_{\{ n \leq \tau \}}\Big]$ we obtain the recursive inequality
   \begin{align*}
       d_{n+1}\leq d_n -\eta(1-\frac{1}{2\sqrt{N}})\frac{c^2}{|\cS|H^3} \Big\lVert \frac{d_\mu^{\pi^\ast}}{\mu}\Big\rVert_\infty^{-2} d_n^2 +\frac{\beta \eta^2 \xi}{2 K_n}.
   \end{align*}

   We define $w := \eta(1-\frac{1}{2\sqrt{N}})\frac{c^2}{|\cS|H^3} \Big\lVert \frac{d_\mu^{\pi^\ast}}{\mu}\Big\rVert_\infty^{-2}$ and $B =\frac{\beta\eta^2 \xi}{2}>0$ such that
   \begin{align*}
       d_{n+1} \leq d_n (1-w d_n) + \frac{B}{K_n}. 
   \end{align*}
   Note that $w>0$ by the assumption $\mu(s)>0$ for all $s\in\cS$. Then by our choice of $K_n$ it holds that
   \begin{align*}
       &\frac{9}{4} w B n^2 
       = \frac{9}{8} \frac{c^2 \eta^3 \beta (1- \frac{1}{2\sqrt{N}}) \xi }{|\cS|H^3}\Big\lVert \frac{d_\mu^{\pi^\ast}}{\mu}\Big\rVert_\infty^{-2} n^2\\
       &\leq \frac{9}{8} \frac{c^2 \eta^2  (1- \frac{1}{2\sqrt{N}})\xi}{\sqrt{N}|\cS|H^3}\Big\lVert \frac{d_\mu^{\pi^\ast}}{\mu}\Big\rVert_\infty^{-2} n^2
       \leq  \frac{9}{8} \frac{c^2  \max\{R^\ast,1\}^2 (1- \frac{1}{2\sqrt{N}})}{N^{3/2}|\cS|H^{19}}\Big\lVert \frac{d_\mu^{\pi^\ast}}{\mu}\Big\rVert_\infty^{-2} n^2
       \leq K_n.
   \end{align*}
   Furthermore, we have for $\eta = \frac{1}{5H^2R^\ast \sqrt{N}}$ that
   \begin{align*}
       \frac{4}{3w} = \frac{4 |\cS| H^3}{3 \eta (1-\frac{1}{2\sqrt{N}}) c^2 }\Big\lVert \frac{d_\mu^{\pi^\ast}}{\mu}\Big\rVert_\infty^{2} = \frac{20 |\cS|H^5 R^\ast }{c^2 \frac{1}{\sqrt{N}} (1-\frac{1}{2\sqrt{N}})}\Big\lVert \frac{d_\mu^{\pi^\ast}}{\mu}\Big\rVert_\infty^{2}.
       %\frac{16 \sqrt{N} \beta (1-\gamma)^2|\cS|}{3 (1-\frac{1}{2\sqrt{N}}) c^2}\Big\lVert \frac{d_\mu^{\pi^\ast}}{\mu} \Big\rVert_\infty^{2}.
   \end{align*}
   We obtain that 
   \begin{align*}
       d_1 \leq H R^\ast \leq \frac{4}{3w} \leq \frac{4}{3w \cdot 1},
   \end{align*}
   because $c \leq 1$, $\Big\lVert \frac{d_\mu^{\pi^\ast}}{\mu} \Big\rVert_\infty^{2} \geq 1$ and $\frac{1}{\sqrt{N}} (1-\frac{1}{2\sqrt{N}})<1$ for all $N \geq 1$.
   
   Suppose the induction assumption $d_n \leq \frac{4}{3w n}$ holds true. 
   First, recall the recursive inequality
   \begin{align*}
     d_{n+1} \leq d_n - w d_n^2 + \frac{B}{K_n}.
   \end{align*}
   The function $f(x) = x-wx^2$ is monotonically increasing in $[0,\frac{1}{2w}]$, and by induction assumption $d_n \leq \frac{1}{4w n} \leq \frac{1}{2w}$. Thus, 
   \begin{align*}
       d_{n+1} &\leq d_n - w d_n^2 + \frac{B}{K_n}\\
       &\leq \frac{4}{3w n} - \frac{16}{9w n^2} + \frac{B}{K_n}\\
       &\leq \frac{4}{3w n} - \frac{16}{9w n^2} + \frac{4 B}{9 w B n^2}\\
       &= \frac{4}{3w n} - \frac{12}{9w n^2}\\
       &= \frac{4}{3w} \Big(\frac{1}{n}- \frac{1}{n^2}\Big)\\
       &\leq \frac{4}{3w n},
   \end{align*}
   by the choice of $K_n\geq \frac{9}{4} w B n^2$.
   We deduce the claim
   \begin{align*}
       d_n \leq \frac{4}{3w n} =  \frac{20 |\cS|H^5 R^\ast }{c^2 \frac{1}{\sqrt{N}} (1-\frac{1}{2\sqrt{N}}) n }\Big\lVert \frac{d_\mu^{\pi^\ast}}{\mu}\Big\rVert_\infty^{2}.
   \end{align*}
\end{proof}

%\probtauleqnsim*
Secondly, consider the complementary event $\{\tau\leq n\}$. We can bound the probability of this event by $\delta$ for a large enough batch size $K$. The proof is inspired by a similar result obtained by \citet[Lem.~6.3]{ding2022beyondEG} for discounted MDPs.

\begin{lemma}\label{lem:prob_tau_leq_n-sim}
  Let $\mu$ be a probability measure such that $\mu(s)>0$ for all $s\in\cS$ and consider the sequence $(\bar{\gt}^{(n)})_{n\geq 0}$ generated by \Eqref{eq:SGD_updates-sim}.  For any $\delta>0$, suppose that
  \begin{itemize}
    \item the batch size $K\geq\frac{10 \max\{R^\ast,1\}^2 n^3 }{ c^2 \delta^2}$,
    \item the step size $\eta=\frac{1}{\sqrt{n}5 H^2 R^\ast}$.
  \end{itemize}
  Then we have $\bbP(\tau \leq n) < \delta$.
\end{lemma}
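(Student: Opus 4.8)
The plan is to exploit the coupling between the stochastic iterates $(\bar{\gt}^{(n)})$ and the exact gradient iterates $(\gt^{(n)})$, which share the same initialisation and step size, and to track their deviation $e_n := \bar{\gt}^{(n)} - \gt^{(n)}$, with $e_0 = 0$. First I would note that the stopped deviation $e_{n\wedge\tau}$ can only leave the ball of radius $c/4$ at time $\tau$ and thereafter stays frozen outside it; hence $\{\tau\le n\} = \{\lVert e_{n\wedge\tau}\rVert_2 \ge c/4\}$ exactly. This is the point of working with the \emph{stopped} process: it removes any need for a maximal inequality, since it now suffices to control the single second moment $u_n := \E[\lVert e_{n\wedge\tau}\rVert_2^2]$ and apply Markov's inequality, $\bbP(\tau\le n) \le 16\,u_n/c^2$. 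The whole problem thus reduces to showing $u_n < c^2\delta/16$.

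Second, I would derive a one-step recursion for $u_m$ by conditioning on $\cF^{(m)}$. On the event $\{m<\tau\}$ the increment is $\eta(\phi_m + b_m)$ with noise part $\phi_m := \widehat{\nabla} J^K(\bar{\gt}^{(m)},\mu) - \nabla J(\bar{\gt}^{(m)},\mu)$ and bias part $b_m := \nabla J(\bar{\gt}^{(m)},\mu) - \nabla J(\gt^{(m)},\mu)$. The two key inputs are: (i) $\phi_m$ is a martingale difference, $\E[\phi_m\mid\cF^{(m)}]=0$ with $\E[\lVert\phi_m\rVert_2^2\mid\cF^{(m)}]\le \xi/K$, where $\xi = 3H^4\max\{R^\ast,1\}^4$ by Lemma~\ref{lem:unbiased-bounded-var-sim}; and (ii) by the $\beta$-smoothness of $J$ from Lemma~\ref{lem:smoothness-simultan}, $\lVert b_m\rVert_2 \le \beta\lVert e_m\rVert_2$ with $\beta = H^2R^\ast(2-\tfrac{1}{|\cA|})$. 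Expanding $\lVert e_{(m+1)\wedge\tau}\rVert_2^2$, the cross term containing $\phi_m$ vanishes under $\E[\cdot\mid\cF^{(m)}]$ by unbiasedness, the noise contributes $\eta^2\xi/K$, and the bias contributes the amplification $(1+\eta\beta)^2$; this yields $u_{m+1} \le (1+\eta\beta)^2 u_m + \eta^2\xi/K$ with $u_0=0$, and hence $u_n \le \tfrac{\eta^2\xi}{K}\sum_{j=0}^{n-1}(1+\eta\beta)^{2j}$.

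Third, I would insert the tuned constants into the solved recursion and impose $16u_n/c^2<\delta$ to read off the batch-size requirement. The main obstacle is precisely the amplification factor $(1+\eta\beta)^{2j}$ coming from the bias: because the unregularised objective $J$ offers no contraction (in the entropy-regularised setting of \citet{ding2022beyondEG} strong concavity turns this into a contraction, which is unavailable here), the bias can be absorbed only by keeping $\eta\beta$ small and by using the stopping time to cap $\lVert e_m\rVert_2 < c/4$ for $m<\tau$. This is exactly why the step size is chosen as $\eta = (5H^2R^\ast\sqrt{n})^{-1}$, so that $\eta\beta \le \tfrac{2}{5\sqrt{n}}$ and the per-step amplification over the stopped horizon stays mild. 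Substituting this $\eta$ together with $\xi$ into $u_n$, the noise level $\eta^2\xi\,n/K$ scales like $\max\{R^\ast,1\}^2/K$ while the accumulation over the horizon and the Markov conversion of the second moment into the tolerance $\delta$ account for the additional factors; matching $16u_n/c^2<\delta$ then produces the stated bound $K\ge \frac{10\max\{R^\ast,1\}^2 n^3}{c^2\delta^2}$. Verifying that the chosen step size indeed keeps the amplification factor small enough for this polynomial batch size to suffice, and tracking the resulting constants, is the delicate part of the argument; everything else is routine bookkeeping.
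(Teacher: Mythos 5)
Your proposal is correct, but it takes a genuinely different route from the paper. The paper argues pathwise: it unrolls the deviation recursion using the triangle inequality and the $\beta$-Lipschitz continuity of $\nabla J$ to get the almost-sure bound $\lVert \gt^{(t)}-\bar{\gt}^{(t)}\rVert \le \sum_{k=1}^{t-1}\eta(1+\eta\beta)^{t-k-1}\lVert\phi_k^K\rVert$, notes that this dominating sum (at $t=n$) controls the maximum over all $t\le n$, and then applies Markov's inequality to the \emph{first} moment, bounding $\E\lVert\phi_k^K\rVert\le\sqrt{\xi/K}$ by Jensen. You instead stop the process, use the exact identity $\{\tau\le n\}=\{\lVert e_{n\wedge\tau}\rVert\ge c/4\}$ to avoid any maximal inequality, and run a \emph{second}-moment recursion $u_{m+1}\le(1+\eta\beta)^2u_m+\eta^2\xi/K$ in which the martingale property of $\phi_m$ (conditional unbiasedness of the batch estimator given $\cF^{(m)}$, with $e_m+\eta b_m$ being $\cF^{(m)}$-measurable) kills the cross term; the decomposition into noise $\phi_m$ and bias $b_m$ with $\lVert b_m\rVert\le\beta\lVert e_m\rVert$ is sound, and the absorption of the event $\{\tau\le m\}$ into the recursion works because $(1+\eta\beta)^2\ge 1$. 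This buys a real quantitative improvement: noise variances accumulate additively (of order $n\,\eta^2\xi/K$) rather than noise norms summing (of order $n\,\eta\sqrt{\xi/K}$), so Chebyshev gives $\bbP(\tau\le n)\lesssim \max\{R^\ast,1\}^2 n^2/(c^2K)$ up to the amplification factor, meaning $K$ of order $n^2/\delta$ would already suffice where the paper's first-moment argument needs $n^3/\delta^2$; the stated batch size is therefore met with ample margin. One caveat, which you rightly flag as "the delicate part": your claim that the amplification $(1+\eta\beta)^{2(n-1)}$ "stays mild" under $\eta=\Theta(1/(H^2R^\ast\sqrt{n}))$ is exactly the step the paper settles with its bound $(1+1/\sqrt{x})^{x-1}\le x$, and with $\eta\beta=\Theta(1/\sqrt{n})$ the factor is really of order $e^{\Theta(\sqrt{n})}$, which is polynomially bounded only for moderate $n$; so this step deserves the same explicit care in your write-up as in the paper's, and is the one place where neither argument is fully watertight for arbitrarily large $n$.
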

\begin{proof}

   By the definition of $\tau$ we have
   \begin{align*}
       \bbP(\tau \leq n) = \bbP( \max_{0 \leq t \leq n} \lVert \gt^{(t)}- \bar{\gt}^{(t)} \rVert \geq \frac{c_h}{4}),
   \end{align*}
   so we first study $ \lVert \gt^{(t)}- \bar{\gt}^{(t)} \rVert$. We emphasise that \cite[Lemma 6.3]{ding2022beyondEG} established a similar recursive inequality.
   \begin{align*}
       \lVert \bar{\gt}^{(t)}- {\gt}^{(t)} \rVert 
       &=\lVert \bar{\gt}^{(0)} + \sum_{k=1}^{t-1} \eta \widehat{\nabla} J^{K}(\bar{\gt}^{(k)},\mu) - (\gt^{(0)} +\sum_{k=1}^{t-1} \eta \nabla J({\gt}^{(k)},\mu)) \rVert \\
       &\leq \sum_{k=1}^{t-1} \eta \lVert \widehat{\nabla} J^{K}(\bar{\gt}^{(k)},\mu) - \nabla J(\gt^{(k)},\mu)\rVert \\
       &\leq  \eta \sum_{k=1}^{t-1} (\lVert \widehat{\nabla} J^{K}(\bar{\gt}^{(k)},\mu)- \nabla J(\bar{\gt}^{(k)},\mu) \rVert + \lVert \nabla J(\bar{\gt}^{(k)},\mu) - \nabla J(\gt^{(k)},\mu)\rVert).\\
   \end{align*}
   We define again $\phi_k^K = \widehat{\nabla} J^{K}(\bar{\gt}^{(k)},\mu)- \nabla J(\bar{\gt}^{(k)},\mu) $ and continue using the $\beta$-lipschitz continuity of $\nabla J(\gt)$ such that
   \begin{align*}
     \lVert \gt^{(t)}- \bar{\gt}^{(t)} \rVert 
       &\leq \eta \sum_{k=1}^{t-1} (\lVert \phi_k^K \rVert + \beta \lVert \gt^{(k)}- \bar{\gt}^{(k)} \rVert)\\
       &= \eta\sum_{k=1}^{t-1} \lVert \phi_k^K \rVert + \eta \beta\sum_{k=1}^{t-1} \lVert \gt^{(k)}- \bar{\gt}^{(k)} \rVert. 
   \end{align*}
   Using this inequality sequentially leads to 
   \begin{align*}
       \lVert \gt^{(t)}- \bar{\gt}^{(t)} \rVert &\leq\eta \sum_{k=1}^{t-1} \lVert \phi_k^K \rVert + \eta \beta\sum_{k=1}^{t-1} \lVert \gt^{(k)}- \bar{\gt}^{(k)} \rVert \\
       &\leq \eta \sum_{k=1}^{t-1} \lVert \phi_k^K \rVert + \eta\beta\sum_{k=1}^{t-2} \lVert \gt^{(k)}- \bar{\gt}^{(k)} \rVert + \eta \beta\Bigl( \eta \sum_{k=1}^{t-2} \lVert \phi_k^K \rVert + \eta \beta\sum_{k=1}^{t-2} \lVert \gt^{(k)}- \bar{\gt}^{(k)} \rVert\Bigr)\\
       &= \eta\sum_{k=1}^{t-1} \lVert \phi_k^K \rVert + \eta^2 \beta  \sum_{k=1}^{t-2} \lVert \phi_k^K \rVert + (1+ \eta \beta) \eta \beta\sum_{k=1}^{t-2} \lVert \gt^{(k)}- \bar{\gt}^{(k)} \rVert\\
       &= \eta \lVert \phi_{t-1}^K \rVert + \eta (1+ \eta \beta) \sum_{k=1}^{t-2} \lVert \phi_k^K \rVert + (1+ \eta \beta) \eta \beta \sum_{k=1}^{t-2} \lVert \gt^{(k)}- \bar{\gt}^{(k)} \rVert\\
       &\leq \sum_{k=1}^{t-1} \eta(1+ \eta \beta)^{t-k-1} \lVert \phi_k^K\rVert.
   \end{align*}
 
   Applying Markov's inequality results in 
   \begin{align*}
       \bbP(\tau \leq n) &= \bbP( \max_{0 \leq t \leq n} \lVert \gt^{(t)}- \bar{\gt}^{(t)}\rVert \geq \frac{c}{4})\\
       &\leq \bbP( \sum_{k=1}^{n-1} \eta(1+ \eta \beta)^{n-k-1} \lVert \phi_k^K\rVert \geq  \frac{c_h}{4}) \\
       &\leq \frac{4 \sum_{k=1}^{n-1} \eta(1+ \eta\beta)^{n-k-1} \E[\lVert \phi_k^K\rVert]}{c}  \\
       &\leq \frac{4 n\eta(1+ \eta \beta)^{n-1} \sqrt{\frac{\xi}{K}}}{c},  \\
   \end{align*}
   where in the last inequality $\E[\lVert \phi_k^K\rVert] \leq \sqrt{\E[\lVert \phi_k^K\rVert^2]} \leq \sqrt{\frac{\xi}{K}}$ by Jensen's inequality and Lemma~\ref{lem:unbiased-bounded-var-sim}.
   Now we plug in the choice of $\eta = \frac{1}{\sqrt{n}5 H^2 R^\ast}< \frac{1}{\sqrt{n}\beta}$, 
   \begin{align*}
       \bbP(\tau \leq n) &\leq \frac{4 n  \frac{ 1}{\sqrt{n}5 H^2 R^\ast} ( 1+  \frac{1}{\sqrt{n} \beta} \beta)^{n-1} \sqrt{\frac{\xi}{K}}}{c}\\
       &= \frac{4\sqrt{n}( 1+  \frac{1}{\sqrt{n}} )^{n-1} \sqrt{C_h}}{5 H^2 R^\ast c \sqrt{K}}
       \leq \frac{4\sqrt{n} n \sqrt{\xi}}{5 H^2 R^\ast c \sqrt{K}},\\
   \end{align*}
   where the last step is due to $f(x)= (1+ \frac{1}{\sqrt{x}})^{x-1} \leq x$ for all $x \geq 1$.
   We follow that $\bbP(\tau <n) < \delta$ if 
   \begin{align*}
       \frac{16 n^3 \xi }{25 H^4 (R^\ast)^2 c^2 \delta^2} = \frac{16 n^3 H^4 \max\{R^\ast,1\}^4 3 }{25 H^4 (R^\ast)^2 c^2 \delta^2} \leq \frac{48 \max\{R^\ast,1\}^2 n^3 }{5 c^2 \delta^2}\leq  \frac{10 \max\{R^\ast,1\}^2 n^3 }{ c^2 \delta^2}=K.
   \end{align*}
\end{proof}

\SGDratesim*
\begin{proof}
    First note again, that by definition  $J^\ast(\mu) = V_0^\ast(\mu)$ and $J(\bar{\gt}^{(N)},\mu) = V_0^{\pi^{\bar{\gt}^{(N)}}}(\mu)$. 
   We separate the probability using the stopping time $\tau$ and obtain 
   \begin{align*}
       \bbP\Big( (J^\ast(\mu) - J(\bar{\gt}^{(N)},\mu)) \geq \epsilon \Big) &\leq \bbP\Big(\{\tau \geq N\} \cap \{(J^\ast(\mu) - J(\bar{\gt}^{(N)},\mu)) \geq \epsilon\} \Big) \\
       &\quad+ \bbP\Big(\{\tau \leq N\} \cap \{(J^\ast(\mu) - J(\bar{\gt}^{(N)},\mu)) \geq \epsilon\} \Big) \\
       &\leq \frac{\E\Big[(J^\ast(\mu) - J(\bar{\gt}^{(N)},\mu))  \mathbf{1}_{\{\tau \geq N\}}\Big]}{\epsilon} + \bbP(\tau \leq N ) \\
       &\leq \frac{1}{\epsilon}  \frac{20 |\cS|H^5 R^\ast }{c^2 \frac{1}{\sqrt{N}} (1-\frac{1}{2\sqrt{N}}) N }\Big\lVert \frac{d_\mu^{\pi^\ast}}{\mu}\Big\rVert_\infty^{2}+ \frac{\delta}{2} \\
       &\leq \frac{\delta}{2} + \frac{\delta}{2}\\
       &= \delta,
   \end{align*}
   where the second inequality holds due to Lemma~\ref{lem:conv-d_l-in-SGD-sim} and Lemma~\ref{lem:prob_tau_leq_n-sim}. The last inequality follows by our choice of $N$:
   \begin{align*}
       \frac{20 |\cS|H^5 R^\ast }{c^2 \sqrt{N} (1-\frac{1}{2\sqrt{N}})  }\Big\lVert \frac{d_\mu^{\pi^\ast}}{\mu}\Big\rVert_\infty^{2}&\leq \frac{\delta}{2}
   \end{align*}
   if and only if $N \geq\big(\frac{20|\cS|H^5 R^\ast}{\epsilon \delta c^2 }\Big\lVert \frac{d_\mu^{\pi^\ast}}{\mu} \Big\rVert_\infty^{2}+ \frac{1}{2}\big)^2$, which is satisfied if $N \geq \big(\frac{21|\cS|H^5 R^\ast}{\epsilon \delta c^2 }\big)^2\Big\lVert \frac{d_\mu^{\pi^\ast}}{\mu} \Big\rVert_\infty^{4}$.
   Note that we can use Lemma~\ref{lem:conv-d_l-in-SGD-sim} in the equation above with a constant batch size, because by our choice of $\eta$
   \begin{align*}
       &\max\Big\{  \frac{9}{8} \frac{c^2  \max\{R^\ast,1\}^2 (1- \frac{1}{2\sqrt{N}})}{N^{3/2}|\cS|H^{19}}\Big\lVert \frac{d_\mu^{\pi^\ast}}{\mu}\Big\rVert_\infty^{-2} n^2 , \frac{10 \max\{R^\ast,1\}^2 N^3 }{ c^2 \delta^2}\Big\} 
       = \frac{10 \max\{R^\ast,1\}^2 N^3 }{ c^2 \delta^2},
   \end{align*}
   for all $n \leq N$. The last equality holds, as $c<1$, $\Big\lVert \frac{d_\mu^{\pi^\ast}}{\mu}\Big\rVert_\infty^{-2}<1$.
\end{proof}

\subsection{Dynamic Approach}\label{app:proofs-dynamical-stochastic}

We start again by showing that the gradient estimator is unbiased and has bounded variance.
\begin{lemma}\label{lem:SGD-unbiased-bounded-var}
   For any $h\in\cH$ and $K_h>0$ it holds that 
   \begin{align*}
     \E_{\mu_h}^{(\pi^\gt, (\tilde{\pi})_{(h+1)})}[\widehat{\nabla} J_h^{K_h}(\gt,\tilde{\pi}_{(h+1)},\mu_h)] = \nabla J_h(\gt,\tilde{\pi}_{(h+1)},\mu_h)
   \end{align*}
   and 
 \begin{align*}
   \E_{\mu_h}^{(\pi^\gt, (\tilde{\pi})_{(h+1)})} [\lVert \widehat{\nabla} J_h^{K_h}(\gt,\tilde{\pi}_{(h+1)},\mu_h) - \nabla J_h(\gt,\tilde{\pi}_{(h+1)},\mu_h)\rVert^2] \leq \frac{5 (H-h)^2 (R^\ast)^2}{K_h} =: \frac{\psi_h}{K}.
 \end{align*}
\end{lemma}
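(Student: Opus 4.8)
The plan is to mirror the structure of the proof of Lemma~\ref{lem:unbiased-bounded-var-sim}, exploiting the fact that in the dynamic approach the estimator in \eqref{eq:estimator-nabla-J-finite-time} consists of a single score term rather than a sum over all epochs, which keeps the constants small.

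For unbiasedness, I would first use that the $K_h$ trajectories are drawn independently and identically, so that $\E[\widehat{\nabla} J_h^{K_h}(\gt,\tilde{\pi}_{(h+1)},\mu_h)] = \E[\nabla\log(\pi^\gt(A_h|S_h))\hat{R}_h]$ reduces to a single-sample expectation. Conditioning on $(S_h,A_h)$ and applying the tower property together with $\E[\hat{R}_h \mid S_h, A_h] = \E[\sum_{k=h}^{H-1} r(S_k,A_k)\mid S_h,A_h] = Q_h^{\tilde{\pi}}(S_h,A_h)$, the expectation collapses to $\E_{S_h\sim\mu_h,\, A_h\sim\pi^\gt(\cdot|S_h)}[\nabla\log(\pi^\gt(A_h|S_h))\,Q_h^{\tilde{\pi}}(S_h,A_h)]$, which is exactly $\nabla J_h(\gt,\tilde{\pi}_{(h+1)},\mu_h)$ by the dynamic policy gradient Theorem~\ref{thm:policy-gradient-theorem-dynamical}.

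For the variance bound I would assemble three ingredients. First, $\lVert\nabla J_h\rVert \le (H-h)R^\ast$: this follows from the closed form in Lemma~\ref{lem:derivative-value-func-dynamic}, the advantage bound $|A_h^{(\pi^\gt,\tilde{\pi}_{(h+1)})}(s,a)| \le (H-h)R^\ast$ (since both $Q_h^{\tilde{\pi}}$ and $V_h^{(\pi^\gt,\tilde{\pi}_{(h+1)})}$ lie in $[0,(H-h)R^\ast]$ under the bounded reward assumption), and the fact that $\mu_h$ and $\pi^\gt(\cdot|s)$ are probability distributions so that $\sum_{s\in\cS_h}\sum_{a\in\cA_s}(\mu_h(s)\pi^\gt(a|s))^2\le 1$. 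Second, using $|\hat{R}_h|\le (H-h)R^\ast$ together with the softmax estimate $\E[\lVert\nabla\log\pi^\gt(A_h|S_h)\rVert^2]\le 1$ from \citet[Lem 4.8]{YuanGrower22} (and Jensen's inequality for the first moment), I obtain $\E[\lVert\widehat{\nabla}J_h^1\rVert]\le (H-h)R^\ast$ and $\E[\lVert\widehat{\nabla}J_h^1\rVert^2]\le (H-h)^2(R^\ast)^2$.

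Finally, since the $K_h$ summands are i.i.d. and unbiased, the second moment of the centred average is $\tfrac{1}{K_h}$ times the single-sample second moment, and bounding $\E[\lVert\widehat{\nabla}J_h^1-\nabla J_h\rVert^2]\le \E[\lVert\widehat{\nabla}J_h^1\rVert^2]+2\E[\lVert\widehat{\nabla}J_h^1\rVert]\,\lVert\nabla J_h\rVert+\lVert\nabla J_h\rVert^2\le 4(H-h)^2(R^\ast)^2\le 5(H-h)^2(R^\ast)^2$ yields the claimed constant $\psi_h=5(H-h)^2(R^\ast)^2$. This lemma is essentially bookkeeping; the only step requiring genuine care is the unbiasedness argument, where one must correctly invoke the conditional expectation $\E[\hat{R}_h\mid S_h,A_h]=Q_h^{\tilde{\pi}}(S_h,A_h)$ to match the estimator with the dynamic policy gradient theorem.
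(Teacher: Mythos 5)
Your proof is correct. The unbiasedness half coincides with the paper's argument: reduce to a single trajectory by i.i.d.\ sampling, then condition on $(S_h,A_h)$ so that $\E[\hat{R}_h\mid S_h,A_h]=Q_h^{\tilde{\pi}}(S_h,A_h)$, which is precisely the step carried out inside the proof of Theorem~\ref{thm:policy-gradient-theorem-dynamical}. For the variance bound, however, you take a genuinely different route from the paper's own proof of this lemma. The paper substitutes the explicit softmax score $\mathbf{1}_{\{s=S_h\}}(\mathbf{1}_{\{a=A_h\}}-\pi^\gt(a|s))$ and the exact gradient coordinates $\mu_h(s)\pi^\gt(a|s)A_h^{(\pi^\gt,\tilde{\pi}_{(h+1)})}(s,a)$ from Lemma~\ref{lem:derivative-value-func-dynamic}, expands the squared norm coordinate-wise into three terms, and bounds them by $2$, $2$ and $1$ times $((H-h)R^\ast)^2$, arriving at $\psi_h=5(H-h)^2(R^\ast)^2$. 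You instead transplant the norm-level argument that the paper itself uses for the simultaneous estimator in Lemma~\ref{lem:unbiased-bounded-var-sim}: you bound $\lVert\nabla J_h\rVert\le (H-h)R^\ast$ from the closed-form gradient (using $|A_h^{(\pi^\gt,\tilde{\pi}_{(h+1)})}|\le (H-h)R^\ast$ and $\sum_{s,a}(\mu_h(s)\pi^\gt(a|s))^2\le 1$), bound the single-sample moments via $|\hat{R}_h|\le (H-h)R^\ast$ together with $\E_{A\sim\pi^\gt(\cdot|s)}\big[\lVert\nabla\log\pi^\gt(A|s)\rVert_2^2\big]\le 1$ (the same estimate of \citet[Lem 4.8]{YuanGrower22} the paper invokes in Lemma~\ref{lem:beta-smoothness}, so it is available), and expand $\lVert X-\mu\rVert^2\le\lVert X\rVert^2+2\lVert X\rVert\,\lVert\mu\rVert+\lVert\mu\rVert^2$; the i.i.d.\ cancellation of cross terms then yields the $1/K_h$ factor. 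Your route is shorter, reuses existing lemmas instead of a hands-on coordinate computation, and actually produces the sharper constant $4(H-h)^2(R^\ast)^2$, which you legitimately slacken to the stated $5(H-h)^2(R^\ast)^2$; what the paper's coordinate-wise computation buys is self-containedness, since the leading term is bounded directly from the softmax structure rather than through an imported score-moment bound, at the cost of a slightly looser constant.
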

\begin{proof}
   By the definition of $\widehat{\nabla} J_h^{K}$ we have
   \begin{align*}
     &\E_{\mu_h}^{(\pi^\gt, (\tilde{\pi})_{(h+1)})}[\widehat{\nabla} J_h^{K_h}(\gt,\tilde{\pi}_{(h+1)},\mu_h)]\\
     &=\E_{\mu_h}^{(\pi^\gt, (\tilde{\pi})_{(h+1)})}\Big[ \frac{1}{K_h} \sum_{i=1}^{K_h} \nabla \log(\pi^\gt(A_t^i|S_t^i)) \hat{R}_h^i \Big]\\
     &=\E_{\mu_h}^{(\pi^\gt, (\tilde{\pi})_{(h+1)})}\Big[ \nabla \log(\pi^\gt(A_h^1|S_h^1)) \hat{R}_h^1 \Big]\\
     &=\E_{\mu_h}^{(\pi^\gt, (\tilde{\pi})_{(h+1)})}\Big[ \nabla \log(\pi^\gt(A_h|S_h)) \sum_{k=h}^{H-1} r(S_k,A_k) \Big],
   \end{align*}
   where we used that we consider independent samples for $i=1,\dots,K_h$. From the proof of the policy gradient Theorem~\ref{thm:policy-gradient-theorem-dynamical}, we obtain that
   \begin{align*}
     &\E_{\mu_h}^{(\pi^\gt, (\tilde{\pi})_{(h+1)})}[\widehat{\nabla} J_h^{K_h}(\gt,\tilde{\pi}_{(h+1)},\mu)]\\
     &=\E_{\mu_h}^{(\pi^\gt, (\tilde{\pi})_{(h+1)})}\Big[ \nabla \log(\pi^\gt(A_1|S_h)) \sum_{k=h}^{H-1} r(S_k,A_k) \Big]\\
     &= \nabla J_h(\gt,\tilde{\pi}_{(h+1)},\mu_h).
   \end{align*}
   For the second claim we have 
   \begin{align*}
     &\E_{\mu_h}^{(\pi^\gt, (\tilde{\pi})_{(h+1)})}\Bigl[\lVert \widehat{\nabla} J_h^{K_h}(\gt,\tilde{\pi}_{(h+1)},\mu_h) - \nabla J_h(\gt,\tilde{\pi}_{(h+1)},\mu_h) \rVert^2\Bigr]\\
     &\leq \frac{1}{K_h}\E_{\mu_h}^{(\pi^\gt, (\tilde{\pi})_{(h+1)})}\Bigl[\lVert \nabla\log(\pi^\gt(A_h|S_h)) \hat{Q}_h(S_h,A_h) - \nabla J_h(\gt) \rVert^2\Bigr]\\
     %&= \frac{1}{K} \E_{\mu_h}^{\pi^\gt, (\tilde{\pi})_{(h+1)}}\Big[ \lVert \Big(\mathbf{1}_{s= S_h}(\mathbf{1}_{a= A_h} - \pi^\theta(a|S_h)) \sum_{k=h}^{H-1} r(S_k,A_k)\Big)_{s\in\cS_h,a\in\cA_s} - \Bigl(\mu(s) \pi^{\theta}(a|s) A_h^{\pi^\gt}(s,a)\Bigr)_{s\in\cS_h,a\in\cA_s} \rVert^2 \Big] \\
     &= \frac{1}{K_h} \E_{\mu_h}^{(\pi^\gt, (\tilde{\pi})_{(h+1)})}\Big[ \sum_{s\in\cS_h} \sum_{a\in\cA_s} \Big(\mathbf{1}_{s= S_h}(\mathbf{1}_{a= A_h} - \pi^\theta(a|s)) \sum_{k=h}^{H-1} r(S_k,A_k) \\
     & \quad   - \mu_h(s) \pi^{\theta}(a|s) A_h^{(\pi^\gt, (\tilde{\pi})_{(h+1)})}(s,a)\Big)^2  \Big],
   \end{align*}
 by the definition of $\widehat{\nabla} J_h^{K_h}(\gt,\tilde{\pi}_{(h+1)},\mu_h)$ and the derivative of $\nabla J_h(\gt,\tilde{\pi}_{(h+1)},\mu_h)$ for the softmax parametrisation. Further,
   \begin{align*}
     &\E_{\mu_h}^{(\pi^\gt, (\tilde{\pi})_{(h+1)})}\Bigl[\lVert \widehat{\nabla} J_h^{K_h}(\gt,\tilde{\pi}_{(h+1)},\mu_h) - \nabla J_h(\gt,\tilde{\pi}_{(h+1)},\mu_h) \rVert^2\Bigr]\\
     &\leq \frac{1}{K_h} \E_{\mu_h}^{(\pi^\gt, (\tilde{\pi})_{(h+1)})}\Big[\sum_{a\in\cA_s} (\mathbf{1}_{a= A_h} - \pi^\gt(a|S_h))^2 \Big(\sum_{k=h}^{H-1} r(S_k,A_k)\Big)^2 \\
     &\quad - 2\,\sum_{a\in\cA_s}(\mathbf{1}_{a= A_h} - \pi^\gt(a|S_h)) \sum_{k=h}^{H-1} r(S_k,A_k)  \mu_h(s) \pi^{\gt}(a|S_h) A_h^{(\pi^\gt, (\tilde{\pi})_{(h+1)})}(S_h,a) \\
     & \quad + \sum_{s\in\cS_h} \sum_{a\in\cA_s}\mu_h(s)^2 \pi^{\gt}(a|s)^2 A_h^{(\pi^\gt, (\tilde{\pi})_{(h+1)})}(s,a)^2 \Big]. \\
   \end{align*}
   We consider all three terms separately. For the first term we have
   \begin{align*}
     &\E_{\mu_h}^{(\pi^\gt, (\tilde{\pi})_{(h+1)})}\Big[\sum_{a\in\cA_s} (\mathbf{1}_{a= A_h} - \pi^\theta(a|S_h))^2 \Big(\sum_{k=h}^{H-1} r(S_k,A_k)\Big)^2 \Big] \\
     &=\E_{\mu_h}^{(\pi^\gt, (\tilde{\pi})_{(h+1)})}\Big[\ \Big(\sum_{k=h}^{H-1} r(S_k,A_k)\Big)^2 \Big] - 2 \E_{\mu_h}^{(\pi^\gt, (\tilde{\pi})_{(h+1)})}\Big[ \pi^\theta(A_h|S_h) \Big(\sum_{k=h}^{H-1} r(S_k,A_k)\Big)^2 \Big] \\
     & \quad+ \E_{\mu_h}^{(\pi^\gt, (\tilde{\pi})_{(h+1)})}\Big[\sum_{a\in\cA_s} \pi^\theta(a|S_h)^2 \Big(\sum_{k=h}^{H-1} r(S_k,A_k)\Big)^2 \Big] \\
     &\leq ((H-h) R^\ast)^2 - 0 + ((H-h) R^\ast)^2\\
     &= 2((H-h) R^\ast)^2,
   \end{align*}
   by bounded reward assumption and the fact that $\pi^\gt$ is a probability distribution.
   For the second term, we note that $A_h^{(\pi^\gt, (\tilde{\pi})_{(h+1)})}(S_h,a)$ can be negative, therefore we consider the absolute value and obtain
   \begin{align*}
     &2 \E_{\mu_h}^{(\pi^\gt, (\tilde{\pi})_{(h+1)})}\Big[\sum_{a\in\cA_s}(\mathbf{1}_{a= A_h} - \pi^\theta(a|S_h)) \sum_{k=h}^{H-1} r(S_k,A_k)  \mu_h(s) \pi^{\theta}(a|S_h) \big\vert A_h^{(\pi^\gt, (\tilde{\pi})_{(h+1)})}(S_h,a)\big\vert\Big] \\
     &\leq 2 \E_{\mu_h}^{(\pi^\gt, (\tilde{\pi})_{(h+1)})}\Big[\sum_{a\in\cA_s}1 \cdot (H-h)R^\ast \cdot 1 \cdot \pi^{\theta}(a|S_h) \cdot (H-h)R^\ast \Big] \\
     &=2 ((H-h)R^\ast)^2.
   \end{align*}
   For the last term we have 
   \begin{align*}
     &\E_{\mu_h}^{(\pi^\gt, (\tilde{\pi})_{(h+1)})}\Big[\sum_{s\in\cS_h} \sum_{a\in\cA_s}\mu_h(s)^2 \pi^{\theta}(a|s)^2 A_h^{(\pi^\gt, (\tilde{\pi})_{(h+1)})}(s,a)^2 \Big]  \leq ((H-h)R^\ast)^2.
   \end{align*}
   In total, it holds that
   \begin{align*}
     &\E_{\mu_h}^{(\pi^\gt, (\tilde{\pi})_{(h+1)})}\Bigl[\lVert \widehat{\nabla} J_h^{K_h}(\gt,\tilde{\pi}_{(h+1)},\mu_h) - \nabla J_h(\gt,\tilde{\pi}_{(h+1)},\mu_h) \rVert^2\Bigr]\leq \frac{ 5((H-h)R^\ast)^2}{K_h}.
 \end{align*}
\end{proof}

Recall $(\bar{\gt}_h^{(n)})_{n\geq 0}$ be the stochastic process from \Eqref{eq:SGD_updates} and let $(\gt_h^{(n)})_{n\geq 0}$ be the deterministic sequence generated by PG with exact gradients,
\begin{align*}%\label{eq:theta-bar-definition}
    \gt_h^{(n+1)} = \gt_h^{(n)} + \eta_h \nabla J_h(\gt_h^{(n)},\tilde{\pi}_{(h+1)},\mu_h)
\end{align*}
such that the initial parameter agree, $\gt_h^{(0)} = \bar{\gt}_h^{(0)}$, and the step size $\eta_h$ is the same for both processes. The natural filtration of $(\bar{\gt}_h^{(n)})_{n\geq 0}$ is denoted by $(\cF_h^{(n)})_{n\geq 0}$. 

For the deterministic scheme we could assure $c_h = \min_{n\geq 0} \min_{s\in\cS} \pi^{\gt_h^{(n)}}(a^\ast(s)|s)$ is bounded away from $0$ by Lemma~\ref{lem:c_lager_0_finite_time_without_reg}. As for the simultaneous PG this cannot be guaranteed for the stochastic trajectory. Define for every epoch the following stopping time
\begin{align*}
    \tau_h := \min\{ n \geq 0 : \lVert \gt_h^{(n)} - \bar{\gt}_h^{(n)} \rVert_2 \geq \frac{c_h}{4} \}.
\end{align*}  
We emphasise that $\tau_h$ is a stopping time with respect to the filtration $(\cF_h^{(n)})_{n\geq 0}$ by construction. 

It follows again by the $\sqrt{2}$-Lipschitz continuity of $\gt \mapsto \pi^\gt(a^\ast(s)|s)$ (Lemma~\ref{lem:softmax-2-Lipschitz}) that $\min_{0 \leq n \leq \tau_h} \min_{s\in\cS}\pi^{\bar{\gt}_h^{(n)}}(a^\ast(s)|s) \geq\frac{c_h}{2} >0$.

\begin{lemma}\label{lem:c_larger_0_SGD-dynamical}
  Let $\mu_h$ be probability measures such that $\mu_h(s) >0$ for all $s\in\cS_h$ and consider the sequence $(\bar{\gt_h^{(n)}})_{n\geq 0}$ generated by \Eqref{eq:SGD_updates}. Then, it holds almost surely that $\min_{0 \leq n \leq \tau_h} \min_{s\in\cS_h} \pi^{\bar{\gt}_h^{(n)}} (a^\ast(s)|s) \geq \frac{c_h}{2}$ is strictly positive.
\end{lemma}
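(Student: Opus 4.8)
The plan is to mirror the proof of Lemma~\ref{lem:c_larger_0_SGD-sim} verbatim, replacing every global object by its epoch-$h$ counterpart. All three ingredients are already available: the deterministic lower bound $c_h>0$ from Lemma~\ref{lem:c_lager_0_finite_time_without_reg}, the $\sqrt{2}$-Lipschitz continuity of the softmax map $\gt \mapsto \pi^\gt(a^\ast(s)|s)$ from Lemma~\ref{lem:softmax-2-Lipschitz}, and the construction of the stopping time $\tau_h$, which by design keeps the stochastic iterates $\bar{\gt}_h^{(n)}$ within distance $c_h/4$ of the exact iterates $\gt_h^{(n)}$ for all $n \le \tau_h$.

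First I would fix $s\in\cS_h$ and $n \le \tau_h$ and record that, by the definition of $\tau_h$, on the event $\{n \le \tau_h\}$ we have $\lVert \gt_h^{(n)} - \bar{\gt}_h^{(n)}\rVert_2 \le \frac{c_h}{4}$. Then I would estimate, using the reverse triangle inequality and afterwards the $\sqrt{2}$-Lipschitz continuity of Lemma~\ref{lem:softmax-2-Lipschitz},
\begin{align*}
  \pi^{\bar{\gt}_h^{(n)}}(a^\ast(s)|s)
  &\ge \pi^{\gt_h^{(n)}}(a^\ast(s)|s) - \bigl| \pi^{\gt_h^{(n)}}(a^\ast(s)|s) - \pi^{\bar{\gt}_h^{(n)}}(a^\ast(s)|s)\bigr| \\
  &\ge \pi^{\gt_h^{(n)}}(a^\ast(s)|s) - \sqrt{2}\, \lVert \bar{\gt}_h^{(n)} - \gt_h^{(n)}\rVert_2 .
\end{align*}

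Next, Lemma~\ref{lem:c_lager_0_finite_time_without_reg} bounds the first term below by $c_h$, while the stopping-time constraint bounds the subtracted term above by $\sqrt{2}\,\frac{c_h}{4}$. Since $1 - \frac{\sqrt{2}}{4} > \frac{1}{2}$, this chain delivers $\pi^{\bar{\gt}_h^{(n)}}(a^\ast(s)|s) > \frac{c_h}{2} > 0$ almost surely. Taking the minimum over $s\in\cS_h$ and over $0 \le n \le \tau_h$ then yields the assertion.

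I do not expect a genuine obstacle here, as the argument is a direct epoch-wise transcription of the simultaneous case. The only points deserving a little care are the interpretation of the event $\{n \le \tau_h\}$ at the stopping time itself and the verification of the numerical constant $1 - \frac{\sqrt{2}}{4} > \frac{1}{2}$, which is precisely what makes the threshold $\frac{c_h}{4}$ in the definition of $\tau_h$ tight enough to keep the perturbed probability strictly above $\frac{c_h}{2}$.
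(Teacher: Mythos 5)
Your argument is exactly the paper's own proof: combine the deterministic bound $c_h>0$ from Lemma~\ref{lem:c_lager_0_finite_time_without_reg}, the $\sqrt{2}$-Lipschitz continuity of Lemma~\ref{lem:softmax-2-Lipschitz}, and the threshold $\frac{c_h}{4}$ in the definition of $\tau_h$ to get $\pi^{\bar{\gt}_h^{(n)}}(a^\ast(s)|s) \geq c_h\bigl(1-\tfrac{\sqrt{2}}{4}\bigr) > \tfrac{c_h}{2}$, then take minima over $s$ and $n$. The boundary subtlety you flag at $n=\tau_h$ (where the distance is first $\geq \tfrac{c_h}{4}$ rather than $\leq \tfrac{c_h}{4}$) is treated no more carefully in the paper's proof, so your transcription matches it in both substance and level of rigour.
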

\begin{proof}
  For every $n \leq \tau$ we obtain by the $\sqrt{2}$-Lipschitz continuity in Lemma~\ref{lem:softmax-2-Lipschitz} that
  \begin{align*}
    \pi^{\bar{\gt}_h^{(n)}}(a^\ast(s)|s) &\geq \pi^{{\gt}_h^{(n)}}(a^\ast(s)|s) -  \vert \pi^{{\gt}_h^{(n)}}(a^\ast(s)|s)- \pi^{\bar{\gt}_h^{(n)}}(a^\ast(s)|s)\vert \\
    &\geq \pi^{{\gt}_h^{(n)}}(a^\ast(s)|s) -  \sqrt{2} \lVert \bar{\gt}_h^{(n)} - {\gt}_h^{(n)} \rVert_2 \\
    &> \frac{c_h}{2} >0,
  \end{align*}
  holds almost surely. 
  The claim follows directly.
\end{proof}

We derive a convergence rate on the event $\{n \leq \tau_h\}$ in the following sense: 
\begin{lemma}\label{lem:conv-d_l-in-SGD}
  Let $\mu_h$ be probability measures such that $\mu_h(s) >0$ for all $s\in\cS_h$ and consider the sequence $(\bar{\gt_h^{(n)}})_{n\geq 0}$ generated by \Eqref{eq:SGD_updates}. Suppose that
   \begin{itemize}
      \item the batch size $K_h^{(n)}\geq \frac{45 c_h^2 }{64  N_h^{\frac{3}{2}}} (1- \frac{1}{2\sqrt{N_h}}) n^2$ is increasing for some $N_h \geq 1$
      \item the step size $\eta_h=\frac{1}{2(H-h)R^\ast \sqrt{N_h}}$.
   \end{itemize}
   Then, 
   \begin{equation*}
      \E\big[ (J_h^{\ast}(\tilde{\pi}_{(h+1)},\mu_h) - J_h(\bar{\gt}_h^{(n)},\tilde{\pi}_{(h+1)},\mu_h) ) \mathbf{1}_{\{ n \leq \tau_h \}}\big] \leq\frac{32 \sqrt{N_h} (H-h)R^\ast}{3 (1-\frac{1}{2\sqrt{N_h}}) c_h^2 n}.
   \end{equation*} 
\end{lemma}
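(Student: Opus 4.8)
The plan is to mirror the proof of the simultaneous analogue, Lemma~\ref{lem:conv-d_l-in-SGD-sim}, replacing every ingredient by its dynamic counterpart. The dynamic setting is in fact cleaner, since $J_h$ depends only on the single-epoch parameter $\gt_h$ and no distribution mismatch coefficient appears in the weak PL-inequality. First I would invoke $\beta_h$-smoothness from Lemma~\ref{lem:beta-smoothness} with $\beta_h = 2(H-h)R^\ast$ together with the descent-type inequality to get, along the stochastic update $\bar{\gt}_h^{(n+1)} = \bar{\gt}_h^{(n)} + \eta_h\widehat{\nabla} J_h^{K_h}(\bar{\gt}_h^{(n)})$,
\[
  J_h(\bar{\gt}_h^{(n+1)}) \ge J_h(\bar{\gt}_h^{(n)}) + \eta_h \langle \nabla J_h(\bar{\gt}_h^{(n)}), \widehat{\nabla} J_h^{K_h}(\bar{\gt}_h^{(n)})\rangle - \tfrac{\beta_h\eta_h^2}{2}\lVert \widehat{\nabla} J_h^{K_h}(\bar{\gt}_h^{(n)})\rVert^2 .
\]
Writing $\phi_n = \widehat{\nabla} J_h^{K_h}(\bar{\gt}_h^{(n)}) - \nabla J_h(\bar{\gt}_h^{(n)})$ and taking the conditional expectation on $\cF_h^{(n)}$, the unbiasedness and variance bound of Lemma~\ref{lem:SGD-unbiased-bounded-var} (with $\psi_h = 5(H-h)^2(R^\ast)^2$) yield
\[
  \E[J_h(\bar{\gt}_h^{(n+1)})\mid \cF_h^{(n)}] \ge J_h(\bar{\gt}_h^{(n)}) + \big(\eta_h - \tfrac{\beta_h\eta_h^2}{2}\big)\lVert \nabla J_h(\bar{\gt}_h^{(n)})\rVert^2 - \tfrac{\beta_h\eta_h^2\psi_h}{2K_n},
\]
and the choice $\eta_h = \frac{1}{2(H-h)R^\ast\sqrt{N_h}} = \frac{1}{\beta_h\sqrt{N_h}}$ turns the prefactor into $\eta_h(1-\frac{1}{2\sqrt{N_h}})$.

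Next I would subtract from $J_h^\ast$, multiply by $\mathbf{1}_{\{n+1\le\tau_h\}}$, and take expectations. Since $\{n+1\le\tau_h\} = \{\tau_h\le n\}^C$ is $\cF_h^{(n)}$-measurable and $\mathbf{1}_{\{n+1\le\tau_h\}}\le\mathbf{1}_{\{n\le\tau_h\}}$, the conditional expectation factors out exactly as in Lemma~\ref{lem:conv-d_l-in-SGD-sim}. On the event $\{n\le\tau_h\}$, Lemma~\ref{lem:c_larger_0_SGD-dynamical} guarantees $\min_{s}\pi^{\bar{\gt}_h^{(n)}}(a_h^\ast(s)|s)\ge c_h/2$, so the weak PL-inequality of Lemma~\ref{lem:PL-type_inequality_finite_time} gives $\lVert\nabla J_h(\bar{\gt}_h^{(n)})\rVert^2 \ge \frac{c_h^2}{4}(J_h^\ast - J_h(\bar{\gt}_h^{(n)}))^2$; a further application of Jensen's inequality ($\E[X^2\mathbf{1}]\ge\E[X\mathbf{1}]^2$, using $\mathbf{1}^2=\mathbf{1}$ and $\bbP(n\le\tau_h)\le1$) then bounds this below by $\frac{c_h^2}{4}d_n^2$, where $d_n := \E[(J_h^\ast - J_h(\bar{\gt}_h^{(n)}))\mathbf{1}_{\{n\le\tau_h\}}]$. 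This produces the recursion $d_{n+1} \le d_n - w\,d_n^2 + B/K_n$ with $w = \eta_h(1-\frac{1}{2\sqrt{N_h}})\frac{c_h^2}{4}$ and $B = \frac{\beta_h\eta_h^2\psi_h}{2}$.

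Finally I would reproduce the induction of Lemma~\ref{lem:conv-d_l-in-SGD-sim} (in the spirit of Lemma~\ref{lem:help-dn}) to establish $d_n\le\frac{4}{3wn}$. Substituting $\eta_h = \frac{1}{\beta_h\sqrt{N_h}}$ gives $\frac{4}{3w} = \frac{32\sqrt{N_h}(H-h)R^\ast}{3(1-\frac{1}{2\sqrt{N_h}})c_h^2}$, which is precisely the claimed bound once the induction closes; the base case $d_1\le (H-h)R^\ast \le \frac{4}{3w}$ holds because $c_h\le1$ forces $\frac{32\sqrt{N_h}}{3(1-\frac{1}{2\sqrt{N_h}})c_h^2}\ge1$. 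The inductive step requires $K_n\ge\frac{9}{4}wB\,n^2$, and computing $\frac{9}{4}wB = \frac{45c_h^2}{128N_h^{3/2}}(1-\frac{1}{2\sqrt{N_h}})$ (using $\psi_h/\beta_h^2 = 5/4$) shows the stated batch size $K_h^{(n)}\ge\frac{45c_h^2}{64N_h^{3/2}}(1-\frac{1}{2\sqrt{N_h}})n^2$ meets this with a factor of two to spare. The main obstacle is purely bookkeeping: keeping the stopping-time indicator $\cF_h^{(n)}$-measurable so the conditional expectations extract cleanly, and matching the constants $\psi_h$, $\beta_h$, $\eta_h$ so that the derived threshold $\frac{9}{4}wB$ aligns with the prescribed $K_h^{(n)}$; all the analytic content (smoothness, weak PL, variance control) is inherited directly from the cited lemmas.
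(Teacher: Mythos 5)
Your proposal follows the paper's proof essentially step for step: the same smoothness/ascent inequality with the variance bound from Lemma~\ref{lem:SGD-unbiased-bounded-var}, the same stopping-time conditioning via Lemma~\ref{lem:c_larger_0_SGD-dynamical} and the weak PL-inequality with Jensen, and the same recursion $d_{n+1}\le d_n - w d_n^2 + B/K_h^{(n)}$ closed by the $d_n\le \frac{4}{3wn}$ induction. The only (immaterial) difference is that you carry the exact $B=\frac{\beta_h\eta_h^2\psi_h}{2}=\frac{5(H-h)R^\ast}{4N_h}$ while the paper rounds up to $B=\frac{5(H-h)R^\ast}{2N_h}$, which is why your computed threshold $\frac{9}{4}wB$ sits a factor of two below the stated batch size rather than matching it exactly.
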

\begin{proof}
   As in the proof of Theorem~\ref{thm:SGD-rate-sim} we deduce from the $\beta_h$-smoothness and Lemma~\ref{lem:SGD-unbiased-bounded-var}, that
   \begin{align*}
     &\E\Bigl[ J(\bar{\gt}_h^{(n+1)},\tilde{\pi}_{(h+1)},\mu_h)\vert \cF_h^{(n)} \Bigr] \\
     &\geq J(\bar{\gt}_h^{(n)},\tilde{\pi}_{(h+1)},\mu_h) + \Big( \eta_h - \frac{\beta_h\eta_h^2}{2}\Big) \lVert \nabla J(\bar{\gt}_h^{(n)},\tilde{\pi}_{(h+1)},\mu_h)\rVert^2 - \frac{\beta_h \eta_h^2 \psi_h}{2 K_h^{(n)}}.  
   \end{align*}
   We take the expectation of this inequality on both sides under the event $\{n+1\leq \tau_h\}$. Note that  $\{n+1\leq \tau_h\} =\{\tau_h \leq n\}^C$ is $\cF_n$-measurable and that $\mathbf{1}_{\{ n+1 \leq \tau_h \}}\leq \mathbf{1}_{\{ n\leq \tau_h \}}$ a.s., thus
   \begin{align*}
     &\E\Big[ (J_h^{\ast}(\tilde{\pi}_{(h+1)},\mu_h) - J_h(\bar{\gt}_h^{(n+1)},\tilde{\pi}_{(h+1)},\mu_h) ) \mathbf{1}_{\{ n+1 \leq \tau_h \}}\Big]\\
     &=\E\Big[ \E\Bigl[(J_h^{\ast}(\tilde{\pi}_{(h+1)},\mu_h)- J_h(\bar{\gt}_h^{(n+1)},\tilde{\pi}_{(h+1)},\mu_h) ) \vert \cF_h^{(n)} \Bigr]  \mathbf{1}_{\{ n+1 \leq \tau_h \}}\Big]\\
     &\leq\E\Big[ \Big(J_h^{\ast}(\tilde{\pi}_{(h+1)},\mu_h) - \E\Bigl[J_h(\bar{\gt}_h^{(n+1)},\tilde{\pi}_{(h+1)},\mu_h) \vert \cF_h^{(n)}\Bigr] \Big) \mathbf{1}_{\{ n \leq \tau_h \}}\Big]\\
     &\leq \E\Big[ (J_h^{\ast}(\tilde{\pi}_{(h+1)},\mu_h)- J_h(\bar{\gt}_h^{(n)},\tilde{\pi}_{(h+1)},\mu_h)) \mathbf{1}_{\{ n \leq \tau_h \}}\Big] \\
     &\quad- \Big( \eta_h - \frac{\beta_h \eta_h^2}{2}\Big) \E\Big[ \lVert \nabla J_h(\bar{\gt}_h^{(n)},\tilde{\pi}_{(h+1)},\mu_h)\rVert^2   \mathbf{1}_{\{ n \leq \tau_h \}}\Big]  +\frac{\beta_h\eta_h^2 \psi_h}{2 K_h^{(n)}}\\
     &= \E\Big[ (J_h^{\ast}(\tilde{\pi}_{(h+1)},\mu_h)- J_h(\bar{\gt}_h^{(n)},\tilde{\pi}_{(h+1)},\mu_h)) \mathbf{1}_{\{ n \leq \tau_h \}}\Big] \\
     &\quad- \eta_h\Big(1-\frac{ 1}{2\sqrt{N_h}}\Big) \E\Big[ \lVert \nabla J_h(\bar{\gt}_h^{(n)},\tilde{\pi}_{(h+1)},\mu_h)\rVert^2   \mathbf{1}_{\{ n \leq \tau_h \}}\Big]  +\frac{5 (H-h)R^\ast}{2 K_h^{(n)} N_h}.
   \end{align*}
   By Lemma~\ref{lem:PL-type_inequality_finite_time} we have that 
   \begin{equation*}
       \lVert \nabla J_h(\bar{\gt}_h^{(n)},\tilde{\pi}_{(h+1)},\mu_h)\rVert^2 \geq \min_{s\in\cS} \pi^{\theta_{n}}(a^\ast(s|s))^2 (J_h^{\ast}(\tilde{\pi}_{(h+1)},\mu_h) - J_h(\bar{\gt}_h^{(n)},\tilde{\pi}_{(h+1)},\mu_h) )^2
   \end{equation*}
   almost surely, and by Lemma~\ref{lem:c_larger_0_SGD-dynamical} we have that $\min_{0\leq n\leq \tau_h} \min_{s\in\cS} \pi^{\bar{\gt}_h^{(n)}}(a^\ast(s|s))^2\geq \frac{c_h}{2} >0$ almost surly. Therefore,
   \begin{align*}
     &\E\Big[ (J_h^{\ast}(\tilde{\pi}_{(h+1)},\mu_h) - J_h(\bar{\gt}_h^{(n+1)},\tilde{\pi}_{(h+1)},\mu_h) \mathbf{1}_{\{ n+1 \leq \tau_h \}}\Big]\\
     &\leq \E\Big[(J_h^{\ast}(\tilde{\pi}_{(h+1)},\mu_h) - J_h(\bar{\gt}_h^{(n)},\tilde{\pi}_{(h+1)},\mu_h))   \mathbf{1}_{\{ n \leq \tau_h \}}\Big] \\
     &\quad- \eta_h\Big(1-\frac{ 1}{2\sqrt{N_h}}\Big) \E\Big[  \min_{s\in\cS} \pi^{\theta_{n}}(a^\ast(s|s))^2 (J_h^{\ast}(\tilde{\pi}_{(h+1)},\mu_h) - J_h(\theta_{n}) )^2 \mathbf{1}_{\{ n\leq \tau_h \}}\Big] +\frac{5 (H-h)R^\ast}{2 K_h^{(n)}N_h} ,\\
     &\leq \E\Big[(J_h^{\ast}(\tilde{\pi}_{(h+1)},\mu_h) - J_h(\bar{\gt}_h^{(n)},\tilde{\pi}_{(h+1)},\mu_h))   \mathbf{1}_{\{ n \leq \tau_h \}}\Big] \\
     &\quad- \eta_h\Big(1-\frac{ 1}{2\sqrt{N_h}}\Big)\frac{c_h^2}{4} \E\Big[  (J_h^{\ast}(\tilde{\pi}_{(h+1)},\mu_h) - J_h(\bar{\gt}_h^{(n)},\tilde{\pi}_{(h+1)},\mu_h))  \mathbf{1}_{\{ n\leq \tau_h \}}\Big]^2+\frac{5 (H-h)R^\ast}{2 K_h^{(n)}N_h} ,\\
   \end{align*}
   where we used Jensen's inequality in the last step.
 
   For $d_n :=\E\Big[ (J_h^{\ast}(\tilde{\pi}_{(h+1)},\mu_h) - J_h(\bar{\gt}_h^{(n)},\tilde{\pi}_{(h+1)},\mu_h) ) \mathbf{1}_{\{ n \leq \tau_h \}}\Big]$ we imply the recursive inequality
   \begin{align*}
     d_{n+1}\leq d_n -\eta_h\Big(1-\frac{ 1}{2\sqrt{N_h}}\Big)\frac{c_h^2}{4} d_n^2 +\frac{5 (H-h)R^\ast}{2 K_h^{(n)}N_h}.
   \end{align*}
   Define $w := \eta_h\Big(1-\frac{ 1}{2\sqrt{N_h}}\Big)\frac{c_h^2}{4} >0$ and $B =\frac{5 (H-h)R^\ast}{2 N_h}>0$, then 
   \begin{align*}
     d_{n+1} \leq d_n (1-w d_n) + \frac{B}{K_h^{(n)}} 
   \end{align*}
   and by our choice of $\eta_h$,
   \begin{align*}
     K_h^{(n)} \geq \frac{45 c_h^2 }{64  N_h^{\frac{3}{2}}} (1- \frac{1}{2\sqrt{N_h}}) n^2 = \frac{9}{4} w B n^2,
   \end{align*}
   Moreover, it holds that
   \begin{align*}
     d_1 \leq (H-h) R^\ast \leq \frac{1}{\eta_h} \leq \frac{4}{3w} \leq \frac{4}{3w \cdot 1},
   \end{align*}
   because $c_h \leq 1$ and $\frac{1}{\sqrt{N_h}} (1-\frac{1}{2\sqrt{N_h}})<1$ for all $N_h \geq 1$.
   Suppose the induction assumption $d_n \leq \frac{4}{3w n}$ holds true, then for $d_{n+1}$,
   \begin{align*}
     d_{n+1} \leq d_n - w d_n^2 + \frac{B}{K_h^{(n)}}.
   \end{align*}
   The function $f(x) = x-wx^2$ is monotonically increasing in $[0,\frac{1}{2w}]$ and by induction assumption $d_n \leq \frac{1}{4w n} \leq \frac{1}{2w}$. So  $d_n - w d_n^2 \leq \frac{4}{3 wn}$ which implies 
   \begin{align*}
     d_{n+1} &\leq d_n - w d_n^2 + \frac{B}{K_h^{(n)}}\\
     &\leq \frac{4}{3w n} - \frac{16}{9w n^2} + \frac{B}{K_n}\\
     &\leq \frac{4}{3w n} - \frac{16}{9w n^2} + \frac{4 B}{9 w B n^2}\\
     &= \frac{4}{3w n} - \frac{12}{9w n^2}\\
     &= \frac{4}{3w} \Big(\frac{1}{n}- \frac{1}{n^2}\Big)\\
     &\leq \frac{4}{3w (n+1)},
   \end{align*}
   where we used that $K_h^{(n)}\geq \frac{9}{4} w B n^2$.
   We follow the claim
   \begin{align*}
     d_n \leq \frac{4}{3w n} = \frac{32 \sqrt{N_h} (H-h)R^\ast}{3 (1-\frac{1}{2\sqrt{N_h}}) c_h^2 n}.
   \end{align*}
 \end{proof}

Secondly, consider the complementary event $\{\tau\leq n\}$. We can bound the probability of this event by $\delta$ for a large enough batch size $K_h$. The proof is again inspired by similar results obtained in \citet[Lem.~6.3]{ding2022beyondEG} for discounted MDPs.
\begin{lemma}\label{lem:tau_leq_L}
  Let $\mu_h$ be probability measures such that $\mu_h(s) >0$ for all $s\in\cS_h$ and consider the sequence $(\bar{\gt_h^{(n)}})_{n\geq 0}$ generated by \Eqref{eq:SGD_updates}. For any $\delta>0$, suppose that
  \begin{itemize}
    \item the batch size $K_h\geq\frac{5 n^3 }{ c_h^2 \delta^2}$
    \item the step size $\eta_h=\frac{1}{\sqrt{n}\beta_h}$.
  \end{itemize} 
  Then, we have $\bbP(\tau_h \leq n) < \delta$.
\end{lemma}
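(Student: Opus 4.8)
The plan is to mirror the proof of Lemma~\ref{lem:prob_tau_leq_n-sim} for the simultaneous scheme, replacing the global quantities by their epoch-$h$ counterparts: the smoothness constant $\beta_h = 2(H-h)R^\ast$ from Lemma~\ref{lem:beta-smoothness}, the variance proxy $\psi_h = 5(H-h)^2(R^\ast)^2$ from Lemma~\ref{lem:SGD-unbiased-bounded-var}, and the constant $c_h$ entering the definition of $\tau_h$. First I would rewrite the event in terms of the maximal deviation between the stochastic and the exact trajectory (both started at the common initialisation $\gt_h^{(0)} = \bar{\gt}_h^{(0)}$ with identical step size), namely $\bbP(\tau_h \le n) = \bbP\big( \max_{0\le t\le n} \lVert \gt_h^{(t)} - \bar{\gt}_h^{(t)} \rVert_2 \ge \tfrac{c_h}{4}\big)$, so that it suffices to control how far the two trajectories can drift apart after $n$ steps.

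To this end I would set $\phi_k := \widehat{\nabla} J_h^{K_h}(\bar{\gt}_h^{(k)},\tilde{\pi}_{(h+1)},\mu_h) - \nabla J_h(\bar{\gt}_h^{(k)},\tilde{\pi}_{(h+1)},\mu_h)$ and subtract the two update recursions from \Eqref{eq:SGD_updates}. Writing the difference as a telescoping sum and inserting and subtracting $\nabla J_h(\bar{\gt}_h^{(k)},\tilde{\pi}_{(h+1)},\mu_h)$ gives, by the triangle inequality and the $\beta_h$-Lipschitz continuity of $\nabla J_h$ (Lemma~\ref{lem:beta-smoothness}), the recursive bound
\begin{equation*}
  \lVert \gt_h^{(t)} - \bar{\gt}_h^{(t)} \rVert_2 \le \eta_h \sum_{k=1}^{t-1} \lVert \phi_k \rVert + \eta_h \beta_h \sum_{k=1}^{t-1} \lVert \gt_h^{(k)} - \bar{\gt}_h^{(k)} \rVert_2 .
\end{equation*}
Iterating this discrete Grönwall-type inequality exactly as in the simultaneous case unrolls the deterministic drift factor and yields $\lVert \gt_h^{(t)} - \bar{\gt}_h^{(t)} \rVert_2 \le \sum_{k=1}^{t-1} \eta_h (1+\eta_h\beta_h)^{t-k-1} \lVert \phi_k \rVert$.

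Finally I would apply Markov's inequality to the maximal deviation, bound each summand by its largest value and the number of summands by $n$, and use Jensen's inequality together with the variance estimate of Lemma~\ref{lem:SGD-unbiased-bounded-var} to replace $\E[\lVert\phi_k\rVert]$ by $\sqrt{\psi_h/K_h} = \sqrt{5}\,(H-h)R^\ast/\sqrt{K_h}$. This produces a bound of the shape $\bbP(\tau_h\le n) \le \tfrac{4}{c_h}\, n\,\eta_h (1+\eta_h\beta_h)^{n-1}\sqrt{\psi_h/K_h}$. Substituting the prescribed step size $\eta_h = \tfrac{1}{\sqrt{n}\,\beta_h}$ makes $\eta_h\beta_h = n^{-1/2}$, so that $(1+\eta_h\beta_h)^{n-1} \le n$ (the estimate $(1+x^{-1/2})^{x-1}\le x$ used in Lemma~\ref{lem:prob_tau_leq_n-sim}) and the explicit $(H-h)R^\ast$ factors cancel between $\eta_h$ and $\sqrt{\psi_h}$, leaving a bound that is a numerical constant times $n^{3/2}/(c_h\sqrt{K_h})$. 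Requiring this to be smaller than $\delta$ and solving for $K_h$ then gives a condition of the form $K_h \ge C\, n^3/(c_h^2\delta^2)$, which is the stated batch-size requirement.

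The main obstacle is the recursive divergence bound and its unrolling: one must account carefully for the fact that the exact gradient is evaluated \emph{along} the stochastic iterate $\bar{\gt}_h^{(k)}$ rather than along the exact one, which is precisely what lets $\beta_h$-smoothness enter and produces the geometric drift factor $(1+\eta_h\beta_h)^{t-k-1}$. The delicate point is that this potentially exploding factor is tamed only because the step size is chosen to shrink like $n^{-1/2}$, forcing $\eta_h\beta_h\le n^{-1/2}$; the rest of the argument is routine bookkeeping of the epoch-dependent constants $\beta_h$ and $\psi_h$ so that they cancel and leave a clean bound depending only on $c_h$, $n$ and $\delta$.
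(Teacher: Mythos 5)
Your proposal is correct and follows essentially the same route as the paper, whose proof of this lemma is explicitly carried out ``line by line'' as in Lemma~\ref{lem:prob_tau_leq_n-sim}: the same telescoping/Gr\"onwall unrolling with the drift factor $(1+\eta_h\beta_h)^{t-k-1}$, Markov plus Jensen with the variance bound $\psi_h$ from Lemma~\ref{lem:SGD-unbiased-bounded-var}, and the estimate $(1+x^{-1/2})^{x-1}\le x$ after substituting $\eta_h = \frac{1}{\sqrt{n}\beta_h}$. The only difference is in constant bookkeeping: the cancellation $\frac{\sqrt{\psi_h}}{\beta_h}=\frac{\sqrt{5}}{2}$ is what yields the paper's exact bound $\frac{n\sqrt{5n}}{c_h\sqrt{K_h}}$ and hence the stated threshold $K_h\ge \frac{5n^3}{c_h^2\delta^2}$, while your version leaves the numerical constant unspecified, which is immaterial to the argument.
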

\begin{proof}
   The proof follows line by line the one of Lemma~\ref{lem:prob_tau_leq_n-sim}. 
   One obtains 
   \begin{align*}
      \bbP(\tau_h \leq n) &= \bbP( \max_{0 \leq t \leq n} \lVert \gt_h^{(n)}- \bar{\gt}_h^{(n)}\rVert \geq \frac{c_h}{4})\\
       &\leq \frac{4 n\eta_h(1+ \eta_h \beta_h)^{n-1} \sqrt{\frac{\psi_h}{K_h}}}{c_h},  \\
   \end{align*}
   where $\psi_h$ from Lemma~\ref{lem:SGD-unbiased-bounded-var}.
   Now we plug in the choice of $\eta_h = \frac{1}{\sqrt{n}\beta_h} = \frac{1}{2(H-h)R^\ast \sqrt{n}}$, 
   \begin{align*}
       \bbP(\tau_h \leq n) &\leq \frac{4 n  \frac{ 1}{\sqrt{n}\beta_h} ( 1+  \frac{1}{\sqrt{n}\beta_h} \beta_h)^{n-1} \sqrt{\frac{\xi_h}{K_h}}}{c_h}\\
       &= \frac{4\sqrt{n}( 1+  \frac{1}{\sqrt{n}} )^{n-1} \sqrt{\psi_h}}{\beta_h c_h \sqrt{K_h}}\\
       &\leq \frac{2n\sqrt{n}  \sqrt{\psi_h}}{\beta_h c_h \sqrt{K_h}} = \frac{n\sqrt{5 n}}{c_h \sqrt{K_h}},\\
   \end{align*}
   where the last step is due to $f(x)= (1+ \frac{1}{\sqrt{x}})^{x-1} \leq x$ for all $x \geq 1$.
   We follow that $\bbP(\tau_h <n) < \delta$ if 
   \begin{align*}
       K_h \geq \frac{5 n^3 }{ c_h^2 \delta^2}.
   \end{align*}
 \end{proof}

We are now ready to proof the epoch wise statement.

\begin{lemma}\label{lem:SPG-convergence-rate}
  Let $\mu_h$ be probability measures such that $\mu_h(s) >0$ for all $s\in\cS_h$ and consider the sequence $(\bar{\gt_h^{(n)}})_{n\geq 0}$ generated by \Eqref{eq:SGD_updates}. Moreover, for any $\delta,\epsilon>0$, assume that
  \begin{enumerate}[label=(\roman*)]
       \item the number of training steps $N_h \geq \big(\frac{12 (H-h)R^\ast }{\epsilon \delta c_h^2 }\big)^2$,
       \item the step size $\eta_h = \frac{1}{2(H-h)R^\ast \sqrt{N_h}}$ and the batch size $K_h = \frac{5 N_h^3 }{ c_h^2 \delta^2}$.
  \end{enumerate}
  Then, it holds true that $\bbP\big( J_h^\ast(\tilde{\pi}_{(h+1)},\mu_h) - J_h(\bar{\gt}_h^{(N_h)},\tilde{\pi}_{(h+1)},\mu_h) \geq \epsilon \big) \leq \delta$.
\end{lemma}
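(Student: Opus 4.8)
The plan is to replicate the argument used for the simultaneous estimator in Theorem~\ref{thm:SGD-rate-sim}, now carried out for a single fixed epoch $h$, coupling the stochastic iterates $(\bar{\gt}_h^{(n)})$ to the exact ones $(\gt_h^{(n)})$ through the stopping time $\tau_h$. First I would record that, by the choice of $\tilde{\pi}$ together with the definition of $J_h$, one has $J_h^\ast(\tilde{\pi}_{(h+1)},\mu_h)=V_h^{(\pi_h^\ast,\tilde{\pi}_{(h+1)})}(\mu_h)$ and $J_h(\bar{\gt}_h^{(N_h)},\tilde{\pi}_{(h+1)},\mu_h)=V_h^{(\pi^{\bar{\gt}_h^{(N_h)}},\tilde{\pi}_{(h+1)})}(\mu_h)$, so the quantity to control is exactly the nonnegative suboptimality gap $J_h^\ast-J_h(\bar{\gt}_h^{(N_h)})$.

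The core step is a decomposition over the disjoint events $\{\tau_h\ge N_h\}$ and $\{\tau_h<N_h\}$:
\[\bbP\big(J_h^\ast-J_h(\bar{\gt}_h^{(N_h)})\ge\epsilon\big)\le\frac{\E\big[(J_h^\ast-J_h(\bar{\gt}_h^{(N_h)}))\mathbf{1}_{\{N_h\le\tau_h\}}\big]}{\epsilon}+\bbP(\tau_h<N_h),\]
where the first summand uses Markov's inequality (legitimate since the gap is nonnegative) and the second bounds the indicator by $1$. On $\{N_h\le\tau_h\}$ the weak PL-inequality of Lemma~\ref{lem:PL-type_inequality_finite_time} does not degenerate, because Lemma~\ref{lem:c_larger_0_SGD-dynamical} guarantees $\min_{s}\pi^{\bar{\gt}_h^{(n)}}(a^\ast(s)|s)\ge c_h/2$ up to $\tau_h$; this is precisely the hypothesis under which Lemma~\ref{lem:conv-d_l-in-SGD} applies. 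Plugging $n=N_h$ into that lemma bounds the numerator by $\tfrac{32\sqrt{N_h}(H-h)R^\ast}{3(1-\frac{1}{2\sqrt{N_h}})c_h^2 N_h}$, an $\cO(1/\sqrt{N_h})$ term, and the choice $N_h\ge(\frac{12(H-h)R^\ast}{\epsilon\delta c_h^2})^2$ is what makes $\tfrac1\epsilon$ times this a controlled fraction of $\delta$.

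For the stopping term I would invoke Lemma~\ref{lem:tau_leq_L} with $n=N_h$: the stated step size $\eta_h=\frac{1}{2(H-h)R^\ast\sqrt{N_h}}=\frac{1}{\beta_h\sqrt{N_h}}$ and batch size $K_h=\frac{5N_h^3}{c_h^2\delta^2}$ match its hypotheses verbatim, yielding $\bbP(\tau_h<N_h)<\delta$. Before applying the two lemmas one must check that the single constant batch size $K_h$ is compatible with both: Lemma~\ref{lem:conv-d_l-in-SGD} asks for an \emph{increasing} $K_h^{(n)}\ge\frac{45c_h^2}{64N_h^{3/2}}(1-\frac{1}{2\sqrt{N_h}})n^2$, and since its right-hand side at $n=N_h$ is $\cO(c_h^2\sqrt{N_h})$ while $K_h=\cO(N_h^3/(c_h^2\delta^2))$, the constant choice dominates it for every $n\le N_h$ (using $c_h\le1$ and $\delta\le1$).

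The main obstacle is purely the bookkeeping of constants: one must arrange that the convergence contribution, of size roughly $\frac{8\delta}{9(1-\frac{1}{2\sqrt{N_h}})}$, together with the stopping probability from the second term, stays below $\delta$. This is what pins down the precise numerical constants ($12$ inside $N_h$, $5$ inside $K_h$) and requires carefully tracking the factor $(1-\frac{1}{2\sqrt{N_h}})$ that appears because the step size scales like $1/\sqrt{N_h}$. Everything else is a direct transcription of the simultaneous proof, so no new analytic idea is needed beyond verifying these quantitative thresholds.
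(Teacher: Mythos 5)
Your proposal is correct and follows the paper's own proof essentially step for step: the paper likewise splits over $\{\tau_h \geq N_h\}$ and $\{\tau_h \leq N_h\}$, applies Markov's inequality together with Lemma~\ref{lem:conv-d_l-in-SGD} on the first event and Lemma~\ref{lem:tau_leq_L} on the second, and closes by verifying that the constant batch size $K_h = \frac{5N_h^3}{c_h^2\delta^2}$ dominates the increasing requirement $\frac{45 c_h^2}{64 N_h^{3/2}}(1-\frac{1}{2\sqrt{N_h}})n^2$ for all $n \leq N_h$ and that the choice of $N_h$ makes the Markov term small. The only difference is cosmetic bookkeeping of how $\delta$ is split between the two terms (the paper allocates $\delta/2 + \delta/2$), which your final-paragraph accounting already anticipates.
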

\begin{proof}
   We separate the probability using the stopping time $\tau_h$ and obtain
   \begin{align*}
       &\bbP\Big( J_h^\ast(\tilde{\pi}_{(h+1)},\mu_h) - J_h(\bar{\gt}_h^{(N_h)},\tilde{\pi}_{(h+1)},\mu_h) \geq \epsilon \Big) \\
       &\leq \bbP\Big(\{\tau_h \geq {N_h}\} \cap \{J_h^\ast(\tilde{\pi}_{(h+1)},\mu_h) - J_h(\bar{\gt}_h^{(N_h)},\tilde{\pi}_{(h+1)},\mu_h) \geq \epsilon\} \Big) \\
       &\quad+ \bbP\Big(\{\tau_h \leq {N_h}\} \cap \{J_h^\ast(\tilde{\pi}_{(h+1)},\mu_h) - J_h(\bar{\gt}_h^{(N_h)},\tilde{\pi}_{(h+1)},\mu_h) \geq \epsilon\} \Big) \\
       &\leq \frac{\E\Big[(J_h^\ast(\tilde{\pi}_{(h+1)},\mu_h) - J_h(\bar{\gt}_h^{(N_h)},\tilde{\pi}_{(h+1)},\mu_h))  \mathbf{1}_{\{\tau_h \geq {N_h}\}}\Big]}{\epsilon} + \bbP(\tau_h \leq {N_h} ) \\
       &\leq \frac{1}{\epsilon}\frac{32 \sqrt{N_h} (H-h)R^\ast }{3 (1-\frac{1}{2\sqrt{N_h}}) c_h^2 n}+ \frac{\delta}{2} \\
       &\leq \frac{\delta}{2} + \frac{\delta}{2}\\
       &= \delta,
   \end{align*}
   where the second inequality it due to Lemma~\ref{lem:conv-d_l-in-SGD} and Lemma~\ref{lem:tau_leq_L}. The last inequality follows by our choice of $N_h$:
   \begin{align*}
      \frac{32 \sqrt{N_h} (H-h)R^\ast }{3\epsilon (1-\frac{1}{2\sqrt{N_h}}) c_h^2 n} &\leq \frac{11 \sqrt{N_h} (H-h)R^\ast }{\epsilon (1-\frac{1}{2\sqrt{N_h}}) c_h^2 n} \leq \frac{\delta}{2}
   \end{align*}
   for $N_h \geq \big(\frac{11 (H-h)R^\ast  }{ \epsilon \delta c_h^2 }+\frac{1}{2}\big)^2 $, which is satisfied for $N_h \geq \big(\frac{12 (H-h)R^\ast  }{\epsilon \delta c_h^2 }\big)^2$. 
   Note further that we could use Lemma~\ref{lem:conv-d_l-in-SGD} in the equation above with a constant batch size $K_h$, because
   \begin{align*}
       \max\Big\{ \frac{45 c_h^2 }{64 N_h^{\frac{3}{2}}} (1- \frac{1}{2\sqrt{N_h}}) n^2, \frac{5 N_h^3 }{ c_h^2 \delta^2}\Big\} = \frac{5 N_h^3 }{ c_h^2 \delta^2},
   \end{align*}
   for all $n \leq N_h$, as $(1- \frac{1}{2\sqrt{N_h}})< 1$ and $c_h < 1$.
 \end{proof}

 \thmSPGerrorovertime*
 \begin{proof}
   As in the proof of the exact gradient case (Theorem~\ref{thm:PG-error-over-time}) \Eqref{eq:help1} we have by our choice of the future policy $\tilde{\pi} = \hat{\pi}^\ast$ that
   \begin{align}\label{eq:help2}
     J_{h}(\bar{\gt}_h^{(N_h)},\tilde{\pi}_{(h+1)},\delta_s) = V_h^{\hat{\pi}^\ast}(s).
   \end{align}
   By Lemma~\ref{lem:SPG-convergence-rate} we have that
   \begin{align*}
     \bbP\Big(J_h^\ast(\tilde{\pi}_{(h+1)},\mu_h) - J_h(\bar{\gt}_h^{(N_h)},\tilde{\pi}_{(h+1)},\mu_h) \geq \frac{\epsilon}{H\Big\lVert  \frac{1}{\mu_h}\Big\rVert_\infty}\Big) \leq \frac{\delta}{H}, 
   \end{align*}
   by our choice of $N_h$, $\eta_h$ and $K_h$.
 
   For every $s\in\cS_h$, denote by $\delta_s$ the dirac measure on state $s$, then as in \Eqref{eq:help3} 
   \begin{align*}
     J_{h}^\ast(\tilde{\pi}_{(h+1)},\delta_s) - J_{h}(\bar{\gt}_h^{(N_h)},\tilde{\pi}_{(h+1)},\delta_s) \leq \Big\lVert \frac{1}{\mu_h}\Big\rVert_\infty (J_{h}^\ast(\tilde{\pi}_{(h+1)},\mu_h) - J_{h}(\gt_h^{(N_h)},\tilde{\pi}_{(h+1)},\mu_h) ) \quad \textrm{a.s.}
   \end{align*}
   Thus, for all $h\in\cH$ it holds that
   \begin{align}\label{eq:backinduction-beginningSPG}
     \begin{split}
     &\bbP\Big(\exists s\in\cS_h  : J_{h}^\ast(\tilde{\pi}_{(h+1)},\delta_s) - J_{h}(\bar{\gt}_h^{(N_h)},\tilde{\pi}_{(h+1)},\delta_s) \geq \frac{\epsilon}{H} \Big) \\
     &\leq \bbP\Big(J_{h}^\ast(\tilde{\pi}_{(h+1)},\mu_h) - J_{h}(\gt_h^{(N_h)},\tilde{\pi}_{(h+1)},\mu_h) \geq \frac{\epsilon}{H\Big\lVert  \frac{1}{\mu_h}\Big\rVert_\infty}\Big)
     \leq \frac{\delta}{H}.
   \end{split}
   \end{align} 
   Define the event $A_h:= \{J_{h}^\ast(\tilde{\pi}_{(h+1)},\delta_s) - J_{h}(\bar{\gt}_h^{(N_h)},\tilde{\pi}_{(h+1)},\delta_s) < \frac{\epsilon}{H} ,\, \forall s\in\cS_h\}$. Then \Eqref{eq:backinduction-beginningSPG} is equivalent to $\bbP(A_h^C)\leq \frac{\delta}{H}$. 
   For $h=H-1$ it follows directly with \Eqref{eq:help2} and the special property of the last time point that
   \begin{align*}
     &\bbP\Big(\exists s\in\cS_h  : V_{H-1}^\ast(s) - V_{H-1}^{\hat{\pi}^\ast}(s)\geq \frac{\epsilon}{H} \Big)  \\
     &= \bbP\Big(\exists s\in\cS_h  :J_{H-1}^\ast(\delta_s) - J_{H-1}(\bar{\gt}_h^{(N_h)},\delta_s)\geq \frac{\epsilon}{H} \Big) 
     \leq \frac{\delta}{H}.
   \end{align*}
   We close the proof by induction. Assume for some $0<h <H$ that
   \begin{align}\label{eq:backinduction-assumptionSPG}
     \bbP\Big(\exists s\in\cS_h  : V_{h}^\ast(s) - V_{h}^{\hat{\pi}^\ast}(s)\geq \frac{\epsilon(H-h)}{H}\Big)\leq \frac{\delta(H-h)}{H}.
   \end{align}
   Define $B_h :=\{V_{h}^\ast(s) - V_{h}^{\hat{\pi}^\ast}(s)< \frac{\epsilon(H-h)}{H}, \forall s\in\cS_h \}$.
   Similar to~\Eqref{eq:help4}, on the event $B_h$ it holds that
   \begin{align*}
     %J_{h-1,s}^\ast > V_{h-1}^\ast(s) - \frac{\epsilon(H-h)}{H}.
     J_{h-1}^\ast(\tilde{\pi}_{(h)},\delta_s) &= \max_{a \in \cA_s} \Bigl( r(s,a) + \sum_{s^\prime \in \cS_h} p(s^\prime|s,a) V_h^\ast(s) - \sum_{s^\prime \in \cS_h} p(s^\prime|s,a) (V_h^\ast(s)- V_{h}^{\hat{\pi}^\ast}(s))\Bigr)\\
     &>\max_{a \in \cA_s} \Bigl( r(s,a) + \sum_{s^\prime \in \cS_h} p(s^\prime|s,a) V_h^\ast(s) \Bigr)- \frac{\epsilon(H-h)}{H}\\
     &= V_{h-1}^\ast(s) - \frac{\epsilon(H-h)}{H}.
   \end{align*}
   We obtain on the event $A_{h-1} \cap B_h$ that (compare to~\Eqref{eq:help5})
   \begin{align*}
     V_{h-1}^\ast(s) - V_{h-1}^{\hat{\pi}^\ast}(s)
     &= V_{h-1}^\ast(s) - J_{h-1}^\ast(\tilde{\pi}_{(h)},\delta_s)+ J_{h-1}^\ast(\tilde{\pi}_{(h)},\delta_s) - V_{h-1}^{\hat{\pi}^\ast}(s) \\
     &< \frac{\epsilon(H-h)}{H} + \frac{\epsilon}{H}\\
     &= \frac{\epsilon(H-(h-1))}{H},
   \end{align*}
   for every $s\in\cS_{h-1}$.
   Hence, $ A_{h-1} \cap B_h\subseteq B_{h-1} $. 
   Finally, we close the induction by
   \begin{align*}
     &\bbP\Bigl(\exists s\in\cS_{h-1} : V_{h-1}^\ast(s) - V_{h-1}^{\hat{\pi}^\ast}(s) \geq \frac{\epsilon(H-(h-1))}{H}\Bigr)\\
     &= 1- \bbP(B_{h-1})
     \leq 1 - \bbP(A_{h-1} \cap B_h) 
     = \bbP(A_{h-1}^C \cup B_h^C)
     \leq \bbP(A_{h-1}^C) + \bbP(B_h^C)\\
     &= \bbP\Big(\exists s\in\cS_{h-1}  : J_{h-1}^\ast(\tilde{\pi}_{(h)},\delta_s) - J_{h-1}(\gt_{h-1}^{(N_h-1)},\tilde{\pi}_{(h)},\delta_s) \geq \frac{\epsilon}{H} \Big) \\
     &\quad + \bbP\Big(\exists s\in\cS_h  : V_{h}^\ast(s) - V_{h}^{\hat{\pi}^\ast}(s)\geq \frac{\epsilon(H-h)}{H}\Big)\\
     &\leq \frac{\delta}{H} + \frac{\delta(H-h)}{H}\\
     &= \frac{\delta (H-(h-1))}{H}.
   \end{align*}
   Finally, for $h=0$ we have shown the assertion
   \begin{align*}
     \bbP\Big(\exists s\in\cS_0  : V_{0}^\ast(s) - V_{0}^{\hat{\pi}^\ast}(s)\geq \epsilon\Big)\leq \delta.
   \end{align*}
\end{proof}

\section{Example}\label{app:example}

We enclose a numerical toy example of a very simple MDP problem of optimally stopping when throwing a dice $H=5$ times. %The example was chosen as this is one of the only non-trivial examples that we know of for which exact policy gradients can be computed. 
This is a non-trivial example for which exact policy gradients can be computed.
The simulations show that the theoretical results (in the exact gradient setup) are sharp up to constants. 

The MDP corresponding to this example is defined as follows:
\begin{itemize}
    \item a constant state space over the epochs $\cS= \{1,\dots,6, \Delta\}$ containing all sides of the dice $1,\dots,6$ and a terminal state $\Delta$, 
    \item a constant action space $\cA=\{0,1\}$, where $1$ indicates stopping and jumping into the terminal state and $0$ indicates continuing to the next epoch,
    \item a transition function $p$
        \begin{equation*}
        \begin{split}
            p(s'\bigm|s,a) &= \bbP(S_{h+1}=s'\bigm| S_h=a,A_h=a) \\\
            &= \begin{cases} 
                \frac{1}{6}, & \text{if } s',s \in \{0,1\dots,6\}, \, a=0,\, \\
                1, & \text{if } s' = \Delta,\, s\in \cS, \, a=1 \text{ or } s'=s=\Delta,\, a=0, \\
                0, & \text{otherwise.}
                \end{cases} 
        \end{split}
        \end{equation*}
        Thus, we throw the dice iid until stopping for the first time, then we jump into the terminal state and stay there for the rest of the game.
    \item a reward function $r$
        \begin{equation*}
        \begin{split}
            r(s,a) = \begin{cases} 
                s, & \text{if } s \in \{0,1\dots,6\}, \, a=1,\, \\
                0, & \text{otherwise.}
                \end{cases} 
        \end{split}
        \end{equation*}
        We only observe a reward when we choose action $1$ to top the game and the reward equals the number on the dice.
\end{itemize}

Having this model with known transition probabilities allows us to implement the simultaneous and dynamic PG under the exact gradient assumption. In the simulation we always initialised the parameters uniformly and chose $\gt \equiv 0$. Furthermore we chose the suggested learning rates from Theorem~\ref{thm:convergence-rate-sim-deterministic} in the simultaneous approach and from Theorem~\ref{thm:PG-error-over-time} in the dynamic approach. 

\begin{figure}[tbh!]
    \subfloat[][]{\includegraphics[width =0.49\linewidth]{Figure_H5-1.png}}%
    \,
    \subfloat[][]{\includegraphics[width =0.49\linewidth]{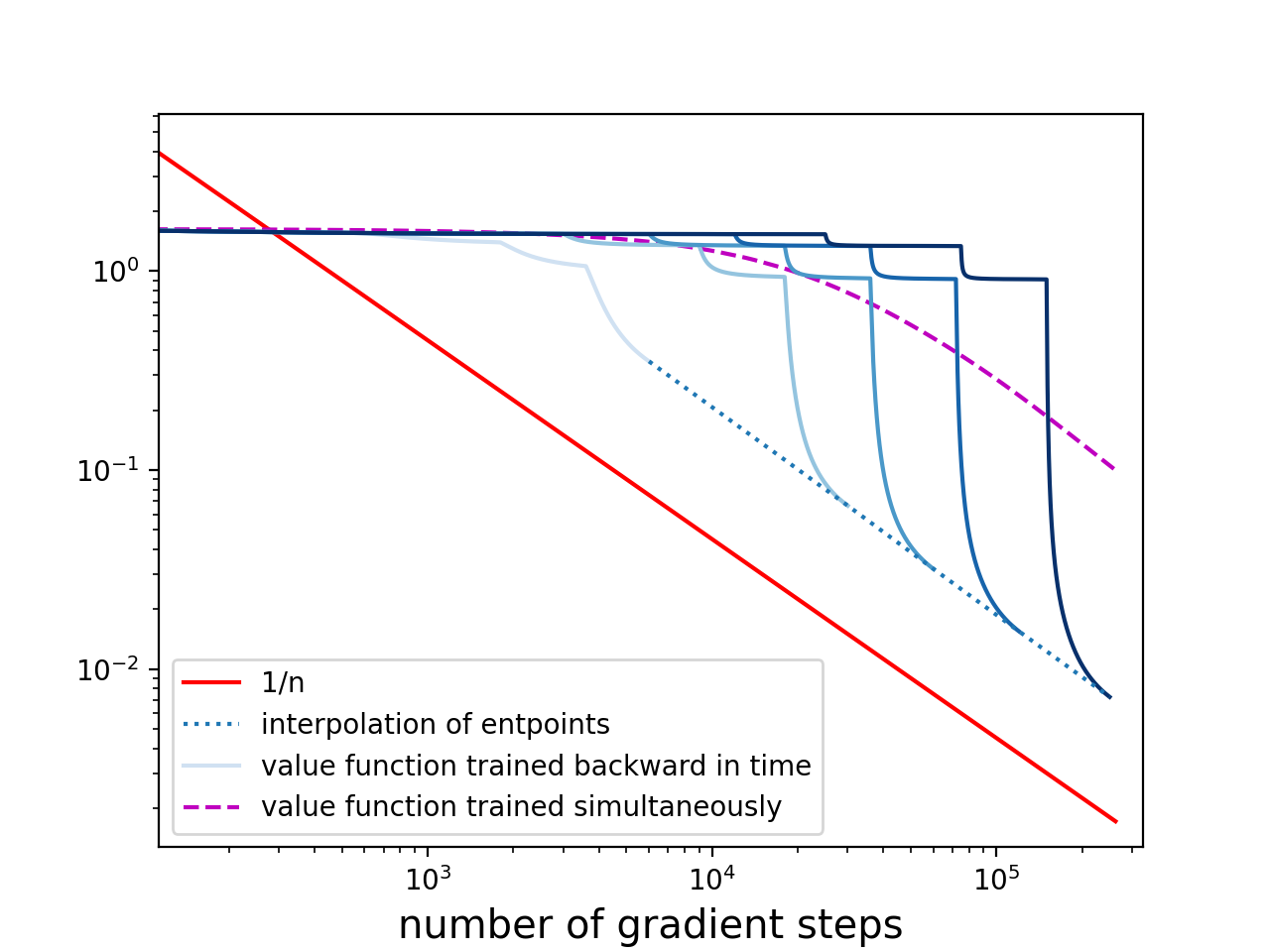}}%
    \caption{(a) shows the behavior of $V_0^{\pi^{\gt^{(n)}}}$ during the training steps over all epochs. 
    (b) shows the log-log plot of the same simulation visualizing the convergence rate towards $V_0^\ast$.}\label{fig:2}
\end{figure}

First, note that Figure~\ref{fig:2} (a) is the same as Figure~\ref{fig:1} from the introduction. The dotted red line in this plot shows the target: $V_0^\ast$. On the $x$-axis we count the number of gradient computations in the algorithms, a way of measuring the computational complexity.
The dashed magenta curve shows the evolution of the estimated value function trained with the simultaneous training of all parameters. As $c$ is unknown for the approach, we trained the parameters until an error of $0.1$ was achieved. 
The blue curves show the evolution of the estimated value function trained with our algorithm backwards. Note that the number of gradient steps varies for different epochs, as suggested by Theorem~\ref{thm:PG-error-over-time}, less training for later epochs. This can be seen in the plot by the different lengths of the plateaus of the blue lines. One plateau shows the training of one parameter. Just when the last parameter $\gt_0$ is trained, the value function $V_0^{\pi^\gt}$ finally converges towards the target. In this simulation we chose $\eps = 5,1,0.5,0.25,0.12$ to define the length of the training steps according to Theorem~\ref{thm:PG-error-over-time}. Note that the uniform initialisation lead to $c_h=0.5$ such that $N_h$ could be explicitly calculated. From light to dark blue $\epsilon$ decreases. It can be seen that the final error is better than the chosen epsilon, indicating that the rate of convergence from the dynamic approach is tight up to constants.  

In Figure~\ref{fig:2} (b) for comparison the red line is a constant times $\frac{1}{n}$. The dashed magenta line is the optimal value minus the dashed magenta curve from (a) of the simultaneous approach. Also, the blue curves are the optimal value minus the blue curves from (a). The dotted blue line is the linear interpolation of the end points of the blue lines. As the dotted blue line, the magenta line and the red line have the same slop, this shows the $\frac{1}{n}$-convergence rate in the accuracy level $\epsilon$. The larger difference from the dashed magenta line to the red line in comparison the dotted blue line to the red line indicates the larger constant in the rate of convergence.

Both plots show that the dynamic PG algorithm converges faster than the simultaneous one. As suggested by the upper bounds the effect gets much stronger for larger $H$.
\end{document}